\documentclass[12pt]{amsart}

\newcommand{\cal}[1]{\mathcal{#1}}

\usepackage{john}
\usepackage{hyperref}
\usepackage{fullpage}

\numberwithin{theorem}{section}
\usepackage{color}
\usepackage{framed}

\makeatletter
\let\@@pmod\pmod
\DeclareRobustCommand{\pmod}{\@ifstar\@pmods\@@pmod}
\def\@pmods#1{\mkern4mu({\operator@font mod}\mkern 6mu#1)}
\makeatother

\newcommand{\NP}{\oper{NP}}
\newcommand{\BS}{\oper{BA}}

\newcommand{\ve}{\varepsilon}
\newcommand{\cP}{\mathcal P}
\newcommand{\bfloor}[1]{\left\lfloor#1\right\rfloor}

\DeclareMathOperator{\HZ}{HZ}
\DeclareMathOperator{\LZ}{LZ}

\usepackage[displaymath, mathlines]{lineno}

\linespread{1}
\usepackage{array}
\usepackage{multirow}

\usepackage[all,cmtip]{xy}
\usepackage[normalem]{ulem}
\usepackage{xcolor}
\newcommand\redsout{\bgroup\markoverwith{\textcolor{red}{\rule[0.5ex]{2pt}{1pt}}}\ULon}
\newcommand\bluesout{\bgroup\markoverwith{\textcolor{blue}{\rule[0.5ex]{2pt}{1pt}}}\ULon}

\date{\today}

\address{John Bergdall\\Department of Mathematics and Statistics \\ Boston University \\ 111 Cummington Mall \\ Boston, MA 02215\\USA}
\email{bergdall@math.bu.edu}
\urladdr{http://math.bu.edu/people/bergdall}

\address{Robert Pollack\\Department of Mathematics and Statistics \\ Boston University \\ 111 Cummington Mall \\ Boston, MA 02215\\USA}
\email{rpollack@math.bu.edu}
\urladdr{http://math.bu.edu/people/rpollack}

\subjclass[2000]{11F33 (11F85)}

\title{Slopes of modular forms and the ghost conjecture\\(Unabridged version)}
\author{John Bergdall and Robert Pollack}

\begin{document}
\begin{abstract}
We formulate a conjecture on slopes of overconvergent $p$-adic cuspforms of any $p$-adic weight in the $\Gamma_0(N)$-regular case. This conjecture unifies a conjecture of Buzzard on classical slopes and more recent conjectures on slopes ``at the boundary of weight space''.
\end{abstract}
\maketitle
\setcounter{tocdepth}{1}
\tableofcontents

\section{Introduction and statement of the conjecture}

Let $p$ be a prime number, and let $N$ be a positive integer co-prime to $p$. The goal of this article is to investigate $U_p$-slopes:\ the $p$-adic valuations of the eigenvalues of the $U_p$-operator acting on spaces of (overconvergent $p$-adic) cuspforms of level $\Gamma_0(Np)$. Ultimately, we formulate a conjecture which unifies currently disparate predictions for the behavior of slopes at weights ``in the center'' and ``towards the boundary'' of $p$-adic weight space.

\subsection{Slopes of cuspforms}

The study of slopes of cuspforms began with extensive computer calculations of Gouv\^ea and Mazur in the 1990s \cite{GouveaMazur-FamiliesEigenforms}. Theoretical advancements of Coleman \cite{Coleman-pAdicBanachSpaces} led to a general theory of overconvergent $p$-adic cuspforms and eventually, with Mazur, to the construction of so-called eigencurves \cite{ColemanMazur-Eigencurve}. To better understand the geometry of the newly constructed eigencurves, Buzzard and his co-authors returned to explicit investigations on slopes in a series of papers \cite{Buzzard-SlopeQuestions,BuzzardCalegari-2adicSlopes,BuzzardCalegari-GouveaMazur,BuzzardKilford-2adc}. 

In \cite{Buzzard-SlopeQuestions}, Buzzard produced a combinatorial algorithm (``Buzzard's algorithm'') that for fixed $p$ and $N$ takes as input $k$ and outputs $\dim S_k(\Gamma_0(N))$-many integers.  He also defined the notion of a prime $p$ being $\Gamma_0(N)$-regular and conjectured that his algorithm was computing slopes in the regular cases.\footnote{Buzzard's algorithm only outputs integers, so Buzzard's conjecture implies that $U_p$-slopes are always integral in $\Gamma_0(N)$-regular cases.} 
\begin{definition}[{\cite[Definition 1.2]{Buzzard-SlopeQuestions}}]\label{definition:Gamma0(N)-regular}
An odd prime $p$ is $\Gamma_0(N)$-regular if the Hecke operator $T_p$ acts on $S_k(\Gamma_0(N))$ with $p$-adic unit eigenvalues for $2 \leq k \leq p+1$.
\end{definition}
See Definition \ref{defn:2regular} for $p=2$, but we note now that $p=2$ is $\SL_2(\Z)$-regular. The first prime $p$ which is not $\SL_2(\Z)$-regular is $p=59$.

Buzzard's algorithm is concerned with spaces of cuspforms without character, where the slopes vary in a fairly complicated way. By contrast, a theorem of Buzzard and Kilford  \cite{BuzzardKilford-2adc} implies that if $j \geq 3$ and $\chi$ is a primitive Dirichlet character of conductor $2^j$ then the $U_2$-slopes in $S_k(\Gamma_1(2^j),\chi)$ are the neatly ordered numbers $ 2^{3-j} \cdot \left(1,2,3,\dotsc,k-2\right)$. See also analogous theorems of Roe \cite{Roe-Slopes}, Kilford \cite{Kilford-5Slopes} and Kilford--McMurdy \cite{KilfordMcMurday-7adicslopes}.

In \cite{LiuXiaoWan-IntegralEigencurves}, Liu, Wan and Xiao gave  a conjectural, but general, framework in which to view the Buzzard--Kilford calculation (see \cite{WanXiaoZhang-Slopes} also). Namely, those authors have conjectured that the slopes of the $U_p$-operator acting on spaces of overconvergent $p$-adic cuspforms at $p$-adic weights ``near the boundary of weight space'' are finite unions of arithmetic progressions whose initial terms are the slopes in explicit classical weight two spaces. They also verified their conjecture for overconvergent forms on definite quaternion algebras.

The beautiful description of the slopes at the boundary of weight space is actually a consequence (see \cite{BergdallPollack-FredholmSlopes,{LiuXiaoWan-IntegralEigencurves}}) of a conjecture, widely attributed to Coleman, called ``the spectral halo'':\ after deleting a closed subdisc of $p$-adic weight space, the Coleman--Mazur eigencurve becomes an infinite disjoint union of finite flat covers over the remaining portion of weight space. Furthermore, families of eigenforms over outer annuli of weight space should be interpreted as $p$-adic families passing through overconvergent $p$-adic eigenforms in characteristic $p$ (see \cite{AndreattaIovitaPilloni-Halo, JohanssonNewton-Extended}). The existence of a spectral halo should not depend on regularity.

In summary, for a space either of the form $S_k(\Gamma_0(Np))$ or $S_k(\Gamma_0(N)\intersect \Gamma_1(p^r),\chi)$, the slopes are conjectured to be determined by a finite computation in small weights together with an algorithm:\ Buzzard's algorithm in the first case and ``generate an arithmetic progression'' in the second. 

In this article, we present a unifying conjecture that predicts the slopes of overconvergent $p$-adic eigenforms over all of $p$-adic weight space simultaneously. The shape of our conjecture is the following:\ we write down a power series over $\Z_p$ in two variables, one of which is the weight variable. We then conjecture, in the $\Gamma_0(N)$-regular case, that the Newton polygon of the specialization of our series to any given weight has the same set of slopes as the $U_p$-operator acting on the corresponding space of overconvergent $p$-adic cuspforms.

\subsection{Fredholm series}
Our approach begins with overconvergent $p$-adic modular forms. Write $\cal  W$ for the {\em even} $p$-adic weight space:\ the space of continuous characters $\kappa: \Z_p^\times \rightarrow \C_p^\times$ with $\kappa(-1)=1$. For each $\kappa \in \cal W$ we write $S_\kappa^{\dagger}(\Gamma_0(Np))$ for the space of weight $\kappa$ overconvergent $p$-adic cuspforms of level $\Gamma_0(Np)$.

An integer $k$ gives rise to a $p$-adic weight $z \mapsto z^k$, and  the finite-dimensional space $S_k(\Gamma_0(Np))$ sits as a $U_p$-stable subspace of $S_k^{\dagger}(\Gamma_0(Np))$. In \cite{Coleman-ClassicalandOverconvergent}, Coleman proved that the $U_p$-slopes in $S_k(\Gamma_0(Np))$ are almost exactly those $U_p$-slopes in $S_k^{\dagger}(\Gamma_0(Np))$ which are at most $k-1$. Thus, one could determine the classical slopes by attempting the seemingly more difficult task of determining the overconvergent slopes.

Denote by
\begin{equation*}
P_\kappa(t) = \det\left(1 - t\restrict{U_p}{S_{\kappa}^{\dagger}(\Gamma_0(Np))}\right) = 1 + \sum_{i \geq 1} a_i(\kappa)t^i \in \Q_p[[t]]
\end{equation*}
the Fredholm series for the $U_p$-operator in weight $\kappa$. The series $P_\kappa$ is entire in the variable $t$ and the $U_p$-slopes in weight $\kappa$ are the slopes of the segments of the Newton polygon of $P_\kappa$.  Coleman's proved (see \cite[Appendix I]{Coleman-pAdicBanachSpaces}) that $\kappa \mapsto a_i(\kappa)$ is defined by a power series with $\Z_p$-coefficients.

To be precise, we write $\cal W = \bigunion_{\ve} \cal W_{\ve}$ where the (disjoint) union runs over even characters $\varepsilon:(\Z/2p\Z)^\times \rightarrow \C_p^\times$, and $\kappa \in \cal W$ is in $\cal W_{\ve}$ if and only if the restriction of $\kappa$ to the torsion subgroup in $\Z_p^\times$ is given by $\ve$. We fix a topological generator $\gamma$ for the procyclic group $1+2p\Z_p$. Each $\cal W_{\varepsilon}$ is then an open $p$-adic unit disc with coordinate $w_{\kappa} = \kappa(\gamma)-1$.

The meaning of Coleman's second result can now be clarified:\ for each $\varepsilon$ there exists a two variable series
\begin{equation*}
P^{(\varepsilon)}(w,t) = 1 + \sum_{i=1}^\infty a_i^{(\varepsilon)}(w)t^i \in \Z_p[[w,t]]
\end{equation*}
such that if $\kappa \in \cal W_{\varepsilon}$ then $P_{\kappa}(t) = P^{(\varepsilon)}(w_{\kappa},t)$. In particular, the slopes of overconvergent $p$-adic cuspforms are encoded in the Newton polygons of the evaluations of the $P^{(\varepsilon)}$ at $p$-adic weights.

\subsection{The ghost conjecture}\label{subsec:ghost-conjecture}
Our approach to predicting slopes is to create a faithful, explicit, model $G^{(\varepsilon)}$ for each Fredholm series $P^{(\varepsilon)}$. We begin by writing $G^{(\varepsilon)}(w,t) = 1 + \sum g_i^{(\varepsilon)}(w)t^i$ for coefficients $g_i^{(\varepsilon)}(w)$ which we shortly determine. If decorations are not needed, we refer to $g(w)$ as one of these coefficients. Each coefficient will be non-zero and not divisible by $p$.\footnote{In \cite{BergdallPollack-FredholmSlopes}, the authors showed that if $N=1$ then the coefficients $a_i^{(\varepsilon)}(w)$ are not divisible by $p$. For $N >1$ this is not true, but we don't believe this divisibility plays a crucial role for predicting slopes.} In particular, $w_{\kappa} \mapsto v_p(g(w_{\kappa}))$ will depend only on the relative position of $w$ to the finitely many roots of $g(w)$ in the open disc $v_p(w) > 0$.

To motivate our specification of the zeros of $g_i^{(\varepsilon)}(w)$, we make two observations:
\begin{enumerate}
\item If $g_i^{(\varepsilon)}(w_\kappa) = 0$ then the $i$-th and $(i+1)$-st slope of the Newton polygon of $G^{(\varepsilon)}(w_\kappa,t)$ are the same.
\end{enumerate}
So, one can ask:\ what are the slopes that appear with multiplicity in spaces of overconvergent $p$-adic cuspforms? The second observation is:
\begin{enumerate}
\setcounter{enumi}{1}
\item If $k\geq 2$ is an even integer then the slope ${k-2\over 2}$ is often repeated in $S_k^{\dagger}(\Gamma_0(Np))$.
\end{enumerate}
In fact,  any eigenform in $S_k(\Gamma_0(Np))$ which is new at $p$ has slope ${k-2\over 2}$. So, in order to model the slopes of $U_p$ it might be reasonable to insist that $g_i^{(\varepsilon)}(w)$ has a zero exactly at $w = w_{k}$ with $k \in \cal W_{\varepsilon}$ where the $i$-th and $(i+1)$-st slope of $U_p$ acting on $S_k(\Gamma_0(Np))$ are both ${k-2\over 2}$. This leads us to seek $g_i^{(\ve)}$ such that:
\begin{equation}\label{eqn:mi_positive}
g_i^{(\varepsilon)}(w_{k}) = 0 \iff \dim S_k(\Gamma_0(N)) < i < \dim S_k(\Gamma_0(N)) + \dim S_k(\Gamma_0(Np))^{p-\new}
\end{equation}
for $k \in \cal W_{\ve}$. Such a $g_i^{(\varepsilon)}$ exists because for fixed $i$, the right-hand side of \eqref{eqn:mi_positive} holds for at most finitely many $k$.

We now need to specify the multiplicities of the zeros $w_k$.\footnote{The na\"ive idea of having all zeros of $g_i^{(\ve)}$ be simple would not work because the ghost series defined below would not be an entire series (compare with Proposition \ref{proposition:ghost-entire}).}  An integer $k \in \cal W_{\varepsilon}$ is a zero for $g_i^{(\varepsilon)}(w)$ for some range of consecutive integers $i = a,a+1,\dots,b$ for which the right-hand side of \eqref{eqn:mi_positive} holds.  Roughly, we set the order of vanishing of $g_a^{(\varepsilon)}(w)$ and $g_b^{(\varepsilon)}(w)$ at $w = w_{k}$  to be 1; for $g_{a+1}^{(\varepsilon)}(w)$ and $g_{b-1}^{(\varepsilon)}(w)$ to be 2; and so on.  More formally, define the sequence $s(\ell)$ by
\begin{equation*}
s_i(\ell) = \begin{cases}
i & \text{if $1\leq i \leq \floor{\ell / 2}$}\\
\ell+1 - i & \text{if $\floor{\ell / 2} < i \leq \ell$},
\end{cases}
\end{equation*}
and $s(\ell)$ is the empty sequence if $\ell\leq 0$. 
For $d \geq 0$ we write $s(\ell,d)$ for the infinite sequence
\begin{equation*}
s(\ell,d) = (\underlabel{d\text{ times}}{0,\dotsc,0},s_1(\ell),s_2(\ell),\dotsc,s_\ell(\ell),0,\dotsc).
\end{equation*}
If $k$ is an integer then set $d_k := \dim S_k(\Gamma_0(N))$ and $d_k^{\new} := \dim S_k(\Gamma_0(Np))^{p-\new}$. We then define $m(k) = s(d_k^{\new}-1,d_k)$, and set
\begin{equation*}
g_i^{(\varepsilon)}(w) := \prod_{k \in \cal W_{\varepsilon}} (w-w_{k})^{m_i(k)} \in \Z_p[w] \subset \Z_p[[w]]
\end{equation*}
which we note is a finite product.

\begin{definition}
The $p$-adic ghost series of tame level $\Gamma_0(N)$ on the component $\cal W_{\varepsilon}$ is
\begin{equation*}
G^{(\varepsilon)}(w,t) := 1 + \sum_{i=1}^\infty g_i^{(\varepsilon)}(w)t^i \in \Z_p[[w,t]].
\end{equation*}
\end{definition}
The naming choice and the motivation for the multiplicities defined in the next paragraph are discussed in Appendix \ref{app:explicit-2adic}. We check in Proposition \ref{proposition:ghost-entire} that $G^{(\varepsilon)}$ is entire as a power series in the variable $t$ over $\Z_p[[w]]$.  In particular, for each $p$-adic weight $\kappa$ we get an entire series $G_{\kappa} \in \C_p[[t]]$.  In what follows, we write $\NP(-)$ for ``Newton polygon''.

\begin{conjecture}[The ghost conjecture]\label{conj:intro-ghost}
If $p$ is an odd $\Gamma_0(N)$-regular prime or $p=2$ and $N = 1$, then $\NP(G_\kappa) = \NP(P_\kappa)$ for each $\kappa \in \cal W$.
\end{conjecture}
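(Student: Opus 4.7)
The plan is to match $\NP(G_\kappa)$ and $\NP(P_\kappa)$ by a two-stage interpolation: verify equality on a rich ``test set'' of weights where both sides are accessible (classical integer weights and weights near the boundary of $\cal W$), then propagate equality to all $\kappa$ using rigidity of Newton polygons in families of entire $t$-series over $\Z_p[[w]]$. Throughout, work component by component on $\cal W_\varepsilon$ and exploit that both $P^{(\varepsilon)}$ and $G^{(\varepsilon)}$ lie in $\Z_p[[w,t]]$ and are entire in $t$.

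First I would verify the conjecture at classical integer weights $k \geq 2$ in $\cal W_\varepsilon$. The slopes of $P_\kappa$ decompose into three blocks: the $p$-old slopes (each classical slope $a$ in $S_k(\Gamma_0(N))$ contributing both $a$ and $k-1-a$ via the pair $(f, V_p f)$), the $p$-new block of $d_k^{\new}$ copies of $(k-2)/2$, and the ``overconvergent tail'' at indices $i > 2d_k + d_k^{\new}$. The definition of $m(k) = s(d_k^{\new}-1, d_k)$ is rigged precisely so that $G_\kappa$ reproduces the $p$-new block at $w = w_k$; the remaining task is to check combinatorially that Buzzard's inductive algorithm, in the $\Gamma_0(N)$-regular case, matches the Newton polygon of $G^{(\varepsilon)}(w_k, t)$ outside the central block. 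This should reduce to a bookkeeping identity between Buzzard's recursion and the factorization pattern of the $g_i^{(\varepsilon)}$, using that for $k' \neq k$ the valuation $v_p(w_k - w_{k'})$ is essentially $v_p(k-k')$. Second, for $\kappa$ with $0 < v_p(w_\kappa)$ small, the ghost specialization collapses: $v_p(w_\kappa - w_{k'}) = v_p(w_\kappa)$ for every classical $k'$, so $v_p(g_i^{(\varepsilon)}(w_\kappa)) = (\sum_{k'} m_i(k')) \cdot v_p(w_\kappa)$ becomes a piecewise linear function of $i$. The Newton polygon of $G_\kappa$ therefore breaks into arithmetic progressions whose common differences and initial terms one checks to agree, block by block, with the Liu--Wan--Xiao halo description coming from weight two slopes.

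The third and most substantial step is the interpolation: extending agreement from the test set to all of $\cal W_\varepsilon$. The natural tool is the theory of the Newton polygon of a two-variable entire series as $w$ varies, together with the Weierstrass preparation of each coefficient $a_i^{(\varepsilon)}(w)$ and $g_i^{(\varepsilon)}(w)$. Since classical weights $w_k$ are dense in each $\cal W_\varepsilon$ and the boundary regime controls the behavior as $v_p(w)\to 0$, one would try to argue that a Newton polygon function agreeing on these loci is forced to agree on all of $\cal W_\varepsilon$, perhaps by comparing Weierstrass degrees of breakpoint coefficients.

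The main obstacle is twofold. Tactically, Steps one and two rest on Buzzard's conjecture and the Liu--Wan--Xiao halo conjecture respectively, neither of which is fully known in the regularity ranges of interest; a complete proof of the ghost conjecture would in fact subsume both. Structurally, the deeper difficulty is Step three: outside classical weights and the boundary annulus there is no independent access to the slopes of $P_\kappa$, so one must leverage global constraints on $P^{(\varepsilon)}$ itself --- $p$-integrality, Atkin--Lehner symmetry of the classical factors, and the compatibility between $P^{(\varepsilon)}$ and its specializations at classical $w_k$ --- to pin down the Newton polygon at non-classical interior weights. Making this rigidity precise, rather than merely heuristic, is where a genuine new idea seems required.
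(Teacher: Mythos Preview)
The statement is a \emph{conjecture} in the paper; there is no proof to compare against. The paper assembles evidence --- numerical agreement with Buzzard's algorithm, consistency with the theorems of Buzzard--Kilford, Buzzard--Calegari, Roe, Kilford, and Kilford--McMurdy, and structural results such as the ghost spectral halo --- but does not claim a proof of Conjecture~\ref{conj:intro-ghost}.

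Your outline is candid about its own gaps, and those gaps are genuinely fatal. Step~1 presupposes that $\NP(P_k)$ is known at all classical $k$; in the $\Gamma_0(N)$-regular case this is precisely Buzzard's conjecture, an open problem of the same order of difficulty as the ghost conjecture itself (indeed the paper checks only numerically that the two agree). Step~2 presupposes the spectral halo with explicit radius, also open in this generality. Most seriously, the rigidity principle in Step~3 is not valid: the function $\kappa \mapsto \NP(P_\kappa)$ is not continuous in any sense strong enough for a density argument. Newton polygons of specializations of a two-variable entire series can and do jump at individual weights (this is exactly what happens to $G_\kappa$ at each $w_k$), and the classical integers together with the boundary annulus still miss large regions of $\cal W_\varepsilon$ --- for instance every $\kappa$ with $v_p(w_\kappa)$ a positive integer but $w_\kappa \notin \Z_p$, which is precisely the ``halo'' region of Section~\ref{sec:global} where no classical or weight-two input is available. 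Comparing Weierstrass degrees of $a_i^{(\varepsilon)}$ and $g_i^{(\varepsilon)}$ would at best match the $w$-adic Newton polygons of the mod-$p$ reductions, not $\NP(P_\kappa)$ at each $\kappa$. A proof would require direct structural control of $P^{(\varepsilon)}(w,t)$ itself, not merely of a dense set of its specializations; no such control is presently known.
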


We check below that the hypotheses on $p$ is necessary (Theorem \ref{thm:ghost-true-regular}).  In Section \ref{subsec:modification}, we formulate a conjecture when $p=2$ is $\Gamma_0(N)$-regular with a modified ghost series.

\pagebreak

\subsection{Evidence for the ghost conjecture}

\subsubsection{Buzzard's conjecture versus the ghost conjecture}
Buzzard's algorithm exploits many known and conjectured properties of slopes, such as their internal symmetries in classical subspaces, their (conjectural) local constancy in large families, and their interaction with Coleman's $\theta$-operator, to recursively predict classical $U_p$-slopes. The ghost conjecture on the other hand, simply motivated by the properties of slopes of $p$-newforms, predicts all overconvergent $U_p$-slopes and one obtains classical slopes by keeping the first $d_k$-many. These two approaches are completely different and, yet, they appear to agree. We view such agreement as compelling evidence for both conjectures. 

If $G(t) \in 1 + t\C_p[[t]]$ is a power series and $d \geq 1$, then write $G^{\leq d}$ for the truncation of $G$ in degree $\leq d$. Write $\BS(k)$ for the output of Buzzard's algorithm on input $k$. 

\begin{fact}\label{fact:buzzard-agreement}
If either
\begin{enumerate}
\item $N=1$ and $p\leq 4099$ and $2\leq k\leq 2050$, or
\item $2 \leq N \leq 42$,  $3 \leq p \leq 199$ and $2 \leq k \leq 400$,
\end{enumerate}
then the multiset of slopes of $\NP(G_k^{\leq d_k})$ is equal to $\BS(k)$.
\end{fact}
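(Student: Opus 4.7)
The statement compares two explicit, purely combinatorial algorithms — Buzzard's algorithm $\BS(k)$ and the ghost recipe producing $\NP(G_k^{\leq d_k})$ — over a finite set of triples $(N, p, k)$. The natural proof strategy is a computational verification: implement each algorithm independently, run both over the ranges listed, and compare the resulting multisets of slopes.

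First, one implements $\BS(k)$ following the recursive description in \cite{Buzzard-SlopeQuestions}; its only nontrivial inputs are the classical dimensions $d_j = \dim S_j(\Gamma_0(N))$ for $2 \leq j \leq k$, which are immediate from standard dimension formulas. Second, one implements the ghost-slope algorithm: for each $i$ with $1 \leq i \leq d_k$ one needs the $p$-adic valuation of the finite product $g_i^{(\varepsilon)}(w_k) = \prod_{k'}(w_k - w_{k'})^{m_i(k')}$, where $k'$ ranges over those integer weights in $\cal W_\varepsilon$ satisfying $d_{k'} < i < d_{k'} + d_{k'}^{\new}$, with multiplicity $m_i(k') = s_i(d_{k'}^{\new}-1, d_{k'})$. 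These multiplicities are again determined entirely by the dimensions $d_{k'}$ and $d_{k'}^{\new}$. From the sequence $\{(i, v_p(g_i^{(\varepsilon)}(w_k)))\}_{i=0}^{d_k}$ one reads off the slopes of the Newton polygon, and one compares the result to the output of $\BS(k)$ as a multiset.

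The obstacle is not mathematical but organizational and computational. The ranges are large — case (i) alone gives on the order of $4000 \cdot 2000$ triples — so one needs efficient bookkeeping of the dimension tables and of the finite sets of integer zeros $k'$ contributing to each $g_i^{(\varepsilon)}$. An a priori bound on the admissible $k'$ is required so that the product defining $g_i^{(\varepsilon)}$ is visibly finite: the inequality $d_{k'} < i + d_{k'}^{\new}$ fails once $k'$ is large relative to $i$ because $d_{k'}$ grows linearly in $k'$ while $d_{k'}^{\new}$ is dominated by $d_{k'}$. One must also manage $p$-adic precision in evaluating $g_i^{(\varepsilon)}(w_k)$, since $w_k - w_{k'} = \gamma^k - \gamma^{k'}$ can have small valuation; since only the valuations enter the Newton polygon, one may compute each factor's valuation additively rather than forming the full product in $\Z_p$. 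Correctness of both implementations is cross-checked against small tables in \cite{Buzzard-SlopeQuestions} and against the explicit examples treated elsewhere in this paper.
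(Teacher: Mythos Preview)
Your proposal is correct and matches the paper's approach: Fact \ref{fact:buzzard-agreement} is stated as an empirical computational result, not proved analytically, and the paper offers no argument beyond having carried out exactly the finite comparison you describe. One small slip: your finiteness justification is garbled---the relevant point is simply that $d_{k'} < i$ forces $k'$ to be bounded above (since $d_{k'}$ grows linearly in $k'$), and the claim that $d_{k'}^{\new}$ is dominated by $d_{k'}$ is false for $p>2$---but this does not affect the soundness of the overall strategy.
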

We note that Buzzard made an extensive numerical verification of his conjecture which included all weights $k\leq 2048$ for $p=2$ and $N=1$.

The careful reader will note a striking omission in the statement of Fact \ref{fact:buzzard-agreement}:\ the agreement between the ghost slopes and the output of Buzzard's algorithm does not seem to be limited to $\Gamma_0(N)$-regular cases. Namely, neither the construction of the ghost series nor Buzzard's algorithm requires any {\em a priori} regularity hypotheses and the tests we ran to check Fact \ref{fact:buzzard-agreement} were not limited to regular cases. It seems possible that someone with enough patience could even prove, without any hypothesis on $p$ and $N$, that the output of Buzzard's algorithm agrees with the classical ghost slopes. Although neither conjecture is predicting $U_p$-slopes in the irregular case, the numbers they both output could be thought of as representing the $U_p$-slopes that ``would have occurred" if not for the existence of a non-ordinary form of low weight.

\subsubsection{Comparisons with known theorems on slopes}
There are a number of cases where the slopes of $\NP(P_{\kappa})$ have been determined. In such cases that we know of, we independently verify that the ghost series determines the same list of slopes. 
\begin{theorem}[Theorem \ref{theorem:agree-BK-p=2}, Corollary \ref{corollary:bc-k=0}, Theorem \ref{theorem:consisten-boundary}]\label{theorem:intro-actual-truth}
$\NP(G_{\kappa}) = \NP(P_{\kappa})$ in the following cases:
\begin{enumerate}
\item $p=2$, $N=1$, $\kappa=0$,
\item $p=2$, $N=1$, $v_2(w_{\kappa}) < 3$,
\item $p=3$, $N=1$, $v_3(w_{\kappa}) < 1$,
\item $p=5$, $N=1$, $\kappa$ of the form $z^k\chi$ with $\chi$ conductor $25$ and
\item $p=7$, $N=1$, $\kappa \in \mathcal W_0 \cup \mathcal W_2$ of the form $z^k\chi$ with $\chi$ conductor $49$.
\end{enumerate}
\end{theorem}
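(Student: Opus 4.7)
My plan is to treat the five cases by computing $\NP(G_\kappa)$ directly and matching it to the known description of $\NP(P_\kappa)$ supplied by prior work: Buzzard--Calegari for case (1), Buzzard--Kilford for (2), Roe for (3), Kilford for (4), and Kilford--McMurdy for (5). In each case the $\NP(P_\kappa)$ side is already established in the literature, so the content of the proof is entirely on the ghost-series side: analyze $\NP(G_\kappa)$ and verify that it yields the same multiset of slopes.

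For case (1), $\kappa = 0$ corresponds to $w_\kappa = 0$, so
\begin{equation*}
v_2(g_i^{(\varepsilon)}(0)) = \sum_k m_i(k)\, v_2(w_k),
\end{equation*}
with $w_k = \gamma^k - 1$. I would tabulate these valuations using the explicit dimension formulas for $S_k(\SL_2(\Z))$ and $S_k(\Gamma_0(2))^{2-\new}$ that enter the multiplicities $m_i(k) = s_i(d_k^{\new}-1, d_k)$, then compare vertex by vertex with the Newton polygon of the characteristic series of $U_2$ at weight zero recorded by Buzzard--Calegari.

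For the near-boundary cases (2)--(5), the unifying mechanism I would exploit is this: once $v_p(w_\kappa)$ is smaller than $v_p(w_k)$ for every integer-weight zero $w_k$ appearing in the ghost coefficients, one has $v_p(w_\kappa - w_k) = v_p(w_\kappa)$, and hence
\begin{equation*}
v_p(g_i^{(\varepsilon)}(w_\kappa)) = v_p(w_\kappa) \cdot \deg g_i^{(\varepsilon)}.
\end{equation*}
The slopes of $\NP(G_\kappa)$ are then $v_p(w_\kappa) \cdot (\deg g_{i+1}^{(\varepsilon)} - \deg g_i^{(\varepsilon)})$. Using the formulas for $d_k$ and $d_k^{\new}$ together with the triangular pattern $s_i$, I would show that the successive differences of $\deg g_i^{(\varepsilon)}$ reproduce the arithmetic progression $(1, 2, 3, \ldots)$ predicted by Buzzard--Kilford and its $p=3,5,7$ analogues, and use $w_\kappa = \chi(\gamma)\gamma^k - 1$ to match the common factor $v_p(w_\kappa)$ with the constants $2^{3-j}$, $3^{1-j}$, etc.\ appearing in those formulas.

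The main obstacle, in my view, is the combinatorial identity $\deg g_{i+1}^{(\varepsilon)} - \deg g_i^{(\varepsilon)} = i$ (up to a component-dependent shift). Although elementary, it requires a careful telescoping of the multiplicities $s_i(d_k^{\new}-1, d_k)$ against the dimension sequences, and in cases (4) and (5) it must be performed one component at a time, as only $\mathcal W_0$ and $\mathcal W_2$ are in play for $p=7$. A subsidiary difficulty is justifying the threshold on $v_p(w_\kappa)$ (the $<3$ in case (2), $<1$ in case (3), etc.) so that the linearization $v_p(w_\kappa - w_k) = v_p(w_\kappa)$ is genuinely uniform over the support of every $g_i^{(\varepsilon)}$; this reduces to a lower bound on $\min_k v_p(w_k)$ for integer $k \geq 2$, which depends on the chosen topological generator $\gamma$ and on the characters $\varepsilon$ labeling the components at hand.
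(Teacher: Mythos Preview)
Your plan for cases (2)--(5) is essentially the paper's own approach: the paper invokes exactly the linearization $v_p(g_i(w_\kappa)) = v_p(w_\kappa)\cdot \deg g_i$ when $v_p(w_\kappa)$ lies below the threshold (this is the ``ghost spectral halo''), and then computes the degree differences $\lambda(g_i)-\lambda(g_{i-1})$ to match the known $U_p$-slope lists of Buzzard--Kilford, Roe, Kilford, and Kilford--McMurdy. The paper organizes the telescoping you describe through the ratios $\Delta_i = g_i/g_{i-1} = \Delta_i^+/\Delta_i^-$, giving explicit highest/lowest-zero tables for $\Delta_i^{\pm}$; one small correction is that for $p=5,7$ the resulting $\Delta$-slope sequence is not $(1,2,3,\dotsc)$ but $\lfloor 8i/5\rfloor$, $\lfloor 9i/7\rfloor$, etc., so your phrasing there is imprecise though not fatally so.

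For case (1) your plan is underspecified in a way that matters. ``Tabulate and compare vertex by vertex'' cannot close the argument, since there are infinitely many coefficients. The paper's device is to compute $v_2(\Delta_i(w_k))$ in closed form for \emph{every} non-positive even integer $k$: using the explicit zero/pole list of $\Delta_i$ (Proposition~\ref{proposition:explicit-p=2-info}) together with $v_2(w_k - w_{k'}) = 2 + v_2(k-k')$, one obtains
\[
v_2(\Delta_i(w_k)) = v_2\!\left(2^{2i}\,\frac{(-k+12i+2)!\,(-k+6i)!}{(-k+8i+2)!\,(-k+8i-2)!\,(-k+12i)}\right),
\]
which is exactly the Buzzard--Calegari conjectural expression. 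Since Buzzard--Calegari \emph{proved} their formula at $k=0$, this yields $\NP(G_0)=\NP(P_0)$. Without identifying this factorial structure (or some equivalent closed form), your direct computation of $v_2(g_i(0)) = \sum_k m_i(k)\,v_2(w_k)$ has no mechanism to match infinitely many slopes against the Buzzard--Calegari list.
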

The determination of the $U_p$-slopes in these cases are due to, in order, Buzzard and Calegari \cite{BuzzardCalegari-2adicSlopes}, Buzzard and Kilford \cite{BuzzardKilford-2adc}, Roe \cite{Roe-Slopes}, Kilford \cite{Kilford-5Slopes} and Kilford and McMurdy \cite{KilfordMcMurday-7adicslopes}.

We also check the ghost conjecture is consistent with a conjecture of Buzzard and Calegari in \cite{BuzzardCalegari-2adicSlopes} on $2$-adic, tame level one, slopes at negative integers (Theorem \ref{theorem:agree-with-buzzard-calegari}) and we derive formulas for the slopes of $\NP(G_0)$ when $p=3,5$ and $N=1$ which agree with formulas found in Loeffler's paper \cite{Loeffer-SpectralExpansions} (Proposition \ref{prop:loeffler-sequences}).

\subsubsection{The ghost spectral halo}\label{subsec:ghost-spectral-intro}
Coleman's spectral halo, mentioned above, is concerned with $p$-adic weights quite far away from the integers. Specifically, let us refer to the spectral halo as the conjecture: 
\begin{conjecture}[The spectral halo conjecture]\label{conj:spectral-halo}
There exists a $v > 0$ such that ${1\over v_p(w_\kappa)}\NP(P_\kappa)$ is independent of $\kappa \in \cal{W}_{\varepsilon}$ if $0 < v_p(w_\kappa) < v$.
\end{conjecture}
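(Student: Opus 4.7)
The plan is to derive the spectral halo conjecture from the ghost conjecture (Conjecture \ref{conj:intro-ghost}) by an explicit, purely combinatorial, analysis of $\NP(G_\kappa)$ for $\kappa$ near the boundary of weight space. Once that is carried out, the statement for $\NP(P_\kappa)$ follows from the identity $\NP(G_\kappa) = \NP(P_\kappa)$ asserted by Conjecture \ref{conj:intro-ghost}. In other words, the conjecture decomposes into a hard analytic input --- the ghost conjecture --- and a soft calculation with the explicit polynomials $g_i^{(\varepsilon)}(w)$.

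The soft calculation is as follows. By construction $g_i^{(\varepsilon)}(w) = \prod_k (w - w_k)^{m_i(k)}$ is a polynomial whose roots occur only at the points $w_k = \gamma^k - 1$ for integer weights $k \in \cal{W}_{\varepsilon}$. Since $\gamma$ is a topological generator of $1 + 2p\Z_p$, a standard computation gives $v_p(w_k) = v_p(k) + 1$ for odd $p$ and $v_p(w_k) = v_p(k) + 2$ for $p = 2$, so in either case $v_p(w_k) \geq v_0$ uniformly, with $v_0 = 1$ or $2$ accordingly. Taking $v = v_0$, whenever $0 < v_p(w_\kappa) < v$ one has $v_p(w_\kappa - w_k) = v_p(w_\kappa)$ for every root $w_k$ appearing in any $g_i^{(\varepsilon)}$, and therefore $v_p(g_i^{(\varepsilon)}(w_\kappa)) = (\deg g_i^{(\varepsilon)}) \cdot v_p(w_\kappa)$ for every $i$. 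Since $\NP(G_\kappa)$ is the lower convex hull of the points $(i, v_p(g_i^{(\varepsilon)}(w_\kappa)))$, all heights rescale linearly in $v_p(w_\kappa)$, and so $\frac{1}{v_p(w_\kappa)} \NP(G_\kappa)$ is the lower convex hull of $(i, \deg g_i^{(\varepsilon)})$, manifestly independent of $\kappa$. This already yields an unconditional ``ghost spectral halo''; under Conjecture \ref{conj:intro-ghost} it transfers to $\NP(P_\kappa)$.

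The substantive obstacle is of course the ghost conjecture itself, which is the central unresolved problem of the paper; the rescaling step above is essentially formal once one has that input. An unconditional proof of Conjecture \ref{conj:spectral-halo} would instead require working directly with the Fredholm series $P^{(\varepsilon)}(w,t)$ to control $v_p(a_i^{(\varepsilon)}(w_\kappa))$ on the boundary annulus, the strategy followed, under various auxiliary hypotheses, by Liu--Wan--Xiao and by Andreatta--Iovita--Pilloni. Because the implication ``ghost $\Rightarrow$ halo'' sketched above is so direct, any evidence for Conjecture \ref{conj:intro-ghost} (such as that collected in Fact \ref{fact:buzzard-agreement} and Theorem \ref{theorem:intro-actual-truth}) immediately transports into evidence for Conjecture \ref{conj:spectral-halo}, and conversely any known boundary behavior of $\NP(P_\kappa)$ provides a nontrivial consistency check on the definition of the $g_i^{(\varepsilon)}$.
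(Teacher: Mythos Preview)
The statement you are addressing is a \emph{conjecture}, not a theorem: the paper does not prove Conjecture~\ref{conj:spectral-halo}, and so there is no ``paper's own proof'' to compare against. What the paper does do is exactly the conditional reduction you outline: Theorem~\ref{thm:gsh} (the ghost spectral halo) carries out your ``soft calculation'' verbatim, observing that all roots of each $g_i^{(\varepsilon)}$ lie in $v_p(w) \geq 1$ (resp.\ $v_2(w) \geq 3$), so that $v_p(g_i^{(\varepsilon)}(w_\kappa)) = \lambda(g_i^{(\varepsilon)}) \cdot v_p(w_\kappa)$ on the boundary annulus. Your proposal is thus not a proof of the conjecture but a restatement of the implication ``ghost conjecture $\Rightarrow$ spectral halo'', which is precisely the paper's viewpoint.

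Two points of correction. First, your constant $v_0 = 2$ for $p = 2$ is off: since the integers $k$ appearing as ghost zeros are even, one has $v_2(k) \geq 1$ and hence $v_2(w_k) \geq 3$, which is why the paper takes the annulus $0 < v_2(w_\kappa) < 3$. Second, and more substantively, the ghost conjecture is only formulated for $\Gamma_0(N)$-regular primes (or $p = 2$, $N = 1$), whereas the paper explicitly notes that the spectral halo ``should not depend on regularity''. So even granting Conjecture~\ref{conj:intro-ghost} in full, your argument would establish Conjecture~\ref{conj:spectral-halo} only in the regular case; the irregular case would remain open and would require either the direct analytic approach you mention (Liu--Wan--Xiao, Andreatta--Iovita--Pilloni) or an irregular modification of the ghost series along the lines discussed in the paper's final sections.
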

On $\cal W_{\ve}$, the constant value of ${1\over v_p(w_\kappa)}\NP(P_\kappa)$ is then beautifully realized as the $w$-adic Newton polygon $\NP(\bar P)$ where $\bar P$ is the mod $p$ reduction of the $P^{(\varepsilon)}$.

The ghost series trivially satisfies this halo-like behavior. Indeed, the zeros of each coefficient $g(w)$ lie in the region $v_p(w_\kappa) \geq 1$ (or $v_2(w_\kappa) \geq 3$ if $p=2$). Thus, over the complement of those regions, we have $v_p(g(w_{\kappa})) = \lambda(g)v_p(w_{\kappa})$ where $\lambda(g) = \deg g$. This proves:
\begin{theorem}[The ghost spectral halo]
\label{thm:gsh}
The function $\kappa \mapsto {1\over v_p(w_\kappa)}\NP(G_\kappa)$ is independent of $\kappa \in \cal W_\ve$ if $0 < v_p(w_\kappa) < 1$ (and $0 < v_2(w_{\kappa}) < 3$ if $p=2$), and the constant value is equal to $\NP(\bar{G}^{(\varepsilon)})$.
\end{theorem}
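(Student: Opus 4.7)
The approach is the one the preceding paragraph already sketches: exploit the explicit factorization $g_i^{(\varepsilon)}(w) = \prod_{k} (w - w_k)^{m_i(k)}$ together with the fact that every root $w_k$ lies outside the specified annulus. So the whole argument comes down to bounding $v_p(w_k)$ and then turning the ultrametric into an equality of valuations.

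First I would verify the zero locations. For $p$ odd, $\gamma \in 1 + 2p\Z_p$, and the binomial expansion of $\gamma^k = (1 + 2p u)^k$ gives $v_p(w_k) = v_p(\gamma^k - 1) \geq 1$ for every integer $k$. For $p = 2$, the condition $\kappa \in \cal W$ (i.e.\ $\kappa(-1)=1$) forces every integer weight $k$ that occurs as a zero of $g_i^{(\varepsilon)}$ to be \emph{even}. Since $\gamma \in 1 + 4\Z_2$, an even-exponent binomial expansion shows $v_2(\gamma^k - 1) \geq 3$ in that case. So in either regime, every root $w_k$ lies in the region complementary to the annulus we care about.

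Next, for any $\kappa$ with $0 < v_p(w_\kappa) < 1$ (respectively $< 3$ for $p=2$), the ultrametric inequality upgrades to an equality $v_p(w_\kappa - w_k) = v_p(w_\kappa)$, because $v_p(w_\kappa) < v_p(w_k)$ for every root. Substituting into the product expression gives
\begin{equation*}
v_p\bigl(g_i^{(\varepsilon)}(w_\kappa)\bigr) \;=\; \Bigl(\sum_{k} m_i(k)\Bigr)\, v_p(w_\kappa) \;=\; \lambda_i\, v_p(w_\kappa),
\end{equation*}
where $\lambda_i := \deg g_i^{(\varepsilon)}$. The vertices of $\NP(G_\kappa)$ are therefore the points $(i, \lambda_i v_p(w_\kappa))$, and rescaling by $1/v_p(w_\kappa)$ yields the polygon through $(i, \lambda_i)$, visibly independent of $\kappa$ on each component.

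Finally, I would identify this constant polygon with $\NP(\bar G^{(\varepsilon)})$. Because every root $w_k$ reduces to $0$ modulo $p$, the factorization collapses modulo $p$ to $\bar g_i^{(\varepsilon)}(w) = w^{\lambda_i}$ in $\F_p[w]$. Thus the $w$-adic valuation of the $i$-th coefficient of $\bar G^{(\varepsilon)}$ is exactly $\lambda_i$, which matches the heights found above. There is no genuine obstacle in the argument; the only subtle points are the separate check that $v_2(w_k)\geq 3$ for $p=2$ (which uses the parity of $k$) and unwinding $\NP(\bar G^{(\varepsilon)})$ as the $w$-adic Newton polygon of the mod $p$ reduction, once one knows entireness from Proposition \ref{proposition:ghost-entire}.
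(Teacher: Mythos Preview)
Your proposal is correct and follows essentially the same approach as the paper. The paper's proof is even more compressed---just the two sentences preceding the theorem statement---noting that every zero of each $g_i^{(\varepsilon)}$ lies in the region $v_p(w)\geq 1$ (or $v_2(w)\geq 3$), so that $v_p(g_i^{(\varepsilon)}(w_\kappa)) = \lambda(g_i^{(\varepsilon)})\cdot v_p(w_\kappa)$ on the complementary annulus; your version supplies the details (the explicit bound on $v_p(w_k)$, the parity argument at $p=2$, and the identification of the rescaled polygon with $\NP(\bar G^{(\varepsilon)})$) that the paper leaves implicit.

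One small wording issue: the points $(i,\lambda_i v_p(w_\kappa))$ are not literally the \emph{vertices} of $\NP(G_\kappa)$ but rather the full set of points whose lower convex hull is $\NP(G_\kappa)$; since vertical scaling commutes with taking the lower convex hull, the conclusion is unaffected.
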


Along with the spectral halo conjecture, one also predicts that the slopes of $\NP(\bar P)$ are a finite union of arithmetic progressions for $v_p(w_{\kappa})$ small (see \cite[Conjecture 1.2(3)]{LiuXiaoWan-IntegralEigencurves}).  We prove this directly for the ghost series, up to finite error. Write $\mu_0(N)$ for the index of $\Gamma_0(N)$ inside $\SL_2(\Z)$.

\begin{theorem}[{Corollary \ref{cor:progs}, Theorem \ref{theorem:agree-BK-p=2}}]\label{theorem:intro-aps-boundary}
If $p$ is odd then the slopes of $\NP(\bar G)$ are a finite union of ${p(p-1)(p+1)\mu_0(N)\over 24}$-many arithmetic progressions with a common difference ${(p-1)^2\over 2}$, except for finitely many possible exceptional slopes.

If $p=2$ and $N=1$ then $\NP(\bar G)$ has slopes $\set{1,2,3,\dotsc}$.
\end{theorem}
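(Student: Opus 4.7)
The plan is to analyze the Newton polygon of $\bar G^{(\varepsilon)}$ directly using the definition of the ghost coefficients, since every zero of every $g_i^{(\varepsilon)}$ lies in the region of positive $p$-adic valuation.

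\textbf{Reduction to the degree sequence.} Since each zero $w_k$ satisfies $v_p(w_k) \geq 1$ (or $v_2(w_k) \geq 3$ for $p=2$), reduction modulo $p$ sends the polynomial $g_i^{(\varepsilon)}(w) = \prod_k (w-w_k)^{m_i(k)}$ to a unit times $w^{d_i}$, where $d_i := \deg g_i^{(\varepsilon)} = \sum_k m_i(k)$. Therefore $\NP(\bar G^{(\varepsilon)})$ has vertices $(i, d_i)$ and its $i$-th slope is $y_i = d_i - d_{i-1}$. Using the explicit shape of the sequence $s(d_k^{\new}-1, d_k)$, the telescoping difference $m_i(k)-m_{i-1}(k)$ takes values in $\{-1,0,+1\}$, and one obtains the signed count
\[
y_i \;=\; A(i) - B(i), \qquad A(i)=\#\{k: d_k < i \leq d_k+\lfloor d_k^{\new}/2 \rfloor\}, \quad B(i)=\#\{k: d_k+\lceil d_k^{\new}/2 \rceil < i \leq d_k+d_k^{\new}\},
\]
where $k$ runs over even integers in $\cal W_\varepsilon$.

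\textbf{Quasi-linear analysis via dimension formulas.} For $p$ odd and $\Gamma_0(N)$-regular, both $d_k$ and $d_k^{\new}$ are quasi-polynomial of degree one in $k$, with leading terms $\mu_0(N)(k-1)/12$ and $(p-1)\mu_0(N)(k-1)/12$ respectively, and with correction terms depending only on $k$ modulo some period dividing $12$. Since $\cal W_\varepsilon \cap 2\Z$ is itself an arithmetic progression of common difference $p-1$, these functions are quasi-polynomial in the natural parameter along $\cal W_\varepsilon$ with overall period $\operatorname{lcm}(12, p-1)$. Inverting the approximations, $k$ is active at step $i$ precisely when $k$ lies in an interval $\bigl(12i/(p\mu_0(N)),\,12i/\mu_0(N)\bigr)$ (up to bounded error), and being in the ``$A$-half'' versus the ``$B$-half'' corresponds to $k$ being above versus below $24i/((p+1)\mu_0(N))$. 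Counting lattice points in these intervals, $A(i) = \tfrac{12 i}{(p+1)\mu_0(N)} + O(1)$ and $B(i) = \tfrac{12 i}{p(p+1)\mu_0(N)} + O(1)$, so $y_i$ is quasi-linear with average slope $\tfrac{12(p-1)}{p(p+1)\mu_0(N)}$.

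\textbf{Extracting the exact period and common difference.} One now needs to pin down the actual period $T$ of quasi-linearity for $y_i$ and the jump $D = y_{i+T} - y_i$. Since both $d_k$ and $d_k^{\new}$ are periodic modulo $\operatorname{lcm}(12,p-1)$ in $k$, and the asymptotic slope of $d_k$ as a function of $k$ along $\cal W_\varepsilon$ is $(p-1)\mu_0(N)/12$ per step, one period in $k$ corresponds to a shift of $\operatorname{lcm}(12,p-1)\cdot(p-1)\mu_0(N)/12$ in $d_k$; combined with the analogous statement for $d_k + d_k^{\new}$ (slope $p \cdot (p-1)\mu_0(N)/12$ per step) a careful bookkeeping gives that $T = p(p-1)(p+1)\mu_0(N)/24$ is a period for $y_i$. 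Combining the average slope computed above with $y_{i+T}-y_i = T \cdot (\text{average slope}) = \tfrac{p(p-1)(p+1)\mu_0(N)}{24} \cdot \tfrac{12(p-1)}{p(p+1)\mu_0(N)} = \tfrac{(p-1)^2}{2}$ identifies $D$. Decomposing $\{y_i\}_{i \geq i_0}$ by residue class of $i$ modulo $T$ then exhibits $T$ arithmetic progressions of common difference $D$, with the finitely many exceptional slopes being those $y_i$ with $i < i_0$ at which the $O(1)$ corrections in the dimension formulas have not yet stabilized.

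\textbf{The case $p=2$, $N=1$ and the main obstacle.} For $p=2$ and $N=1$ the density of weights in $\cal W$ is too coarse for the same averaging to apply, so one instead uses the explicit formulas for $\dim S_k(\SL_2(\Z))$ and $\dim S_k(\Gamma_0(2))^{2\text{-new}}$ to compute $d_i$ and $y_i$ by hand; the statement that $y_i = i$ then follows by direct verification, consistent with the Buzzard--Kilford result referenced in Theorem \ref{theorem:intro-actual-truth}(1)--(2). The main obstacle throughout is the rigorous bookkeeping in the third paragraph: translating ``quasi-polynomial in $k$ with period $\operatorname{lcm}(12,p-1)$'' into the integer-valued counting functions $A(i)$ and $B(i)$ and controlling the $O(1)$ errors tightly enough to show that $T$ is \emph{exactly} a period and $D$ is \emph{exactly} the jump, rather than merely being correct to leading order.
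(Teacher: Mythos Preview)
Your overall strategy matches the paper's: reduce to the $w$-adic degrees $\lambda(g_i)=\deg g_i^{(\varepsilon)}$, study the $\Delta$-slopes $y_i=\lambda(g_i)-\lambda(g_{i-1})=\lambda(\Delta_i^+)-\lambda(\Delta_i^-)$, and show these satisfy an exact recursion $y_{i+C_{p,N}}=y_i+\tfrac{(p-1)^2}{2}$. For $p=2$, $N=1$ your plan (compute $y_i=i$ directly from the explicit dimension formulas) is exactly what the paper does in Theorem~\ref{theorem:agree-BK-p=2}. However, for odd $p$ there are two genuine gaps.

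\textbf{First gap: $\Delta$-slopes versus Newton slopes.} You assert that ``$\NP(\bar G^{(\varepsilon)})$ has vertices $(i,d_i)$ and its $i$-th slope is $y_i$''. This is not automatic: it requires $i\mapsto y_i$ to be weakly increasing, which you never prove and which can fail for small $i$. The paper does \emph{not} claim this. Instead it proves a transfer lemma (Lemma~\ref{lemma:newton_slopes}): if the $\Delta$-slopes satisfy $y_{i+C}=y_i+\delta$ exactly, then the Newton slopes form $C$ arithmetic progressions of difference $\delta$ \emph{up to finitely many exceptions among the first $C$ slopes}. This is precisely the source of the ``finitely many exceptional slopes'' clause in the theorem, not (as you suggest) the stabilization of the $O(1)$ terms in the dimension formulas.

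\textbf{Second gap: the exact periodicity.} You correctly identify this as the main obstacle, and indeed your third paragraph is not a proof. Knowing that $d_k$ and $d_k^{\new}$ are quasi-polynomial in $k$ with period $\operatorname{lcm}(12,p-1)$ does not by itself give $A(i+T)-A(i)$ and $B(i+T)-B(i)$ exactly; inverting quasi-polynomials and counting lattice points only controls these up to $O(1)$. The paper closes this gap with a key exact identity (Lemma~\ref{lemma:robs-tedious-calculation}): for the shift $k\mapsto k+p(p-1)$ one has
\[
d_{k+p(p-1)}+\Bigl\lfloor\tfrac{d_{k+p(p-1)}^{\new}}{2}\Bigr\rfloor \;=\; d_k+\Bigl\lfloor\tfrac{d_k^{\new}}{2}\Bigr\rfloor + C_{p,N},
\]
which (together with the easier $d_{k+p(p-1)(p+1)/2}=d_k+C_{p,N}$) yields exact formulas for how $\HZ(\Delta_i^\pm)$ and $\LZ(\Delta_i^\pm)$ shift under $i\mapsto i+C_{p,N}$ (Lemma~\ref{lemma:lambdazp_change}). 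From these one reads off $\lambda(\Delta_{i+C_{p,N}}^+)=\lambda(\Delta_i^+)+\tfrac{p(p-1)}{2}$ and $\lambda(\Delta_{i+C_{p,N}}^-)=\lambda(\Delta_i^-)+\tfrac{p-1}{2}$ (Proposition~\ref{prop:lambda_change}), hence $y_{i+C_{p,N}}-y_i=\tfrac{(p-1)^2}{2}$ on the nose. Note also that the paper first removes $w_2$ as a zero (Lemma~\ref{lemma:remove-wt2-zero}) to make these shift identities hold without low-weight exceptions; your outline does not address this. Finally, a minor point: the theorem does not assume $\Gamma_0(N)$-regularity, so you should not invoke it.
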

We note that Theorem \ref{thm:gsh} and \cite{BergdallPollack-FredholmSlopes} imply that if the ghost conjecture is true then the exceptional slopes do not appear. More specifically, if the ghost conjecture is true then Theorem \ref{thm:gsh} implies the spectral halo exists on $0 < v_p(w_{\kappa}) < 1$, and if that is true then \cite[Theorem 3.10]{BergdallPollack-FredholmSlopes} proves that the slopes in Theorem \ref{theorem:intro-aps-boundary} are a finite union of arithmetic progressions without exceptions. Moreover, as evidence for the ghost conjecture, one can independently verify that the number of progressions predicted by  \cite[Theorem 3.10]{BergdallPollack-FredholmSlopes} is exactly the same number written in Theorem \ref{theorem:intro-aps-boundary}.\footnote{If Conjecture \ref{conj:spectral-halo} is true with $v = 1$ then \cite[Theorem 3.10]{BergdallPollack-FredholmSlopes} predicts the number of progressions with common difference ${(p-1)^2\over 2}$ is given by
\begin{equation*}
{(p-1)c_0(N)\over 2} + \sum_{j=0}^{{p-3\over 2}} \dim S_2(\Gamma_0(N)\cap \Gamma_1(p^2), \chi\omega^{-2j})
\end{equation*}
where $\chi$ is an even primitive character modulo $p^2$, $\omega$ is the Teichm\"uller character and $c_0(N)$ is the number of cusps on $X_0(N)$. One can check that this is exactly ${p(p-1)(p+1)\mu_0(N)\over 24}$ (using \cite[Th\'eor\`eme 1]{CohenOesterle-Dimensions} for example).
} 

In addition to the ghost spectral halo, we've also discovered interesting arithmetic properties of slopes over {\em other} regions of $p$-adic weight space. See Section \ref{subsec:intro-global-halo} below (specifically Theorem \ref{theorem:global-halo}, which is a vast generalization of Theorem \ref{theorem:intro-aps-boundary}).

\subsection{Distribution of ghost slopes}
In Theorem \ref{thm:asymptotic} below we prove an asymptotic formula for the $i$-th slope of $\NP(G_k)$ when $k\geq 2$ is an even integer. Here we highlight two corollaries related to conjectures of Buzzard--Gouv\^ea and Gouv\^ea on the distribution of classical slopes. We write $s_1(k) \leq s_2(k) \leq \dotsb$ for the slopes of $\NP(G_k)$.

On \cite[Page 8]{Gouvea-WhereSlopesAre}, Gouv\^ea asks  if $v_p(a_p) \leq {k-1\over p+1}$ with probability one as $k\goto \infty$ where $a_p$ ranges over eigenvalues for $T_p$ acting on $S_k(\Gamma_0(N))$. Buzzard asks in \cite[Question 4.9]{Buzzard-SlopeQuestions} if the bound is {\em always} true when $p$ is $\Gamma_0(N)$-regular.\footnote{Gouv\^ea also asks whether or not $v_p(a_p) \leq {k-1\over p+1}$ for all $k$ once it is true for $k \leq p+1$, which is a slightly stronger question (\cite[Page 9]{Gouvea-WhereSlopesAre}).}  We prove that the ghost slopes satisfy an asymptotic version of the Buzzard--Gouv\^ea bound.

\begin{theorem}[Corollary \ref{cor:highest}]\label{theorem:buzzard-gouvea-intro}
For $k\geq 2$ even, 
\begin{equation*}
s_{d_k}(k) = {k \over p+1} + O(\log k).
\end{equation*}
\end{theorem}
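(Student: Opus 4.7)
The plan is to deduce the bound from the asymptotic formula for $s_i(k)$ in Theorem \ref{thm:asymptotic}, specialized at $i = d_k$.

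First I would rewrite $s_{d_k}(k)$ as a discrete derivative. At a vertex of $\NP(G_k)$ one has $s_i(k) = v_p(g_i(w_k)) - v_p(g_{i-1}(w_k))$, and each valuation decomposes as
\[
v_p(g_i(w_k)) = \sum_{k' \in \cal W_\varepsilon \cap \Z} m_i(k') \cdot v_p(w_k - w_{k'}).
\]
The discrete derivative $m_{d_k}(k') - m_{d_k-1}(k')$ equals $+1$ on the ascending leg of the tent-sequence $s(d_{k'}^{\new}-1, d_{k'})$ and $-1$ on the descending leg. Using the asymptotics $d_{k'} \sim \mu_0(N)k'/12$ and $d_{k'}^{\new} \sim (p-1) d_{k'}$, the ascending window reduces to $k' \in (2k/(p+1), k)$ and the descending window to $k' \in (k/p, 2k/(p+1))$.

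Second, I would carry out the main-term computation. Integer weights in $\cal W_\varepsilon$ are spaced by $p-1$, and for $p$ odd $v_p(w_k - w_{k'}) = 1 + v_p(n)$ where $n = (k-k')/(p-1)$. The ascending window contains $\lfloor k/(p+1) \rfloor$ integer weights and the descending window contains $\lfloor k/p \rfloor - \lfloor k/(p+1) \rfloor$; so the signed count of $1$'s is $k(p-1)/[p(p+1)] + O(1)$. Combining with the standard estimate $\sum_{n \leq M} v_p(n) = M/(p-1) + O(\log M)$, the $v_p(n)$ contributions multiply each partial count by an additional factor of $1/(p-1)$, yielding
\[
s_{d_k}(k) = \frac{p}{p-1} \cdot \frac{k(p-1)}{p(p+1)} + O(\log k) = \frac{k}{p+1} + O(\log k).
\]

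The principal obstacle is the first step: justifying that $s_{d_k}(k)$ genuinely agrees with $v_p(g_{d_k}(w_k)) - v_p(g_{d_k-1}(w_k))$ up to $O(\log k)$ error. In principle, non-vertex corrections to the Newton polygon could inflate the gap between consecutive valuations, but the tent-sequence structure of $m_i(k')$ constrains these corrections to lie within the logarithmic error budget. Establishing this uniformly is the content of Theorem \ref{thm:asymptotic}; once it is in hand, the Gouv\^ea-type bound above is an immediate consequence of the balancing identity exhibited in the main-term computation.
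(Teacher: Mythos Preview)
Your proposal is correct and ultimately rests on the same idea as the paper --- apply Theorem~\ref{thm:asymptotic} at $i=d_k$ --- but you take an unnecessary detour. The paper's proof is a single sentence: Theorem~\ref{thm:asymptotic} already states $s_i(k) = \tfrac{12i}{\mu_0(N)(p+1)} + O(\log k,\log i)$ for $i\le d_k$, so plugging in $i=d_k$ and using $d_k = \tfrac{k\mu_0(N)}{12}+O(1)$ gives $s_{d_k}(k)=\tfrac{k}{p+1}+O(\log k)$ immediately.

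Your ``main-term computation'' --- identifying the ascending and descending windows for $\Delta_{d_k}$, counting integer weights in each, and summing the valuations $1+v_p(n)$ --- is a direct computation of $v_p(\Delta_{d_k}(w_k))$. This is precisely the special case $i=d_k$ of Proposition~\ref{prop:ith_newtonslope}, which is one of the ingredients \emph{inside} the proof of Theorem~\ref{thm:asymptotic}. Since you invoke Theorem~\ref{thm:asymptotic} anyway (correctly noting that it is needed to pass from $\Delta$-slopes to Newton slopes), the explicit window analysis is redundant: the theorem already packages both the $\Delta$-slope estimate and the Newton-slope comparison. So nothing is wrong, but the two middle paragraphs can be replaced by one substitution.
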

We believe that in fact $s_{d_k}(k) \leq {k-1\over p+1}$ always, but we did not pursue this except if $p=2$. We will not include details here.

In \cite{Gouvea-WhereSlopesAre}, Gouv\^ea also considered, for a fixed $k$, the set 
\begin{equation*}
\mathbf x_k := \set{ {h\over k-1} \st h \text{ is a slope of $T_p$ acting on $S_k(\Gamma_0(N))$}} \subset [0,1].
\end{equation*}
He then conjectured that the sets $\mathbf x_k$ become equidistributed on $[0,{1\over p+1}]$ as $k\goto \infty$. We establish an analogous property for the ghost slopes. Write $d_{k,p} = \dim S_k(\Gamma_0(Np))$.

\begin{theorem}[{Corollary \ref{corollary:gouvea-distribution}}]
\label{thm:gouvea-dist}
As $k\goto \infty$, the sets
\begin{equation*}
\set{ {s_i(k)\over k-1} \st 1\leq i \leq d_{k,p}}
\end{equation*}
become equidistributed with respect to the unique probability measure on $[0,{1\over p+1}] \union \set{1\over 2} \union [{p\over p+1},1]$ whose mass is ${p-1\over p+1}$ at ${1\over 2}$ and is uniformly distributed otherwise.
\end{theorem}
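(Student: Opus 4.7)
The plan is to split the empirical measure $\mu_k := \frac{1}{d_{k,p}}\sum_{i=1}^{d_{k,p}} \delta_{s_i(k)/(k-1)}$ according to the three natural ranges of indices arising from the identity $d_{k,p} = 2 d_k + d_k^{\new}$, and to analyze each range separately using Theorem \ref{thm:asymptotic} together with the combinatorics of the multiplicities $m_i(k)$.

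On the classical range $1 \leq i \leq d_k$, Theorem \ref{thm:asymptotic}, combined with the endpoint asymptotic $s_{d_k}(k) = k/(p+1) + O(\log k)$ of Corollary \ref{cor:highest}, should yield an asymptotically linear formula of shape $s_i(k) = \frac{i}{d_k}\cdot \frac{k}{p+1} + o(k)$ uniformly in $i$. After dividing by $k-1$, the normalized slopes $\{s_i(k)/(k-1)\}_{i=1}^{d_k}$ trace out the uniform distribution on $[0, 1/(p+1)]$ with respect to Lebesgue measure. The asymptotic weight of this range is $d_k/d_{k,p}$, which tends to $1/(p+1)$ by the Cohen--Oesterl\'e dimension formula \cite{CohenOesterle-Dimensions} together with $[\Gamma_0(N):\Gamma_0(Np)] = p+1$.

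On the $p$-new range $d_k < i \leq d_k + d_k^{\new}$, condition \eqref{eqn:mi_positive} together with observation~(1) preceding the definition of $G^{(\varepsilon)}$ forces all slopes in this range to coincide, and a direct Newton polygon computation at $w = w_k$ identifies the common value as $(k-2)/2$. After normalization, these $d_k^{\new}$ slopes cluster at $1/2$, with limiting weight $d_k^{\new}/d_{k,p} \to (p-1)/(p+1)$. On the dual classical range $d_k + d_k^{\new} < i \leq d_{k,p}$, I would extract a reflection identity $s_{d_{k,p}+1-i}(k) = (k-1) - s_i(k)$ from the palindromic structure $s_j(\ell) = s_{\ell+1-j}(\ell)$ of the defining sequence, together with the fact that $m_i(k)$ vanishes outside the $p$-new range; this reduces the analysis to the classical range and produces uniform mass $1/(p+1)$ on $[p/(p+1),1]$.

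Summing the three pieces yields the measure described in the theorem. The main obstacle is extracting sufficiently uniform control from Theorem \ref{thm:asymptotic}: genuine equidistribution requires the error in the asymptotic formula to be $o(k)$ uniformly in $i$, which is strictly stronger than a pointwise asymptotic. A secondary subtlety is the reflection identity on the dual classical range, which must be deduced carefully from the combinatorics of the ghost multiplicities, but should reduce to an elementary symmetry check once the classical range and the constant middle range are pinned down.
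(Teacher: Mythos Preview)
Your three-range decomposition and appeal to Theorem~\ref{thm:asymptotic} is exactly the paper's approach; the paper's proof of Corollary~\ref{corollary:gouvea-distribution} is a one-liner citing that theorem together with the dimension asymptotics $d_k \sim \frac{k\mu_0(N)}{12}$, $d_k^{\new} \sim \frac{k\mu_0(N)(p-1)}{12}$, $d_{k,p} \sim \frac{k\mu_0(N)(p+1)}{12}$. Two points in your write-up need correction, however.

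First, on the $p$-new range you assert that the common slope is exactly $(k-2)/2$. This is true for the actual $U_p$-operator but \emph{not} for the ghost series. The vanishing $g_i(w_k)=0$ for $d_k < i < d_k + d_k^{\new}$ forces a single Newton segment over that range, but its slope is determined by the finite values $v_p(g_{d_k}(w_k))$ and $v_p(g_{d_k+d_k^{\new}}(w_k))$, and Lemma~\ref{lemma:ss_slope} computes this slope to be $k/2 + O(\log k)$, not $(k-2)/2$ on the nose. This does not affect the equidistribution conclusion, but the exact claim is false.

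Second, and more seriously, the reflection identity $s_{d_{k,p}+1-i}(k) = (k-1) - s_i(k)$ you propose for the dual range does not follow from the palindromic shape of $m_i(k)$ for the single weight $k$. The Newton polygon of $G_k$ depends on $g_i(w_k) = \prod_{k'}(w_k - w_{k'})^{m_i(k')}$, i.e.\ on the multiplicities $m_i(k')$ for \emph{all} $k'$, and there is no symmetry relating $m_i(k')$ to $m_{d_{k,p}+1-i}(k')$ for $k' \neq k$. The paper sidesteps this entirely: Theorem~\ref{thm:asymptotic} already gives the \emph{same} formula $s_i(k) = \frac{12i}{\mu_0(N)(p+1)} + O(\log k, \log i)$ on both the range $i \leq d_k$ and the range $i > d_k + d_k^{\new}$. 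Plugging in $d_k + d_k^{\new} < i \leq d_{k,p}$ directly, one sees $s_i(k)/(k-1)$ sweep from $p/(p+1)$ to $1$ linearly in $i$, giving the uniform piece on $[p/(p+1),1]$ without any reflection.

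Finally, your concern about uniformity is already handled: the $O(\log k, \log i)$ in Theorem~\ref{thm:asymptotic} is uniform in $(i,k)$ by the paper's $O$-convention (Section~\ref{subsec:notations}), and for $i \leq d_{k,p} = O(k)$ it collapses to $O(\log k) = o(k)$.
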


The method for these investigations is to study asymptotics of the actual points underlying the construction of the Newton polygons for the ghost series. The extra flexibility of having a power series in hand allows one to establish results like Theorem \ref{thm:gouvea-dist} without the annoying combinatorics that would underlie proving an exact Buzzard--Gouv\^ea bound holds.

\begin{remark}\label{remark:gouvea-mazur-remark}
We also explored the relationship between the ghost series and the Gouv\^ea--Mazur conjecture \cite[Conjecture 1]{GouveaMazur-FamiliesEigenforms}. Namely, one might ask if one sees ``logarithmic-sized ghost families'' as suggested by Buzzard's conjecture. Indeed, we do.

In the discussion of the ghost spectral halo, we observed that all the zeros of the ghost coefficients occur at integer weights. Moreover, the set of zeros of a given coefficient is a linear function of its index. For example, if $k\geq 2$ is an integer then the zeros of the coefficients $g_1(w),\dotsc,g_{d_k}(w)$ (over the component containing $k$) are completely contained in the list $2,4,\dotsc,k-2$. In particular, if $v_p(w_\kappa - w_k) \geq 1 + \ceil{\log_p(k)}$ then $v_p(g_i^{(\varepsilon)}(w_\kappa)) = v_p(g_i^{(\varepsilon)}(w_k))$ for $1 \leq i \leq d_k$ and so
\begin{equation*}
\kappa \mapsto \NP(G_{\kappa}^{\leq d_k})
\end{equation*}
is constant on $v_2(w_{\kappa} - w_{k}) \geq 1 + \ceil{\log_p(k)}$. For example, $S_{62}(\SL_2 \Z)$ is four-dimensional with $T_2$-slopes $6,6,14,14$ and Figure \ref{fig:gen-slopes-2} illustrates these are the lowest four ghost slopes on $v_2(w_\kappa - w_{62}) \geq 7 = 1 + \ceil{\log_2(62)}$.
\end{remark}

\subsection{Halos and arithmetic progressions}\label{subsec:intro-global-halo}
We turn now towards one consequence of the ghost conjecture.  For $\kappa \in \cal W$, let us write $\alpha_{\kappa} := \sup_{w \in \Z_p} v_p(w_{\kappa} - w)$. Since the zeros of the ghost coefficients are all integers, it is easy to see that if $\kappa,\kappa'$ lie on the same component and $v_p(w_{\kappa'}-w_{\kappa}) > \alpha_{\kappa}$, then $\NP(G_{\kappa'}) = \NP(G_{\kappa})$.  In particular, if $w_\kappa \nin \Z_p$, then $\alpha_\kappa$ is finite and there is a small disc around $w_\kappa$ on which the ghost slopes are all constant.

The simplest example is to fix $r \geq 0$ an integer and $v$ a rational number $r < v < r+1$. Then $\kappa \mapsto  \NP(G_{\kappa})$ is constant on the disc $v_p(w_\kappa) = v$, and the Newton polygons scale linearly with $v$, forming ``halos''. We've illustrated the halos in Figure \ref{fig:gen-slopes} below where we've plotted the first twenty slopes on $v_p(w_{\kappa}) = v$ for $v \nin \Z$ when $p=2$ and $N = 1$. (The omitted regions are indicated with an open circle.\footnote{We stress that the behavior of the slopes in the omitted regions may be very complicated, interweaving the disjoint branches that we've drawn.}) Note the picture over $v_2(w_\kappa) < 3$ illustrates the result of Buzzard--Kilford \cite{BuzzardKilford-2adc}. Over $3 < v < 4$  you see pairs of parallel lines which hints at extra structure in the set of slopes.

\begin{figure}[htbp]
\begin{center}
\includegraphics[scale=.7]{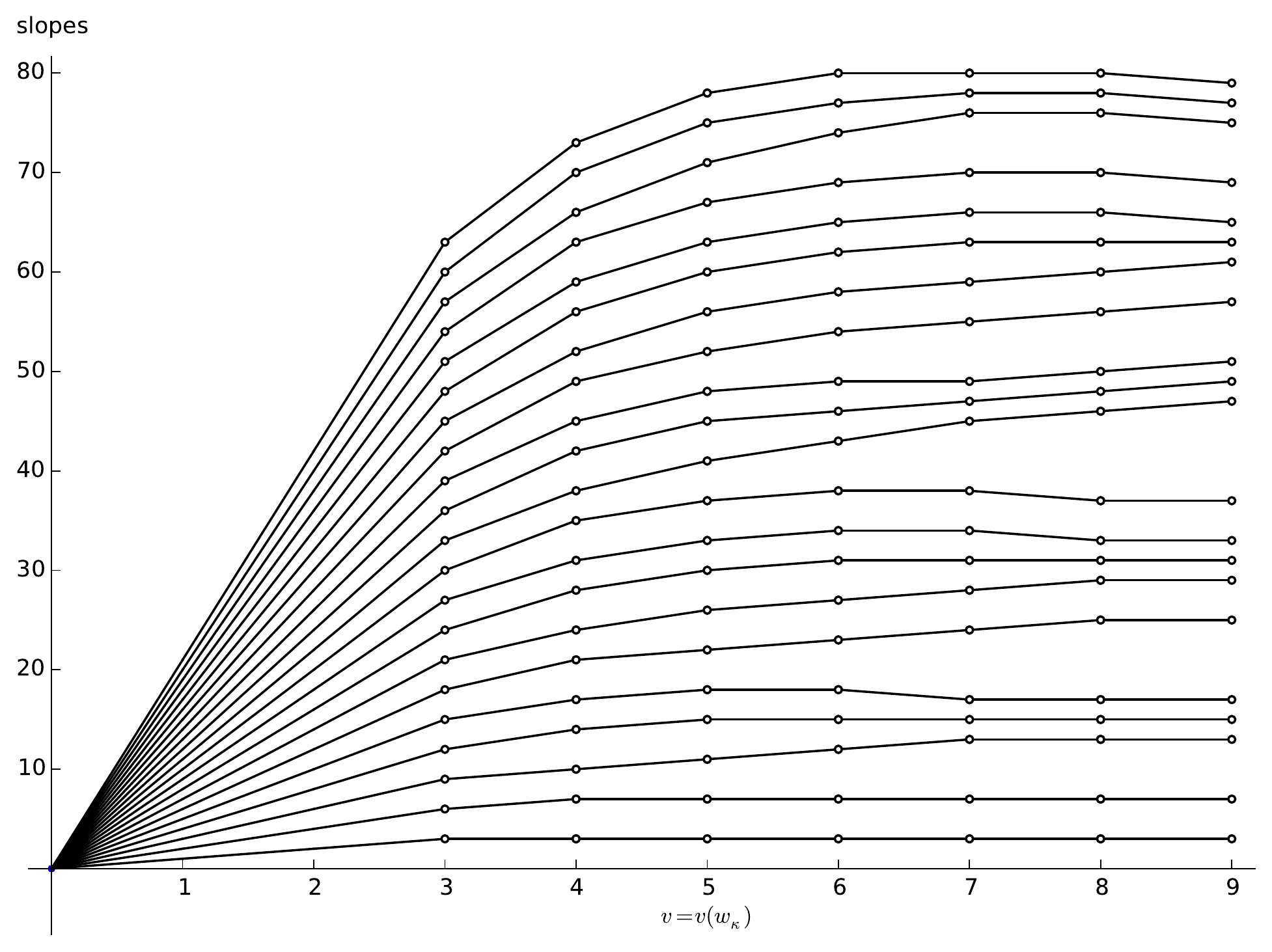}
\caption{``Halos'' for $p=2$ and $N=1$.}
\label{fig:gen-slopes}
\end{center}
\end{figure}

The following theorem explains this regularity. If $r \geq 0$, write 
\begin{equation*}
C_{p,N,r} = {p^{r+1}(p-1)(p+1)\mu_0(N)\over 24}.
\end{equation*}

\begin{theorem}[Theorem \ref{theorem:global-halo-progressions}, Remark \ref{rmk:p=2-aps-remark}]\label{theorem:global-halo}
Let $p$ be odd and assume $w_\kappa \nin \Z_p$. Write $r = \floor{\alpha_{\kappa}}$. Then the slopes of $\NP(G_{\kappa})$ form a finite union of $C_{p,N,r}$-many arithmetic progressions with the same common difference
\begin{equation*}
{(p-1)^2\over 2}\left(\alpha_\kappa + \sum_{v=1}^r (p-1)p^{r-v} \cdot v\right)
\end{equation*}
except for finitely many possibly exceptional slopes.

If $p = 2$ and $N=1$ then the same is true with $C_{2,1,r} = \max(2^{r-2},1)$ and common difference $\alpha_k + \sum_{v=3}^r 2^{r-v}\cdot v$. 
\end{theorem}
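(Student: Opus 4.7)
The plan is to reduce the analysis of $\NP(G_\kappa)$ to a combinatorial study of $i \mapsto v_p(g_i^{(\varepsilon)}(w_\kappa))$ via the explicit product formula. Because $w_\kappa \notin \Z_p$, there is a unique closest integer weight $k_0 \in \mathcal{W}_\varepsilon$ with $v_p(w_\kappa - w_{k_0}) = \alpha_\kappa$. For odd $p$, the identity $v_p(\gamma^n - 1) = v_p(n) + 1$ (valid since $v_p(\gamma - 1) = 1$) combined with the congruence $k \equiv k_0 \pmod{p-1}$ (forced by $k, k_0 \in \mathcal{W}_\varepsilon$) gives $v_p(w_k - w_{k_0}) = v_p(k - k_0) + 1$, and then the ultrametric inequality yields
\[
v_p(w_\kappa - w_k) = \min\bigl(\alpha_\kappa,\, v_p(k - k_0) + 1\bigr) \qquad (k \in \mathcal{W}_\varepsilon).
\]
Substituting into $v_p(g_i^{(\varepsilon)}(w_\kappa)) = \sum_k m_i(k)\, v_p(w_\kappa - w_k)$ and telescoping by the ``level'' $j := v_p(k - k_0)$ produces the key decomposition
\[
v_p(g_i^{(\varepsilon)}(w_\kappa)) = \sum_{j=0}^{r-1} N_j(i) + (\alpha_\kappa - r)\, N_r(i), \qquad N_j(i) := \!\!\sum_{\substack{k \in \mathcal{W}_\varepsilon \\ v_p(k - k_0) \geq j}} m_i(k).
\]

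Next, I would establish the arithmetic-progression structure for each level-sum $N_j$. The sequence $i \mapsto N_j(i)$ is the total exponent of the ``sub-ghost'' factor supported on the $p^j$-sparse coset $k_0 + (p-1)p^j\Z$ of integer weights. Adapting the proof of Theorem \ref{theorem:intro-aps-boundary} (equivalently Corollary \ref{cor:progs})---whose combinatorial input is the tent shape of $m_i(k) = s_{i - d_k}(d_k^{\new} - 1)$ in $i$ together with the linearity of $d_k$ and $d_k^{\new}$ in $k$---to this sparser setting should show that the first differences $\Delta N_j(i) := N_j(i+1) - N_j(i)$ eventually form a finite union of arithmetic progressions whose common differences and counts are governed explicitly by $p$, $j$, and $\mu_0(N)$.

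Assembling via the Step 1 decomposition, the slopes of $\NP(G_\kappa)$ form a finite union of $C_{p,N,r}$ arithmetic progressions with common difference $\tfrac{(p-1)^2}{2}\bigl(\alpha_\kappa + \sum_{v=1}^r (p-1)p^{r-v}\cdot v\bigr)$. The count $C_{p,N,r} = p^r\, C_{p,N,0}$ arises by telescoping the level-by-level contributions, and the common difference comes out correctly after invoking the identity $\sum_{v=1}^r (p-1)p^{r-v}\cdot v = \sum_{m=1}^{p^r-1}(v_p(m) + 1)$, which records the aggregate weighted contribution of one fundamental period of integer weights around $k_0$. The $p = 2$, $N = 1$ case of Remark \ref{rmk:p=2-aps-remark} is handled identically, except that $v_2(\gamma^n - 1) = v_2(n) + 2$ and the forced evenness of $k \in \mathcal{W}$ imply $v_2(w_k - w_{k_0}) = v_2((k - k_0)/2) + 3$; this accounts for the shifted summation range $v \geq 3$ and the modified count $C_{2,1,r} = \max(2^{r-2}, 1)$.

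The main obstacle is Step 2: extending the arithmetic-progression structure of first differences from the full ghost ($j = 0$, the content of Theorem \ref{theorem:intro-aps-boundary}) to the sparser sub-ghosts ($j > 0$). Summing tent-shaped multiplicities $s_{i-d_k}(d_k^{\new}-1)$ along an arithmetic progression of $k$'s with step $(p-1)p^j$ requires careful tracking of the residues of $d_k$ and $d_k^{\new}$ modulo $(p-1)p^j$, and the ``finitely many exceptional slopes'' permitted at each level must be controlled uniformly in $j$ so that the accumulated error after summing remains finite. Equally delicate is ensuring that the APs from different levels assemble into a single AP structure with a uniform common difference, rather than a disordered mixture.
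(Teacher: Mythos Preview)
Your level-by-level decomposition in Step~1 is correct and is a valid reorganization, but what you have written is a plan, not a proof: you explicitly name Step~2 as ``the main obstacle'' and do not carry it out. The paper takes a more direct route that sidesteps your obstacle entirely. Rather than filtering by the valuation level $j = v_p(k-k_0)$ and studying each $N_j$ separately, the paper works with the $\Delta$-slopes $v_p(\Delta_i(w_\kappa))$ and shows directly that
\[
v_p\!\left(\frac{\Delta_{i+C}(w_\kappa)}{\Delta_i(w_\kappa)}\right) = \text{(the claimed common difference)}
\]
for all $i$, where $C = C_{p,N,r}$. The mechanism is this: the zeros of $\Delta_i^{\pm}$ are an arithmetic progression in $k$ with step $p-1$, and Lemma~\ref{lemma:lambdazp_change} and Proposition~\ref{prop:lambda_change} show that passing from $i$ to $i+C$ shifts the highest zero by a multiple of $p^{r+1}$ and lengthens the progression by exactly $p^{r+1}\cdot\tfrac{p-1}{2}$ (for $\Delta^+$) or $p^r\cdot\tfrac{p-1}{2}$ (for $\Delta^-$). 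One then factors $\Delta_{i+C}^{\pm} = a\cdot b$ where $a$ has the same number of zeros as $\Delta_i^{\pm}$ (shifted by a multiple of $p^{r+1}$, hence contributing identically at $w_\kappa$ by your own Step~1 formula) and $b$ carries the new zeros, whose total contribution is computed in closed form by the elementary counting Lemma~\ref{lemma:easy}. This delivers the constant shift in $\Delta$-slopes, and Lemma~\ref{lemma:newton_slopes} converts that into the statement about Newton slopes. Your approach and the paper's share the same computational backbone (the shift behaviour of $\HZ$, $\LZ$, $\lambda$ of $\Delta_i^\pm$), but the paper's single-shift argument avoids having to control each sub-ghost $N_j$ separately and then reassemble.

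Two smaller points. First, your claim that ``there is a unique closest integer weight $k_0 \in \mathcal W_\varepsilon$ with $v_p(w_\kappa - w_{k_0}) = \alpha_\kappa$'' is false when $\alpha_\kappa \in \Z$: the paper's Lemma~\ref{lemma:equivalent-conditons-inZp} shows that $w_\kappa \notin \Z_p$ bifurcates into the case $\alpha_\kappa \notin \Z$ (your setup) and the case where $w_\kappa$ is close to an element of $\Z_p^{\mathrm{nr}}\setminus\Z_p$, handled by taking $w_{k_0}=0$ and invoking Lemma~\ref{lemma:Zpnr}. Your Step~1 formula survives this, but the justification needs the case split. Second, the paper first replaces $g_i$ by $g_i^\sharp$ (deleting the $w_2$ zero, Lemma~\ref{lemma:remove-wt2-zero}); without this the arithmetic-progression structure of the zeros of $\Delta_i^\pm$ can fail at small $i$, which would complicate your Step~2 further.
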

The condition $\floor{\alpha_\kappa} = 0$ is equivalent to $0 < v_p(w_\kappa) < 1$ in which case $\alpha_{\kappa} = v_p(w_\kappa)$. Thus, Theorem \ref{theorem:global-halo} generalizes Theorem \ref{theorem:intro-aps-boundary}.

Note that Theorem \ref{theorem:global-halo} applies to $p$-adic annuli $r < v_p(w_{\kappa} - w_{k_0}) < r+1$ for any integer $k_0$, and $\kappa\mapsto \NP(G_\kappa)$ is constant on each fixed radius $v_p(w_\kappa - w_{k_0}) = v \in (r,r+1)$. Thus the halo behavior is stable under re-centering the coordinate $w$ at any integral weight. We illustrate this in Figure \ref{fig:gen-slopes-2} below, showing the halos near the weight $w=w_{62}$ when $p=2$ and $N=1$.  (The interested reader may want to compare Figure \ref{fig:gen-slopes-2} to the discussion in the last paragraph of \cite[Section 3]{Buzzard-SlopeQuestions}.)

\begin{figure}[htbp]
\begin{center}
\includegraphics[scale=.7]{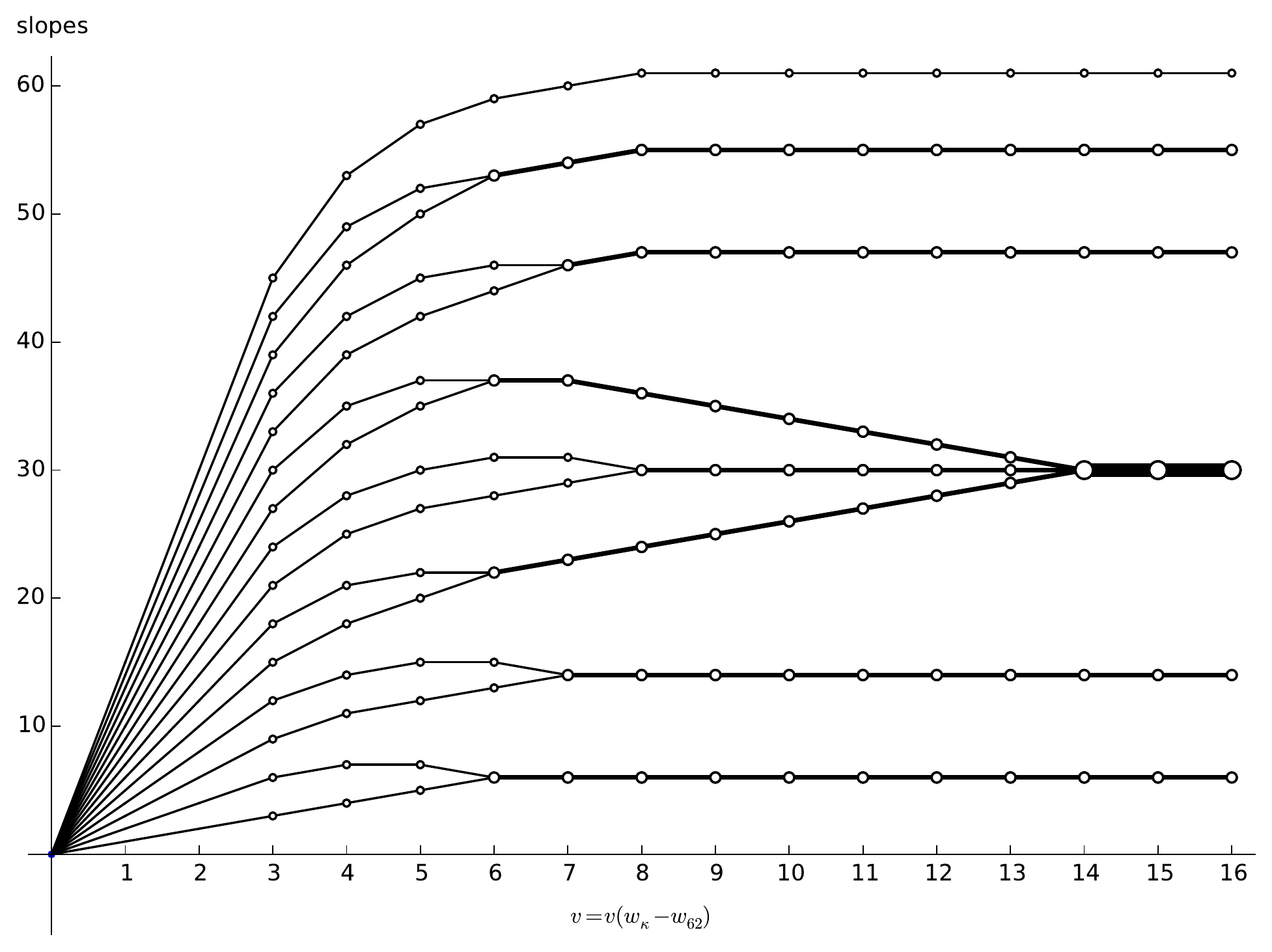}
\caption{``Halos'' centered at the weight $w = w_{62}$ when $p=2$ and $N =1$.}
\label{fig:gen-slopes-2}
\end{center}
\end{figure}

There are several interesting observations regarding Figure \ref{fig:gen-slopes-2}. First, if $v_2(w_\kappa - w_{62}) > 3$ then $v_2(w_\kappa) = 3$, so the picture in Figure \ref{fig:gen-slopes-2} is nearly completely contained within the omitted regions in Figure \ref{fig:gen-slopes} over $v_2(w_\kappa) = 3$. Second, we've drawn some lines in Figure \ref{fig:gen-slopes-2} thicker than others:\ the thickness of a line corresponds to the multiplicity of a slope. On $v_2(w_\kappa-w_{62}) > 6$, we see a double slope 6; on $v_2(w_\kappa - w_{62}) > 7$, we see two 14s; and so on. Compare with the example at the end of Remark \ref{remark:gouvea-mazur-remark}. Next, the thickest line is six slope 30 families:\ these should correspond under the ghost conjecture to  six families of $p$-adic eigenforms converging to the six newforms of weight $62$ (which have slope ${62-2\over 2} = 30$). Finally, the lone family at the top of Figure \ref{fig:gen-slopes-2} is a slope $61$ family which should be thought of under the ghost conjecture as converging to the critical slope Eisenstein series of weight 62.

If the ghost conjecture is true, there are halos for $U_p$-slopes, and the slopes of $\NP(P_{\kappa})$ satisfy Theorem \ref{theorem:global-halo}. Over the annulus $0 < v_p(w_{\kappa}) < 1$, one can observe this empirically by computing classical spaces of cuspforms of weight with character of large $p$-power conductor. However, everything is much more mysterious over a $p$-adic annulus $r < v_p(w_{\kappa}) < r+1$ once $r \geq 1$:\ there are no locally algebraic weights in that region and thus no classical spaces of cuspforms.

\subsection{Irregular cases}
The basic heuristic in the ghost series construction is that the zeros of the coefficients of Fredholm series give rise to repeated slopes and that newforms provide many repeated slopes. In a separate article (\cite{BergdallPollack-FractionalSlopes}) we show that non-integral, and thus repeated, slopes always appear when $p$ is an odd $\Gamma_0(N)$-irregular prime. One could hope that careful predictions of where these fractional slopes appear could lead to a modification of the ghost series which would work in any case.

We examined carefully the case where $p=59$ and $N=1$ and came up with a way to modify infinitely many, relatively sparse, coefficients by adding a new zero.  We tested our modified ghost series against the $U_{59}$-slopes for weights $k \leq 1156$ and they matched perfectly. However, computing actual slopes is computationally difficult and we feel we do not have enough data to support an actual conjecture.\footnote{And, we cannot compare to Buzzard's algorithm since it doesn't compute slopes in  irregular cases.} 

The precise indices where the zeros are added and the precise zeros which are added are determined by the list of slopes in weight two spaces with character of conductor $p=59$ (some of these are fractional).  It would be interesting to have a modification which works for general $p$ and $N$ (after computing this finite amount of data). Moreover, such a modification would hopefully be regular enough and sparse enough so that the results of Sections \ref{subsec:distribution} and \ref{sec:global} will go through in the general case.

\subsection{Organization}
Section \ref{section:explicit-analysis} is concerned with explicitly determining information about the ghost series, including proving that it is entire in the variable $t$. However, the reader may want to skip directly to Section \ref{section:comparison} where we give more precise information when $p=2$ and $N=1$. This section also contains the bulk of the numerical evidence for the ghost conjecture. Sections \ref{subsec:distribution} deals with asymptotics of ghost slopes. It relies heavily on Section \ref{section:explicit-analysis}. The same is true for Section \ref{sec:global}, where we describe the halos and discuss the arithmetic properties of ghost slopes. Section \ref{sec:modification} contains a modification of the ghost series when $p=2$. The main theme is dealing with fractional slopes that appear in certain spaces.

\subsection{Conventions}\label{subsec:notations}
We maintain all the notations presented in the introduction. We also make the following conventions.

If $P(t) = 1 + \sum a_i t^i \in \C_p[[t]]$ is an entire series then we write $\NP(P)$ for its Newton polygon, which is the lower convex hull of the set of points $\set{(i,v_p(a_i)) \st a_i\neq 0}$. The {\em slopes} of $P$ are the slopes of $\NP(P)$. The {\em $\Delta$-slopes} of $P$ are the differences $v_p(a_i) - v_p(a_{i-1})$ for $i=1,2,\dotsc$ with $a_{i},a_{i-1}\neq 0$, i.e.\ the slopes of the line segment connecting consecutive points before taking the Newton polygon. When $P$ is the Fredholm series for $U_p$ we will use $U_p$-slopes and when $P$ is the ghost series we will say ghost slopes and ghost $\Delta$-slopes.

If $f(x), g(x)$ and $h(x)$ are real-valued functions of a variable $x = (x_1,\dotsc,x_n) \in \R^n$ then we write
\begin{equation*}
f(x) = g(x) + O(h(x))
\end{equation*}
to mean that there exists an $M \geq 0$ and a constant $A > 0$ such that $\abs{f(x) - g(x)} \leq A\abs{h(x)}$ whenever $\norm{x} \geq M$ (where $\norm{-}$ is the standard norm on $\R^n$). 
If $h_1(x_1),\dotsc,h_n(x_n)$ are $n$ functions on a single variable then we write $f(x) = g(x) + O(h_1(x_1),\dotsc,h_n(x_n))$ to mean the above with $h(x) := \sup_{i} h(x_i)$.

\subsection*{Acknowledgements} We thank Kevin Buzzard and Liang Xiao for helpful discussions. The first author was supported by NSF grant DMS-1402005 and the second author was supported by NSF grant DMS-1303302.

\section{Explicit analysis of the ghost series}\label{section:explicit-analysis}

We fix a prime $p$, a tame level $N$, and an even Dirichlet character $\varepsilon$ of conductor $p$. We write $g_i = g_i^{(\varepsilon)}$ for the coefficients of the $p$-adic ghost series $G(w,t)=G^{(\varepsilon)}(w,t)$ of tame level $N$ over the component $\cal W_{\varepsilon}$. We have two goals in this section. First, we will prove that $G(w,t)$ is entire over $\Z_p[[w]]$ (see Proposition \ref{proposition:ghost-entire}). Second, we will show that if the ghost conjecture is true then either $p$ is an odd $\Gamma_0(N)$-regular prime or $p=2$ and $N=1$ (see Theorem \ref{thm:ghost-true-regular}). 

We begin by recalling that if $k \in \cal W_{\varepsilon}$ then
\begin{equation}\label{eqn:zeros}
g_i(w_{k}) = 0 \iff d_k < i < d_k + d_k^{\new}.
\end{equation}
Throughout this section, we also refer to the integer $k$ as the zero of $g_i$ when we truthfully mean the coordinate $w_{k}$.
\begin{lemma}
\label{lemma:arithprogs}
If $N>1$ or $p>3$, then the zeros of $g_i$ are integers $k$ which form a finite arithmetic progression with common difference $p-1$, if $p$ is odd, and $2$ if $p$ is even.
\end{lemma}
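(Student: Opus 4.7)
\medskip

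\noindent\textbf{Proof plan.} By the definition of $g_i = g_i^{(\varepsilon)}$, its zeros (in the $w$-disc of $\mathcal{W}_\varepsilon$) are exactly the points $w_k$ for integers $k \in \mathcal{W}_\varepsilon$ satisfying
\begin{equation*}
d_k < i < d_k + d_k^{\new},
\end{equation*}
so in particular every zero is of the form $w_k$ for an integer $k$. First I would note that the set of integers $k$ lying in a fixed component $\mathcal{W}_\varepsilon$ is precisely a single residue class modulo $p-1$ (for $p$ odd) or modulo $2$ (for $p=2$): this follows directly from the decomposition $\Z_p^\times = \mu \times (1 + 2p\Z_p)$ and the fact that $z \mapsto z^k$ restricts to $\varepsilon$ on $\mu$ iff $k$ is in a fixed residue class. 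Thus the zeros of $g_i$ automatically lie in such an arithmetic progression; the content of the lemma is that no integer in this residue class strictly between two zeros is skipped.

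The key reduction is then to the following monotonicity claim: if $k < k'$ are two integers in $\mathcal{W}_\varepsilon$ with $d_k^{\new} > 0$ and $d_{k'}^{\new} > 0$, then
\begin{equation*}
d_k \leq d_{k'} \quad\text{and}\quad d_k + d_k^{\new} \leq d_{k'} + d_{k'}^{\new}.
\end{equation*}
Granting this, the lemma follows immediately: if $k_1 < k_2$ are both zeros of $g_i$ and $k_1 < k < k_2$ is any intermediate integer in $\mathcal{W}_\varepsilon$, then by monotonicity $d_k \leq d_{k_2} < i$ and $d_k + d_k^{\new} \geq d_{k_1} + d_{k_1}^{\new} > i$, so $k$ is itself a zero of $g_i$.

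For the monotonicity claim I would appeal to the standard dimension formulas (e.g.\ Cohen--Oesterl\'e, cited already in the paper). Writing $k' = k + n(p-1)$ (or $k+2n$ if $p=2$), one has
\begin{equation*}
d_{k'} - d_k = \tfrac{\mu_0(N)(k'-k)}{12} + (\text{bounded correction})
\end{equation*}
and similarly $d_{k'} + d_{k'}^{\new} - d_k - d_k^{\new} = \tfrac{(p-1)\mu_0(N)(k'-k)}{12} + (\text{bounded correction})$, using that $\dim S_k(\Gamma_0(Np)) = 2d_k + d_k^{\new}$ and $\mu_0(Np) = (p+1)\mu_0(N)$. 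The ``main term'' grows linearly in $k'-k$, so both differences are nonnegative once one controls the bounded corrections. The hypothesis $N > 1$ or $p > 3$ is exactly what is needed to absorb these corrections: when $N>1$ one has $\mu_0(N) \geq 3$, which dominates the elliptic/cuspidal corrections; when $N=1$ and $p > 3$ the step $p-1$ is large enough that the linear growth again dominates. The excluded cases $(N,p) \in \{(1,2),(1,3)\}$ are precisely those where the dimensions stay $0$ or $1$ for many consecutive weights, and the monotonicity (together with the conclusion of the lemma) can genuinely fail.

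\medskip

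\noindent\textbf{Main obstacle.} The only non-routine point is Step~3, namely verifying the monotonicity uniformly in $k$ under the hypothesis $N>1$ or $p>3$. I expect this to be a careful but essentially mechanical case analysis using the explicit dimension formula; the other steps are elementary.
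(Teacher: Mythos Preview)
Your approach is correct and essentially the same as the paper's: both reduce to the monotonicity of $k \mapsto d_k$ and $k \mapsto d_k + d_k^{\new}$ along the relevant residue class (the paper packages this as monotonicity of $d_k$ and $d_k^{\new}$ separately in Lemma~\ref{lemma:dim-formulas-increasing} and then steps from $k$ to $k+\varphi(2p)$ rather than jumping from $k_1$ to $k_2$). One minor slip: your monotonicity claim carries the hypotheses $d_k^{\new} > 0$ and $d_{k'}^{\new} > 0$, but when you apply it to an intermediate weight $k$ between two zeros $k_1 < k_2$ you have not verified $d_k^{\new} > 0$; simply drop that hypothesis, since the dimension formulas give the monotonicity unconditionally under $N>1$ or $p>3$.
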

\begin{proof}
By \eqref{eqn:zeros} it suffices to show that 
\begin{equation*}
d_k < i < d_k + d_k^{\new} \implies \text{one of } \begin{cases}
d_{k+\varphi(2p)} < i < d_{k+\varphi(2p)} + d_{k+\varphi(2p)}^{\new}\\
i \leq d_{k+\varphi(2p)}.
\end{cases}
\end{equation*}
But if $N > 1$ or $p > 3$ then $d_k \leq d_{k + \varphi(2p)}$ and $d_{k}^{\new} \leq d_{k+\varphi(2p)}^{\new}$ (Lemma \ref{lemma:dim-formulas-increasing}). Thus if the first possibility fails, it must be due to the lower inequality, which is what we wanted to show.
\end{proof}

\begin{remark}
Lemma \ref{lemma:arithprogs} only misses $N=1$ and $p=2,3$. When $N=1$ and $p=2$, the zeros of $g_i$ are 
$$
6i+8,6i+10,\dots,12i-4,12i-2,12i+2,
$$
and thus are just missing the single term $12i$ in an arithmetic progression.  Similarly, for $N=1$ and $p=3$, the zeros of  $g_i$ are
$$
4i+6,4i+8,\dots,12i-4,12i-2,12i+2.
$$
(See Proposition \ref{proposition:explicit-p=2-info} and Table \ref{table:least-zeros}.)
\end{remark}
For each coefficient $g$, write $\HZ(g)$ (resp.\ $\LZ(g)$) for the highest (resp.\ lowest) $k$ such that $w_{k}$ is a zero of $g$. The following proposition describes these highest and lowest zeros up to constants bounded independent of $i$.

\begin{proposition}\label{prop:approximate-zero-location}
As functions of $i$,
$$
\HZ(g_i) = \frac{12i}{\mu_0(N)} + O(1) ~\text{~~~and~~~}~
\LZ(g_i) = \frac{12i}{\mu_0(N)p} +O(1)
$$
\end{proposition}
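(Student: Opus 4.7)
\medskip
\noindent\textbf{Proof plan for Proposition 2.3.}

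The plan is to feed the defining condition~\eqref{eqn:zeros} into the standard dimension formulas. Recall that by the genus/dimension formula for $X_0(N)$,
\[
d_k = \frac{\mu_0(N)}{12}(k-1) + O(1),
\]
where the constant in the $O(1)$ is bounded in terms of the elliptic points and cusps of $X_0(N)$, uniformly in $k$. For the other endpoint, I would use the fact that the $p$-old subspace of $S_k(\Gamma_0(Np))$ has dimension $2d_k$ (from the two degeneracy maps at $p$), so that
\[
d_k + d_k^{\new} \;=\; \dim S_k(\Gamma_0(Np)) - d_k \;=\; \frac{p\,\mu_0(N)}{12}(k-1) + O(1),
\]
using $\mu_0(Np) = (p+1)\mu_0(N)$ in the dimension formula for $\Gamma_0(Np)$. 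Both $O(1)$ constants are independent of $k$.

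Given these asymptotics, I would handle $\HZ(g_i)$ first. By~\eqref{eqn:zeros}, $\HZ(g_i)$ is the largest integer $k \in \mathcal W_\varepsilon$ with $d_k < i < d_k + d_k^{\new}$. As $k$ grows, $d_k$ increases and eventually exceeds $i$; the largest $k$ with $d_k < i$ thus satisfies $k = \tfrac{12i}{\mu_0(N)} + O(1)$ by inverting the dimension formula. For such a $k$, the upper inequality $i < d_k + d_k^{\new}$ is automatic once $i$ (and hence $k$) is large, because $d_k + d_k^{\new} - d_k = d_k^{\new}$ grows linearly in $k$ with a strictly positive rate (for $p > 1$). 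Finally, Lemma~\ref{lemma:arithprogs} (together with the remark for the $N=1$, $p\in\{2,3\}$ exceptions) lets me move from such a $k$ to the largest one in $\mathcal W_\varepsilon$ at the cost of at most $\varphi(2p)$, which is absorbed into $O(1)$.

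For $\LZ(g_i)$ the argument is symmetric: I want the smallest $k \in \mathcal W_\varepsilon$ with $d_k + d_k^{\new} > i$, equivalently $d_k + d_k^{\new} \approx i$ from above. Inverting the second asymptotic formula gives $k = \tfrac{12i}{p\,\mu_0(N)} + O(1)$, and the companion inequality $d_k < i$ holds automatically at such a $k$ since $d_k = \tfrac{1}{p}(d_k + d_k^{\new}) + O(1) \approx i/p < i$.

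The one mild obstacle is the edge behavior already flagged in the remark: for $(N,p) = (1,2)$ or $(1,3)$ the zero set of $g_i$ is not literally an arithmetic progression, and for very small $k$ the dimension formulas have small sporadic corrections. But in both cases the deviations are bounded by an absolute constant independent of $i$, so they are harmless for an $O(1)$ statement. The only genuine computation is verifying that the upper inequality (resp.\ lower inequality) is automatic at the candidate value of $k$ produced above, and this reduces to the fact that $d_k^{\new}$ is bounded below by a positive linear function of $k$ once $k$ is sufficiently large.
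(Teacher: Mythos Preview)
Your proposal is correct and follows essentially the same approach as the paper: invert the linear asymptotics $d_k = \tfrac{k\mu_0(N)}{12} + O(1)$ and $d_k + d_k^{\new} = \tfrac{kp\mu_0(N)}{12} + O(1)$ to locate the extremal $k$ satisfying the relevant half of \eqref{eqn:zeros}. The paper's proof is terser and simply asserts that $\HZ(g_i)$ is the largest $k$ with $d_k < i$ and $\LZ(g_i)$ the smallest $k$ with $i < d_k + d_k^{\new}$; your version spells out why the companion inequality in \eqref{eqn:zeros} is automatic at each endpoint and why restricting to $k \in \mathcal W_\varepsilon$ only costs $O(1)$, points the paper leaves implicit.
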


\begin{proof}
By standard dimension formulas (see Appendix \ref{app:dimension-formulae}), we have that 
$d_k = \frac{k \mu_0(N)}{12} +O(1)$ and $d_k^{\new} = \frac{k \mu_0(N)(p-1)}{12} + O(1)$.
Thus, the largest $k$ satisfying $d_k < i$ equals $\frac{12i}{\mu_0(N)} +O(1)$, and the smallest $k$ satisfying $i < d_k + d_k^{\new}$ is $\frac{12i}{\mu_0(N)p} +O(1)$. The proposition follows from the definition \eqref{eqn:zeros}.
\end{proof}

We also explicitly describe how the zeros of the coefficients and their multiplicities change as we increase indices. Write $\Delta_i(w) = g_i(w)/g_{i-1}(w)$. The definition of the multiplicity patterns $m_i(-)$ in Section \ref{subsec:ghost-conjecture} implies that $\Delta_i$ has only simple zeros and poles at some finite set of $w = w_{k}$. More specifically, if $k \in \cal W_{\varepsilon}$ then
\begin{equation}\label{eqn:Deltaizeros}
\Delta_i(w_{k}) = 0 \iff m_i(k) = m_{i-1}(k) +1 \iff d_k+1 \leq i \leq d_k + \bfloor{{d_k^{\new}\over 2}}
\end{equation}
and
\begin{equation}\label{eqn:Deltaipoles}
\Delta_i(w_{k}) = \infty \iff m_i(k) = m_{i-1}(k) - 1 \iff d_k + \bfloor{{d_k^{\new}-1\over 2}} +2 \leq i \leq d_k + d_k^{\new}.
\end{equation}
For notation, we will always write $\Delta_i = \Delta_i^+/\Delta_i^-$ in lowest common terms. Thus $\Delta_i^{\pm} \in \Z[w]$ and the zeros are of the form $w_{k}$ with $k \in \Z$. We write $\HZ(\Delta_i^{\pm})$ and $\LZ(\Delta_i^{\pm})$ for the highest and lowest zeros as with $g_i$ above.
\begin{proposition}
\label{prop:change_in_zeroes}
\leavevmode
\begin{enumerate}
\item \label{change:b}
The zeros of $\Delta_i^-$ form an arithmetic progression with common difference $p-1$ if $p$ is odd and $2$ if $p=2$.  The same is true for the zeros of $\Delta_i^+$ unless $N=1$ and $p=2, 3$.
\item \label{change:c} $\displaystyle \HZ(\Delta_i^+) = \frac{12i}{\mu_0(N)}+O(1)$ and 
$\displaystyle \LZ(\Delta_i^+) = \frac{24i}{\mu_0(N)(p+1)} + O(1)$.
\item \label{change:d} $\displaystyle \HZ(\Delta_i^-) = \frac{24i}{\mu_0(N)(p+1)}+O(1)$ and
$\displaystyle \LZ(\Delta_i^-) = \frac{12i}{\mu_0(N)p} + O(1)$.
\end{enumerate}
\end{proposition}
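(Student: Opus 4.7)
The plan is to read off the zero sets of $\Delta_i^{\pm}$ directly from \eqref{eqn:Deltaizeros} and \eqref{eqn:Deltaipoles}, then convert the resulting bounds on $i$ into bounds on $k$ via the dimension asymptotics from the proof of Proposition \ref{prop:approximate-zero-location}. Explicitly, the zeros of $\Delta_i^+$ are those $k \in \cal W_\varepsilon$ with $d_k + 1 \le i \le d_k + \lfloor d_k^{\new}/2 \rfloor$, and the zeros of $\Delta_i^-$ are those with $d_k + \lfloor (d_k^{\new}-1)/2\rfloor + 2 \le i \le d_k + d_k^{\new}$. Using $d_k = k\mu_0(N)/12 + O(1)$ and $d_k^{\new} = k\mu_0(N)(p-1)/12 + O(1)$, the upper bound on $i$ for $\Delta_i^+$ is identical to the one defining $\HZ(g_i)$, so $\HZ(\Delta_i^+) = 12i/\mu_0(N) + O(1)$, while the lower bound simplifies to $d_k + \lfloor d_k^{\new}/2\rfloor = k\mu_0(N)(p+1)/24 + O(1)$, giving $\LZ(\Delta_i^+) = 24i/(\mu_0(N)(p+1)) + O(1)$. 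For $\Delta_i^-$, the lower constraint $d_k + d_k^{\new} = k\mu_0(N)p/12 + O(1)$ produces $\LZ(\Delta_i^-)$, and the upper constraint $d_k + \lfloor (d_k^{\new}-1)/2 \rfloor + 2 = k\mu_0(N)(p+1)/24 + O(1)$ produces $\HZ(\Delta_i^-)$. This establishes (2) and (3).

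For (1), I would repeat the monotonicity argument of Lemma \ref{lemma:arithprogs}. By Lemma \ref{lemma:dim-formulas-increasing}, both $k\mapsto d_k$ and $k\mapsto d_k^{\new}$ are non-decreasing on $\cal W_\varepsilon \cap \Z$ as long as $N>1$ or $p>3$, and the floor function preserves monotonicity, so all four endpoints $d_k + 1$, $d_k + \lfloor d_k^{\new}/2\rfloor$, $d_k + \lfloor (d_k^{\new}-1)/2\rfloor + 2$, $d_k + d_k^{\new}$ are non-decreasing functions of $k$. Consequently the set of $k \in \cal W_\varepsilon$ satisfying either pair of inequalities is an interval, hence an arithmetic progression with common difference $\varphi(2p)$, which is $p-1$ for odd $p$ and $2$ for $p=2$.

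The main obstacle is the exceptional behavior of $\Delta_i^+$ when $N=1$ and $p\in\{2,3\}$. In those cases $d_k$ vanishes for a range of small $k$, so the midpoint $d_k + \lfloor d_k^{\new}/2\rfloor$ can jump by more than one under $k \mapsto k + \varphi(2p)$; a single $i$ landing in such a gap then produces an arithmetic progression with one missing term, exactly analogous to the ``missing $12i$'' phenomenon for $g_i$ itself recorded in the remark after Lemma \ref{lemma:arithprogs}. No analogous gap can appear for $\Delta_i^-$, because its upper endpoint $d_k + d_k^{\new}$ strictly grows under $k \mapsto k + \varphi(2p)$ as soon as $d_k^{\new} > 0$, which is the case for all but finitely many $k$ (and those finitely many can be absorbed into the implicit constants in (3) without affecting the AP claim).
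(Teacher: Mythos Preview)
Your proof follows the same approach as the paper's: part (1) via the monotonicity argument of Lemma \ref{lemma:arithprogs} using Lemma \ref{lemma:dim-formulas-increasing}, and parts (2)--(3) via the dimension asymptotics as in Proposition \ref{prop:approximate-zero-location}. The paper's own proof is a two-sentence pointer to exactly these ingredients, so you have essentially reconstructed it with more detail.

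One small imprecision in your final paragraph: the claim that $d_k + d_k^{\new}$ \emph{strictly} grows under $k \mapsto k + \varphi(2p)$ once $d_k^{\new} > 0$ is not quite right (e.g.\ for $N=1$, $p=2$ one has $d_{10}+d_{10}^{\new} = d_{12}+d_{12}^{\new} = 1$), and in any case you only discuss the upper endpoint $b(k) = d_k + d_k^{\new}$, whereas the interval argument requires both $a(k)$ and $b(k)$ to be non-decreasing. The paper does not spell this out either; it implicitly defers the exceptional cases $N=1$, $p\in\{2,3\}$ to the explicit formulas in Proposition \ref{proposition:explicit-p=2-info} and Table \ref{table:least-zeros}, from which one reads off directly that the zeros of $\Delta_i^-$ do form an unbroken progression. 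Your instinct that $\Delta_i^-$ behaves better than $\Delta_i^+$ in these cases is correct, but the cleanest justification is simply to cite those explicit computations rather than argue abstractly.
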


\begin{proof}
Part (\ref{change:b}) follows from \eqref{eqn:Deltaizeros} and \eqref{eqn:Deltaipoles} together with Lemma \ref{lemma:dim-formulas-increasing} (as in the proof of Lemma \ref{lemma:arithprogs}).  Parts (\ref{change:c}) and (\ref{change:d}) follow similarly as in the proof of Proposition \ref{prop:approximate-zero-location}.
\end{proof}

\begin{remark}
In Proposition \ref{proposition:explicit-p=2-info} and Table \ref{table:least-zeros}, we give formulas making the above $O(1)$-terms precise when $N=1$ and $p=2,3,5$, and $7$. The qualification for $p=2,3$ and $N=1$ will be inconsequential as we move forward (see the proofs of Lemma \ref{lemma:zeropole_count} and Proposition \ref{prop:ith_newtonslope}, for example).
\end{remark}

For a non-zero element $\Delta \in \Z_p[[w]]$, we write $\lambda(\Delta)$ for the number of zeros of $\Delta$ in the open disc $v_p(w) >0$. We extend this to the field of fractions in the obvious way.

\begin{lemma}
\label{lemma:zeropole_count}
If $p > 2$ then
\begin{enumerate}
\item $\lambda(\Delta_i^+) = \displaystyle \frac{12i}{\mu_0(N)(p+1)} + O(1)$ and
\item $\lambda(\Delta_i^-) = \displaystyle \frac{12i}{\mu_0(N)p(p+1)} + O(1)$.
\end{enumerate}
For $p=2$, the same formulas hold if we replace the 12 by a 6.
\end{lemma}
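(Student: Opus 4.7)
The plan is to reduce $\lambda(\Delta_i^\pm)$ to a simple counting problem and then carry out the arithmetic.

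First, I would recall from equations \eqref{eqn:Deltaizeros} and \eqref{eqn:Deltaipoles} that $\Delta_i^\pm$ has only simple roots at points of the form $w_k$ with $k \in \mathcal W_{\varepsilon}$ an integer, so
\begin{equation*}
\lambda(\Delta_i^\pm) = \#\{k \in \mathcal W_\varepsilon \cap \Z : \Delta_i^\pm(w_k) = 0\}
\end{equation*}
once one observes that every integer weight $k \neq 0$ satisfies $v_p(w_k) > 0$ (because $\gamma \equiv 1 \pmod{2p}$, so $\gamma^k - 1$ has positive valuation). Thus no integer zeros $w_k$ are lost when restricting to the open disc $v_p(w) > 0$.

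Next, by Proposition \ref{prop:change_in_zeroes}(\ref{change:b}), the set of integer zeros of $\Delta_i^\pm$ is an arithmetic progression with common difference $p-1$ (or $2$ if $p=2$), and the first and last terms are pinned down up to an additive $O(1)$ error by parts (\ref{change:c}) and (\ref{change:d}). So the count is
\begin{equation*}
\lambda(\Delta_i^\pm) = \frac{\HZ(\Delta_i^\pm) - \LZ(\Delta_i^\pm)}{p-1} + O(1)
\end{equation*}
for $p$ odd (with $p-1$ replaced by $2$ when $p=2$). Substituting from Proposition \ref{prop:change_in_zeroes}, for $p$ odd I get
\begin{equation*}
\lambda(\Delta_i^+) = \frac{1}{p-1}\left(\frac{12i}{\mu_0(N)} - \frac{24i}{\mu_0(N)(p+1)}\right) + O(1) = \frac{12i}{\mu_0(N)(p+1)} + O(1),
\end{equation*}
and likewise
\begin{equation*}
\lambda(\Delta_i^-) = \frac{1}{p-1}\left(\frac{24i}{\mu_0(N)(p+1)} - \frac{12i}{\mu_0(N)p}\right) + O(1) = \frac{12i}{\mu_0(N)p(p+1)} + O(1).
\end{equation*}
The analogous computation for $p=2$ (with denominator $2$ in place of $p-1 = 1$) gives precisely the formulas with $12$ replaced by $6$.

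The only subtlety — and the ``main obstacle'' in a loose sense — is the qualification in Proposition \ref{prop:change_in_zeroes}(\ref{change:b}) that when $N=1$ and $p \in \{2,3\}$ the zeros of $\Delta_i^+$ may fail to form a genuine arithmetic progression. But as observed in the Remark following Proposition \ref{prop:change_in_zeroes}, the deviation is by a single missing term in an otherwise complete arithmetic progression; this contributes at most a bounded error to the count and is therefore absorbed into the $O(1)$. Hence the formulas hold in the stated uniformity.
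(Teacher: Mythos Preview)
Your proposal is correct and follows essentially the same approach as the paper: count the zeros of $\Delta_i^\pm$ as $(\HZ-\LZ)/(p-1)+O(1)$ using the arithmetic-progression structure from Proposition~\ref{prop:change_in_zeroes}(\ref{change:b}), substitute the asymptotics from parts (\ref{change:c}) and (\ref{change:d}), and absorb the $N=1$, $p\in\{2,3\}$ anomaly into the $O(1)$. The paper's proof is a terser version of exactly this argument.
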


\begin{proof}
For $p>2$, the number of zeros of $\Delta_i^+$ equals 
$$
\frac{\HZ(\Delta_i^+) - \LZ(\Delta_i^+)}{p-1}+1 = \frac{1}{p-1} \left(  \frac{12i}{\mu_0(N)} - \frac{24i}{\mu_0(N)(p+1)} \right) + O(1) = \frac{12i}{\mu_0(N)(p+1)} + O(1)
$$
by Proposition \ref{prop:change_in_zeroes}(\ref{change:b},\ref{change:c}).\footnote{The $O(1)$ term absorbs the qualification that Proposition \ref{prop:change_in_zeroes} isn't quite true if $p=2,3$ and $N=1$.}  The zeros of $\Delta_i^-$ and $p=2$ is done similarly.
\end{proof}

\begin{remark}\label{remark:lambdaDeltai}
The difference $\lambda(\Delta_i) = \lambda(\Delta_i^+) - \lambda(\Delta_i^-)$ equals the $i$-th $\Delta$-slope of the mod $p$ reduction $\overline{G(w,t)}\in \F_p[[w,t]]$ (with the $w$-adic valuation on $\F_p[[w]]$). By Lemma \ref{lemma:zeropole_count}, the $i$-th $\Delta$-slope equals $ \frac{12i(p-1)}{\mu_0(N)p(p+1)}$ up to a bounded constant.  We return to this in Section \ref{sec:global}.
\end{remark}

Recall that if $R$ is a local ring with maximal ideal $\ideal m$ and $F(t) = \sum r_i t^i \in R[[t]]$ then $F$ is called entire if there exists a sequence of integers $c_i$ such that $r_i \in \ideal m^{c_i}$ and $c_i / i \goto \infty$. If $R = \Z_p[[w]]$ and $G(w,t)\in \Z_p[[w,t]]$ is entire over $\Z_p[[w]]$ then the specialized series $G(w',t) \in \C_p[[t]]$ is entire in the usual sense for all $w' \in \C_p$ with $v_p(w') > 0$ (see \cite[Section 1.3]{ColemanMazur-Eigencurve}).

\begin{proposition}\label{proposition:ghost-entire}
The ghost series $G^{(\varepsilon)}(w,t)$ is entire series over $\Z_p[[w]]$. In particular, if $\kappa \in \cal W$ then $G_\kappa(t)$ is an entire series.
\end{proposition}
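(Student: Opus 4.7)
The plan is to produce a sequence of integers $c_i$ with $g_i^{(\varepsilon)} \in \mathfrak{m}^{c_i}$, where $\mathfrak{m} = (p,w)$ is the maximal ideal of $\Z_p[[w]]$, and with $c_i/i \to \infty$. The natural choice is $c_i := \deg g_i^{(\varepsilon)}$, so two things must be verified: (a) that $g_i^{(\varepsilon)}$ truly lies in $\mathfrak{m}^{\deg g_i^{(\varepsilon)}}$, and (b) that $\deg g_i^{(\varepsilon)}/i \to \infty$.

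For (a), I would use the factorization $g_i^{(\varepsilon)}(w) = \prod_{k} (w-w_k)^{m_i(k)}$, which shows that $g_i^{(\varepsilon)}$ is a \emph{monic} polynomial in $w$, all of whose zeros $w_k$ satisfy $v_p(w_k) \geq 1$ (and in fact $v_2(w_k) \geq 3$ when $p = 2$). Writing $g_i^{(\varepsilon)}(w) = \sum_{j=0}^{d} a_{d-j} w^{d-j}$ with $d = \deg g_i^{(\varepsilon)}$, the coefficient $a_{d-j}$ is $\pm$ the $j$th elementary symmetric polynomial in the zeros (with multiplicity), hence $v_p(a_{d-j}) \geq j$. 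Then $a_{d-j} w^{d-j} \in p^j \Z_p \cdot w^{d-j} \subseteq \mathfrak{m}^{j+(d-j)} = \mathfrak{m}^d$, so $g_i^{(\varepsilon)} \in \mathfrak{m}^d$.

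For (b), the key observation is that every zero of every $g_j^{(\varepsilon)}$ lies in the open disc $v_p(w) > 0$, so $\lambda(g_j^{(\varepsilon)}) = \deg g_j^{(\varepsilon)}$ and therefore
\begin{equation*}
\deg g_i^{(\varepsilon)} - \deg g_{i-1}^{(\varepsilon)} \;=\; \lambda(\Delta_i) \;=\; \lambda(\Delta_i^+) - \lambda(\Delta_i^-).
\end{equation*}
By Lemma \ref{lemma:zeropole_count}, the right-hand side grows linearly in $i$ with a positive leading coefficient, namely $\tfrac{12(p-1)}{\mu_0(N)p(p+1)} \cdot i + O(1)$ for $p$ odd (with the analogous formula for $p=2$). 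Summing in $i$, we see $\deg g_i^{(\varepsilon)}$ is asymptotic to a positive quadratic in $i$, and in particular $c_i/i \to \infty$. Combining (a) and (b) shows $G^{(\varepsilon)}(w,t)$ is entire over $\Z_p[[w]]$.

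The ``in particular'' statement is then automatic from the cited result in \cite[Section 1.3]{ColemanMazur-Eigencurve}: specializing an entire series over $\Z_p[[w]]$ at any $w_\kappa \in \C_p$ with $v_p(w_\kappa) > 0$ produces an entire series in $\C_p[[t]]$. I do not foresee a serious obstacle: the only delicate bookkeeping is tracking that the quadratic growth in (b) really comes out of Lemma \ref{lemma:zeropole_count}, and that requires only adding $\lambda(\Delta_i^+) - \lambda(\Delta_i^-)$ telescopically. The small $p=2,3$, $N=1$ qualifications in Lemma \ref{lemma:arithprogs} and Proposition \ref{prop:change_in_zeroes} are absorbed harmlessly into the $O(1)$ term.
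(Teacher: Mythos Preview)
Your proof is correct and follows essentially the same approach as the paper: both take $c_i = \lambda(g_i) = \deg g_i^{(\varepsilon)}$, observe that all roots $w_k$ lie in $p\Z_p$ so that $g_i^{(\varepsilon)} \in (p,w)^{\lambda(g_i)}$, and then verify $\lambda(g_i)/i \to \infty$ via $\lambda(g_i) - \lambda(g_{i-1}) = \lambda(\Delta_i) \to \infty$ using Lemma~\ref{lemma:zeropole_count}. Your version is slightly more explicit (spelling out the elementary symmetric polynomial argument for (a) and the telescoping quadratic growth for (b)), but the substance is identical.
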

\begin{proof}
Every root $w_{k}$ of $g_i$ lies in $p\Z_p$, and so $g_i \in (p,w)^{\lambda(g_i)}$. We claim $\lambda(g_i)/i \goto \infty$ as $i \goto \infty$. To show the claim, it is enough to show
 \begin{equation}\label{eqn:need-lim-inf}
\liminf_i \left(\lambda(g_{i}) - \lambda(g_{i-1})\right) = \infty.
\end{equation} 
But $\lambda(g_{i})-\lambda(g_{i-1}) = \lambda(\Delta_i)$, so \eqref{eqn:need-lim-inf} follows from Lemma \ref{lemma:zeropole_count} and the remark following it.
\end{proof}

We now turn to showing that for the ghost conjecture to be true, either $p=2$ and $N=1$ or $p$ is an odd $\Gamma_0(N)$-regular prime. In addition to our running notation $d_k$ and $d_k^{\new}$, we now also write $d_k^{\ord}$ for the dimension of the $p$-ordinary subspace of $S_k(\Gamma_0(Np))$. Hida theory implies that $d_k^{\ord}$ depends only on the component $\cal W_{\varepsilon}$ containing $k$. If $\kappa \in \cal W$, write $d_{G_\kappa}^{\ord}$ for the multiplicity of the slope zero in $\NP(G_\kappa)$. We leave the following proof to the reader. (The supremums are finite by Proposition \ref{prop:approximate-zero-location}.)
\begin{lemma}[Ghost Hida theory]\label{lemma:ghost-hida}
The function $\kappa \mapsto d_{G_\kappa}^{\ord}$ is constant on connected components of $\cal W$. Specifically, if $\kappa \in \cal W_{\varepsilon}$ then 
\begin{multline*}
d_{G_\kappa}^{\ord} = \sup \set{i \st g_i^{(\varepsilon)}(w) = 1} = \sup \set{i \st m_i(k) = 0 \text{ for all $k \in \cal W_{\varepsilon}$}}\\ \geq \min \set{d_k \st k\geq 2 \text{ and } k \in \mathcal W_{\varepsilon}}.
\end{multline*}
\end{lemma}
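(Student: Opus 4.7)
The key observation is that the $p$-adic valuation of $g_i^{(\varepsilon)}(w_\kappa)$ is either strictly positive or zero, according to whether the polynomial $g_i^{(\varepsilon)}(w)$ is nonconstant or equal to $1$. Indeed, every root $w_k = \gamma^k - 1$ of any coefficient has $v_p(w_k) \geq 1$ when $p$ is odd (resp.\ $v_2(w_k) \geq 2$ when $p=2$), since $\gamma$ topologically generates $1+2p\Z_p$. Combined with the assumption $v_p(w_\kappa) > 0$, this gives $v_p(w_\kappa - w_k) > 0$ for every integer $k$. The factorization
\begin{equation*}
g_i^{(\varepsilon)}(w_\kappa) = \prod_{k \in \cal W_\varepsilon}(w_\kappa - w_k)^{m_i(k)}
\end{equation*}
then forces $v_p(g_i^{(\varepsilon)}(w_\kappa)) > 0$ as soon as some $m_i(k) > 0$, while $v_p(g_i^{(\varepsilon)}(w_\kappa)) = v_p(1) = 0$ when $g_i^{(\varepsilon)}(w) = 1$.

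Since $g_i^{(\varepsilon)} \in \Z_p[w]$ and $v_p(w_\kappa) > 0$, every point $(i, v_p(g_i^{(\varepsilon)}(w_\kappa)))$ used to build $\NP(G_\kappa)$ lies at non-negative height. The lower convex hull therefore includes the horizontal segment from the origin out to the largest $i$ with $v_p(g_i^{(\varepsilon)}(w_\kappa)) = 0$, and no farther, so the multiplicity of slope zero equals this largest index. Applying the observation from the first paragraph gives $d_{G_\kappa}^{\ord} = \sup\set{i \st g_i^{(\varepsilon)}(w) = 1}$; the supremum is actually a maximum because $\lambda(g_i^{(\varepsilon)}) \to \infty$ by Lemma \ref{lemma:zeropole_count}. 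Since the right-hand side depends only on $\varepsilon$, the function $\kappa \mapsto d_{G_\kappa}^{\ord}$ is constant on each component $\cal W_\varepsilon$.

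The second equality $\sup\set{i \st g_i^{(\varepsilon)} = 1} = \sup\set{i \st m_i(k) = 0 \text{ for all } k}$ is immediate from the same product formula together with the distinctness of the $w_k$'s. For the inequality, let $i_0 = \min\set{d_k \st k \geq 2, k \in \cal W_\varepsilon}$. For any integer $k \geq 2$ in $\cal W_\varepsilon$, the support of the sequence $m(k) = s(d_k^{\new}-1, d_k)$ lies in $\set{d_k+1, \dotsc, d_k + d_k^{\new} - 1}$, and $i_0 \leq d_k$ by definition, so $m_{i_0}(k) = 0$. For $k \leq 1$ in $\cal W_\varepsilon$ the dimensions $d_k$ and $d_k^{\new}$ both vanish and $m_i(k) \equiv 0$. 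Thus $i_0$ lies in the set under the second supremum, completing the proof. The whole argument is essentially an unpacking of definitions; the only subtle point is the convex-hull step in the second paragraph, which ensures the slope-zero segment reaches all the way to the final index with $v_p = 0$ even if some intermediate indices have strictly positive valuation.
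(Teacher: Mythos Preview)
Your proof is correct and is essentially what the paper intends: the paper itself leaves this lemma to the reader, noting only that the suprema are finite by Proposition~\ref{prop:approximate-zero-location}. Your unpacking of the definitions is the natural argument, and you correctly identify the one nontrivial point, namely that the convex-hull step handles the possibility that $\{i : g_i^{(\varepsilon)} = 1\}$ fails to be an initial segment (as indeed happens in the paper's Example~\ref{example:issue-p=2} with $p=2$, $N=7$). One minor quibble: the finiteness of the supremum is more directly attributed to Proposition~\ref{prop:approximate-zero-location} (or the argument in Proposition~\ref{proposition:ghost-entire}) than to Lemma~\ref{lemma:zeropole_count}, which concerns $\lambda(\Delta_i^{\pm})$ rather than $\lambda(g_i)$ directly; but the underlying fact you use is correct either way.
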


\begin{lemma}\label{lemma:explicit-ghost-ordinary-dims}
Let $p \neq 2$.
\begin{enumerate}
\item If $4 \leq k \leq p-1$ is an even integer then $d_{G_k}^{\ord} \geq d_k$.
\item $d_{G_2}^{\ord} \geq d_2 + d_2^{\new} = d_{2 + (p-1)}$.
\end{enumerate}
\end{lemma}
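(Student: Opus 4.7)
The plan is to apply the formula $d_{G_\kappa}^{\ord} = \sup\{i : g_i^{(\varepsilon)}(w) = 1\}$ from Lemma~\ref{lemma:ghost-hida} and, in each case, exhibit a specific index $i$ with $g_i^{(\varepsilon)}(w) = 1$. By the characterization~\eqref{eqn:zeros}, this triviality is equivalent to demanding, for every integer $k' \geq 2$ in $\cal W_{\varepsilon}$, that either $i \leq d_{k'}$ or $i \geq d_{k'} + d_{k'}^{\new}$.

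For part (1), the hypothesis $4 \leq k \leq p-1$ forces $p \geq 5$ and $k - (p-1) < 2$, so the integers $\geq 2$ in the component containing $k$ form the arithmetic progression $k, k+(p-1), k+2(p-1), \dotsc$. For any $i \leq d_k$ and any $j \geq 0$, the monotonicity statement in Lemma~\ref{lemma:dim-formulas-increasing} (applicable since $p \geq 5$) gives $d_{k+j(p-1)} \geq d_k \geq i$, so the trivialization condition is satisfied at every $k'$. Hence $g_i^{(\varepsilon)}(w) = 1$ for all $i \leq d_k$, and $d_{G_k}^{\ord} \geq d_k$.

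For part (2), I take $i = d_2 + d_2^{\new}$, where $\varepsilon$ is the character of the component containing $k = 2$. The integers $\geq 2$ in this component are $2, 2+(p-1), 2+2(p-1), \dotsc$. Three checks are needed: at $k' = 2$ the condition $i \geq d_2 + d_2^{\new}$ holds with equality; at $k' = 2 + (p-1)$ the condition $i \leq d_{2+(p-1)}$ holds with equality, once one invokes the dimension identity $d_2 + d_2^{\new} = d_{2+(p-1)}$ (a standard fact about cuspform dimensions to be cited from Appendix~\ref{app:dimension-formulae}); and for $k' = 2 + j(p-1)$ with $j \geq 2$, the monotonicity supplies $d_{k'} \geq d_{2+(p-1)} = i$. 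Thus $g_{d_2 + d_2^{\new}}^{(\varepsilon)}(w) = 1$, and Lemma~\ref{lemma:ghost-hida} yields $d_{G_2}^{\ord} \geq d_2 + d_2^{\new}$.

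The main subtlety lies in part (2): the direct bound from Lemma~\ref{lemma:ghost-hida} only gives $d_{G_2}^{\ord} \geq d_2$, and strengthening it to $d_{G_2}^{\ord} \geq d_2 + d_2^{\new}$ requires noticing that the window of zeros forced by $k' = 2$ terminates precisely at the index where the window forced by $k' = 2 + (p-1)$ would begin. The dimension identity $d_2 + d_2^{\new} = d_{2+(p-1)}$ is exactly what makes this alignment happen; without it the index $d_2 + d_2^{\new}$ would generally fall inside the open interval $(d_{2+(p-1)}, d_{2+(p-1)} + d_{2+(p-1)}^{\new})$, the coefficient $g_{d_2+d_2^{\new}}^{(\varepsilon)}$ would pick up a zero, and the argument would collapse.
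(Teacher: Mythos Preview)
Your argument is essentially the same as the paper's: both parts use Lemma~\ref{lemma:ghost-hida} together with the monotonicity statements in Lemma~\ref{lemma:dim-formulas-increasing}, and part~(2) hinges on the identity $d_2 + d_2^{\new} = d_{2+(p-1)}$ (which in the paper is Lemma~\ref{lemma:weight2-inequality}).

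There is one small gap in your part~(2). When you write ``the monotonicity supplies $d_{k'} \geq d_{2+(p-1)} = i$'' for $j \geq 2$, you are implicitly invoking Lemma~\ref{lemma:dim-formulas-increasing}, but that lemma does not cover the case $p=3$, $N=1$. The paper handles this edge case separately by observing that then $d_{2+(p-1)} = \dim S_4(\SL_2\Z) = 0$, so the claimed inequality $d_{G_2}^{\ord} \geq 0$ is trivial. You should insert this one-line check; otherwise your citation of monotonicity is unjustified in that single case.
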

\begin{proof}
First assume that $4 \leq k \leq p-1$ (so $p>3$). By Lemma \ref{lemma:dim-formulas-increasing},  $n \mapsto d_{k+n(p-1)}$ is weakly increasing with respect to $n\geq 0$, so Lemma \ref{lemma:ghost-hida} proves $d_k = \min_n d_{k+n(p-1)}\leq d_{G_k}^{\ord}$.

For part (b), Lemma \ref{lemma:weight2-inequality} implies that $d_2 + d_2^{\new} = d_{2+(p-1)}$. If $p = 3$ and $N=1$ then $d_{2+(p-1)} = \dim S_4(\SL_2 \Z) = 0$, so (b) is trivial. If $p > 3$ or $N > 1$ then Lemma \ref{lemma:dim-formulas-increasing} applies and $d_2+d_2^{\new} = d_{2 + (p-1)} = \min_{n\geq 1} d_{2 + n(p-1)}$. So, the coefficient $g_i$ at index $i = d_{2+(p-1)}$ is trivial, showing $d_{G_2}^{\ord} \geq d_{2 + (p-1)}$ by Lemma \ref{lemma:ghost-hida}.
\end{proof}

\begin{remark}
When $p$ is odd, Lemma \ref{lemma:explicit-ghost-ordinary-dims}(b) implies that one could remove $w_{2}$ as a root of any of the coefficients of the ghost series. We actually do that in Section \ref{sec:global} below (see Lemma \ref{lemma:remove-wt2-zero}).
\end{remark}

\begin{lemma}\label{lemma:p=2-ghost-bound}
If $p=2$ then $d_4\leq d_{G_2}^{\ord}$.
\end{lemma}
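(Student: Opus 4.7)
The plan is to invoke Lemma \ref{lemma:ghost-hida}, which identifies $d_{G_2}^{\ord}$ with $\sup\set{i \st m_i(k) = 0 \text{ for all } k \in \cal W_{\ve}}$. So it suffices to exhibit one index $i_0 \geq d_4$ for which $m_{i_0}(k) = 0$ at every even integer $k \geq 2$, and I will take $i_0 = d_4$ itself.

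The argument then splits by the weight $k$. For $k \geq 4$ even, Lemma \ref{lemma:dim-formulas-increasing} (or, in the edge cases where its statement does not directly apply, a one-line check from the dimension formula) shows that $d_k \geq d_4$, whence $i_0 = d_4 \leq d_k$ and $m_{d_4}(k) = 0$ by the very definition of the multiplicity pattern. The case $k = 2$ is the nontrivial one: the definition of $m_i(2)$ makes $m_{d_4}(2) = 0$ equivalent to $d_4 \leq d_2$ or $d_4 \geq d_2 + d_2^{\new}$, both of which are implied by the single inequality
\begin{equation*}
d_4 \geq d_2 + d_2^{\new},
\end{equation*}
since $d_2^{\new} \geq 0$.

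The main obstacle is this last inequality, and I would verify it directly from classical dimension formulas. Because $p = 2$ forces $N$ odd, one has the identities $\mu_0(2N) = 3\mu_0(N)$, $\nu_2(2N) = \nu_2(N)$, $\nu_3(2N) = 0$, and $\nu_\infty(2N) = 2\nu_\infty(N)$. Substituting these into $d_2^{\new} = g(X_0(2N)) - g(X_0(N))$, and combining with the familiar formulas $d_2 = g(X_0(N))$ and $d_4 = \mu_0(N)/4 + \nu_2(N)/4 - \nu_\infty(N)/2$, the contributions of $\mu_0(N)$, of $\nu_3(N)$, and the absolute constant all cancel and one is left with
\begin{equation*}
d_4 - d_2 - d_2^{\new} = \frac{\nu_2(N) + \nu_\infty(N) - 2}{2}.
\end{equation*}
This quantity is non-negative: in the base case $N = 1$ one checks $\nu_2(1) + \nu_\infty(1) = 2$ directly, while for every odd $N \geq 3$ the group $\Gamma_0(N)$ has at least the inequivalent cusps $0$ and $\infty$, so $\nu_\infty(N) \geq 2$ alone suffices. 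This would finish the proof.
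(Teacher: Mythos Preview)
Your approach is exactly the paper's: invoke ghost Hida theory (Lemma~\ref{lemma:ghost-hida}) to reduce to showing $m_{d_4}(k)=0$ for all even $k\geq 2$, handle $k\geq 4$ via monotonicity of $d_k$, and handle $k=2$ via the inequality $d_4 \geq d_2 + d_2^{\new}$, which the paper simply cites as Lemma~\ref{lemma:appendix-2adic-wt-2}.

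There is, however, a slip in your explicit computation. You write $d_2^{\new} = g(X_0(2N)) - g(X_0(N))$, but the $p$-old subspace of $S_2(\Gamma_0(2N))$ has dimension $2\dim S_2(\Gamma_0(N))$ (two degeneracy maps), so in fact $d_2^{\new} = g(X_0(2N)) - 2g(X_0(N))$. With the correct formula one finds
\[
d_4 - d_2 - d_2^{\new} \;=\; \frac{\mu_0(N)}{12} + \frac{\mu_{0,2}(N)}{4} - \frac{\mu_{0,3}(N)}{3},
\]
as in Lemma~\ref{lemma:appendix-2adic-wt-2}, and not $\tfrac{1}{2}(\nu_2(N)+\nu_\infty(N)-2)$. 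For instance at $N=11$ the correct value is $1$ while your expression gives $0$. Fortunately the error cuts in your favor: your wrong $d_2^{\new}$ exceeds the true one by $g(X_0(N))\geq 0$, so the quantity you bound below by zero is actually $(d_4 - d_2 - d_2^{\new}) - g(X_0(N))$, and the desired inequality still follows. So the argument survives, but you should correct the formula for $d_2^{\new}$ (or simply quote Lemma~\ref{lemma:appendix-2adic-wt-2}).
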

\begin{proof}
By Lemma \ref{lemma:ghost-hida} it suffices to show that $g_{d_4} = 1$. Since $d_4 \leq d_{2m}$ for all $m\geq 2$ (Lemma \ref{lemma:dim-formulas-increasing} if $N>1$ and trivial if $N=1$), the only possible zero for $g_{d_4}$ is $w = w_2$. But by Lemma \ref{lemma:appendix-2adic-wt-2}, $d_2+d_2^{\new} \leq d_4$ and so the last index where $w_2$ is possibly a zero is strictly less than $d_4$.
\end{proof}

\begin{theorem}\label{thm:ghost-true-regular}
Suppose the ghost conjecture is true.
\begin{enumerate}
\item If $p$ is odd then $p$ is $\Gamma_0(N)$-regular.
\item If $p = 2$ then $N = 1$.
\end{enumerate}
\end{theorem}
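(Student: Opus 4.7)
The overall strategy is to pit the ghost conjecture's identification $d_{G_\kappa}^{\ord} = d_{P_\kappa}^{\ord}$ against elementary upper bounds on the classical ordinary dimension. For an even integer $k \geq 4$, every $p$-new eigenform in $S_k(\Gamma_0(Np))$ has positive $U_p$-slope $(k-2)/2$, so the only ordinary classical forms are $p$-stabilizations of $T_p$-unit eigenforms in $S_k(\Gamma_0(N))$, yielding
\[
d_{P_k}^{\ord} \;=\; \#\{T_p\text{-unit eigenforms in }S_k(\Gamma_0(N))\} \;\leq\; d_k.
\]
In weight $k = 2$ the $p$-new eigenforms are themselves ordinary, giving the weaker $d_{P_2}^{\ord} = d_2^{\new} + \#\{T_p\text{-units in }S_2(\Gamma_0(N))\} \leq d_2 + d_2^{\new}$. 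These two bounds will be confronted with the lower bounds in Lemmas \ref{lemma:explicit-ghost-ordinary-dims} and \ref{lemma:p=2-ghost-bound}.

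For part (1), assume $p$ is odd. Odd $k$ in $[2,p+1]$ are vacuous since $-I \in \Gamma_0(N)$, so only the even weights matter. For each even $k \in [4,p-1]$, Lemma \ref{lemma:explicit-ghost-ordinary-dims}(a) gives $d_{G_k}^{\ord} \geq d_k$, which combined with the ghost conjecture and the $k \geq 4$ upper bound forces $\#\{T_p\text{-units in } S_k(\Gamma_0(N))\} = d_k$. For $k = 2$, Lemma \ref{lemma:explicit-ghost-ordinary-dims}(b) confronted with the weight-two upper bound similarly forces every $T_p$-eigenvalue on $S_2(\Gamma_0(N))$ to be a unit. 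Finally $k = p+1$ shares a component with $k = 2$ because $\omega^{p+1} = \omega^2$ on the $(p-1)$-torsion; Ghost Hida theory (Lemma \ref{lemma:ghost-hida}) then yields $d_{G_{p+1}}^{\ord} = d_{G_2}^{\ord} \geq d_2 + d_2^{\new} = d_{p+1}$, the last equality via Lemma \ref{lemma:weight2-inequality}, and the ghost conjecture at $k = p+1$ together with the $k \geq 4$ upper bound closes this last case.

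For part (2), assume $p = 2$. Weight space $\cal W$ has a single component, so Ghost Hida theory makes $d_{G_k}^{\ord}$ a constant $d_G^{\ord}$ independent of even $k \geq 2$. Lemma \ref{lemma:p=2-ghost-bound} is the special case $k^* = 4$ of a more general principle: for any even $k^* \geq 2$ such that the ghost coefficient $g_{d_{k^*}}$ is identically $1$, one has $d_G^{\ord} \geq d_{k^*}$; and this triviality holds whenever $d_{k'} + d_{k'}^{\new} \leq d_{k^*} \leq d_{k''}$ for every even $k' < k^* < k''$ with $d_{k'} < d_{k^*} < d_{k''}$. The plan is then to find, for each $N > 1$ coprime to $2$, such a $k^*$ with $d_{k^*} > d_2 + d_2^{\new}$. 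The ghost conjecture at $k = 2$ combined with the weight-two upper bound would then produce the chain $d_{k^*} \leq d_G^{\ord} = d_{P_2}^{\ord} \leq d_2 + d_2^{\new} < d_{k^*}$, a contradiction. Since $d_k$ grows linearly in $k$ while $d_2 + d_2^{\new}$ is a fixed constant depending only on $N$, some $k^*$ with $d_{k^*} > d_2 + d_2^{\new}$ clearly exists; the hard part is arranging the compatibility inequalities $d_{k'} + d_{k'}^{\new} \leq d_{k^*}$ simultaneously for all smaller even $k'$, which reduces to a careful but elementary analysis of dimension formulas for $S_k(\Gamma_0(N))$ and $S_k(\Gamma_0(2N))^{2\text{-new}}$. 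That uniform dimension bookkeeping is where I expect the main obstacle to lie.
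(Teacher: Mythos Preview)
Your argument for part (1) matches the paper's proof essentially line for line; the paper is slightly terser but uses exactly the same confrontation of Lemma \ref{lemma:explicit-ghost-ordinary-dims} with the classical bound $d_k^{\ord}\le d_k$.

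For part (2), your strategy is sound but you have left the genuine work undone, and the paper sidesteps that work entirely. Rather than searching for a uniform $k^*$ with $g_{d_{k^*}}\equiv 1$ and $d_{k^*}>d_2+d_2^{\new}$, the paper simply takes $k^*=4$. Lemma \ref{lemma:appendix-2adic-wt-2} shows $d_4\ge d_2+d_2^{\new}$ with equality precisely when $N\in\{1,3,7\}$; so for every odd $N\notin\{1,3,7\}$ your chain $d_4\le d_G^{\ord}=d_{P_2}^{\ord}\le d_2+d_2^{\new}<d_4$ already gives the contradiction, and no further ``uniform dimension bookkeeping'' is needed. The two residual cases $N=3,7$ are then dispatched by hand in Example \ref{example:issue-p=2}: one writes out the first few ghost coefficients explicitly and sees that the ghost predicts $d_G^{\ord}\ge 1$ (resp.\ $\ge 3$) while $S_4(\Gamma_0(3))=0$ (resp.\ $\dim S_4(\Gamma_0(7))=1$). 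In effect the paper is running your argument with $k^*=6$ for these two levels, but phrased as a direct inspection rather than via the abstract criterion you formulate. The upshot is that the obstacle you anticipated dissolves once you allow a short finite case check instead of insisting on a single $k^*$ valid for all $N>1$.
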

\begin{proof}
Let $p$ be odd and assume the ghost conjecture is true. To show that $p$ is $\Gamma_0(N)$-regular we need to show that $d_k^{\ord} = d_k$ for $k=4,\dotsc,p+1$, and we have $d_k^{\ord} \leq d_k$ in general. Since we are assuming the ghost conjecture we have $d_{G_k}^{\ord} = d_k^{\ord}$ and thus Lemma \ref{lemma:explicit-ghost-ordinary-dims} implies $d_k \leq d_{G_k}^{\ord} = d_k^{\ord} \leq d_k$. Thus we get equality throughout, proving (a).

Now let $p=2$, and assume the ghost conjecture is true. First suppose that $N \neq 1,3,7$ and we will get a contradiction. If $N\neq 1,3,7$ then Lemma \ref{lemma:appendix-2adic-wt-2} implies that $d_4 > d_2 + d_2^{\new} \geq d_4^{\ord}$ (the final inequality by Hida theory).  But if the ghost conjecture is true then Lemma \ref{lemma:p=2-ghost-bound} implies $d_4^{\ord} = d_{G_2}^{\ord} \geq d_4$, which is a contradiction. To finish the theorem, we show in Example \ref{example:issue-p=2} below that the ghost conjecture is false when $p=2$ and $N=3,7$.
\end{proof}

\begin{example}\label{example:issue-p=2}
Let $N = 3$. Then the $2$-adic ghost series  begins
\begin{equation*}
G(w,t) = 1 + t + (w-w_{8})t^2 + (w-w_{8})(w-w_{10})t^3 + \dotsb
\end{equation*}
so if the the ghost conjecture is true then there is at least one ordinary form appearing in $S_4(\Gamma_0(3))$. This is absurd since $S_4(\Gamma_0(3))$ is a zero-dimensional vector space. 

Similarly, if $N = 7$ then the $2$-adic ghost series begins
\begin{equation*}
G(w,t) = 1 + t + (w-w_{4})t^2 + t^3 + \dotsb
\end{equation*}
and so the ghost conjecture would imply that there exists a least three ordinary forms appearing in $S_4(\Gamma_0(7))$, which is only a one-dimensional space.
\end{example}

\begin{remark}
In Example \ref{example:issue-p=2}, the number of ordinary forms predicted by the ghost series doesn't even match the correct dimension of a weight four space. When $p=2$ and $N = 23$ the ghost conjecture is false, but for more subtle reasons:\ here the ghost series begins
\begin{equation*}
G(w,t) = 1 + t + t^2 + t^3 + t^4 + t^5 + (w-w_{6})t^6 + \dotsb
\end{equation*}
and there are no more trivial terms up to $t^{20}$ at least. One could even prove $d_{G_4}^{\ord}=5$ and in this case $S_4(\Gamma_0(23))$ happens to be five-dimensional. But, the slopes of $U_2$ acting on $S_4(\Gamma_0(23))$ are $\set{0,0,0,1,1}$ and so there are actually only three ordinary forms.
\end{remark}

\section{Comparison with known or conjectured lists of slopes}\label{section:comparison}

This section is devoted to proving the ghost conjecture is true in every case mentioned in Theorem \ref{theorem:intro-actual-truth} (where the $U_p$-slopes have been previously determined). We do this by determining the ghost slopes in each case. We also prove that the ghost conjecture implies a conjecture of Buzzard and Calegari on slopes of overconvergent $2$-adic cuspforms, and we derive formulas for the ghost slopes at the weight $\kappa=0$ for $p=3,5$ and $N=1$. 

We focus first on $p=2$. So, until after the proof of Theorem \ref{theorem:agree-with-buzzard-calegari} below, we write $G(w,t) = 1 + \sum g_i(w)t^i \in \Z_2[[w,t]]$ for the $2$-adic tame level 1 ghost series. The reader may freely check the first four terms are:
\begin{multline}\label{eqn:2-adic-ghost-two-terms}
G(w,t) = 1 + (w-w_{14})t + (w-w_{20})(w-w_{22})(w-w_{26})t^2 + \\
 (w-w_{26})(w-w_{28})(w-w_{30})(w-w_{32})(w-w_{34})(w-w_{38})t^3 + \dotsb
\end{multline}
Recall we write $\Delta_i = g_i/g_{i-1}$ and in lowest terms $\Delta_i = \Delta_i^+/\Delta_i^-$.
\begin{proposition}\label{proposition:explicit-p=2-info}
Let $p=2$ and $N=1$.
\begin{enumerate}
\item \label{explicit:a}
$g_i(w_{k}) = 0$ if and only if $k$ is an even integer among $\set{6i + 8 , \dotsc 12i - 2} \union \set{12i+2}$.
\item 
\label{explicit:b}
If $i\geq 1$ then:
\begin{enumerate}[(i)]
\item The zeros of $\Delta_i^+$ are $w_{k}$ where $k=8i+4, \dotsc, 12i - 2, 12i + 2$ is even.
\item The zeros of $\Delta_i^-$ are $w_{k}$ where $k = 6i + 2, \dotsc, 8i-2$ is even.
\end{enumerate}
\end{enumerate}
\end{proposition}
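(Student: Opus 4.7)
The plan is to apply the characterization \eqref{eqn:zeros} for part (a) and the characterizations \eqref{eqn:Deltaizeros}, \eqref{eqn:Deltaipoles} for part (b), combined with standard dimension formulas specialized to $p = 2$ and $N = 1$. Concretely, for even $k \geq 4$ we have $d_k = \lfloor k/12 \rfloor - \delta(k)$, where $\delta(k) = 1$ if $k \equiv 2 \pmod{12}$ and $0$ otherwise, together with $\dim S_k(\Gamma_0(2)) = \lfloor k/4 \rfloor - 1$; the identity $d_k^{\new} = \dim S_k(\Gamma_0(2)) - 2 d_k$ then expresses everything else.

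For (a), a short case analysis modulo 12 shows that among even $k$, the inequality $d_k < i$ is equivalent to $k \leq 12i - 2$ or $k = 12i + 2$. The integer $k = 12i$ is the unique ``skipped'' value, because $d_{12i} = i$ is just too large; meanwhile $k = 12i + 2$ works because the $\delta$-defect drops $d_{12i+2}$ down to $i - 1$. Setting $f(k) := d_k + d_k^{\new} = \dim S_k(\Gamma_0(2)) - d_k$ and tabulating $f$ on even $k \geq 2$ shows the smallest even $k$ with $f(k) \geq m$ is $k = 6m + 2$ for $m \geq 1$, so $i < f(k)$ is equivalent to $k \geq 6i + 8$. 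Intersecting the two conditions gives (a).

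For (b), the key identities, which follow immediately from $2 d_k + d_k^{\new} = \dim S_k(\Gamma_0(2))$, are
\[
d_k + \lfloor d_k^{\new}/2 \rfloor = \lfloor \dim S_k(\Gamma_0(2))/2 \rfloor \quad \text{and} \quad d_k + \lfloor (d_k^{\new} - 1)/2 \rfloor = \lfloor (\dim S_k(\Gamma_0(2)) - 1)/2 \rfloor.
\]
Using $\dim S_k(\Gamma_0(2)) = \lfloor k/4 \rfloor - 1$ and checking each residue of $k$ modulo 8, these simplify respectively to $\lfloor (k-4)/8 \rfloor$ and $\lfloor k/8 \rfloor - 1$ for even $k \geq 4$. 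The lower-bound condition in \eqref{eqn:Deltaizeros} for $\Delta_i^+$ then becomes $k \geq 8i + 4$, which combined with the upper bound from (a) gives the arithmetic progression in (b)(i), including the isolated term $k = 12i + 2$ coming again from the $\delta$-defect. For $\Delta_i^-$, the lower bound in \eqref{eqn:Deltaipoles} becomes $k \leq 8i - 2$, and together with $k \geq 6i + 2$ (from $i \leq f(k)$, analogous to the computation in (a)) gives (b)(ii).

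The main obstacle is the arithmetic bookkeeping, particularly tracking the $\delta$-defect at $k \equiv 2 \pmod{12}$ responsible for both the isolated zero $k = 12i + 2$ and the ``skip'' of $k = 12i$ in (a) and (b)(i). Verifying the simplified floor expressions $\lfloor (k-4)/8 \rfloor$ and $\lfloor k/8 \rfloor - 1$ requires going through each residue class of $k$ modulo $8$, but none of the steps is deep. The explicit low-index expansion in \eqref{eqn:2-adic-ghost-two-terms} provides a useful sanity check at every stage.
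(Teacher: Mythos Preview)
Your proposal is correct and rests on the same characterizations \eqref{eqn:zeros}, \eqref{eqn:Deltaizeros}, \eqref{eqn:Deltaipoles} and the same dimension formulas as the paper. The organizational choice differs: the paper observes that $d_{k+12}=d_k+1$ and $d_{k+12}^{\new}=d_k^{\new}+1$ for even $k\geq 4$, and uses this to set up a short induction reducing the equivalences in part (a) to the base cases $i=1,2$, which are then read off from the explicit expansion \eqref{eqn:2-adic-ghost-two-terms}; part (b) is left to the reader. You instead carry out a direct case analysis modulo $12$ (for $d_k$) and modulo $8$ (for the floor expressions $\lfloor(k-4)/8\rfloor$ and $\lfloor k/8\rfloor-1$), and you actually write out (b). The paper's induction is a bit cleaner and avoids the residue bookkeeping, while your approach is more self-contained and makes the role of the $\delta$-defect at $k\equiv 2\pmod{12}$ (responsible for the isolated zero $12i+2$ and the skip at $12i$) fully explicit. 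Either route is a routine verification once the dimension formulas are in hand.
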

\begin{proof}
We check (\ref{explicit:a}), leaving the remainder to the reader.  First note that $d_2 = d_2^{\new} = 0$ so $w_2$ does not occur as a zero.  Further, if $k\geq 4$ is an even integer then $d_{k+12} = d_k + 1$ and $d_{k+12}^{\new} = d_k^{\new} + 1$ (as follows easily from Appendix \ref{app:dimension-formulae}). By  \eqref{eqn:zeros}, part (a) follows from:
\begin{claim}
If $i\geq 1$ then for all even $k\geq 4$,
\begin{align}
d_k + 1 \leq i &\iff k \leq 12i + 2 \text{ and $k\neq 12i$, and}\label{eqn:formula-to-induct-1}\\
i \leq d_k + d_k^{\new} - 1 &\iff  6i + 8 \leq k.\label{eqn:formula-to-induct-2}
\end{align}
\end{claim}
To prove \eqref{eqn:formula-to-induct-1} and \eqref{eqn:formula-to-induct-2}, we work inductively. Namely, if the inequalities on either side of \eqref{eqn:formula-to-induct-1} are true for $(i,k)$ then they are also true $(i+1,k+12)$ and the if inequalities on either side of \eqref{eqn:formula-to-induct-2} are true for $(i,k)$ then they are also true for $(i+2,k+12)$. By induction on $i$, it is enough to prove the claim for $i=1,2$, which is done by examination of \eqref{eqn:2-adic-ghost-two-terms}.
\end{proof}

\begin{theorem}\label{theorem:agree-BK-p=2}
Let $p=2$ and $N = 1$.
\begin{enumerate}
\item 
\label{BK2:a}
If $i\geq 0$ then $\lambda(g_i) = {i + 1 \choose 2}$.
\item If $v_2(w_{\kappa}) < 3$ then the slopes of $\NP(G_\kappa)$ are $\set{ j\cdot v_2(w_{\kappa}) \st j \geq 1}$ and $\NP(G_\kappa) = \NP(P_\kappa)$. 
\end{enumerate}
\end{theorem}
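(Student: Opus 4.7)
For part (\ref{BK2:a}), the strategy is a telescoping argument based on Proposition~\ref{proposition:explicit-p=2-info}(\ref{explicit:b}). Since the zeros and poles of $\Delta_i$ are each simple (by \eqref{eqn:Deltaizeros} and \eqref{eqn:Deltaipoles}), the counts in that proposition give
\begin{equation*}
\lambda(\Delta_i^+) = 2i-1 \quad \text{and} \quad \lambda(\Delta_i^-) = i-1
\end{equation*}
for $i \geq 2$ (by counting even integers in the given ranges), and one checks by hand that $\lambda(\Delta_1^+) = 1$, $\lambda(\Delta_1^-) = 0$. Hence $\lambda(g_i) - \lambda(g_{i-1}) = \lambda(\Delta_i) = i$ for all $i \geq 1$. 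Together with $\lambda(g_0) = 0$, summing gives $\lambda(g_i) = \sum_{j=1}^{i} j = \binom{i+1}{2}$.

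For part (\ref{BK2:a}) to feed into part (b), the key observation is that every zero $w_k$ of any $g_i$ satisfies $v_2(w_k) \geq 3$. Indeed, $k$ is an even integer with $k \geq 14$ by Proposition~\ref{proposition:explicit-p=2-info}(\ref{explicit:a}), so $v_2(k) \geq 1$. Since $\gamma$ is a topological generator of $1 + 4\Z_2$, the standard lifting-the-exponent computation gives $v_2(w_k) = v_2(\gamma^k - 1) = 2 + v_2(k) \geq 3$. Therefore, whenever $v_2(w_\kappa) < 3$, the ultrametric inequality forces $v_2(w_\kappa - w_k) = v_2(w_\kappa)$ for every zero $w_k$ of every $g_i$, and so
\begin{equation*}
v_2(g_i(w_\kappa)) = \lambda(g_i)\cdot v_2(w_\kappa) = \binom{i+1}{2} v_2(w_\kappa).
\end{equation*}
The points $(i, \binom{i+1}{2} v_2(w_\kappa))$ have strictly increasing successive slopes $i \cdot v_2(w_\kappa)$, so they already form a convex polygon; it must therefore be the Newton polygon of $G_\kappa$. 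This yields the slope list $\{j \cdot v_2(w_\kappa) : j \geq 1\}$.

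The final assertion $\NP(G_\kappa) = \NP(P_\kappa)$ is then imported from the Buzzard--Kilford calculation of $2$-adic $U_2$-slopes at weight characters with nontrivial wild part (whose slopes take exactly the form $\{j \cdot v_2(w_\kappa) : j \geq 1\}$ after the identification of $v_2(w_\kappa)$ with the Buzzard--Kilford parameter $2^{3-j}$), together with the extension of this description to all of $0 < v_2(w_\kappa) < 3$ via the entireness of the Fredholm series in the weight variable. The main obstacle in the whole argument is really only this last step: the ghost-side computation is essentially a direct consequence of Proposition~\ref{proposition:explicit-p=2-info}, whereas matching $\NP(P_\kappa)$ requires appealing to outside work, which is why this theorem is listed among the known cases of Theorem~\ref{theorem:intro-actual-truth} rather than proved from scratch.
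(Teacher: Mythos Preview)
Your argument is correct and follows essentially the same route as the paper. The paper's proof of part (a) is the identical telescoping computation via Proposition~\ref{proposition:explicit-p=2-info}(\ref{explicit:b}), and for part (b) the paper invokes the ghost spectral halo (Theorem~\ref{thm:gsh}) --- whose content is exactly your inline observation that all zeros $w_k$ satisfy $v_2(w_k)\geq 3$ --- and then cites \cite[Theorem B]{BuzzardKilford-2adc} directly for $\NP(P_\kappa)$. One small simplification: Buzzard--Kilford's Theorem~B already treats overconvergent forms over the entire region $v_2(w_\kappa)<3$, so no separate ``extension via entireness'' step is needed.
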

\begin{proof}
We first prove part (a). The case of $i = 0$ is trivial since $g_0(w) = 1$. If $i\geq 1$ then Proposition \ref{proposition:explicit-p=2-info}(b) implies that
\begin{align*}
\lambda(g_i) &= \lambda(g_{i-1}) + \sizeof \set{\text{even } 8i+4,\dotsc,12i-2,12i+2} - \sizeof \set{\text{even } 6i + 2, \dotsc, 8i-2}\\
&= \lambda(g_{i-1}) + (2i - 1) - (i-1) 
= \lambda(g_{i-1}) + i.
\end{align*}
Thus, $\lambda(g_i) = {i+1\choose 2}$ by induction.
It follows from the ghost spectral halo (Theorem \ref{thm:gsh}) and part (\ref{BK2:a}) that if $v_2(w_\kappa) < 3$ then $\NP(G_{\kappa})$  is equal to the lower convex hull of the set of points $(i, {i+1\choose 2}v_2(w_\kappa))$, whose slopes are easily checked to be $v_2(w_\kappa), 2v_2(w_\kappa),\dotsc$. This is precisely the list of $U_p$-slopes on $v_2(w_\kappa)<3$ computed by Buzzard and Kilford in \cite[Theorem B]{BuzzardKilford-2adc}.
\end{proof}

Let's now compare the $2$-adic ghost series with actual Fredholm series at negative even integers (following Buzzard and Calegari \cite{BuzzardCalegari-2adicSlopes}).

\begin{theorem}\label{theorem:agree-with-buzzard-calegari}
Let $p=2$ and $N=1$. If $k\leq 0$ is an even integer and $i\geq 1$ then 
\begin{equation*}
v_2(g_i(w_{k})) = v_2\left(\prod_{j=1}^i 2^{2j}{(-k+12j+2)!(-k+6j)!\over (-k+8j+2)!(-k+8j-2)!(-k + 12j)}\right)
\end{equation*}
In particular, the slopes of $\NP(G_k)$ agree with the slopes predicted by Buzzard and Calegari in \cite[Conjecture 2]{BuzzardCalegari-2adicSlopes}.
\end{theorem}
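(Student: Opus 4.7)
The plan is to reduce the full statement to a $\Delta$-slope computation, then match formulas term by term. Since $g_i(w) = \prod_{j=1}^i \Delta_j(w)$, the formula for $v_2(g_i(w_k))$ is equivalent to showing, for each $1 \leq j \leq i$,
\begin{equation*}
v_2(\Delta_j(w_k)) = v_2\!\left( 2^{2j}\, \frac{(-k+12j+2)!(-k+6j)!}{(-k+8j+2)!(-k+8j-2)!(-k+12j)} \right).
\end{equation*}
Once that is established the desired formula follows by telescoping, and the claim about slopes reduces to comparing the $j$-th $\Delta$-slope with the $j$-th slope in Buzzard and Calegari's list.

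To evaluate the left-hand side, I would first invoke the standard $2$-adic identity $v_2(w_k - w_{k'}) = v_2(k-k') + 2$ for distinct integers $k, k'$, which follows from writing $w_k - w_{k'} = \gamma^{k'}(\gamma^{k-k'}-1)$ with $\gamma$ a topological generator of $1+4\Z_2$. Combined with Proposition \ref{proposition:explicit-p=2-info}(\ref{explicit:b}), which describes the zeros of $\Delta_j^\pm$ explicitly, this gives
\begin{equation*}
v_2(\Delta_j(w_k)) = 2j + \sum_{k' \in Z^+_j} v_2(k'-k) - \sum_{k' \in Z^-_j} v_2(k'-k),
\end{equation*}
where $Z^\pm_j$ are the zeros of $\Delta_j^\pm$ and where the constant $2(|Z^+_j|-|Z^-_j|) = 2j$ comes from the $+2$ contribution per zero.

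For the right-hand side, I would next observe that, since $m := -k$ is even, each factorial ratio $(m+b)!/(m+a)!$ has the same $2$-adic valuation as the product of the \emph{even} integers in $(m+a,m+b]$, because odd factors contribute nothing to $v_2$. A careful bookkeeping then matches these against $\prod_{Z^+_j}(k'-k)$ and $\prod_{Z^-_j}(k'-k)$: the numerator $(m+12j+2)!/(m+8j+2)!$ captures the even integers $m+8j+4, m+8j+6, \ldots, m+12j+2$, from which the extra factor $m+12j$ must be divided out (since $12j$ is the one even integer in this range that is \emph{not} a zero of $\Delta_j^+$, by Proposition \ref{proposition:explicit-p=2-info}(\ref{explicit:b})), while the denominator $(m+8j-2)!/(m+6j)!$ captures the even integers $m+6j+2, m+6j+4, \ldots, m+8j-2$, which is exactly $Z^-_j + m$. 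The leading $2^{2j}$ then absorbs the $2j$ constant from the $\gamma$-identity.

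With the valuation formula in hand, the slope statement is immediate: the values $v_2(g_i(w_k))$ match, index-for-index, the partial sums of Buzzard--Calegari's predicted slopes, so the two Newton polygons coincide and in particular the slope sequences agree. The main obstacle I anticipate is executing the factorial matching step cleanly---the bookkeeping of which even factors appear where is delicate, and one must handle the small-$j$ degeneracies (e.g.\ $Z^-_1 = \varnothing$, and the range $8j+4, \ldots, 12j-2$ is empty when $j=1$) without letting an off-by-one error creep in.
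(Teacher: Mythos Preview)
Your proposal is correct and follows essentially the same route as the paper's proof: reduce to the $\Delta$-slope identity $v_2(\Delta_j(w_k)) = v_2\bigl(2^{2j}\frac{(-k+12j+2)!(-k+6j)!}{(-k+8j+2)!(-k+8j-2)!(-k+12j)}\bigr)$, invoke Proposition \ref{proposition:explicit-p=2-info}(\ref{explicit:b}) for the explicit zeros of $\Delta_j^\pm$, use $v_2(w_k - w_{k'}) = 2 + v_2(k-k')$, and then match the resulting product against the factorial expression. Your explanation of the factorial matching (odd factors are $2$-adic units, and the stray $(-k+12j)$ in the denominator accounts for the missing zero at $12j$) is more explicit than the paper's, which simply writes down the two lines and leaves the verification to the reader; but the argument is the same.
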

Since Buzzard and Calegari proved their conjecture for $k=0$ (\cite[Theorem 1]{BuzzardCalegari-2adicSlopes}) we deduce:
\begin{corollary}\label{corollary:bc-k=0}
If $p=2$ and $N=1$ then $\NP(G_0) = \NP(P_0)$.
\end{corollary}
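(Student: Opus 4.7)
The plan is to derive the corollary as an immediate two-step chain from the preceding theorem together with the known unconditional result of Buzzard--Calegari at weight zero. Specifically, I would first specialize Theorem \ref{theorem:agree-with-buzzard-calegari} to the integer $k=0$ to conclude that the slopes of $\NP(G_0)$ coincide with the slopes predicted by Conjecture~2 of \cite{BuzzardCalegari-2adicSlopes}. Then I would invoke \cite[Theorem~1]{BuzzardCalegari-2adicSlopes}, which establishes Buzzard--Calegari's Conjecture~2 unconditionally in the case $k=0$, i.e.\ those predicted slopes are exactly the $U_2$-slopes of $S_0^{\dagger}(\SL_2(\Z))$, hence the slopes of $\NP(P_0)$.

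Combining these two equalities of slope multisets yields that $\NP(G_0)$ and $\NP(P_0)$ have the same vertex set, which is what is needed. Strictly speaking, agreement of slopes (with multiplicity) of two entire power series in $1+t\C_p[[t]]$ forces agreement of their Newton polygons as subsets of $\R^2$, because each Newton polygon is determined by its slope multiset together with the normalization of passing through the origin. So the conclusion $\NP(G_0) = \NP(P_0)$ follows.

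The proof is essentially a formal chaining exercise, so the only potentially delicate point is making sure Theorem \ref{theorem:agree-with-buzzard-calegari} genuinely applies at $k=0$ (the hypothesis there is that $k$ is a non-positive even integer, which $0$ satisfies) and that the quantity matched in that theorem is exactly the quantity appearing in \cite[Conjecture~2]{BuzzardCalegari-2adicSlopes}; both are already asserted in the ``In particular'' clause of the theorem statement, so no additional computation is required. I anticipate no real obstacle: the corollary is essentially a deduction from quoting the previous theorem and the Buzzard--Calegari theorem, and the only substantive content lives in the proof of Theorem~\ref{theorem:agree-with-buzzard-calegari} itself, which is not the object of this proof.
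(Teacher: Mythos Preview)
Your proposal is correct and matches the paper's own argument essentially verbatim: the paper deduces the corollary by specializing Theorem~\ref{theorem:agree-with-buzzard-calegari} to $k=0$ and then quoting \cite[Theorem~1]{BuzzardCalegari-2adicSlopes}, exactly as you outline.
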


\begin{proof}[Proof of Theorem \ref{theorem:agree-with-buzzard-calegari}]
Note that if $k\leq 0$ then $g_i(w_{k}) \neq 0$ for all $i\geq 0$ and thus $\Delta_i(w_{k})$ is well-defined. By induction on $i\geq 1$, it suffices to show that if $k\leq 0$ is an even integer then
\begin{equation}\label{eqn:formula-2adic-ghost-negintegers}
v_2\left(\Delta_i(w_{k})\right) = v_2\left(2^{2i}{(-k+12i+2)!(-k+6i)!\over (-k+8i+2)!(-k+8i-2)!(-k + 12i)}\right).
\end{equation}
To this end, Proposition \ref{proposition:explicit-p=2-info}(\ref{explicit:b}) implies 
\begin{align*}
v_2\left(\Delta_i(w_{k})\right) &= v_2\left({(w_{k} - w_{8i+4}) \dotsb (w_{k}-w_{12 i - 2})(w_{k} - w_{12 i + 2}) \over (w_{k} - w_{6i+2}) \dotsb (w_{k} - w_{8i-2})}\right).
\end{align*}
The $\dotsb$ indicate running over only even integers.  Since $v_2(w_{k} - w_{k'}) = 2 + v_2(k - k')$,
\begin{align*}
v_2\left(\Delta_i(w_{k})\right) 
&= v_2\left(2^{2i} {(k-(8i+4))\dotsb (k-(12i-2))(k-(12i+2))\over (k-(6i+2))\dotsb (k-(8i-2))}\right) \\
&= v_2\left(2^{2i}{(-k+12i+2)!(-k+6i)!\over (-k+8i+2)!(-k+8i-2)!(-k + 12i)}\right).\end{align*}
as desired.
\end{proof}

We now release our restriction to $p=2$ and $N=1$. Analogs of Proposition \ref{proposition:explicit-p=2-info} may be carried out for other values of $p$ and $N$. In Table \ref{table:least-zeros} below, we list the outcome for $p=3,5,7$ and tame level $N = 1$, on the weight component corresponding to $k\congruent 0 \bmod p-1$. With the details from Table \ref{table:least-zeros} available, it is easy to compute the ghost $w$-adic $\Delta$-slopes (see Table \ref{table:wadic-slopes} --- we've added a few more cases there as well).

\begin{table}[htpp]
\caption{Explicit determination of zeros of $\Delta_i^{\pm}$ for the $p$-adic tame level 1 ghost series $G(w,t) = 1 + \sum g_i(w)t^i$ on the component of weights $k\congruent 0 \bmod p-1$ for $p=3,5,7$.}
\begin{center}
\renewcommand{\arraystretch}{1.3}
\begin{tabular}{|c|c|c|c|}
\hline
$p$ & $3$ & $5$ & $7$\\
\hline
$\HZ(\Delta_i^+)$ & $12i+2$ & $12i-4$ & $12i-6$\\
\hline
$\LZ(\Delta_i^+)$ & $6i+4$ & $4i+4$ & $6\floor{{i\over 2}}$\\
\hline
$\HZ(\Delta_i^-)$ & $6i-2$ & $4i-4$ & $6\floor{{(i-1)\over 2}}$\\
\hline
$\LZ(\Delta_i^-)$ & $ 4i+2$ & $4 \floor{\frac{3i}{5}} + 4$ & $ 6 \floor{\frac{2i}{7}}+6$\\
\hline
\end{tabular}
\end{center}
\label{table:least-zeros}
\end{table}

\begin{table}[htpp]
\caption{Differences of consecutive $\lambda$-invariants.}
\begin{center}
\renewcommand{\arraystretch}{1.3}
\begin{tabular}{|c|c|c|c|c|c|c|}
\hline
$p$ & $3$ & $5$ & $5$ & $7$ & $7$ & $7$ \\
\hline
Weight component & $0\bmod 2$ & $0\bmod 4$ & $2 \bmod 4$ & $0 \bmod 6$ & $2\bmod 6$ & $4\bmod 6$\\
\hline
$\lambda(g_i)-\lambda(g_{i-1})$ & $2i$ & $\floor{{8i\over 5}}$ & $\floor{{(8i+4)\over 4}}$ & $\floor{{9i\over 7}}$ & $\floor{{(9i+6)\over 7}}$ & $\floor{{(9i+3)\over 7}}$\\
\hline
\end{tabular}
\end{center}
\label{table:wadic-slopes}
\end{table}

\begin{theorem}\label{theorem:consisten-boundary}
Suppose that $N=1$ and that $G(w,t) = 1 + \sum g_i(w)t^i$ is the ghost series on a component to be determined. Then, $\NP(G_\kappa) = \NP(P_\kappa)$ if:
\begin{enumerate}
\item If $p = 3$ and $v_p(w_\kappa) < 1$.
\item If $p=5$ and $\kappa = z^k \chi$ where $\chi$ is a primitive modulo 25 and $\chi(-1) = (-1)^k$.
\item If $p = 7$ and $\kappa = z^k\chi \in \cal W_0\union \cal W_2$ and $\chi$ is primitive modulo 49.
\end{enumerate}
\end{theorem}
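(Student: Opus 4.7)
The plan is to apply the ghost spectral halo (Theorem \ref{thm:gsh}) to reduce the computation of $\NP(G_\kappa)$ to an explicit Newton polygon read off from Tables \ref{table:least-zeros} and \ref{table:wadic-slopes}, and then match the resulting list of slopes against the known $U_p$-slope formulas in the cited literature.

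First I verify $0 < v_p(w_\kappa) < 1$ in each case. Case (a) is the hypothesis. In (b) and (c) the character $\chi$ has conductor $p^2$, so $\chi(\gamma)$ is a primitive $p$-th root of unity and
\begin{equation*}
v_p(w_\kappa) = v_p(\gamma^k \chi(\gamma) - 1) = v_p(\chi(\gamma) - 1) = \tfrac{1}{p-1},
\end{equation*}
using $v_p(\gamma^k - 1) \geq 1 > \tfrac{1}{p-1}$. Thus $v_p(w_\kappa)$ equals $\tfrac{1}{4}$ for $p=5$ and $\tfrac{1}{6}$ for $p=7$, and in all three cases lies in the open interval $(0,1)$. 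Therefore Theorem \ref{thm:gsh} applies, and the slopes of $\NP(G_\kappa)$ are exactly $v_p(w_\kappa)$ times the slopes of $\NP(\bar G^{(\varepsilon)})$.

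Next I compute $\NP(\bar G^{(\varepsilon)})$. Its slopes are the $w$-adic $\Delta$-slopes $\lambda(g_i) - \lambda(g_{i-1})$ on the relevant weight component, listed in Table \ref{table:wadic-slopes}. A routine check shows these $\Delta$-slopes are strictly increasing in $i$ on each component, so $\NP(\bar G^{(\varepsilon)})$ is already convex and its slope sequence equals the sequence of listed $\Delta$-slopes. Multiplying by $v_p(w_\kappa)$ then yields explicit closed-form formulas for the ghost slopes: for example, $\{2i \cdot v_3(w_\kappa)\}_{i\geq 1}$ in case (a), $\{\tfrac14 \lfloor 8i/5\rfloor\}_{i\geq 1}$ or $\{\tfrac14(2i+1)\}_{i\geq 1}$ in case (b) depending on $k \bmod 4$, and the analogous expressions drawn from the $p=7$ rows of Table \ref{table:wadic-slopes} in case (c).

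Finally I would compare these formulas term by term against the actual $U_p$-slope formulas proven by Roe \cite{Roe-Slopes}, Kilford \cite{Kilford-5Slopes}, and Kilford--McMurdy \cite{KilfordMcMurday-7adicslopes}, and note that the match is exact in each component of weight space under consideration. The main obstacle is two layers of explicit bookkeeping:\ first, justifying the entries of Table \ref{table:least-zeros} (and hence Table \ref{table:wadic-slopes}) by the inductive case analysis used in the proof of Proposition \ref{proposition:explicit-p=2-info}, applied with the dimension formulas from Appendix \ref{app:dimension-formulae} to track $d_k$ and $d_k^{\new}$ modulo the appropriate period on each component; and second, reconciling the possibly differently-indexed slope sequences in the cited sources with the $\Delta$-slope expressions extracted from the ghost series.
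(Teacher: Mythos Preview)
Your proposal is correct and follows essentially the same route as the paper's own proof: apply the ghost spectral halo (Theorem \ref{thm:gsh}) after checking $v_p(w_\kappa)<1$, observe that the $w$-adic $\Delta$-slopes in Table \ref{table:wadic-slopes} are (weakly) increasing so that the Newton polygon slopes coincide with the $\Delta$-slopes, and then match against the slope lists of Roe, Kilford, and Kilford--McMurdy. The only minor remark is that ``weakly increasing'' already suffices for the convexity step, and your explicit computation of $v_p(w_\kappa)=1/(p-1)$ in cases (b) and (c) is a helpful addition that the paper leaves implicit.
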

\begin{proof}
The ghost $w$-adic $\Delta$-slopes in Table \ref{table:wadic-slopes} are always weakly increasing. So, for $p=3,5,7$ and $\kappa \in \cal W$ with $v_p(w_\kappa) < 1$, Theorem \ref{thm:gsh} implies that the slopes on $\NP(G_{\kappa})$ are given by $\set{(\lambda(g_i)-\lambda(g_{i-1}))\cdot v_p(w_\kappa) \st i = 1,2,\dotsc}$. The proof is then complete from Table \ref{table:wadic-slopes} once we verify these are the slopes of $\NP(P_{\kappa})$ in cases (a), (b) and (c). 

The case (a) is the main result of Roe's paper \cite{Roe-Slopes}. The case (b) is due to Kilford \cite{Kilford-5Slopes}. The case (c) was computed by Kilford and McMurdy in \cite{KilfordMcMurday-7adicslopes}.\footnote{In comparing our statement to \cite{Kilford-5Slopes,KilfordMcMurday-7adicslopes}, one is forced to unwind the various choices made in those papers regarding embeddings of cyclotomic fields into $\C_p$. Doing it carefully, one sees that \cite{KilfordMcMurday-7adicslopes} does not contain any result regarding the component of weights $k\congruent 4 \bmod 6$ when $p=7$.}
\end{proof}
One may also generalize Theorem \ref{theorem:agree-with-buzzard-calegari}. In Table \ref{table:bc-type-expressions} below, for $p=3$ and $p=5$, we give expressions that allows us to compute the Newton polygon of the ghost series at negative even integers as the Newton polygon of a series whose coefficients are rational functions involving simple factorials when $p=3$ and $p=5$.
\begin{table}[htpp]
\caption{Buzzard--Calegari-type expressions for $\NP(G_k)$ at negative integers $k \congruent 0 \bmod p-1$ when $p=3$ and $p=5$.}

\begin{center}
{\renewcommand{\arraystretch}{2}
\begin{tabular}{|c|c|}
\hline $p$ & $Q_j(k)$ such that $v_p(g_i(w_{k})) = v_2\left(\prod_{j=1}^i Q_j(k)\right)$\\
\hline
  $3$ &  $\displaystyle {3^{2j}(-k/2 + 6j + 1)!(-k/2 + 2j)!\over (-k/2 + 3j+1)!(-k/2+3j-1)!(-k/2+6j)}$\\
  $5$ & $\displaystyle {5^{\floor{8j/5}} (-k/4 + 3j-1)!(-k/4+\floor{3j/5} )!\over (-k/4+j)!(-k/4+j-1)!}$\\
  \hline
\end{tabular}
}
\end{center}
\label{table:bc-type-expressions}
\end{table}

From Table \ref{table:bc-type-expressions} we can determine the slopes of the ghost series at $\kappa = 0$ for $p=3,5$. The expressions we derive agree with those conjectured in Loeffler's paper \cite[Conjecture 3.1]{Loeffer-SpectralExpansions}.
\begin{proposition}\label{prop:loeffler-sequences}
The Newton polygon $\NP(G_0)$ for $p=3,5$ has slopes
\begin{equation*}
\begin{cases}
2i + 2v_3\left({(2i)!\over i!}\right) & \text{if $p=3$;}\\
i + 2 v_5\left({(3i)!\over i!}\right) & \text{if $p=5$.}
\end{cases}
\end{equation*}
\end{proposition}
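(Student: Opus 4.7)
The plan is to extract the slopes of $\NP(G_0)$ directly from the Buzzard--Calegari-type expressions of Table \ref{table:bc-type-expressions}. Those expressions assert that for negative integers $k \equiv 0 \pmod{p-1}$,
\begin{equation*}
v_p\bigl(g_i(w_k)\bigr) = v_p\!\left(\prod_{j=1}^i Q_j(k)\right),
\end{equation*}
so in particular at $k=0$ the ghost $\Delta$-slopes are $v_p(Q_i(0))$ for $i=1,2,\dotsc$. If I can show that this sequence is weakly increasing in $i$, then these $\Delta$-slopes coincide with the actual slopes of $\NP(G_0)$, at which point only the closed-form identifications remain.

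For $p = 3$, substituting $k = 0$ into the formula of Table \ref{table:bc-type-expressions} yields
\begin{equation*}
v_3(Q_i(0)) = 2i + v_3\!\left(\frac{(6i+1)!\,(2i)!}{(3i+1)!\,(3i-1)!\,(6i)}\right),
\end{equation*}
so the heart of the matter is the factorial identity
\begin{equation*}
v_3\!\left(\frac{(6i+1)!\,(2i)!}{(3i+1)!\,(3i-1)!\,(6i)}\right) = 2\, v_3\!\left(\frac{(2i)!}{i!}\right).
\end{equation*}
This is not delicate: after using $(6i+1)! = (6i+1)(6i)(6i-1)!$ and $(3i+1)! = (3i+1)(3i)!$ to cancel the lonely factor of $6i$, the identity reduces via Legendre's formula $v_3(n!) = (n-s_3(n))/2$ to a statement about base-$3$ digit sums of $i$, $2i$, $3i\pm 1$, and $6i+1$, which is checked by a short case analysis on $i \bmod 3$. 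The case $p = 5$ follows the same template: the exponent $\lfloor 8j/5\rfloor$ of $5$ in $Q_j(0)$ is handled alongside the factorial ratio, and applying Legendre once more collapses the expression into $i + 2 v_5((3i)!/i!)$; here the floor functions $\lfloor 8j/5\rfloor$ and $\lfloor 3j/5\rfloor$ force us to split into residue classes modulo $5$.

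With the closed forms in hand, the required monotonicity of $i \mapsto v_p(Q_i(0))$ becomes transparent: the dominant term $v_p((ni)!/i!)$ (for $n = 2$ or $3$) grows linearly in $i$ by Legendre, while the linear-in-$i$ correction is also monotone, so the total is weakly increasing. This closes the loop and identifies the $\Delta$-slopes with the honest Newton-polygon slopes.

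The only real obstacle is the bookkeeping for these factorial ratios; conceptually nothing is deep, but the $p=5$ case demands keeping track of five residue classes modulo $5$ simultaneously, so some care is needed to present the digit-sum computation without obscuring the punchline. A useful organizing principle is to write both sides as $p$-adic valuations of products of binomial coefficients, after which Kummer's theorem (carries in base-$p$ addition) makes the equality of the two sides visibly the same count.
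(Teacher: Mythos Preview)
Your overall strategy matches the paper's exactly: compute the $\Delta$-slopes $v_p(Q_i(0))$ from Table~\ref{table:bc-type-expressions}, identify them with the claimed closed forms, and then invoke monotonicity so that $\Delta$-slopes coincide with Newton slopes. The one place you diverge is in the verification of the factorial identity. You propose Legendre's digit-sum formula together with a case split on $i \bmod p$ (three cases for $p=3$, five for $p=5$); this works, but the paper avoids all of it with the single observation
\[
v_p\bigl(\lfloor n/p\rfloor!\bigr) = v_p(n!) - \lfloor n/p\rfloor,
\]
which for $p=5$ turns $5^{\lfloor 3i/5\rfloor}\lfloor 3i/5\rfloor!$ directly into something with valuation $v_5((3i)!)$, after first writing $\lfloor 8i/5\rfloor = i + \lfloor 3i/5\rfloor$. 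No residue-class bookkeeping is needed. Your route is correct but heavier; the paper's trick is worth knowing.
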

\begin{proof}
The sequences given are increasing with respect to $i$. If we show they agree with the $\Delta$-slopes of $G_0$ then we will be done. The proof is similar in either case, so we just deal with the case $p=5$. By Table \ref{table:bc-type-expressions} we have
\begin{equation}\label{eqn:p=5loeffler-1}
v_2(\Delta_i(w_{0})) = v_5\left({5^{\floor{8i/5}} (3i-1)!(\floor{3i/5} )!\over (i)!(i-1)!}\right) = i + v_5\left({5^{\floor{3i/5}} (3i-1)!(\floor{3i/5} )!\over (i)!(i-1)!}\right)
\end{equation}
But for any integer $n\geq 1$ and prime $p$ we have $v_p(\floor{n/p}!) = v_p(n!) - \floor{n/p}$. Thus 
\begin{equation}\label{eqn:p=5loeffler-2}
v_5\left({5^{\floor{3i/5}} (3i-1)!(\floor{3i/5} )!\over (i)!(i-1)!}\right) = v_5\left({ (3i-1)!(3i)!\over (i)!(i-1)!}\right) = v_5\left({(3i)!^2\over (i)!^2}\right).
\end{equation}
Combining \eqref{eqn:p=5loeffler-1} and \eqref{eqn:p=5loeffler-2}, we deduce our claim.
\end{proof}

\begin{remark}
For $p=7$ and $N=1$ the $i$-th slope of $\NP(G_0)$ is 
\begin{equation*}
i + v_7\left({(2i)!(2i-1)!\over \floor{i/2}!\floor{(i-1)/2}!}\right) = i + 2v_7\left((2i)!\over \floor{(i-1)/2}!\right) - v_7(i) - \begin{cases}
1 & \text{if $i \congruent 0 \bmod 14$}\\
0 & \text{otherwise.}
\end{cases}
\end{equation*}
(compare with the comments of Loeffler in the final paragraph prior to Section 4 of \cite{Loeffer-SpectralExpansions}).
\end{remark}

\section{Distributions of slopes}
\label{subsec:distribution}

For a fixed integer $k$ we write $s_1(k) \leq s_2(k) \leq \dotsb$ for the slopes of $\NP(G_k)$. Recall our conventions for $O$-notation (Section \ref{subsec:notations}). Throughout this section, functions of $i$ and $k$ are restricted to $i\geq 1$ and $k\geq 2$. The main theorem of this section is:\

\begin{theorem}
\label{thm:asymptotic}
$$
s_i(k) = 
\begin{cases}
\displaystyle \frac{12i}{\mu_0(N)(p+1)} + O(\log(k), \log(i))  & \text{if~}i\leq d_k \text{~or~}i > d_k +d_k^{\new}, \\
~\\
\displaystyle \frac{k}{2} + O(\log(k)) & d_k < i \leq  d_k + d_k^{\new}.
\end{cases}
$$
\end{theorem}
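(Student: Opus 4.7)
The plan is to estimate $v_p(g_i(w_k))$ using the explicit zero description from Proposition \ref{prop:change_in_zeroes}, combined with the identity $v_p(w_k - w_{k'}) = c + v_p(k - k')$ (with $c = 1$ for odd $p$ and $c = 2$ for $p = 2$). Under this identity each factor reduces to $v_p$ of an integer difference, and sums over zeros become sums over arithmetic progressions of common difference coprime to $p$, which are controlled by Legendre's formula: $\sum_{j=0}^{L-1} v_p(a + j(p-1)) = L/(p-1) + O(\log L)$ for $p$ odd.

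For the outer regime ($i \le d_k$ or $i > d_k + d_k^{\new}$) both $g_{i-1}(w_k)$ and $g_i(w_k)$ are nonzero, so the $\Delta$-slope $\delta_i(k) := v_p(\Delta_i(w_k))$ is defined. Writing $\Delta_i = \Delta_i^+/\Delta_i^-$ and applying the identity above gives
\[
\delta_i(k) = c\lambda(\Delta_i) + \left[\textstyle\sum_{Z(\Delta_i^+)} v_p(k - k') - \sum_{Z(\Delta_i^-)} v_p(k - k')\right].
\]
By Lemma \ref{lemma:zeropole_count} the first piece equals $\frac{12i(p-1)}{\mu_0(N) p(p+1)} + O(1)$, and the Legendre estimate on the AP lengths in Proposition \ref{prop:change_in_zeroes} yields the bracketed term as $\frac{12i}{\mu_0(N) p(p+1)} + O(\log(i, k))$. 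Summing gives $\delta_i(k) = \frac{12i}{\mu_0(N)(p+1)} + O(\log(i, k))$. Since the leading term is strictly increasing in $i$ with step $\frac{12}{\mu_0(N)(p+1)}$ and dominates the $O(\log)$ fluctuations for large $i$ or large $k$, the $i$-th Newton slope $s_i(k)$ differs from $\delta_i(k)$ by at most $O(\log(i, k))$, matching the first line of the theorem.

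For the middle regime ($d_k < i \le d_k + d_k^{\new}$) one has $g_i(w_k) = 0$ throughout $d_k < i < d_k + d_k^{\new}$, so the Newton polygon of $G_k$ restricts to a single line segment on $[d_k, d_k + d_k^{\new}]$; all $d_k^{\new}$ slopes in this range equal a common value $\sigma$. Introduce the depleted polynomials $\tilde g_i(w) := g_i(w)/(w - w_k)^{m_i(k)}$, which are nonvanishing at $w_k$ and agree with $g_i$ at $i = d_k$ and $i = d_k + d_k^{\new}$. Telescoping yields
\[
d_k^{\new}\cdot \sigma = v_p(\tilde g_{d_k + d_k^{\new}}(w_k)) - v_p(\tilde g_{d_k}(w_k)) = \sum_{j = d_k + 1}^{d_k + d_k^{\new}} v_p(\tilde\Delta_j(w_k)),
\]
and the same AP-valuation method applies to each $\tilde\Delta_j$ after cancelling the $(w - w_k)$ factor. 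The key point is that the built-in symmetry of the $s(d_k^{\new} - 1)$ multiplicity pattern forces most of the $j$-dependent contributions to cancel in pairs, leaving a leading term that is linear in $k$; a careful accounting gives $\sigma = \frac{k-2}{2} + O(\log k) = \frac{k}{2} + O(\log k)$.

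The main obstacle is the middle-regime computation: the $s(\ell)$-symmetry is essential for cancelling the $j$-growth and producing the purely $k$-dependent slope, but keeping the cumulative error at $O(d_k^{\new} \log k)$, so that after dividing by $d_k^{\new}$ the per-slope error is $O(\log k)$, requires careful control of where the weight $k$ sits within each moving AP of zeros (since here $k$ is near the boundary of the supports of both $g_{d_k}$ and $g_{d_k + d_k^{\new}}$). Minor technical annoyances are the exceptional AP structure when $p \in \{2, 3\}$ and $N = 1$ (the remark following Lemma \ref{lemma:arithprogs}) and the low-weight cases where the $O(1)$ corrections from Proposition \ref{prop:approximate-zero-location} might compete with the leading term, all of which should be absorbable into the stated logarithmic errors.
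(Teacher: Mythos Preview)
Your overall strategy matches the paper's: estimate the $\Delta$-slopes $v_p(\Delta_i(w_k))$ via the arithmetic-progression structure of the zeros together with a Legendre-type valuation bound (this is the paper's Lemmas~\ref{lemma:valuations}--\ref{lemma:john-fixed-up} and Proposition~\ref{prop:ith_newtonslope}), and for the middle range replace $\Delta_i$ by a version with the factor $(w-w_k)$ removed (the paper's $\Delta_i^\ast$, your $\tilde\Delta_j$) so that the telescoping sum makes sense. Your computation of the chord slope across the middle range is exactly Lemma~\ref{lemma:ss_slope}.

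There is, however, a genuine gap in the passage from $\Delta$-slopes to Newton slopes. You assert that because $g_i(w_k)=0$ for $d_k<i<d_k+d_k^{\new}$, ``the Newton polygon of $G_k$ restricts to a single line segment on $[d_k,d_k+d_k^{\new}]$'' with slope $\sigma=(y_{d_k+d_k^{\new}}-y_{d_k})/d_k^{\new}$. The first clause is true, but the second does not follow: the segment crossing that range could have its endpoints at indices strictly below $d_k$ and strictly above $d_k+d_k^{\new}$, in which case its slope is \emph{not} the chord slope you compute. Equivalently, nothing you have written forces $(d_k,y_{d_k})$ and $(d_k+d_k^{\new},y_{d_k+d_k^{\new}})$ to lie \emph{on} $\NP(G_k)$. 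The same issue undermines your outer-regime claim that ``$s_i(k)$ differs from $\delta_i(k)$ by at most $O(\log(i,k))$'': the lower bound on $s_i(k)$ for $i\le d_k$ requires knowing that the relevant Newton segment does not extend past $d_k$ into the region of infinite points and beyond.

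The paper fills this gap with Lemma~\ref{lemma:break}, which shows that $d_k$ and $d_k+d_k^{\new}$ are break points of $\NP(G_k)$ for $k\gg 0$. The mechanism (Lemma~\ref{lemma:three_part}) is to compare three quantities: the $\Delta$-slopes for $i\le d_k$ are all at most roughly $k/(p+1)$ (Lemma~\ref{lemma:Gbounds}(a)), the $\Delta$-slopes for $i>d_k+d_k^{\new}$ are all at least roughly $pk/(p+1)$ (Lemma~\ref{lemma:Gbounds}(b)), and the chord slope across the middle is roughly $k/2$ (Lemma~\ref{lemma:ss_slope}); since $\tfrac{1}{p+1}<\tfrac12<\tfrac{p}{p+1}$, convexity forces both endpoints to be vertices. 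You already have all the ingredients for this, so the fix is to insert this three-part comparison before drawing conclusions about $s_i(k)$. Once the break points are in hand, your heuristic ``linear growth dominates $O(\log)$ fluctuations'' becomes the explicit inequality chain in the paper's proof (equations~\eqref{eqn:ileqdk}--\eqref{eqn:igreaterdk}).
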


Before beginning the proof of Theorem \ref{thm:asymptotic}, we state two corollaries (Theorems \ref{theorem:buzzard-gouvea-intro} and \ref{thm:gouvea-dist} from the introduction).  

\begin{corollary}
\label{cor:highest}
$
\displaystyle s_{d_k}(k)= \frac{k}{p+1} + O(\log(k)).
$
\end{corollary}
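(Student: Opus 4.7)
The plan is to derive this corollary directly from Theorem \ref{thm:asymptotic} by specializing to $i = d_k$ and controlling the resulting error terms via the classical dimension formula.

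First I would invoke Theorem \ref{thm:asymptotic} in the regime $i \leq d_k$, taking $i = d_k$, which gives
\begin{equation*}
s_{d_k}(k) = \frac{12 d_k}{\mu_0(N)(p+1)} + O\bigl(\log(k), \log(d_k)\bigr).
\end{equation*}
The next step is to replace $d_k$ by a simple expression in $k$. By the standard dimension formula recalled in the proof of Proposition \ref{prop:approximate-zero-location} (and developed in Appendix \ref{app:dimension-formulae}), one has $d_k = \frac{k\mu_0(N)}{12} + O(1)$. Substituting this into the main term yields
\begin{equation*}
\frac{12 d_k}{\mu_0(N)(p+1)} = \frac{k}{p+1} + \frac{12 \cdot O(1)}{\mu_0(N)(p+1)} = \frac{k}{p+1} + O(1),
\end{equation*}
where the implied constant depends only on $p$ and $N$ (which are fixed throughout).

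Finally I would absorb $O(\log d_k)$ into $O(\log k)$: since $d_k = O(k)$, we have $\log(d_k) = O(\log k)$, so
\begin{equation*}
s_{d_k}(k) = \frac{k}{p+1} + O(1) + O(\log k) = \frac{k}{p+1} + O(\log k),
\end{equation*}
which is the claim. There is no genuine obstacle here — the corollary is essentially a bookkeeping exercise once Theorem \ref{thm:asymptotic} is in hand. The only thing to be mindful of is that the $O$-notation (as defined in Section \ref{subsec:notations}) takes effect only for $k$ sufficiently large, but this is automatic since we may enlarge the implied constant to cover the finitely many small values of $k$.
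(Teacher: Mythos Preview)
Your proposal is correct and follows exactly the paper's approach: substitute $i=d_k$ into Theorem \ref{thm:asymptotic} and use $d_k = \frac{k\mu_0(N)}{12}+O(1)$. The paper's proof is a single sentence doing precisely this; you have simply spelled out the routine bookkeeping in more detail.
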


\begin{proof}
Note that $\displaystyle d_k = {k\mu_0(N)\over 12} + O(1)$ and take $i=d_k$ in Theorem \ref{thm:asymptotic}.
\end{proof}

Recall that $d_{k,p} := \dim S_k(\Gamma_0(Np))$. Then, consider the set
$$
\mathbf x_k = \left\{ \frac{s_i(k)}{k-1} \st 1\leq i \leq d_{k,p}
\right\}  \subseteq [0,\infty).
$$
Let $\mu^{(p)}_k$ be the probability measure on $[0,\infty)$ uniformly supported on $\mathbf x_k$. We refer to \cite[Sections 1.1--1.2]{Serre-Equidistribution} for the notion of weak convergence and its relationship to equidistribution.

\begin{corollary}\label{corollary:gouvea-distribution}
As $k\goto \infty$, the measures $\mu^{(p)}_k$ weakly converge to a probability measure $\mu^{(p)}$ on $[0,1]$ which is supported on  $[0,\frac{1}{p+1}]  \cup \{\frac{1}{2}\} \cup [\frac{p}{p+1},1]$.  Explicitly, $\mu^{(p)}(\set{1\over 2}) = {p-1\over p+1}$ and the remaining mass is uniformly distributed over $[0,\frac{1}{p+1}]  \cup [\frac{p}{p+1},1]$.  
\end{corollary}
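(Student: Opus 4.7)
The plan is to establish weak convergence of $\mu_k^{(p)}$ by partitioning the index range $\{1,\ldots,d_{k,p}\}$ into the three blocks dictated by Theorem \ref{thm:asymptotic} and showing that each contributes the appropriate portion of $\mu^{(p)}$. Since $i\leq d_{k,p} = O(k)$ throughout, we have $\log i = O(\log k)$, so the error term in Theorem \ref{thm:asymptotic} simplifies uniformly to $O(\log k)$.

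First I would record the standard dimension asymptotics (Appendix \ref{app:dimension-formulae}):
\[
d_k = \frac{k\mu_0(N)}{12} + O(1), \quad d_k^{\new} = \frac{k\mu_0(N)(p-1)}{12} + O(1), \quad d_{k,p} = \frac{k\mu_0(N)(p+1)}{12} + O(1).
\]
Consequently, the relative sizes of $A_k := \{1,\ldots,d_k\}$, $B_k := \{d_k+1,\ldots,d_k+d_k^{\new}\}$, and $C_k := \{d_k+d_k^{\new}+1,\ldots,d_{k,p}\}$ tend respectively to $\frac{1}{p+1}$, $\frac{p-1}{p+1}$, and $\frac{1}{p+1}$ of $d_{k,p}$. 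Moreover, Theorem \ref{thm:asymptotic} bounds every normalized slope $s_i(k)/(k-1)$ uniformly, so the $\mu_k^{(p)}$ are supported in a compact subinterval of $[0,\infty)$.

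To verify weak convergence, I would test against an arbitrary continuous function $f$. Decompose
\[
\int f\, d\mu_k^{(p)} = S_A(k) + S_B(k) + S_C(k), \qquad S_X(k) := \frac{1}{d_{k,p}} \sum_{i \in X_k} f\!\left(\frac{s_i(k)}{k-1}\right).
\]
For the middle block, Theorem \ref{thm:asymptotic} gives $s_i(k)/(k-1) = \tfrac{1}{2} + O(\log k / k)$ uniformly for $i \in B_k$, and uniform continuity of $f$ then yields $S_B(k) \to \frac{p-1}{p+1} f(\tfrac{1}{2})$. For the outer blocks, the normalized slopes satisfy
\[
\frac{s_i(k)}{k-1} = \frac{12 i}{\mu_0(N)(p+1)(k-1)} + O\!\left(\frac{\log k}{k}\right),
\]
so as $i$ ranges over $A_k$ (resp.\ $C_k$) the leading term parametrizes $[0,\tfrac{1}{p+1}]$ (resp.\ $[\tfrac{p}{p+1},1]$) in equal linear increments of width $\approx 12/(\mu_0(N)(p+1)k)$. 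Recognizing $S_A(k)$ and $S_C(k)$ as Riemann sums, the normalization $1/d_{k,p}$ and the density of sample points combine to give $(p+1) d_k / d_{k,p} \to 1$ and $(p+1)(d_{k,p}-d_k-d_k^{\new})/d_{k,p} \to 1$, whence $S_A(k) \to \int_0^{1/(p+1)} f(x)\, dx$ and $S_C(k) \to \int_{p/(p+1)}^{1} f(x)\, dx$. Summing the three limits matches integration against the measure $\mu^{(p)}$ described in the statement, and a direct check shows the total mass is $\frac{1}{p+1} + \frac{p-1}{p+1} + \frac{1}{p+1} = 1$.

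The only real obstacle is bookkeeping the error term in Theorem \ref{thm:asymptotic}: one must confirm that the $O(\log k / k)$ perturbation of the arguments of $f$ does not disturb the limit, which is immediate from uniform continuity of $f$ on a compact interval, and that Riemann-sum convergence survives the $O(1)$ slack in the endpoints of the index ranges $A_k$ and $C_k$, which contributes only $O(1/d_{k,p})$ to each $S_X(k)$. Everything else reduces to standard manipulations with Riemann sums.
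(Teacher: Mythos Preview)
Your proposal is correct and follows the same approach as the paper. The paper's proof is a single sentence pointing to Theorem~\ref{thm:asymptotic} and the dimension asymptotics for $d_k$, $d_k^{\new}$, and $d_{k,p}$; your write-up simply fleshes out what ``clear'' means there---the three-block partition, the Riemann-sum identification on the outer blocks, and the point-mass contribution from the middle block are exactly the intended argument.
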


\begin{proof}
This is clear from Theorem \ref{thm:asymptotic} and the asymptotics for $d_k, d_k^{\new}$ and $d_{k,p}$ (for example, see the proof of Proposition \ref{prop:approximate-zero-location} for $d_k$ and $d_k^{\new}$; an asymptotic for $d_{k,p}$ is easily obtained from those two).
\end{proof}

\begin{remark}
The measures $\mu_k^{(p)}$ clearly depend on $N$, even if $N$ is suppressed from our notation. However, it is interesting that the limit $\mu^{(p)}$ does not.
\end{remark}

\begin{remark}
The key point in the proof of Theorem \ref{thm:asymptotic}  is the analysis in Proposition \ref{prop:ith_newtonslope} below. If one could prove an analog of Proposition \ref{prop:ith_newtonslope} for the Fredholm series of $U_p$ then the proof of Theorem \ref{thm:asymptotic} would go through as written.
\end{remark}

The rest of this section is devoted to proving Theorem \ref{thm:asymptotic}.  Our strategy is to prove an analoge of Theorem \ref{thm:asymptotic} for ghost $\Delta$-slopes first and, from this, make conclusions about ghost slopes. We will need two short lemmas.

\begin{lemma}\label{lemma:valuations}
Suppose that $y,\lambda > 0$ are integers and $p$ is a prime number. Then
\begin{equation*}
v_p(\lambda!) \leq \sum_{i=0}^{\lambda-1} v_p(y+i) \leq v_p((\lambda-1)!) + \floor{\log_p(y+\lambda)} + \min(v_p(y),v_p(\lambda)).
\end{equation*}
\end{lemma}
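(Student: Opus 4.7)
The plan is to prove the two inequalities separately.

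For the lower bound, I would use the identity
\[
y(y+1)\cdots(y+\lambda-1) \;=\; \lambda!\binom{y+\lambda-1}{\lambda}
\]
and observe that the binomial coefficient is a positive integer, so taking $v_p$ of both sides gives $v_p(\lambda!) \leq \sum_{i=0}^{\lambda-1} v_p(y+i)$ immediately.

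For the upper bound the key step is a counting argument. I would rewrite the sum in ``layer-cake'' form as
\[
\sum_{i=0}^{\lambda-1} v_p(y+i) \;=\; \sum_{j\geq 1} a_j, \qquad a_j := \#\set{i \in \set{0,\dotsc,\lambda-1} : p^j \mid y+i},
\]
and control each $a_j$ by two elementary observations: since multiples of $p^j$ are spaced by $p^j$, any window of $\lambda$ consecutive integers contains at most $\lceil \lambda/p^j\rceil$ of them; and since every element of $\set{y,\dotsc,y+\lambda-1}$ is a positive integer no larger than $y+\lambda-1$, the count $a_j$ vanishes once $p^j > y+\lambda-1$, i.e., for $j > J := \floor{\log_p(y+\lambda-1)}$. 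Writing $\lceil \lambda/p^j\rceil = \floor{\lambda/p^j} + \mathbf{1}_{p^j \nmid \lambda}$ and summing then yields
\[
\sum_{i=0}^{\lambda-1} v_p(y+i) \;\leq\; \sum_{j=1}^{J} \floor{\lambda/p^j} \;+\; \bigl(J - v_p(\lambda)\bigr) \;=\; v_p((\lambda-1)!) + \floor{\log_p(y+\lambda-1)},
\]
where I use Legendre's formula $v_p(\lambda!) = \sum_{j\geq 1}\floor{\lambda/p^j}$ (the tail beyond $J$ vanishes since $v_p(\lambda) \leq \floor{\log_p\lambda} \leq J$), together with $v_p(\lambda!) - v_p(\lambda) = v_p((\lambda-1)!)$.

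The claimed upper bound then follows from $\floor{\log_p(y+\lambda-1)} \leq \floor{\log_p(y+\lambda)}$ and $\min(v_p(y), v_p(\lambda)) \geq 0$; in fact the argument above proves a slightly stronger statement than what is claimed, with the $\min(v_p(y), v_p(\lambda))$ term being unnecessary. The only piece requiring any real care is the combinatorial estimate $a_j \leq \lceil \lambda/p^j\rceil$, and even that is essentially immediate from the arithmetic-progression structure of multiples of $p^j$; the rest is bookkeeping with floors and Legendre's formula.
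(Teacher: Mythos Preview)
Your proof is correct; indeed, as you note, your upper bound argument actually establishes the sharper inequality $\sum_{i=0}^{\lambda-1} v_p(y+i) \leq v_p((\lambda-1)!) + \floor{\log_p(y+\lambda-1)}$, rendering the $\min(v_p(y),v_p(\lambda))$ term superfluous.

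The lower bound is handled identically in the paper, via the integrality of $\binom{y+\lambda-1}{\lambda}$. For the upper bound, however, the paper takes a different route: it rewrites the product as $(\lambda-1)!\cdot\dfrac{(y+\lambda-1)!}{(y-1)!(\lambda-1)!}$, observes that the second factor is symmetric in $y$ and $\lambda$, assumes without loss of generality that $v_p(y)\leq v_p(\lambda)$, and then applies the classical Kummer-type estimate $v_p\binom{n}{k}\leq\floor{\log_p(n+1)}$ to $\binom{y+\lambda-1}{y}$ after peeling off a factor of $y$. Your layer-cake counting argument is more elementary in that it avoids invoking this estimate on binomial coefficients, and it gives the tighter bound directly without the symmetry detour. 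The paper's approach, on the other hand, makes the appearance of the $\min(v_p(y),v_p(\lambda))$ term natural (it comes from the peeled-off factor), even though your argument shows that term is not actually needed.
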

\begin{proof}
Write 
\begin{equation*}
s(y) = \sum_{i=0}^{\lambda-1} v_p(y+i) = v_p\left({y + \lambda -1 \choose \lambda}\lambda!\right).
\end{equation*}
Since binomial coefficients are integers we immediately get the lower bound $v_p(\lambda!) \leq s(y)$. On the other hand, we can also write
\begin{equation*}
{y+\lambda-1\choose \lambda}\lambda! = {(y+\lambda-1)!\over (y-1)!(\lambda-1)!}(\lambda-1)!,
\end{equation*}
so for the upper bound it suffices to see 
\begin{equation}\label{eqn:to-show-valuation-lemma}
v_p\left( {(y+\lambda-1)!\over (y-1)!(\lambda-1)!} \right) \leq \floor{\log_p(y+\lambda)} + \min(v_p(y),v_p(\lambda)).
\end{equation}
Since \eqref{eqn:to-show-valuation-lemma} is symmetric in $\lambda$ and $y$, we may assume that $v_p(y) \leq v_p(\lambda)$. In that case, the classical estimate $v_p\left({n\choose k}\right) \leq \floor{\log_p(n+1)}$ yields
\begin{equation*}
v_p\left( {(y+\lambda-1)!\over (y-1)!(\lambda-1)!} \right) = v_p(y) + v_p\left({y+\lambda-1 \choose y}\right) \leq v_p(y) + \floor{\log_p(y + \lambda)}.
\end{equation*}
This completes the proof.
\end{proof}

Now set $\delta$ be the size of the torsion subgroup in $\Z_p^\times$. Thus $\delta = p-1$ if $p$ is odd and $\delta = 2$ if $p = 2$. If $k_0 \in \Z$, $\lambda > 0$ and $p$ is a prime then we define
\begin{equation*}
P_{k_0,\lambda}(w) = (w-w_{k_0})(w-w_{k_0-\delta})\dotsb (w-w_{k_0 - (\lambda-1)\delta}).
\end{equation*}
Thus $P_{k_0,\lambda} \in \Z[w]$ has $\lambda$-many zeros, the highest zero is $k_0$, and the zeros are an arithmetic progression of difference $p-1$ if $p$ is odd and $2$ if $p$ is even (compare with Proposition \ref{prop:change_in_zeroes}). Write $q=p$ if $p$ is odd and $q = 4$ if $p=2$
\begin{lemma}\label{lemma:john-fixed-up}
Assume that $k \congruent k_0 \bmod \delta$ and $P_{k_0,\lambda}(w_{k}) \neq 0$. Then
\begin{equation*}
v_p(P_{k_0,\lambda}(w_{k})) = {q \lambda \over p-1} + O(\log \lambda,\log\abs{k-k_0}))
\end{equation*}
\end{lemma}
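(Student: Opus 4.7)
The plan is to reduce $v_p(P_{k_0,\lambda}(w_k))$ to a sum of $p$-adic valuations of consecutive integers, then estimate via Lemma~\ref{lemma:valuations} and Legendre's formula for $v_p(n!)$.

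First I would compute $v_p(w_k - w_{k'})$ for arbitrary integers $k, k'$. Since $w_j = \gamma^j - 1$ with $\gamma$ a topological generator of $1 + 2p\Z_p$, the factorization $w_k - w_{k'} = \gamma^{k'}(\gamma^{k-k'}-1)$ combined with the standard identity $v_p(\gamma^n - 1) = v_p(n) + v_p(\gamma-1)$ yields $v_p(w_k - w_{k'}) = c + v_p(k-k')$, where $c := v_p(\gamma-1)$ equals $1$ for odd $p$ and $2$ for $p=2$. (The case $p=2$ has already been used in the proof of Theorem~\ref{theorem:agree-with-buzzard-calegari}.)

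Next, since $k \equiv k_0 \pmod{\delta}$, the quantity $y := (k-k_0)/\delta$ is an integer, and the nonvanishing hypothesis becomes $y \notin \{0,-1,\dotsc,-(\lambda-1)\}$. A direct calculation gives
$$v_p(P_{k_0,\lambda}(w_k)) = \sum_{i=0}^{\lambda-1}\bigl(c + v_p(\delta) + v_p(y+i)\bigr) = \lambda\bigl(c + v_p(\delta)\bigr) + \sum_{i=0}^{\lambda-1} v_p(y+i).$$
If $y \le -\lambda$, the reindexing $i \mapsto \lambda-1-i$ replaces $y$ by $-y-\lambda+1 \ge 1$ without altering the sum or the order of magnitude of $\log|y|$, so I may assume $y \ge 1$.

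Now Lemma~\ref{lemma:valuations} sandwiches $\sum_{i=0}^{\lambda-1} v_p(y+i)$ between $v_p(\lambda!)$ and $v_p((\lambda-1)!) + \bfloor{\log_p(y+\lambda)} + v_p(\lambda)$. Legendre's formula $v_p(n!) = (n - s_p(n))/(p-1)$ gives $v_p(\lambda!) = v_p((\lambda-1)!) + O(\log\lambda) = \lambda/(p-1) + O(\log\lambda)$, and since $\log y = O(\log|k-k_0|)$ I obtain
$$\sum_{i=0}^{\lambda-1} v_p(y+i) = \frac{\lambda}{p-1} + O(\log\lambda,\log|k-k_0|).$$
The proof then closes with a numerical check in each parity: for odd $p$ one has $c + v_p(\delta) = 1 + 0 = 1$, so $v_p(P_{k_0,\lambda}(w_k)) = \lambda + \lambda/(p-1) + O(\cdots) = p\lambda/(p-1) + O(\cdots)$; for $p=2$ one has $c + v_p(\delta) = 2 + 1 = 3$, giving $3\lambda + \lambda = 4\lambda$. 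In both cases the leading term equals $q\lambda/(p-1)$, as desired. The argument is essentially bookkeeping; the only mildly delicate points are the $y<0$ reduction and tracking the parity-dependent constants, neither of which constitutes a genuine obstacle.
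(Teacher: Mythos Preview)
Your proof is correct and follows essentially the same route as the paper's: reduce to a sum $\sum_{i=0}^{\lambda-1} v_p(y+i)$ with $y>0$, then sandwich via Lemma~\ref{lemma:valuations} and Legendre's formula. Your reindexing for $y\le -\lambda$ is exactly the paper's choice of $x = k_0 - (\lambda-1)\delta - k$ in the other case, and your constant $c+v_p(\delta)$ is the paper's $\vartheta$.
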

\begin{proof}
For any $k,k'$ we have $v_p(w_{k} - w_{k'}) = v_p(2p) + v_p(k-k')$. Since $P_{k_0,\lambda}(w_{k}) \neq 0$ and $k \congruent k_0 \bmod \delta$ we have either $k < k_0-(\lambda-1)\delta$ or $k_0 < k$. Thus we deduce that
\begin{equation}\label{eqn:first-eqn}
v_p(P_{k_0,\lambda}(w_{k})) = v_p(2p)\lambda + \sum_{i=0}^{\lambda-1} v_p(x + i\delta)
\end{equation}
where $x = k-k_0$ or $x=k_0 - (\lambda-1)\delta - k$ depending on which choice makes $x > 0$. Note that $x \congruent 0 \bmod \delta$. So, replacing $x$ by $y = x/\delta$,  \eqref{eqn:first-eqn} becomes
\begin{equation}\label{eqn:second}
v_p(P_{k_0,\lambda}(w_{k})) = \vartheta\lambda + \sum_{i=0}^{\lambda-1} v_p(y+i)
\end{equation}
where $\vartheta = 1$ if $p$ is odd and $\vartheta = 3$ otherwise. 

By \eqref{eqn:second}
 and Lemma \ref{lemma:valuations} we see 
\begin{equation}\label{eqn:kanye-west}
\vartheta\lambda + v_p(\lambda!) \leq v_p(P_{k_0,\lambda}(w_{k}))\\ \leq \vartheta\lambda + v_p((\lambda-1)!) + \floor{\log_p(y+\lambda)} + \min(v_p(y),v_p(\lambda)).
\end{equation}
On the left-hand side of \eqref{eqn:kanye-west} we have
\begin{equation*}
\vartheta\lambda + v_p(\lambda!) \geq {(\vartheta(p-1) + 1)\lambda \over p-1} - \ceil{\log_p(\lambda)} = {q\lambda \over p-1} - \ceil{\log_p(\lambda)},
\end{equation*}
and on the right-hand side \eqref{eqn:kanye-west} we have
\begin{equation*}
\vartheta\lambda + v_p((\lambda-1)!) \leq \vartheta\lambda + {\lambda - 1 \over p-1} \leq {q \lambda \over p-1}.
\end{equation*}
(Here we've used the classical formula of Legendre for $v_p(n!)$.) By \eqref{eqn:kanye-west} we get
\begin{equation*}
-\ceil{\log_p(\lambda)} \leq v_p(P_{k_0,\lambda}(w_{k})) - {q\lambda \over p-1} \leq \floor{\log_p(y+\lambda)} + \min(v_p(y),v_p(\lambda)).
\end{equation*}
Since $y = \abs{k-k_0} + O(\lambda)$ (and $\log_p x = O(\log x)$), we're finished.
\end{proof}

Now fix $\cal W_{\ve}$ and write $G(w,t) = 1 + \sum g_i(w)t^i$ for the ghost series over $\cal W_{\varepsilon}$. We assume all weights $k$ are in $\cal W_{\varepsilon}$ in what follows. Recall that $\Delta_i = g_i/g_{i-1}$, and if $\Delta_i(w_{k})$ is well-defined then $v_p(\Delta_i(w_{k}))$  is the $i$-th $\Delta$-slope in weight $k$.  We define
$$
\Delta_i^*(w_{k}) := \begin{cases}
(w-w_{k}) \Delta_i(w_{k})  & \text{if~}\Delta_i \text{~has~a~pole~at~}w_{k},\\
\displaystyle \frac{\Delta_i(w_{k})}{w-w_{k}}   & \text{if~}\Delta_i \text{~has~a~zero~at~}w_{k},\\
\Delta_i(w_{k}) & \text{otherwise}.
\end{cases}
$$
Since $\Delta_i$ only has simple zeros or poles, $\Delta_i^*$ has no zeros or poles.

\begin{proposition}\label{prop:ith_newtonslope}
We have
\begin{equation*}
v_p\left(\Delta_i^{\ast}(w_{k})\right) = {12 i \over \mu_0(N)(p+1)} + O( \log k, \log i)
\end{equation*}
\end{proposition}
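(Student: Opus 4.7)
The plan is to separately estimate $v_p(\Delta_i^+(w_k))$ and $v_p(\Delta_i^-(w_k))$ via Lemma \ref{lemma:john-fixed-up}, use the $\Delta_i^*$ convention to absorb any single $(w-w_k)$ factor that might appear, and then subtract.

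First, I would factor $\Delta_i^+$ (and likewise $\Delta_i^-$) into AP-polynomials of the form $P_{k_0,\lambda}$. By Proposition \ref{prop:change_in_zeroes}(\ref{change:b}), $\Delta_i^{\pm}$ is itself such a polynomial except in the exceptional cases $N=1$ and $p=2,3$, where a single gap forces at most one additional split. If $w_k$ happens to be a root of $\Delta_i^{\pm}$, I would further bisect the relevant progression at $w_k$ and discard the offending factor, producing two AP-polynomials neither of which vanishes at $w_k$; the zero or pole so removed is precisely what the $\Delta_i^*$ definition absorbs. In total, each of the reduced $\Delta_i^{\pm}$ is a product of at most three AP-polynomials $P_{k_0,\lambda}$ that don't vanish at $w_k$.

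Second, by Proposition \ref{prop:change_in_zeroes}(\ref{change:c},\ref{change:d}) the starting points $k_0$ are $O(i)$, and by Lemma \ref{lemma:zeropole_count} the $\lambda$'s of the factors are $O(i)$. So applying Lemma \ref{lemma:john-fixed-up} to each factor, the error $O(\log\lambda,\log|k-k_0|)$ becomes $O(\log k,\log i)$, and summing over the (bounded number of) factors yields
\begin{equation*}
v_p(\Delta_i^*(w_k)) = \frac{q}{p-1}\bigl(\lambda(\Delta_i^+) - \lambda(\Delta_i^-)\bigr) + O(\log k, \log i),
\end{equation*}
where any roots lost to bookkeeping contribute an additive $O(1)$ absorbed into the error.

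Third, I would invoke Lemma \ref{lemma:zeropole_count} to evaluate the main term. For $p$ odd, $\lambda(\Delta_i^+) - \lambda(\Delta_i^-) = \frac{12i(p-1)}{\mu_0(N)p(p+1)} + O(1)$ and $q=p$, so $\frac{q}{p-1}(\lambda(\Delta_i^+)-\lambda(\Delta_i^-)) = \frac{12i}{\mu_0(N)(p+1)} + O(1)$. For $p=2$ the analogous computation (with the $12$ in Lemma \ref{lemma:zeropole_count} replaced by $6$ and $q=4$) yields $4\cdot \frac{i}{\mu_0(N)} = \frac{12i}{\mu_0(N)(p+1)}$ since $p+1=3$. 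The main obstacle is the bookkeeping: confirming that the several sources of constant-order sloppiness (the exceptional $N=1,\,p=2,3$ gap, the potential root at $w_k$, and the $O(1)$'s in Lemmas \ref{lemma:zeropole_count} and \ref{lemma:john-fixed-up}) genuinely collapse into the stated $O(\log k,\log i)$ error, and that no application of Lemma \ref{lemma:john-fixed-up} secretly hits a vanishing factor.
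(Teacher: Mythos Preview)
Your proposal is correct and follows essentially the same approach as the paper: factor $\Delta_i^{\pm}$ into arithmetic-progression polynomials $P_{k_0,\lambda}$, split at $w_k$ if necessary so Lemma~\ref{lemma:john-fixed-up} applies, and then combine with Lemma~\ref{lemma:zeropole_count} to extract the main term $\frac{q}{p-1}(\lambda(\Delta_i^+)-\lambda(\Delta_i^-)) = \frac{12i}{\mu_0(N)(p+1)} + O(1)$. The paper organizes the argument by first restricting to one of finitely many domains (e.g.\ the case $\Delta_i^+(w_k)=0$, where $k=O(i)$ so the error simplifies to $O(\log i)$), whereas you treat all cases uniformly and keep the $O(\log k,\log i)$ error throughout; this is a cosmetic difference only.
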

\begin{proof}
Recall our standard practice of writing $\Delta_i = \Delta_i^+/\Delta_i^-$ in lowest terms. Write $\lambda_i^+$ for the number of  zeros of $\Delta_i^+$ and $\lambda_i^-$ for the number of zeros of $\Delta_i^-$. Write $k_i^+ = \HZ(\Delta_i^+)$ and $k_i^- = \HZ(\Delta_i^-)$. We note it suffices to prove the result separately for pairs $(i,k)$ ranging over a finite number of disjoint domains. With this in mind, we will focus only on the pairs $(i,k)$ such that $w_{k}$ is a zero of $\Delta_i^+$ and leave the other possible pairs for the reader. We will also assume that $p > 3$ or $N > 1$ for simplicity.\footnote{The proof below can easily be modified to handle $p=2,3$ and $N=1$. For example, the corrected formula for $\Delta_i^+$ is $\Delta_i^+(w_{k}) = P_{k_1^+-4,\lambda^+-1}(w_{k})\cdot(w_{12i+2}-w_{k})$, and so the estimates that follow will only be off by $O(\log k, \log i)$.\label{footnote:p=23N=1-asy-fix}}

By \eqref{eqn:Deltaizeros}, if $\Delta_i^+(w_{k}) = 0$ then $k = O(i)$ and thus our goal is to show that 
\begin{equation*}
v_p(\Delta_i^{\ast}(w_{k})) = {12 i \over \mu_0(N)(p+1)} + O(\log i).
\end{equation*}
By Proposition \ref{prop:change_in_zeroes}(a), using that either $p > 3$ or $N > 1$, we have
\begin{equation*}
\Delta_i^+(w_{k}) = P_{k_i^+,\lambda'}(w_{k})\cdot (w-w_{k}) \cdot P_{k - \delta,\lambda''}(w_{k})
\end{equation*}
where $\lambda' + \lambda'' = \lambda^+ - 1$. So, by definition of $\Delta_i^{\ast}$ we have
\begin{equation*}
v_p\left(\Delta_i^{\ast}(w_{k})\right) = v_p\left(P_{k_i^+,\lambda'}(w_{k})\right) + v_p\left(P_{k - \delta,\lambda''}(w_{k})\right) - v_p\left(P_{k_i^-,\lambda^-}(w_{k})\right)
\end{equation*}
and $k \congruent k_i^{\pm} \bmod \delta$. Next, by Proposition \ref{prop:change_in_zeroes}(b,c) we have $k_i^+ = O(i)$ and $k_i^- = O(i)$; by Lemma \ref{lemma:zeropole_count} we have $\lambda^+ = O(i)$ and $\lambda^- = O(i)$. Since $k = O(i)$ as well, Lemma \ref{lemma:john-fixed-up} implies
\begin{equation*}
v_p(\Delta_i^{\ast}(w_{k})) = {q\over p-1}(\lambda ' + \lambda'' - \lambda^-) + O(\log i) = {q\over p-1}(\lambda^+ - \lambda^-) + O(\log i).
\end{equation*}
Finally by Lemma \ref{lemma:zeropole_count} we have
\begin{equation*}
{q \over p-1}(\lambda^+ - \lambda^-) = {12i \over \mu_0(N)(p+1)} + O(1).
\end{equation*}
This completes the proof.
\end{proof}
To pass from asymptotic control of ghost $\Delta$-slopes as in Proposition \ref{prop:ith_newtonslope} to asymptotic control of ghost  slopes, we need to show that $i=d_k$ and $i=d_k+d_k^{\new}$ are asymptotically indices of points on $\NP(G_k)$ (Lemma \ref{lemma:break} below). First, we give asymptotic control of the ghost slopes over ``oldform'' and ``newform'' ranges.

\begin{lemma}
\label{lemma:Gbounds}
If $x > 0$ then there exists a $k'$ such that if $k \geq k'$ then
\begin{enumerate}
\item $\displaystyle v_p(\Delta_i(w_{k})) \displaystyle < \left(\frac{1}{p+1} + x\right) k$ for all $i\leq d_k$, and
\item $\displaystyle v_p(\Delta_{i}(w_{k})) \displaystyle > \left(\frac{p}{p+1} - x\right)k$ for all $i \geq d_k + d_k^{\new}$.
\end{enumerate}
\end{lemma}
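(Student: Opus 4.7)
The plan is a direct application of Proposition \ref{prop:ith_newtonslope} combined with the standard dimension asymptotics $d_k = \frac{k\mu_0(N)}{12} + O(1)$ and $d_k + d_k^{\new} = \frac{k\mu_0(N)p}{12} + O(1)$ from Appendix \ref{app:dimension-formulae}. The key observation is that when either $i \leq d_k$ or $i > d_k + d_k^{\new}$, the formulas \eqref{eqn:Deltaizeros} and \eqref{eqn:Deltaipoles} imply $w_{k}$ is neither a zero nor a pole of $\Delta_i$, so $\Delta_i^{\ast}(w_{k}) = \Delta_i(w_{k})$, and Proposition \ref{prop:ith_newtonslope} yields
\begin{equation*}
v_p(\Delta_i(w_{k})) = \frac{12i}{\mu_0(N)(p+1)} + O(\log k, \log i).
\end{equation*}

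For part (a), I would use $i \leq d_k$ to bound $\frac{12i}{\mu_0(N)(p+1)} \leq \frac{k}{p+1} + O(1)$; since $\log i \leq \log d_k = O(\log k)$ in this range, the total error collapses to $O(\log k)$. Given any fixed $x > 0$, for $k$ sufficiently large one has $O(\log k) < xk$, and the strict bound $v_p(\Delta_i(w_{k})) < \left(\frac{1}{p+1} + x\right)k$ follows.

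Part (b) needs slightly more care because $i$ is not bounded above by any polynomial in $k$. Writing the error as $A(\log k + \log i)$ for a uniform constant $A$ and setting $c = \frac{12}{\mu_0(N)(p+1)}$, I would observe that the function $f(i) = ci - A \log i$ is strictly increasing once $i \geq A/c$. For $k$ large enough the entire range $i \geq d_k + d_k^{\new}$ lies past this threshold, so the minimum of $f$ over this range is attained at the left endpoint $i = d_k + d_k^{\new}$, where $\log i = \log k + O(1)$ and $\frac{12i}{\mu_0(N)(p+1)} \geq \frac{kp}{p+1} + O(1)$. This yields $v_p(\Delta_i(w_{k})) \geq \frac{kp}{p+1} - O(\log k) > \left(\frac{p}{p+1} - x\right)k$ for $k$ large enough, completing the argument on the subset of $i \geq d_k + d_k^{\new}$ where $\Delta_i$ is regular at $w_{k}$.

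The main subtlety I would address up front is the boundary index $i = d_k + d_k^{\new}$, at which $\Delta_i$ has a simple pole at $w_{k}$ by \eqref{eqn:Deltaipoles}; one must either adopt the convention that $v_p$ at such a pole is $+\infty$ (in which case the bound is vacuous there) or restrict the statement to $i > d_k + d_k^{\new}$. Once this convention is fixed, the remainder of the argument is bookkeeping of $O$-terms, with no further obstruction.
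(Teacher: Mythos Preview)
Your argument is correct and follows the paper's approach: part (a) matches the paper's proof, and for part (b) the paper only says it is ``handled similarly,'' so your explicit treatment of the unbounded $\log i$ term via the eventual monotonicity of $ci - A\log i$ is a welcome addition (and is the same device the paper invokes later in the proof of Theorem~\ref{thm:asymptotic}). One small slip: at a pole of $\Delta_i$ one has $v_p(\Delta_i(w_k)) = v_p(g_i(w_k)) - v_p(g_{i-1}(w_k)) = -\infty$, not $+\infty$, so the inequality in (b) actually \emph{fails} at $i = d_k + d_k^{\new}$ whenever $d_k^{\new} \geq 2$; your second alternative, reading the statement with the strict inequality $i > d_k + d_k^{\new}$, is the correct fix and is all that is used downstream in Lemma~\ref{lemma:break}.
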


\begin{proof}
We check the claim (a) of the lemma as (b) is handled similarly. Note that if $i\leq d_k$ then $\Delta_i^{\ast}(w_{k}) = \Delta_i(w_{k})$, and $i = O(k)$. So, Proposition \ref{prop:ith_newtonslope} implies there is a constant $A\geq0$ such that if $i\leq d_k$ then
$$
v_p(\Delta_i(w_{k})) \leq \frac{12i}{\mu_0(N)(p+1)} + A \log k.
$$
Since $i\leq \displaystyle d_k = \frac{k\mu_0(N)}{12} + O(1)$, 
$$
v_p(\Delta_i(k)) \leq
\frac{k}{p+1}  + A \log k + B
$$
for some $B>0$. The lemma clearly follows now.
\end{proof}

\begin{lemma}
\label{lemma:ss_slope}
Set $y_i(k) = v_p(g_i(w_{k}))$. Then,
$$
\frac{y_{d_k+d_k^{\new}}(k) - y_{d_k}(k)}{d_k^{\new}} = \frac{k}{2} + O(\log k).
$$
\end{lemma}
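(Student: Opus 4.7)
The plan is to reduce the claim to a telescoping sum of $\Delta$-slopes that can be controlled via Proposition~\ref{prop:ith_newtonslope}. The immediate obstacle is that the obvious telescoping $y_{d_k + d_k^{\new}}(k) - y_{d_k}(k) = \sum_{i=d_k+1}^{d_k+d_k^{\new}} v_p(\Delta_i(w_k))$ is nonsense, because the intermediate $y_i(k)$ are all $+\infty$ (by \eqref{eqn:zeros}). The key trick is to normalize at $w=w_k$.

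Introduce $\tilde{g}_i(w) := g_i(w)/(w-w_k)^{m_i(k)}$, which is a polynomial with $\tilde{g}_i(w_k)\neq 0$ for every $i$. Since $m_{d_k}(k)=m_{d_k+d_k^{\new}}(k)=0$, we have $\tilde{g}_i(w_k)=g_i(w_k)$ at the two endpoints, so
\begin{equation*}
y_{d_k+d_k^{\new}}(k)-y_{d_k}(k) \;=\; v_p\!\left(\tilde{g}_{d_k+d_k^{\new}}(w_k)/\tilde{g}_{d_k}(w_k)\right)\;=\;\sum_{i=d_k+1}^{d_k+d_k^{\new}} v_p\!\left((\tilde{g}_i/\tilde{g}_{i-1})(w_k)\right).
\end{equation*}
A case check using \eqref{eqn:Deltaizeros}--\eqref{eqn:Deltaipoles} shows $(\tilde{g}_i/\tilde{g}_{i-1})(w_k)=\Delta_i^{\ast}(w_k)$ exactly, matching the three cases in the definition of $\Delta_i^{\ast}$. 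Thus the telescope becomes
\begin{equation*}
y_{d_k+d_k^{\new}}(k)-y_{d_k}(k)\;=\;\sum_{i=d_k+1}^{d_k+d_k^{\new}} v_p(\Delta_i^{\ast}(w_k)).
\end{equation*}

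Next I would apply Proposition~\ref{prop:ith_newtonslope}. Over the summation range we have $i=O(k)$, so $\log i = O(\log k)$, and each summand equals $\tfrac{12i}{\mu_0(N)(p+1)} + O(\log k)$. Summing the main terms using $\sum_{i=d_k+1}^{d_k+d_k^{\new}} i = d_k^{\new}(2d_k+d_k^{\new}+1)/2$, and the dimension asymptotics $d_k=\tfrac{k\mu_0(N)}{12}+O(1)$ and $d_k^{\new}=\tfrac{k\mu_0(N)(p-1)}{12}+O(1)$ (Appendix~\ref{app:dimension-formulae}), one computes $2d_k+d_k^{\new}=\tfrac{k\mu_0(N)(p+1)}{12}+O(1)$. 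Therefore
\begin{equation*}
\frac{1}{d_k^{\new}}\sum_{i=d_k+1}^{d_k+d_k^{\new}} \frac{12i}{\mu_0(N)(p+1)} \;=\; \frac{12(2d_k+d_k^{\new}+1)}{2\mu_0(N)(p+1)}\;=\;\frac{k}{2}+O(1).
\end{equation*}
The accumulated error is $\sum_i O(\log k)=O(d_k^{\new}\log k)$, which after division by $d_k^{\new}$ contributes $O(\log k)$, completing the estimate. The only substantive step is the normalization trick above; everything else is bookkeeping from Proposition~\ref{prop:ith_newtonslope} and standard dimension formulas.
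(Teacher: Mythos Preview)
Your argument is correct and follows essentially the same route as the paper: both reduce the difference $y_{d_k+d_k^{\new}}(k)-y_{d_k}(k)$ to a sum of $v_p(\Delta_i^\ast(w_k))$ over the newform range and then invoke Proposition~\ref{prop:ith_newtonslope} together with the dimension asymptotics. Your normalization $\tilde g_i(w)=g_i(w)/(w-w_k)^{m_i(k)}$ is a clean way to make precise the paper's slightly informal product identity $\prod \Delta_i(w_k)=\prod \Delta_i^\ast(w_k)$ over $d_k< i\le d_k+d_k^{\new}$; otherwise the two proofs differ only in that the paper computes $y_{d_k}(k)$ and $y_{d_k+d_k^{\new}}(k)$ separately from $i=1$ and subtracts, whereas you telescope the difference directly.
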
 

\begin{proof}
By Proposition \ref{prop:ith_newtonslope}, we have
\begin{align}
y_{d_k}(k) \label{eqn:ydk-shorter}
&= \sum_{i=1}^{d_k} v_p(\Delta_i (w_{k})) 
= \sum_{i=1}^{d_k} \frac{12i}{\mu_0(N)(p+1)} + O(\log(k)) \\
&= \frac{12}{\mu_0(N)(p+1)} \binom{d_k}{2} + O(k\log k) 
= \frac{6(d_k)^2 }{\mu_0(N)(p+1)} + O(k\log k) \nonumber
\end{align}
Among $d_k < i < d_k + d_k^{\new}$, $w_{k}$ is a zero of $\Delta_i$ exactly as many times as it is a pole (by construction), and so
$$
\prod_{j=d_k+1}^{d_k+d_k^{\new}} \Delta_i(w_{k}) = \prod_{j=d_k+1}^{d_k+d_k^{\new}} \Delta_i^*(w_{k}).
$$
Arguing as above, using Proposition \ref{prop:ith_newtonslope}, gives
\begin{equation}\label{eqn:ydk-longer}
y_{d_k+d_k^{\new}}(k) = \sum_{i=1}^{d_k+d_k^{\new}} v_p(\Delta_i^*(w_{k}))
  =  {6(d_k+d_k^{\new})^2 \over \mu_0(N)(p+1)}    + O(k\log k) 
\end{equation}
Combining \eqref{eqn:ydk-shorter} and \eqref{eqn:ydk-longer}, we deduce that
\begin{align*}
\frac{y_{d_k+d_k^{\new}(k)} - y_{d_k}(k)}{d_k^{\new}} 
&= {1 \over d_k^{\new}}\cdot\left(\frac{6((d_k+d_k^{\new})^2-(d_k)^2)}{\mu_0(N)(p+1)} +O(k\log k)\right)  \\
&= \frac{6d_{k,p}}{\mu_0(N)(p+1)}  + O(\log k) 
= \frac{k}{2} + O(\log k),
\end{align*}
as desired.
\end{proof}

\begin{lemma}
\label{lemma:break}
For $k\gg0$, $i=d_k$ and $i=d_k+d_k^{\new}$ are indices of break points on $\NP(G_k)$.
\end{lemma}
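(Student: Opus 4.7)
The plan is to exhibit, for each of the two candidate indices $i_0 \in \{d_k, d_k+d_k^{\new}\}$ and $k\gg 0$, a supporting slope at $(i_0, y_{i_0}(k))$ that strictly separates the left- and right-average slopes. First I would record the key structural observation: by \eqref{eqn:zeros}, the only indices with $g_i(w_k) \neq 0$ are $i \in \{0,1,\dots,d_k\} \cup \{d_k+d_k^{\new}, d_k+d_k^{\new}+1,\dots\}$. The intermediate indices $d_k < i < d_k+d_k^{\new}$ give $v_p(g_i(w_k)) = \infty$, so they contribute nothing to the Newton polygon. Hence it suffices to verify
\[
\sup_{i<i_0,\, g_i(w_k)\neq 0} \frac{y_{i_0}-y_i}{i_0-i} \;<\; \inf_{i>i_0,\, g_i(w_k)\neq 0} \frac{y_i-y_{i_0}}{i-i_0}.
\]

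Fix $x > 0$ with $x < \tfrac{p-1}{2(p+1)}$, so $\tfrac{1}{p+1}+x < \tfrac{1}{2} < \tfrac{p}{p+1}-x$. For $i_0 = d_k$, each left average slope $(y_{d_k}-y_i)/(d_k-i)$ with $i < d_k$ is an average of $\Delta$-slopes $v_p(\Delta_j(w_k))$ for $j \in [i+1,d_k]$, each of which is strictly less than $(\tfrac{1}{p+1}+x)k$ by Lemma \ref{lemma:Gbounds}(a); hence the supremum is at most $(\tfrac{1}{p+1}+x)k$. On the right, the slope from $(d_k, y_{d_k})$ to $(d_k+d_k^{\new}, y_{d_k+d_k^{\new}})$ is $k/2 + O(\log k)$ by Lemma \ref{lemma:ss_slope}, and for $i > d_k+d_k^{\new}$ the slope $(y_i-y_{d_k})/(i-d_k)$ is a weighted average of $k/2+O(\log k)$ and an average of $v_p(\Delta_j(w_k))$ for $j \in (d_k+d_k^{\new}, i]$, the latter strictly exceeding $(\tfrac{p}{p+1}-x)k$ by Lemma \ref{lemma:Gbounds}(b). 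Both inputs to this weighted average exceed $(\tfrac{1}{p+1}+x)k$ for $k\gg 0$, since the linear gap $(\tfrac{1}{2}-\tfrac{1}{p+1}-x)k$ dominates the $O(\log k)$ correction. Thus the infimum on the right strictly exceeds the supremum on the left.

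For $i_0 = d_k+d_k^{\new}$ the argument is symmetric. The right slopes $(y_i-y_{i_0})/(i-i_0)$ for $i > i_0$ are averages of $\Delta$-slopes each strictly exceeding $(\tfrac{p}{p+1}-x)k$. On the left, the slope from $(d_k, y_{d_k})$ to $(i_0, y_{i_0})$ is $k/2+O(\log k)$, while for $i < d_k$ the slope $(y_{i_0}-y_i)/(i_0-i)$ is a weighted average of $k/2+O(\log k)$ and a quantity at most $(\tfrac{1}{p+1}+x)k$; either way the supremum is at most $k/2 + O(\log k)$, strictly less than $(\tfrac{p}{p+1}-x)k$ for $k\gg 0$.

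The main subtlety I would want to keep in mind is that the $\Delta$-slope $v_p(\Delta_{d_k+d_k^{\new}}(w_k))$ itself is typically not finite (the ratio $g_i/g_{i-1}$ has a pole at $w_k$ in the generic situation), which is why the argument above is phrased in terms of finite-to-finite average slopes rather than individual $\Delta$-slopes across the newform range. Once that is handled, the lemma follows by combining the per-index bounds of Lemma \ref{lemma:Gbounds} with the integrated slope of Lemma \ref{lemma:ss_slope}.
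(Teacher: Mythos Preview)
Your argument is correct and is essentially the same as the paper's: both combine Lemma~\ref{lemma:Gbounds} and Lemma~\ref{lemma:ss_slope} to separate the secant slopes on the two sides of $d_k$ and of $d_k+d_k^{\new}$. The only difference is cosmetic: the paper isolates the convexity verification as a standalone statement (Lemma~\ref{lemma:three_part}) and then says the result is immediate, whereas you inline that verification and compute the weighted averages explicitly. Your remark about avoiding the ill-defined $\Delta$-slope at $i=d_k+d_k^{\new}$ is exactly the point absorbed into the hypothesis ``$N_2<i$'' of Lemma~\ref{lemma:three_part}.
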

\begin{proof}
This is immediate from the two previous lemmas, and the next lemma whose proof we leave to the reader.
\end{proof}
\begin{lemma}
\label{lemma:three_part}
Consider a collection $\cP = \{(i,y_i) : i\geq 0\}$ such that $y_i \in \R_{\geq 0} \union \set{\infty}$ and $y_i = \infty$ if and only if $N_1<i<N_2$ for some $N_i \geq 0$.  If $i < j$, set $\Delta_{i,j} = {y_j-y_i\over j-i}$, and set $\Delta_i := \Delta_{i-1,i}$.  Assume that there are constants $\gamma_i$ such that:
\begin{enumerate}
\item 
If $i \leq N_1$ then $\Delta_i \leq \gamma_1$; 
\item 
If $N_2 < i$ then $\Delta_i \geq \gamma_2$; and 
\item
$\gamma_1 < \Delta_{N_1,N_2} < \gamma_2$.
\end{enumerate}
Then, $N_1$ and $N_2$ are indices of break points of $NP(\cP)$.
\end{lemma}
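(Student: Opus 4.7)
The plan is to verify the two break-point claims directly from the three slope hypotheses, using the elementary fact that a point $(n, y_n)$ is a vertex of the lower convex hull precisely when every chord-slope from a point to its left is strictly less than every chord-slope from it to a point on the right. Equivalently, I will show
\begin{equation*}
\Delta_{i,N_1} < \Delta_{N_1,j} \text{ for all valid } i < N_1 < j,
\qquad
\Delta_{i,N_2} < \Delta_{N_2,j} \text{ for all valid } i < N_2 < j,
\end{equation*}
where ``valid'' means $y_i, y_j < \infty$.

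First I would handle $N_1$. If $i \leq N_1$ is valid, then $\Delta_{i,N_1}$ is the average of the consecutive slopes $\Delta_{i+1},\dotsc,\Delta_{N_1}$, each bounded above by $\gamma_1$ by hypothesis (1); hence $\Delta_{i,N_1} \leq \gamma_1$. For the right side, $j > N_1$ valid forces $j \geq N_2$. If $j = N_2$, then $\Delta_{N_1,j} = \Delta_{N_1,N_2} > \gamma_1$ by hypothesis (3). If $j > N_2$, one writes
\begin{equation*}
\Delta_{N_1,j} = \frac{(N_2-N_1)\Delta_{N_1,N_2} + (j-N_2)\Delta_{N_2,j}}{j - N_1},
\end{equation*}
a convex combination of $\Delta_{N_1,N_2} > \gamma_1$ and $\Delta_{N_2,j}$, which by hypothesis (2) (averaging $\Delta_{N_2+1},\dotsc,\Delta_j$) is at least $\gamma_2 > \gamma_1$. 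Thus $\Delta_{N_1,j} > \gamma_1 \geq \Delta_{i,N_1}$, proving $N_1$ is a vertex.

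For $N_2$, the argument is symmetric. Any valid $i < N_2$ satisfies either $i = N_1$ (giving $\Delta_{i,N_2} = \Delta_{N_1,N_2} < \gamma_2$ by (3)) or $i < N_1$, in which case
\begin{equation*}
\Delta_{i,N_2} = \frac{(N_1-i)\Delta_{i,N_1} + (N_2-N_1)\Delta_{N_1,N_2}}{N_2-i}
\end{equation*}
is a convex combination of quantities $\leq \gamma_1$ and $< \gamma_2$, hence $< \gamma_2$. On the other side, for $j > N_2$ valid, averaging hypothesis (2) gives $\Delta_{N_2,j} \geq \gamma_2$, so $\Delta_{i,N_2} < \gamma_2 \leq \Delta_{N_2,j}$.

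There is no real obstacle here; the only care needed is to remember that $y_i = \infty$ for $N_1 < i < N_2$ removes those indices from consideration as possible break points or chord endpoints, so the only ``crossing'' chords between the two sides of $N_1$ (respectively $N_2$) jump over the infinite block, and the weighted-average identity above handles exactly this situation.
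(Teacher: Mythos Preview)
Your argument is correct: the convex-combination identities reduce everything to the three hypotheses, and the handling of the ``infinite block'' $N_1 < i < N_2$ is exactly right. One cosmetic slip: in the $N_1$ step you write ``if $i \leq N_1$ is valid'' where you want $i < N_1$ (otherwise $\Delta_{i,N_1}$ is undefined), but the subsequent computation makes clear what is meant.

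As for comparison, the paper explicitly leaves this lemma to the reader, so there is no authors' proof to compare against. Your direct verification via the chord-slope criterion is the natural argument and is precisely what the authors would expect a reader to supply.
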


We now prove the main theorem of this section.

\begin{proof}[Proof of Theorem \ref{thm:asymptotic}]
Let $C = {12 \over \mu_0(N)(p+1)}$. We need to show that there exists a constant $A > 0$ such that 
\begin{enumerate}[(i)]
\item if $d_k < i \leq d_k + d_k^{\new}$ then $-A \log k \leq s_i(k) - k/2 \leq A \log k$, and
\item If $i \leq d_k$ or $i > d_k + d_k^{\new}$ then $-A\max(\log k, \log i) \leq s_i(k) - Ci \leq A \max( \log k, \log i)$
\end{enumerate}
It suffices to check $A$ exists independently for each of the four bounds.

For (i), if $k$ is fixed then only finitely many $i$ satisfy $i \leq d_k + d_k^{\new}$ and so we may, without loss of generality, assume that $k$ is sufficiently large. In that case, Lemma \ref{lemma:break} implies that the indices $d_k$ and $d_k+d_k^{\new}$ are indices of break points on $\NP(G_k)$, and so Lemma \ref{lemma:ss_slope} proves (i) holds for some $A>0$.

For case (ii), we write $y_i(k) = v_p(g_i(w_{k}))$. To compute an asymptotic for $s_i(k)$ it suffices to assume that $(i-1,y_{i-1}(k))$ is a break point of the Newton polygon. In that case, by definition of Newton polygon, we know that $s_i(k) \leq v_p(\Delta_i(w_{k}))$ for all such $i$ and all $k$ and so Proposition \ref{prop:ith_newtonslope} gives us upper bounds for (ii).

Now we deal with lower bounds. We may separately assume that $i \leq d_k$ and $i > d_k + d_k^{\new}$. First assume that $i \leq d_k$. Then $i = O(k)$, so we can choose a constant $A > 0$ such that if $m \leq d_k$ then $v_p(\Delta_m(w_{k})) \geq Cm - A \log k$ (Proposition \ref{prop:ith_newtonslope}). In particular, if $j\geq 0$ and $i + j \leq d_k$ then
\begin{multline}\label{eqn:ileqdk}
y_{i+j}(k) = y_{i-1}(k) + \sum_{m=i}^{i+j} v_p(\Delta_m(w_{k})) \geq y_{i-1}(k) + \sum_{m=i}^{i+j} (Cm - A \log k)\\
\geq y_{i-1}(k) + (Ci + {Cj\over 2})(j+1) - (A \log k)(j+1).
\end{multline}
Thus, 
\begin{equation}\label{eqn:ileqdk2}
{y_{i+j}(k) - y_{i-1}(k)\over j+1} \geq Ci + {Cj\over 2} - A \log k.
\end{equation}
The right-hand side of \eqref{eqn:ileqdk2} is minimized at $j = 0$ and so we deduce 
\begin{equation}\label{eqn:nearlyileqdkslope}
{y_{i+j}(k) - y_{i-1}(k) \over j+1} \geq Ci - A \log k
\end{equation}
for all $i \leq d_k$ and $j \geq 0$ with $i + j \leq d_k$. Finally by Lemma \ref{lemma:break}, except for finitely many $k$, and thus finitely many $i \leq d_k$, $s_i(k)$ is the slope of line segment connecting index $i-1$ to index $i+j$ for some $i + j \leq d_k$. Thus we conclude $s_i(k) - Ci \geq - A \log k$ for some $A > 0$ and all $i \leq d_k$.

Now consider the case where $i > d_k + d_k^{\new}$. If $i$ is fixed then $i \leq d_k$ except for finitely many $k$ and so we may also suppose in what follows that $i$ is sufficiently large (to be determined). Continuing, since $i > d_k + d_k^{\new}$, we have $k = O(i)$ and so the analog of \eqref{eqn:ileqdk2} is 
\begin{equation}\label{eqn:igreaterdk}
{y_{i+j} - y_{i-1} \over j+1} \geq Ci + {Cj\over 2} - A \log(i+j)
\end{equation}
for all $j\geq 0$. The right-hand side of \eqref{eqn:igreaterdk}, as a function of $j$, has a unique local minimum at $j = 2A/C - i$ and so if we suppose that $i > 2A/C$ then the right-hand side of \eqref{eqn:igreaterdk} is minimized at $j=0$ on the domain $j \geq 0$. The proof is now completed  just as before.
\end{proof}

\section{Halos and arithmetic progressions}
\label{sec:global}

The goal of this section is to prove that for weights $\kappa$ with $w_{\kappa} \nin \Z_p$, the slopes of $\NP(G_{\kappa})$ are, except for a finite number of terms, a finite union of arithmetic progressions whose common difference can be explicitly determined. Throughout we will {{\em assume that $p$ is odd}}. See Remark \ref{rmk:p=2-aps-remark} for $p=2$.

Fix a component $\cal W_{\ve}$  of $p$-adic weight space, and we implicitly assume all weights lie within $\cal W_{\ve}$ in what follows. Set $\overline{G(w,t)} = \overline{G^{(\varepsilon)}(w,t)} \in \F_p[[w,t]]$ for the reduction modulo $p$ of the ghost series. We write $\NP(\bar{G})$ for the Newton polygon of $\overline{G(w,t)}$ computed with respect $w$-adic valuation on the coefficients in $\F_p[[w]]$. Write
\begin{equation*}
C_{p,N} := {p(p-1)(p+1)\mu_0(N)\over 24},
\end{equation*}
and if $r \geq 0$ is an integer write $C_{p,N,r} = p^r C_{p,N}$. Since $p$ is odd, $p(p-1)(p+1) \congruent 0 \bmod 24$, so $C_{p,N}$ is an integer divisible by $\mu_0(N)$.  Recall that if $w_{\kappa} \nin \Z_p$ then we write $\alpha_{\kappa} = \sup_{w' \in \Z_p} v_p(w_{\kappa}-w') \in (0,\infty)$.

\begin{theorem}\label{theorem:global-halo-progressions}
Assume that $w_{\kappa} \nin \Z_p$ and write $r = \floor{\alpha_\kappa}$. Then,
the slopes of $\NP(G_{\kappa})$  form a finite union of $C_{p,N,r}$-many arithmetic progressions with common difference
\begin{equation*}
{(p-1)^2\over 2}\left(\alpha_{\kappa} + \sum_{v=1}^r (p-1)p^{r-v}\cdot v \right)
\end{equation*}
up to finitely many exceptional slopes contained within the first $C_{p,N,r}$ slopes.
\end{theorem}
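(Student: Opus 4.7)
The plan is to establish that, for $i$ sufficiently large, the ghost $\Delta$-slopes satisfy $v_p(\Delta_{i+C}(w_\kappa)) - v_p(\Delta_i(w_\kappa)) = d$ \emph{exactly}, where $C := C_{p,N,r}$ and $d$ is the common difference in the statement. Combined with an eventual monotonicity statement for the $\Delta$-slopes (which follows from an adaptation of Proposition \ref{prop:ith_newtonslope}, yielding asymptotic growth rate $d/C > 0$), this exact periodicity will imply that the slopes of $\NP(G_\kappa)$ split into $C$-many arithmetic progressions of common difference $d$, up to the initial segment of at most $C$ slopes where monotonicity may fail.

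First, I would choose an integer $k_0 \in \cal W_\varepsilon$ satisfying $v_p(w_{k_0} - w^*) \geq r+1$, where $w^* \in \Z_p$ is a point realizing the distance $\alpha_\kappa = v_p(w_\kappa - w^*)$. The ultrametric inequality then yields, for any $k \in \cal W_\varepsilon$ with $v := v_p(k-k_0)$,
\begin{equation*}
v_p(w_\kappa - w_k) = \begin{cases} \alpha_\kappa & \text{if } v \geq r, \\ 1 + v & \text{if } 0 \leq v < r. \end{cases}
\end{equation*}
In particular this valuation depends only on the residue class of $k$ modulo $p^r$, and a direct computation shows that summing over a complete set of residues gives $T_r := \alpha_\kappa + (p-1)\sum_{v=1}^r v p^{r-v}$. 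Since $\gcd(p-1, p^r) = 1$, any arithmetic progression inside $\cal W_\varepsilon$ with common difference $p-1$ and length a multiple of $p^r$ hits each residue class modulo $p^r$ the same number of times, so the corresponding sum of valuations equals $(\text{length}/p^r) \cdot T_r$.

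Next, I would apply this ``balancing lemma'' to the zero sets of $\Delta_i^{\pm}$, which are APs of common difference $p-1$ by Proposition \ref{prop:change_in_zeroes}. The endpoints $\HZ(\Delta_i^\pm)$ and $\LZ(\Delta_i^\pm)$ are linear in $i$ with respective slopes $12/\mu_0(N)$, $24/(\mu_0(N)(p+1))$, $24/(\mu_0(N)(p+1))$, and $12/(\mu_0(N)p)$, up to an $O(1)$ error. The value $C_{p,N,r} = p^{r+1}(p-1)(p+1)\mu_0(N)/24$ is chosen precisely so that each of the three resulting shifts over an $i$-increment of $C$ --- namely $p^{r+1}(p-1)(p+1)/2$, $p^{r+1}(p-1)$, and $p^r(p-1)(p+1)/2$ --- is a multiple of $p^r(p-1)$. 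For $i$ large enough that the $O(1)$ residuals have settled into a periodic pattern, the newly-added top zeros and newly-lost bottom zeros of $\Delta_{i+C}^{\pm}$ relative to $\Delta_i^{\pm}$ therefore form APs of length a multiple of $p^r$. The balancing lemma reduces each contribution to an integer multiple of $T_r$, and an explicit subtraction gives
\begin{equation*}
v_p(\Delta_{i+C}^+(w_\kappa)) - v_p(\Delta_i^+(w_\kappa)) = \tfrac{p(p-1)}{2} T_r, \qquad v_p(\Delta_{i+C}^-(w_\kappa)) - v_p(\Delta_i^-(w_\kappa)) = \tfrac{p-1}{2} T_r,
\end{equation*}
whose difference is exactly $\frac{(p-1)^2}{2} T_r = d$, as required.

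The main obstacle is justifying that the $O(1)$ residuals in the endpoint formulas of Proposition \ref{prop:change_in_zeroes} are periodic in $i$ with period dividing $C$, so that the shifts above are truly constant rather than merely constant-up-to-bounded-error. This reduces to a careful inspection of the dimension formulas recalled in Appendix \ref{app:dimension-formulae}:\ both $d_k$ and $d_k^{\new}$ are linear in $k$ plus corrections periodic in $k$ modulo a divisor of $12$ (adjusted for $\mu_0(N)$ in the general tame level case), and $C_{p,N,r}$ is compatible with this quasi-periodic structure by construction. Once this compatibility is verified, the linear growth rate of the $\Delta$-slopes yields eventual monotonicity, ghost $\Delta$-slopes agree with slopes of $\NP(G_\kappa)$ for $i \gg 0$, and the finite initial segment absorbs all exceptional slopes into the first $C_{p,N,r}$ indices, completing the proof.
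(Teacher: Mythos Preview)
Your computation of the common difference via the ``balancing lemma'' is essentially the paper's own argument (its Lemma \ref{lemma:easy} and equation \eqref{eqn:h-equation-halo}), and the numerics $\tfrac{p(p-1)}{2}T_r$ and $\tfrac{p-1}{2}T_r$ are exactly right. Where you diverge from the paper is in how you pass from $\Delta$-slopes to Newton slopes, and this is where a gap appears.

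You claim the identity $v_p(\Delta_{i+C}(w_\kappa)) - v_p(\Delta_i(w_\kappa)) = d$ only for $i$ sufficiently large, and then invoke eventual monotonicity to conclude that exceptions lie among the first $C$ slopes. That last step does not follow: if exact periodicity and monotonicity only begin at some index $i_0$, the exceptional slopes can occupy the first $i_0$ indices, not merely the first $C$. The paper avoids this by proving the $\Delta$-slope identity for \emph{every} $i\ge 1$, via exact endpoint-shift formulas (Lemma \ref{lemma:lambdazp_change} and Proposition \ref{prop:lambda_change}) rather than the asymptotic ones of Proposition \ref{prop:change_in_zeroes}. With exact periodicity in hand, Lemma \ref{lemma:newton_slopes} gives the conclusion by a breakpoint-shifting argument (if $x\ge C$ is a breakpoint then so is $x-C$); no monotonicity is needed.

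Two things you are missing to make the identity hold for all $i$: first, the paper replaces $g_i$ by $g_i^\sharp$ (Lemma \ref{lemma:remove-wt2-zero}), stripping the weight-$2$ zero, which is exactly what spoils the small-$i$ exactness. Second, your proposed fix---periodicity of the $O(1)$ residuals in Proposition \ref{prop:change_in_zeroes} with period dividing $C$---is morally what Lemma \ref{lemma:lambdazp_change} establishes, but the paper proves it directly from the dimension recursion $d_{k+12} = d_k + \mu_0(N)$ (and its newform analogue) rather than by appealing to general periodicity of the formulas in Appendix \ref{app:dimension-formulae}. The case $\lambda(\Delta_i^\pm)=0$ and the anomaly at $p=3$, $N=1$ require separate treatment there.
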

In Theorem \ref{theorem:global-halo-progressions}, the condition that $r = 0$ is equivalent to $0 < v_p(w_{\kappa}) < 1$, and in that case $\alpha_{\kappa} = v_p(w_{\kappa})$. The conclusion is the the slopes of $\NP(G_{\kappa})$ are, up to a finite number of exceptions, a finite union of $C_{p,N}$-many arithmetic progressions of common difference ${(p-1)^2\over 2}\cdot v_p(w_{\kappa})$. From the ghost spectral halo (Theorem \ref{thm:gsh} in the introduction) we deduce:

\begin{corollary}
\label{cor:progs}
The slopes of $\NP(\overline{G})$ are a finite union of $C_{p,N}$-many arithmetic progressions whose common difference is $\frac{(p-1)^2}{2}$ up to finitely many exceptional slopes contained within the first $C_{p,N}$ slopes.
\end{corollary}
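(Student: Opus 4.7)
The plan is to deduce Corollary \ref{cor:progs} directly from Theorem \ref{theorem:global-halo-progressions} together with the ghost spectral halo (Theorem \ref{thm:gsh}). Concretely, I would choose any $\kappa \in \cal W_{\varepsilon}$ with $0 < v_p(w_{\kappa}) < 1$; then $w_{\kappa} \notin \Z_p$ and $\alpha_{\kappa} = v_p(w_{\kappa}) \in (0,1)$, so $r = \lfloor \alpha_{\kappa}\rfloor = 0$. Applying Theorem \ref{theorem:global-halo-progressions} with $r=0$, the empty sum over $v$ vanishes and the theorem gives that the slopes of $\NP(G_{\kappa})$ are, except for finitely many exceptional slopes contained within the first $C_{p,N,0} = C_{p,N}$ slopes, a union of $C_{p,N}$-many arithmetic progressions with common difference $\frac{(p-1)^2}{2}\cdot v_p(w_{\kappa})$.

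Next I would invoke Theorem \ref{thm:gsh}, which asserts that on the region $0 < v_p(w_{\kappa}) < 1$ the rescaled Newton polygon $\frac{1}{v_p(w_{\kappa})}\NP(G_{\kappa})$ is constant and equals $\NP(\overline{G}^{(\varepsilon)})$. Dividing each slope of $\NP(G_{\kappa})$ by the common factor $v_p(w_{\kappa})$ scales every arithmetic progression to have common difference $\frac{(p-1)^2}{2}$, and converts the first $C_{p,N}$ exceptional slopes of $\NP(G_{\kappa})$ into the first $C_{p,N}$ slopes of $\NP(\overline{G}^{(\varepsilon)})$. Since the statement of the corollary is componentwise (the notation $\overline{G}$ suppresses $\varepsilon$), this yields exactly the claimed description of $\NP(\overline{G})$.

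There is essentially no obstacle beyond checking that the transformations commute with the rescaling: a finite union of arithmetic progressions scales to a finite union of arithmetic progressions, with the common difference rescaled by the same factor, and the exceptional set (being the first $C_{p,N}$ slopes) is preserved under this scaling. The heart of the argument — the counting of arithmetic progressions and the computation of the common difference — is entirely delegated to Theorem \ref{theorem:global-halo-progressions}, and the transfer from $\NP(G_{\kappa})$ to $\NP(\overline{G})$ is handled by the halo statement.
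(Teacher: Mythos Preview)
Your proposal is correct and follows exactly the approach the paper indicates: the paragraph immediately preceding Corollary \ref{cor:progs} spells out that specializing Theorem \ref{theorem:global-halo-progressions} to $r=0$ (equivalently $0 < v_p(w_\kappa) < 1$, so $\alpha_\kappa = v_p(w_\kappa)$) gives progressions of common difference $\tfrac{(p-1)^2}{2}\cdot v_p(w_\kappa)$ in $\NP(G_\kappa)$, and then the ghost spectral halo (Theorem \ref{thm:gsh}) rescales this to $\NP(\overline G)$. Your verification that $w_\kappa \notin \Z_p$ and $\alpha_\kappa = v_p(w_\kappa)$ on this region is also correct.
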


\begin{remark}
The exceptional slopes in $\NP(\bar G)$ should not exist (see the comments after Theorem \ref{theorem:intro-aps-boundary} in the introduction) but we have not pursued proving this stronger statement.
\end{remark}

The remainder of the section is devoted to proving Theorem \ref{theorem:global-halo-progressions}.  Our method, as in Section \ref{subsec:distribution}, is to first verify a corresponding statement for ghost $\Delta$-slopes, and, from this,  deduce our result about ghost slopes.   To this end, here is a general lemma on Newton polygons.

\begin{lemma}
\label{lemma:newton_slopes}
Consider a collection $\cP = \{(i,y_i) : i\geq 0\}$ such that $y_i \in \R_{>0}$.  If the $\Delta$-slopes of $\cP$ form a union of $C$ arithmetic progressions with common difference $\delta$, then the same holds for the slopes of $\NP(\cP)$ up to finitely many exceptional slopes contained within the first $C$ slopes.
\end{lemma}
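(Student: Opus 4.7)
The plan is to compare the Newton polygon of $\mathcal{P}$ with the convex envelope obtained by sorting the $\Delta$-slopes. First I would let $\sigma_1 \leq \sigma_2 \leq \cdots$ denote the sorted enumeration of the multiset of $\Delta$-slopes and set $\tilde y_n := y_0 + \sum_{i=1}^n \sigma_i$. A counting argument, comparing $\#\{(c,k) : a_c + k\delta \leq x\}$ at $x$ and at $x+\delta$ for the multiset $\{a_c + k\delta : 1 \leq c \leq C,\ k\geq 0\}$, shows that past finitely many initial terms $\sigma_{n+C} = \sigma_n + \delta$, so the sorted sequence itself decomposes as a union of $C$ arithmetic progressions with common difference $\delta$. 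Since the $\sigma_i$ are weakly increasing, $\tilde y_n$ is convex in $n$ and hence equals its own Newton polygon, with slopes $\sigma_i$.

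A basic comparison then yields $\tilde y_n \leq \NP(\mathcal{P})(n) \leq y_n$: the upper bound is the definition of the Newton polygon, and the lower bound follows because $\tilde y_n \leq y_n$ pointwise (the sum of the $n$ smallest $\Delta$-slopes is at most the sum of any $n$ of them) and $\tilde y$, being convex and below $y$, is a lower bound for the lower convex hull of the points $(n,y_n)$.

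The heart of the argument is to show that past a bounded initial segment, $\NP(\mathcal{P})(n) = \tilde y_n$. This reduces to exhibiting a sufficiently dense set of \emph{balanced} indices $n$, by which I mean indices where $y_n = \tilde y_n$ or equivalently where the first $n$ of the $\delta_i$'s coincide as a multiset with the $n$ smallest $\sigma_i$'s. I would argue that past some index, every block of $C$ consecutive indices contains at least one balanced index; since between balanced indices the Newton polygon is pinned to $\tilde y$, the Newton polygon slopes past this segment equal the $\sigma_n$'s and therefore form $C$ arithmetic progressions with common difference $\delta$. The Newton polygon slopes preceding the first balanced index constitute the exceptional ones, and a direct count bounds their number by $C$, so that they lie within the first $C$ slopes of $\NP(\mathcal{P})$.

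The main obstacle will be establishing the density of balanced indices past some threshold: the multiset hypothesis of the lemma does not by itself control the order in which the $C$ arithmetic progressions are interleaved, and a pathological interleaving could in principle delay or prevent the equality $y_n = \tilde y_n$ from occurring regularly. In the paper's intended applications, however, the interleaving is governed by the explicit structure of the ghost series developed in Section~\ref{section:explicit-analysis} together with the asymptotic regularity of Proposition~\ref{prop:ith_newtonslope}, which makes the $\Delta$-slopes approximately weakly increasing and hence forces the required density of balanced indices.
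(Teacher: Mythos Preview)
Your approach has a genuine gap, and it stems from two issues.

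First, the hypothesis is not a multiset condition. In the paper's usage (see the final lines of the proof of Theorem~\ref{theorem:global-halo-progressions}), ``the $\Delta$-slopes form a union of $C$ arithmetic progressions with common difference $\delta$'' means the \emph{ordered} recurrence $\delta_{i+C} = \delta_i + \delta$ for all $i \geq 1$, where $\delta_i := y_i - y_{i-1}$; this is precisely what is verified there before the lemma is invoked.

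Second, and more seriously, your central reduction --- showing $\NP(\mathcal{P})(n) = \tilde y_n$ past a threshold --- is false even under the ordered hypothesis. Take $C = 2$, $\delta = 1$, $\delta_1 = 10$, $\delta_2 = 0$. Then $y_{2k} = k^2 + 9k$, the breakpoints are exactly the even integers, and the Newton slopes are $5,5,6,6,7,7,\ldots$, which do form two progressions of common difference $1$, as the lemma predicts. But the sorted $\Delta$-slopes are $0,1,\ldots,9,10,10,11,11,\ldots$, and for $k \geq 10$ one computes $\tilde y_{2k} = y_{2k} - 25$. Hence there is no balanced index beyond $n = 0$ and $\NP(\mathcal{P})$ never coincides with $\tilde y$. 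Your proposed rescue via extra structure from Section~\ref{section:explicit-analysis} is therefore both insufficient and, for this self-contained lemma, inadmissible.

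The paper's argument is quite different and very short. From $\delta_{i+C} = \delta_i + \delta$ one gets $y_{a+C} - y_{b+C} = (y_a - y_b) + (a-b)\delta$ for all $a,b \geq 0$, so any supporting line at a breakpoint $x \geq C$ becomes, after lowering its slope by $\delta$, a supporting line at $x - C$; thus $x - C$ is again a breakpoint. This single observation yields the lemma: consecutive breakpoints past $C$ are at most $C$ apart, and comparing the sums $\sum_{j=x-C+1}^{x} s_j = y_x - y_{x-C}$ at successive breakpoints (together with the easy inequality $s_{j} \geq s_{j-C} + \delta$) forces $s_j = s_{j-C} + \delta$ for all $j$ beyond the first breakpoint $x^\ast \in [C, 2C)$, leaving at most $x^\ast - C < C$ exceptional slopes at the start.
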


\begin{proof}
This follows immediately from observing that if $x\geq C$ is the index of a breakpoint of $\NP(\cP)$, then $x-C$ is also the index of a breakpoint of $\NP(\cP)$.
\end{proof}

To deduce Theorem \ref{theorem:global-halo-progressions} from Lemma \ref{lemma:newton_slopes}, we need to verify that the Newton slopes of $G_\kappa$ are a finite union of arithmetic progressions. This is not quite true, but we will show it is true after excluding the weight $w = w_{2}$ from ever appearing as a zero of a ghost coefficient. This modification has no effect on $\NP(G_\kappa)$. Specifically:

\begin{lemma}\label{lemma:remove-wt2-zero}
 For each $i\geq 1$, write $g_{i}^{\sharp}(w) = g_i(w)(w-w_{2})^{-m}$ where $m = \ord_{w = w_{2}} g_i(w)$. Set $G^{\sharp}(w,t) = 1 + \sum g_i^{\sharp}(w)t^i$ and $\bar{G^{\sharp}}$ as its reduction modulo $p$. Then, $\NP(G_\kappa) =\NP(G_{\kappa}^{\sharp})$ for all $\kappa \in \cal W$, and $\NP(\bar G) = \NP(\bar{G^{\sharp}})$.
\end{lemma}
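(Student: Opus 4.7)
The proof is a short convex-hull argument resting on two structural observations about the ghost coefficients, together with the ``barrier'' supplied by Lemma \ref{lemma:explicit-ghost-ordinary-dims}(b).

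First, by the definition of the multiplicity pattern $m(k)$ in Section \ref{subsec:ghost-conjecture}, the order of vanishing $m_i(2)$ of $g_i$ at $w = w_{2}$ is nonzero only when $d_2 < i < d_2 + d_2^{\new}$. Hence $g_i^{\sharp} = g_i$ whenever $i \leq d_2$ or $i \geq d_2 + d_2^{\new}$. Second, since $p$ is odd, Lemma \ref{lemma:explicit-ghost-ordinary-dims}(b) gives $g_{d_2 + d_2^{\new}}(w) = 1$ as a polynomial. (The degenerate case $p=3$, $N=1$, where $d_2 + d_2^{\new} = 0$, makes $m_i(2) \equiv 0$ and the lemma vacuous.)

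Given these, the plan is to argue as follows. Since $w_{2} \in \Z_p$ and the factor $(w - w_{2})$ is monic, each $g_i^{\sharp}$ remains in $\Z_p[w]$, so $v_p(g_i(w_\kappa)) \geq 0$ and $v_p(g_i^{\sharp}(w_\kappa)) \geq 0$ for every $\kappa$. At $i = 0$ and at $i = d_2 + d_2^{\new}$ we have $g_i = g_i^{\sharp} = 1$, so both $\NP(G_\kappa)$ and $\NP(G_\kappa^{\sharp})$ see the data points $(0, 0)$ and $(d_2 + d_2^{\new}, 0)$. Because all data points lie in $\{y \geq 0\}$, each polygon starts at $(0, 0)$ with slopes bounded below by zero, and being pinned to height $0$ again at index $d_2 + d_2^{\new}$ forces both polygons to coincide with the flat segment $y=0$ on $[0, d_2 + d_2^{\new}]$.

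For indices $i > d_2 + d_2^{\new}$ the first structural observation says $g_i = g_i^{\sharp}$, so the data is identical; combined with the common break point $(d_2 + d_2^{\new}, 0)$ this forces the two polygons to coincide to the right as well. Hence $\NP(G_\kappa) = \NP(G_\kappa^{\sharp})$. The statement $\NP(\bar G) = \NP(\bar{G^{\sharp}})$ is obtained identically: since $w_k \in p\Z_p$, one has $\bar{g_i}(w) = w^{\lambda(g_i)}$ and $\overline{g_i^{\sharp}}(w) = w^{\lambda(g_i) - m_i(2)}$, and the same convex-hull argument on $\F_p[[w]]$ (using $\lambda(g_0) = \lambda(g_{d_2 + d_2^{\new}}) = 0$ and $\lambda(g_i^{\sharp}) \geq 0$) concludes. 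The only conceptual step is recognizing that Lemma \ref{lemma:explicit-ghost-ordinary-dims}(b) is exactly what supplies the right-hand endpoint $(d_2 + d_2^{\new}, 0)$ that allows the flat segment to be read off; everything else is routine.
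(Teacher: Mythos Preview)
Your argument is correct and is precisely the unpacking of the paper's one-line proof. The paper simply invokes Lemma \ref{lemma:explicit-ghost-ordinary-dims}(b); what that invocation is really using is the fact (established in the proof of that lemma, via Lemma \ref{lemma:ghost-hida}) that $g_{d_2+d_2^{\new}} = 1$, which together with the nonnegativity of all data forces both Newton polygons to be flat on $[0,d_2+d_2^{\new}]$ and to coincide thereafter since the coefficients agree for $i \geq d_2+d_2^{\new}$. One small wording point: the index $d_2+d_2^{\new}$ need not literally be a break point (the polygon could stay flat further), but it lies on both polygons, which is all your convexity argument requires.
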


\begin{proof}
This follows from (the equality in) Lemma \ref{lemma:explicit-ghost-ordinary-dims}(b).
\end{proof}
\begin{framed}
\vspace{-.20cm}
\textbf{Convention:} for the rest of this section {\em we replace the $g_i(w)$ by $g_i^{\sharp}(w)$}.
\vspace{-.20cm}
\end{framed}
 
We now aim to show that the $\Delta$-slopes of (the newly defined) $G_\kappa$ form a finite union of arithmetic progressions.  Recall, $\Delta_i = g_i/g_{i-1}$ and $\Delta_i = \Delta_i^+/\Delta_i^-$ with $\Delta_i^{\pm} \in \Z_p[[w]]$ and $\gcd(\Delta_i^+,\Delta_i^-) = 1$. As preparation, we will compare the zeros of $\Delta_i^{\pm}$ to those of $\Delta^{\pm}_{i+C_{p,N,r}}$. We write $\HZ(-)$ and $\LZ(-)$ for the highest and lowest zeros as in Section \ref{section:explicit-analysis} (if they exist).

\begin{lemma}
\label{lemma:lambdazp_change}
\leavevmode
\begin{enumerate}
\item If $\lambda(\Delta_i^+)>0$, then 
\begin{equation*}
\HZ(\Delta^+_{i+C_{p,N}}) =  \HZ(\Delta_i^+) + \frac{p(p+1)(p-1)}{2}
~\text{~~~and~~~}~
\LZ(\Delta^+_{i+C_{p,N}}) =  \LZ(\Delta^+_i) + p(p-1).
\end{equation*}
\item If $\lambda(\Delta_i^-)>0$, then 
$$
\HZ(\Delta^-_{i+C_{p,N}}) =  \HZ(\Delta^-_i) + p(p-1)
~\text{~~~and~~~}~
\LZ(\Delta^-_{i+C_{p,N}}) =  \LZ(\Delta^-_i) + \frac{(p+1)(p-1)}{2}.
$$
\end{enumerate}
\end{lemma}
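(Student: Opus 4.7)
The plan is to reduce the lemma to four exact periodicity identities for the dimension functions $d_k$ and $d_k^{\new}$, each of which can then be checked from the explicit formulas in Appendix \ref{app:dimension-formulae}. Using \eqref{eqn:Deltaizeros}, \eqref{eqn:Deltaipoles} and the monotonicity of $k\mapsto d_k,\,d_k^{\new}$ from Lemma \ref{lemma:dim-formulas-increasing}, I would first observe that $\HZ(\Delta_i^+)$ is the largest $k\in \cal W_\varepsilon$ with $d_k<i$, that $\LZ(\Delta_i^+)$ is the smallest $k$ with $d_k+\lfloor d_k^{\new}/2\rfloor \geq i$, that $\HZ(\Delta_i^-)$ is the largest $k$ with $d_k+\lfloor (d_k^{\new}-1)/2\rfloor + 2\leq i$, and that $\LZ(\Delta_i^-)$ is the smallest $k$ with $d_k+d_k^{\new}\geq i$. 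In each case monotonicity makes one of the two inequalities of \eqref{eqn:Deltaizeros} or \eqref{eqn:Deltaipoles} automatic at the extremal zero.

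Given these characterizations, each of the four claims of the lemma reduces to showing that the relevant combination of dimensions increases by exactly $C_{p,N}$ under the prescribed $k$-shift. For $\HZ(\Delta_i^+)$ I need $d_{k+p(p^2-1)/2}-d_k = C_{p,N}$; for $\LZ(\Delta_i^+)$ and $\HZ(\Delta_i^-)$ I need the analogous identities for $d_k+\lfloor d_k^{\new}/2\rfloor$ and $d_k+\lfloor (d_k^{\new}-1)/2\rfloor$ under the shift $k\mapsto k+p(p-1)$; and for $\LZ(\Delta_i^-)$ I need $(d_{k+(p^2-1)/2}+d_{k+(p^2-1)/2}^{\new})-(d_k+d_k^{\new})=C_{p,N}$. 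The first is easy: since $p^2-1$ is divisible by $24$ for odd $p$, the shift $p(p^2-1)/2$ is divisible by $12$, so the elliptic and cusp contributions to $d_k$ repeat exactly, and the leading $(k-1)\mu_0(N)/12$ term supplies precisely $p(p^2-1)\mu_0(N)/24 = C_{p,N}$.

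The main obstacle is the remaining three identities, whose shifts $p(p-1)$ and $(p^2-1)/2$ are not in general divisible by $12$ (e.g.\ when $p=3$), so a naive periodicity argument fails. My plan is to rewrite each combination in terms of the full dimension of $S_k(\Gamma_0(Np))$, using $2 d_k+d_k^{\new}=\dim S_k(\Gamma_0(Np))$: for instance, $d_k+\lfloor d_k^{\new}/2\rfloor = \lfloor \dim S_k(\Gamma_0(Np))/2\rfloor$, and analogously for the other two. The identity then becomes a statement about how $\dim S_k(\Gamma_0(Np))$ changes under a shift in $k$, which I would verify by the explicit formula for $\dim S_k(\Gamma_0(Np))$ from Appendix \ref{app:dimension-formulae} together with a short case analysis on $p\bmod 12$ and the nebentype $\varepsilon$; the point is that the leading term in the shift is $K\mu_0(N)(p+1)/12 = 2C_{p,N}$ and the elliptic/cusp contributions either cancel or collapse after the floor is applied because the two values share the right parity.

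Once each of the four increment identities is established, the lemma follows: the shifted value $k_0+K$ of each $\HZ$ or $\LZ$ manifestly satisfies the binding inequality for index $i+C_{p,N}$, and it is tight (no strictly larger/smaller $k$ in the arithmetic progression is a zero) because monotonicity of the non-binding quantity prevents such a zero from existing beyond $k_0+K$.
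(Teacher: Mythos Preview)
Your approach is essentially the paper's: reduce each of the four claims to an exact increment identity for a dimension combination under the stated $k$-shift, and verify those identities via the explicit formulas in Appendix~\ref{app:dimension-formulae}. Your rewriting $d_k+\lfloor d_k^{\new}/2\rfloor=\lfloor d_{k,p}/2\rfloor$ is precisely what the paper exploits (Lemma~\ref{lemma:robs-tedious-calculation}, via Lemma~\ref{app-lemma:shift-wt-p(p-1)}).

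There is, however, a genuine gap in your final paragraph. Once the increment identity is known, the tightness argument for the two $\HZ$ statements is clean: a candidate $k'>k_0+K$ has $k'-K>k_0\geq 4$, so the identity applies and rules it out. But for the two $\LZ$ statements the symmetric argument breaks at the boundary. Concretely, for $\LZ(\Delta_i^+)=k_0$, to rule out a candidate $k'<k_0+p(p-1)$ you would apply the increment identity at $k'-p(p-1)$; if $k_0$ is the least weight $\geq 4$ on the component (i.e.\ $k_0-(p-1)<4$), then for small $k'$ this falls outside the range where \eqref{eqn:dim-formula-old}--\eqref{eqn:dim-formula-new} and hence Lemma~\ref{lemma:robs-tedious-calculation} apply. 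Your phrase ``monotonicity of the non-binding quantity'' does not cover this: for $\LZ(\Delta_i^+)$ the non-binding inequality $d_{k'}<i+C_{p,N}$ is automatically satisfied for small $k'$, so it cannot be the obstruction. The paper treats exactly this boundary case (case (i) in its proof), reducing it to the explicit inequality $\lfloor d_{k_0}^{\new}/2\rfloor < 1 + (p-1)(p+1)\mu_0(N)/24$ for $4\leq k_0\leq p+1$ (with a separate check for $p=3$). You should add this.

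Two smaller points. First, Lemma~\ref{lemma:dim-formulas-increasing} does not give monotonicity of $k\mapsto d_k$ when $N=1$, and for $p=3$, $N=1$ the zeros of $\Delta_i^+$ are not even a full arithmetic progression (Proposition~\ref{prop:change_in_zeroes}(a)); the paper handles $p=3$, $N=1$ separately via Table~\ref{table:least-zeros}, and you will need to as well. Second, $p^2-1$ is not divisible by $24$ when $p=3$; what you need (and what is true for all odd $p$) is that $p(p^2-1)$ is divisible by $24$, hence $p(p^2-1)/2$ by $12$.
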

\begin{proof}
We prove the assertions for $\Delta_i^+$ and leave part (b) for the reader (the proofs are analogous).

We recall that by \eqref{eqn:Deltaizeros}, for each $i$, $\HZ(\Delta_i^+)$ is the largest $k \in \cal W_{\varepsilon}$ such that $d_k < i$ (with $k\geq 4$, convention in this section). Next, write $C = {p(p+1)(p-1)\over 2} = {12 C_{p,N}\over \mu_0(N)} \congruent 0 \bmod p-1$. Thus $k \mapsto k+C$ preserves the component of weight space. Moreover, since $d_{k+12} = d_k + \mu_0(N)$ we have $d_{k + C} = d_k + C_{p,N}$.

Now let $k = \HZ(\Delta_i^+)$ and $k' = \HZ(\Delta_{i+C_{p,N}}^+)$. The previous paragraph implies that $k + C \leq k'$. Write $k'-C = k + j(p-1)$ for some $j\geq 0$. If $j > 0$ then $k'-C > k$ and so by definition of highest zero, $i \leq d_{k'-C}$. But the previous paragraph then implies that $i + C_{p,N} \leq d_{k'}$, which is a contradiction to the definition of $k'$. 

Proving the formula for $\LZ(\Delta_{i+C_{p,N}}^+)$ is slightly more tedious. Set $k = \LZ(\Delta_i^+)$  and $k' = \LZ(\Delta_{i+C_{p,N}}^+)$. Then $k' \leq k + p(p-1)$ because Lemma \ref{lemma:robs-tedious-calculation} implies that
\begin{equation}\label{eqn:lemmaa6-equation}
d_{k+p(p-1)} + \left\lfloor \frac{d^{\new}_{k+p(p-1)}}{2} \right\rfloor = d_k + \left\lfloor \frac{d^{\new}_k}{2} \right\rfloor  + C_{p,N}.
\end{equation}
If $p=3$ and $N=1$ then $k' = k+6$ by Table \ref{table:least-zeros}. Thus we assume that $p>3$ or $N>1$. In particular, Lemma \ref{lemma:dim-formulas-increasing} then implies that since we already showed that $k' \leq k + p(p-1)$ we may finish by showing
\begin{equation}\label{eqn:newest-realization}
d_{k+(p-1)(p-1)} + \bfloor{{d_{k+(p-1)(p-1)}^{\new}\over 2}} < i + C_{p,N}.
\end{equation}
By definition of $k = \LZ(\Delta_i^+)$, $i \leq d_k +\floor{{d_k^{\new}\over 2}}$ and either
\begin{enumerate}[i)]
\item $k - (p-1) < 4$, or
\item $k-(p-1) \geq 4$ but $d_{k-(p-1)} + \floor{{d_{k-(p-1)}^{\new}\over 2}} < i$.
\end{enumerate}
If (ii) holds then \eqref{eqn:newest-realization} is immediate from Lemma \ref{lemma:robs-tedious-calculation} and the assumption in (ii).

It remains to handle case (i):  $4 \leq k \leq p+1$. Then, $k$ is the lowest integer weight $k\geq 4$ on our fixed component and so the assumption that $\lambda(\Delta_i^+) > 0$ and $k = \LZ(\Delta_i^+)$ implies that
\begin{equation}\label{eqn:gold-star}
d_k < i \leq d_k + \left\lfloor{{d_k^{\new}\over 2}}\right\rfloor.
\end{equation}
First assume $p\neq 3$. Then Lemma \ref{lemma:robs-tedious-calculation} reduces \eqref{eqn:newest-realization} to showing 
\begin{equation*}
d_k + \bfloor{{d_k^{\new}\over 2}} < i + {(p-1)(p+1)\over 24}\mu_0(N)
\end{equation*}
instead. But by \eqref{eqn:gold-star}, this reduces to checking for $i=d_k+1$, and in that case checking
\begin{equation*}
\bfloor{d_k^{\new}\over 2} < 1 + {(p-1)(p+1)\over 24}\mu_0(N).
\end{equation*}
We leave this final point for the reader.

Now assume that $p=3$, so that our assumption now is that $k=4$ is the lowest zero of $\Delta_i^+$. We have $C_{3,N} = \mu_0(N)$. By \eqref{eqn:gold-star} it is enough to show \eqref{eqn:newest-realization} when $i = d_4+1$, and thus we need to check
\begin{equation*}
d_8 + \bfloor{{d_8^{\new}\over 2}} < d_4 + 1 + \mu_0(N),
\end{equation*}
which we also leave for the reader.
\end{proof}

\begin{proposition}
\label{prop:lambda_change}
If $i\geq 1$ then
\begin{enumerate}
\item $\lambda(\Delta^+_{i + C_{p,N}}) = \lambda(\Delta^+_i) + \frac{p(p-1)}{2}$, and
\item $\lambda(\Delta^-_{i + C_{p,N}}) = \lambda(\Delta_i^-) + \frac{p-1}{2}$.
\end{enumerate}
\end{proposition}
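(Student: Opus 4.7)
The plan is to deduce both formulas from the shift data on the extremal zeros already established in Lemma \ref{lemma:lambdazp_change}, using the arithmetic-progression structure of the zero sets. Since $p$ is odd, Proposition \ref{prop:change_in_zeroes}(a) tells us that the zeros of $\Delta_i^-$ always form an arithmetic progression with common difference $p-1$, and the same holds for the zeros of $\Delta_i^+$ except in the isolated case $N=1$, $p=3$. Whenever $\lambda(\Delta_i^{\pm}) > 0$, we therefore have
\begin{equation*}
\lambda(\Delta_i^{\pm}) = \frac{\HZ(\Delta_i^{\pm}) - \LZ(\Delta_i^{\pm})}{p-1} + 1.
\end{equation*}

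For part (a), subtracting the two instances of this identity at the indices $i$ and $i+C_{p,N}$ and inserting the shifts from Lemma \ref{lemma:lambdazp_change}(a) gives
\begin{equation*}
\lambda(\Delta_{i+C_{p,N}}^+) - \lambda(\Delta_i^+) = \frac{1}{p-1}\left(\frac{p(p+1)(p-1)}{2} - p(p-1)\right) = \frac{p(p-1)}{2}.
\end{equation*}
The analogous computation, substituting the shifts from Lemma \ref{lemma:lambdazp_change}(b), yields
\begin{equation*}
\lambda(\Delta_{i+C_{p,N}}^-) - \lambda(\Delta_i^-) = \frac{1}{p-1}\left(p(p-1) - \frac{(p+1)(p-1)}{2}\right) = \frac{p-1}{2},
\end{equation*}
proving (b).

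The main obstacle is covering the boundary cases where this skeletal argument breaks down: namely (i) the small $i$ for which $\lambda(\Delta_i^{\pm}) = 0$, so Lemma \ref{lemma:lambdazp_change} does not directly apply, and (ii) the exceptional $p=3$, $N=1$ case in part (a), where the zeros of $\Delta_i^+$ omit a single term of the arithmetic progression (by the remark after Lemma \ref{lemma:arithprogs}). For (i), the asymptotic $\lambda(\Delta_i^{\pm}) \sim c\cdot i$ from Lemma \ref{lemma:zeropole_count} shows there are only finitely many such $i$, and for each one the defining descriptions \eqref{eqn:Deltaizeros} and \eqref{eqn:Deltaipoles} together with the explicit ranges for $\HZ$ and $\LZ$ from Proposition \ref{prop:change_in_zeroes} let us compute $\lambda(\Delta_{i+C_{p,N}}^{\pm})$ directly by enumeration. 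For (ii), since $C_{3,1} = 1$, the gap at $12i$ shifts consistently to $12(i+1)$ under $i \mapsto i + 1$, so the same count formula above still produces the expected increment of $\tfrac{p(p-1)}{2} = 3$.
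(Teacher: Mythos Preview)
Your main argument is the same as the paper's: use the arithmetic-progression structure together with the shift data from Lemma \ref{lemma:lambdazp_change} to compute the increment. The computation you wrote out for the generic case $\lambda(\Delta_i^{\pm})>0$ (away from $p=3$, $N=1$) is exactly what the paper does, and your handling of the $p=3$, $N=1$ exception is fine (the paper simply cites Table \ref{table:least-zeros}).

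The gap is in your treatment of the boundary case $\lambda(\Delta_i^{\pm})=0$. Saying ``there are finitely many such $i$, so enumerate'' is not a proof here, because the proposition is asserted for arbitrary odd $p$ and arbitrary $N$: the set of exceptional indices depends on $p,N,\varepsilon$, and you give no uniform description of it. Worse, the tool you propose to use for the enumeration --- the ranges for $\HZ$ and $\LZ$ from Proposition \ref{prop:change_in_zeroes} --- are only stated up to $O(1)$, so they cannot pin down $\lambda(\Delta_{i+C_{p,N}}^{\pm})$ exactly. The paper handles this case with a genuine argument: it extends the definitions of $d_k$, $d_k^{\new}$, $\HZ(\Delta_i^+)$, $\LZ(\Delta_i^+)$ formally to $k<4$, shows that when $\lambda(\Delta_i^+)=0$ one must have $\LZ(\Delta_i^+)-\HZ(\Delta_i^+)=p-1$ exactly (otherwise an intermediate $k$ would force $d_k^{\new}<0$), checks that Lemma \ref{lemma:lambdazp_change} still holds for the extended quantities, and then reads off $\lambda(\Delta_{i+C_{p,N}}^+)=\tfrac{p(p-1)}{2}$ from the same difference formula. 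You should either reproduce that argument or supply an equally uniform one.
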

\begin{proof}
If $p=3$ and $N=1$, then this proposition follows from Table \ref{table:least-zeros}. Otherwise,
for each $i$, Proposition \ref{prop:change_in_zeroes}(a) (valid by our exclusion of $p=3$ and $N=1$) implies that
\begin{equation}\label{eqn:lambda-difference}
\lambda(\Delta_i^{\pm}) = 1 + {1\over p-1}(\HZ(\Delta_i^{\pm}) - \LZ(\Delta_i^{\pm})).
\end{equation}
If $\lambda(\Delta_i^+) > 0$ then (a) follows \eqref{eqn:lambda-difference} and Lemma \ref{lemma:lambdazp_change}(a), and if $\lambda(\Delta_i^-) > 0$ then (b) follows from \eqref{eqn:lambda-difference} and Lemma \ref{lemma:lambdazp_change}(b).

If $\lambda(\Delta_i^+) = 0$ we proceed as follows (and leave the reader to deal with $\lambda(\Delta_i^-) = 0$). First, if $k < 4 $ is even then we re-define $d_k$ and $d_k^{\new}$ using the formulas \eqref{eqn:dim-formula-old} and \eqref{eqn:dim-formula-new} in Appendix \ref{app:dimension-formulae}. We then define $\HZ(\Delta_i^+)$ and $\LZ(\Delta_i^+)$ by insisting that \eqref{eqn:Deltaizeros} holds, i.e.\ $\HZ(\Delta_i^+)$ is the largest $k \in \cal W_{\varepsilon}$ such that $d_k < i$ and $\LZ(\Delta_i^+)$ is the least $k \in \cal W_{\varepsilon}$ such that $i\leq d_k + \bfloor{{d_k^{\new}/ 2}}$.

Continue to suppose that $\lambda(\Delta_i^+) = 0$. Then we must have $\HZ(\Delta_i^+) < \LZ(\Delta_i^+)$ (since otherwise $\Delta_i^+$ would have a zero). Moreover, by definition, $\HZ(\Delta_i^+) \congruent \LZ(\Delta_i^+) \bmod p-1$. We claim that $\LZ(\Delta_i^+) - \HZ(\Delta_i^+) = p-1$. 

We will prove our claim by contradiction. First, we observe that $\HZ(\Delta_i^+) \geq 0$. Indeed, if $N > 1$ then it is easy to see that if $k\leq 0$ then $d_k < 0 \leq i$, and if $N=1$ then $d_2 = -1 < i$ for any $i\geq 0$, and thus $\HZ(\Delta_i^+) \geq 2$ in fact. Next, if $\LZ(\Delta_i^+) - \HZ(\Delta_i^+) > p-1$ then since $\HZ(\Delta_i^+) \congruent \LZ(\Delta_i^+) \bmod p-1$ (by definition) we can find a $k \in \cal W_{\varepsilon}$ such that $\HZ(\Delta_i^+) < k < \LZ(\Delta_i^+)$. This implies $i \leq d_k$ and $d_k + \floor{{d_k^{\new}\over 2}} < i$, whence $d_k^{\new} < 0$. But this implies that $k \leq 0$, which is a contradiction.\footnote{If $k\geq 4$ then obviously $d_k^{\new}\geq 0$ and the reader may check that $d_2^{\new} \geq 0$ for any $p$ and $N$, given our overwritten definition.}

Finally, the reader may check that the proof of Lemma \ref{lemma:lambdazp_change} extends to the new definitions of $\HZ(\Delta_i^+)$ and $\LZ(\Delta_i^+)$, and thus 
\begin{equation*}
\lambda(\Delta^+_{i+C_{p,N}}) = {\HZ(\Delta_i^+)-\LZ(\Delta_i^+)\over p-1} + 1 + {p(p-1)\over 2} = {p(p-1)\over 2}.
\end{equation*}
This completes the proof.
\end{proof}

\begin{remark}
The $i$-th $w$-adic $\Delta$-slope of $\bar G$ is $\lambda(g_i) - \lambda(g_{i-1}) = \lambda(\Delta_i^+) - \lambda(\Delta_i^-)$.  Thus, 
Proposition \ref{prop:lambda_change} together with Lemma \ref{lemma:newton_slopes} implies Corollary \ref{cor:progs}.
\end{remark}

We briefly unwind the condition $w_{\kappa} \nin \Z_p$. We thank Erick Knight for pointing out the equivalence in  Lemma \ref{lemma:equivalent-conditons-inZp} below. Write $\Z_p^{\nr}$ for the ring of integers in the maximal unramified extension of $\Q_p$ contained in $\bar \Q_p$, and $\omega: \bar \F_p^\times \goto ( \Z_p^{\nr})^\times$ for the Teichm\"uller lift.
\begin{lemma}\label{lemma:Zpnr}
If $x_0 \in \cal O_{\C_p}$ and there exists $x' \in \Z_p^{\nr} - \Z_p$ such that $v_p(x_0 - x') > v_p(x')$ then $v_p(x_0 - x) = \min(v_p(x_0),v_p(x))$ for all $x \in \Z_p$.
\end{lemma}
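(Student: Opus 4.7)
The plan is to reduce the required equality $v_p(x_0 - x) = \min(v_p(x_0),v_p(x))$ to a statement about residues in the residue field $\bar \F_p$ of $\cal O_{\C_p}$. First, I set $n := v_p(x')$. The ultrametric inequality, applied to $x_0 = x' + (x_0 - x')$ and combined with the hypothesis $v_p(x_0 - x') > n$, forces $v_p(x_0) = n$. Dividing by $p^n$, the elements $x_0/p^n$ and $x'/p^n$ are both units of $\cal O_{\C_p}$, and their difference lies in the maximal ideal, so they have a common residue $\bar a \in \bar \F_p^\times$.

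The heart of the argument is to show $\bar a \nin \F_p$, which is where the hypothesis $x' \in \Z_p^{\nr} \setminus \Z_p$ enters in its full strength. Writing the Teichm\"uller expansion $x' = \sum_{i \geq n} \omega(a_i) p^i$ with $a_i \in \bar \F_p$ and $a_n \neq 0$, one has $\bar a = a_n$, so the claim is $a_n \nin \F_p$. I expect this to be the subtle point of the argument, since the naive set-theoretic reading $x' \nin \Z_p$ only guarantees that \emph{some} Teichm\"uller digit of $x'$ is non-$\F_p$; the full hypothesis (including the strict bound $v_p(x_0 - x') > v_p(x')$) should pin this down to $a_n$ itself, perhaps via a choice of $x'$ adapted to $x_0$ or via an iteration on the Teichm\"uller digits which terminates precisely because $x' \nin \Z_p$.

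With $\bar a \in \bar \F_p \setminus \F_p$ in hand, the conclusion is mechanical. Fix $x \in \Z_p$. If $v_p(x) \neq n$, the equality $v_p(x_0 - x) = \min(v_p(x_0),v_p(x))$ follows immediately from the ultrametric inequality. If $v_p(x) = n$, then $x/p^n \in \Z_p^\times$, so $\overline{x/p^n}$ lies in $\F_p^\times$, and
\begin{equation*}
\overline{(x_0 - x)/p^n} = \bar a - \overline{x/p^n} \in \bar \F_p \setminus \F_p,
\end{equation*}
which is in particular nonzero; therefore $v_p(x_0 - x) = n = \min(v_p(x_0), v_p(x))$. The main obstacle, as flagged, is the middle paragraph: ensuring that the leading residue $\bar a$ itself (rather than some deeper Teichm\"uller digit of $x'$) must lie outside $\F_p$.
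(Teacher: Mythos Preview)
Your approach is the same as the paper's: reduce to the case $v_p(x) = v_p(x_0) =: n$ and argue that the common residue $\bar a := \overline{p^{-n}x'} = \overline{p^{-n}x_0}$ lies outside $\F_p$, so that it cannot coincide with $\overline{p^{-n}x} \in \F_p^\times$. The obstacle you flag in your middle paragraph is genuine and cannot be removed under the stated hypotheses: the lemma is false as written. For any $\alpha \in \bar\F_p \setminus \F_p$, set $x_0 = x' = 1 + p\,\omega(\alpha) \in \Z_p^{\nr} \setminus \Z_p$; then $v_p(x_0 - x') = \infty > 0 = v_p(x')$, yet for $x = 1 \in \Z_p$ one has $v_p(x_0 - x) = 1 \neq 0 = \min(v_p(x_0),v_p(x))$. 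The paper's own proof makes exactly the same leap, asserting that ``the assumption $x_0 \nin \Z_p$ implies that the reductions of $p^{-v_p(x_0)}x_0$ and $p^{-v_p(x_0)}x$ are distinct,'' which is precisely the unproven claim $\bar a \nin \F_p$.

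The statement becomes correct --- and your third paragraph then proves it --- under the extra hypothesis $\overline{p^{-v_p(x')}x'} \nin \F_p$ (for instance if $x'$ has the special form $p^n\omega(\beta)$ with $\beta \in \bar\F_p\setminus\F_p$). Alternatively, one may bypass the lemma: if $x_0 \nin \Z_p$, pick $x^\ast \in \Z_p$ realizing $\alpha := \sup_{x\in\Z_p} v_p(x_0-x) < \infty$ and check directly that $v_p\bigl((x_0-x^\ast)-x\bigr) = \min(\alpha, v_p(x))$ for every $x \in \Z_p$, which is what the downstream arguments actually need.
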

\begin{proof} Suppose $x \in \Z_p$ and $v_p(x_0) = v_p(x)$. We will show $v_p(x_0 - x) = v_p(x_0)$. If $x_0 \in  \Z_p^{\nr} - \Z_p$ then the result is clear. Indeed,  the assumption $x_0 \nin \Z_p$ implies that the reductions of $p^{-v_p(x_0)}x_0$ and $p^{-v_p(x_0)}x$ are distinct in $\bar \F_p^\times$.

Now assume $x_0$ is general. Since $v_p(x_0 - x') > v_p(x')$, we have $v_p(x_0) = v_p(x')$. By the previous paragraph applied to $x'$, we know that $v_p(x' - x) = v_p(x')$. Thus, $v_p(x_0 - x') > v_p(x' - x)$ as well. But then, the ultrametric inequality implies
\begin{equation*}
v_p(x_0 - x) = v_p(x' - x) = v_p(x') = v_p(x_0),
\end{equation*}
as promised.
\end{proof}
\begin{lemma}\label{lemma:equivalent-conditons-inZp}
If $x_0 \in \cal O_{\C_p}$ then $x_0 \nin \Z_p$ if and only if either:
\begin{enumerate}
\item there exists $x' \in \Z_p$ such that $v_p(x_0-x') \nin \Z \union \set{\infty}$, or
\item there exists $x' \in \Z_p^{\nr} - \Z_p$ such that $v_p(x_0 - x') > v_p(x')$.
\end{enumerate}
\end{lemma}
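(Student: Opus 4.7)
The plan is to prove the two directions separately. For the easy direction, suppose $x_0 \in \Z_p$ and I will verify neither (a) nor (b) can hold. For (a) this is immediate since $x_0-x' \in \Z_p$ has valuation in $\Z \cup \set{\infty}$. For (b), if $x' \in \Z_p^{\nr} - \Z_p$ satisfied $v_p(x_0-x') > v_p(x')$, then dividing by $p^{v_p(x')}$ and reducing modulo $p\cal O_{\C_p}$ would force the reductions of $p^{-v_p(x')}x_0$ and $p^{-v_p(x')}x'$ to coincide; but the former lies in $\F_p$ while the latter lies in $\bar\F_p - \F_p$ (because $x' \nin \Z_p$), a contradiction.

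For the converse, assume $x_0 \nin \Z_p$. My first step is to invoke compactness of $\Z_p$ inside $\C_p$, together with upper semi-continuity of $v_p$, to conclude that $\alpha := \sup_{x' \in \Z_p} v_p(x_0 - x')$ is finite (since $\Z_p$ is closed in $\C_p$) and is achieved at some $x_1 \in \Z_p$. If $\alpha \nin \Z$, the element $x_1$ witnesses (a) and we are done. Otherwise $\alpha \in \Z$; setting $y := x_0-x_1$, I would reduce $p^{-\alpha}y$ modulo $p\cal O_{\C_p}$ to obtain a nonzero element $\bar y \in \bar \F_p$. If it happened that $\bar y \in \F_p$, lifting to $a \in \Z_p$ would produce $v_p(x_0 - (x_1 + p^\alpha a)) > \alpha$, violating the maximality of $\alpha$. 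So $\bar y \in \bar\F_p - \F_p$.

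I then form the Teichm\"uller lift $u := \omega(\bar y) \in (\Z_p^{\nr})^\times$, which lies outside $\Z_p$ since $\bar u = \bar y \nin \F_p$, and set $\tilde x := x_1 + p^\alpha u \in \Z_p^{\nr}$. Because $p^\alpha u \nin \Z_p$ while $x_1 \in \Z_p$, we get $\tilde x \in \Z_p^{\nr}-\Z_p$, and by construction
\begin{equation*}
v_p(x_0 - \tilde x) = v_p(y - p^\alpha u) > \alpha.
\end{equation*}
To complete (b) I still have to show $v_p(\tilde x) \leq \alpha$. I would split this into cases on $v_p(x_1)$: if $v_p(x_1) < \alpha$ the ultrametric inequality yields $v_p(\tilde x) = v_p(x_1) < \alpha$; if $v_p(x_1) > \alpha$ it yields $v_p(\tilde x) = \alpha$; and if $v_p(x_1) = \alpha$, reducing $p^{-\alpha}\tilde x = p^{-\alpha}x_1 + u$ modulo $p$ gives a sum of an $\F_p$-class with a $(\bar\F_p-\F_p)$-class, which is itself in $\bar\F_p - \F_p$ and in particular nonzero, forcing $v_p(\tilde x) = \alpha$.

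The main technical obstacle is ensuring $\alpha$ is actually attained (a compactness argument) and handling the borderline case $v_p(x_1) = \alpha$ where one must rule out cancellation pushing $v_p(\tilde x)$ above $\alpha$; the $\bar\F_p/\F_p$ dichotomy that prevented $x_0 \in \Z_p$ in the easy direction is what prevents the cancellation here. Everything else is a straightforward bookkeeping exercise with reductions modulo $p\cal O_{\C_p}$, and in fact the shape of $\tilde x$ is essentially dictated by the form of the hypothesis in Lemma \ref{lemma:Zpnr}.
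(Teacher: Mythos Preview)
Your converse direction (assuming $x_0 \nin \Z_p$ and producing (a) or (b)) is correct and is cleaner than the paper's argument. The paper does not invoke compactness; instead it builds a Teichm\"uller-digit expansion $x_0 = \sum_{i \geq 0} x_i'$ with $x_i' \in \Z_p^{\nr}$ and strictly increasing valuations, locates the first index $i$ with $x_i' \nin \Z_p$, and assembles the witness from a partial sum. Your approach short-circuits this: compactness of $\Z_p$ hands you a best approximation $x_1$ immediately, and then a single Teichm\"uller correction produces $\tilde x$. Your case split on $v_p(x_1)$ versus $\alpha$, including the borderline case $v_p(x_1)=\alpha$, is handled correctly.

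However, your easy direction for (b) has a genuine gap. You assert that $x' \in \Z_p^{\nr} - \Z_p$ forces the reduction of $p^{-v_p(x')}x'$ to lie in $\bar\F_p - \F_p$, but this is false: with $p = 5$ and $x' = 1 + 5\,\omega(\zeta)$ for some $\zeta \in \bar\F_5 - \F_5$, one has $x' \in \Z_5^{\nr} - \Z_5$ and $v_5(x') = 0$, yet $x' \equiv 1 \pmod 5$. Worse, taking $x_0 = 1 \in \Z_5$ gives $v_5(x_0 - x') = 1 > 0 = v_5(x')$, so condition (b) actually holds for this $x_0 \in \Z_p$. Thus the ``only if'' direction is false as stated; only the implication from $x_0 \nin \Z_p$ to (a)~or~(b) survives, and that is the only direction used downstream. (The paper's own treatment of this direction, via Lemma~\ref{lemma:Zpnr}, rests on the same mistaken reduction claim in the proof of that lemma.)
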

\begin{proof}
We assume either (a) or (b) holds and we show $\sup_{x \in \Z_p} v_p(x_0-x) < \infty$. If (b) holds, then this is done by Lemma \ref{lemma:Zpnr}. Suppose that (a) holds, and choose such an $x'$ and let $x \in \Z_p$. Then, since $v_p(x_0-x') \nin \Z\union \set{\infty}$ and $v_p(x' - x) \in \Z$ we have 
\begin{equation*}
v_p(x_0 - x) = \min(v_p(x_0-x'),v_p(x'-x)) \leq v_p(x_0-x').
\end{equation*}
We now show the converse.  Specifically, we show that if $x_0 \in \bar \Z_p - \Z_p$ and $v_p(x_0 - x') \in \Z$ for all $x' \in \Z_p$ then (b) holds. By assumption, $v_p(x_0) \in \Z$, and so $x_0' = p^{v_p(x_0)}\omega(\overline{p^{-v_p(x_0)}x_0}) \in \Z_p^{\nr}$ satisfies $v_p(x_0') = v_p(x_0)$ and $v_p(x_0 - x_0') > v_p(x_0)$. If $x_0' \nin \Z_p$ then we are done. Otherwise set $x_1 = x_0 - x_0'$. Then $x_1$ satisfies all the hypotheses imposed on $x_0$ in  this paragraph, and $v_p(x_1) > v_p(x_0)$. Thus we can repeat the construction of $x_1$ from $x_0$, and by induction we can construct an infinite sequence $x_0,x_1,\dotsc$ and $x_0',x_1',\dotsc$ such that $x_{i+1} = x_i - x_i'$ with
\begin{enumerate}[i)]
\item $x_i \in \bar \Z_p - \Z_p$ with  $v_p(x_0) < v_p(x_1) < \dotsb$,  
\item $x_i' \in \Z_p^{\nr}$ with  $v_p(x_i - x_i') > v_p(x_i) = v_p(x_i')$ for all $i\geq 0$,
\end{enumerate}
In particular $x_0 = \sum x_i'$. Since $x_0 \nin \Z_p$ there exists a smallest $i\geq 1$ such that $x_i' \in  \Z_p^{\nr} - \Z_p$. We claim $x_0'-x_i'$ witnesses that (b) is true for $x_0$.

To see that, set $x' = x_0' - x_i' \nin \Z_p$. Since $x_0' \in \Z_p$, by Lemma \ref{lemma:Zpnr}, we have
\begin{equation*}
v_p(x') = \min(v_p(x_i'),v_p(x_0')) = \min(v_p(x_i),v_p(x_0)) = v_p(x_0).
\end{equation*}
Then
\begin{equation*}
v_p(x' - x_0) = v_p(-x_1 - x_i') \geq v_p(x_1) > v_p(x_0) = v_p(x'),
\end{equation*}
as promised.
\end{proof}

\begin{lemma}
\label{lemma:tire_vals}
Suppose that $h \in \Z_p[[w]]$ and the zeros of $h$ are all in $p\Z_p$. Let $w' \in \ideal m_{\C_p}$ such that either
\begin{enumerate}
\item $v_p(w') \nin \Z$, or
\item there exists a $\twid w \in  \Z_p^{\nr} - \Z_p$ such that $v_p(w' - \twid w) > v_p(\twid w)$.
\end{enumerate}
If $r = \floor{v_p(w')}$, then
\begin{multline*}
v_p(h(w')) = v_p(w')\cdot \sizeof \set{w'' \st h(w'') = 0 \text{ and } v_p(w'') \geq r+1} \\
+ \sum_{v=1}^r v\cdot \sizeof\set{w'' \st h(w'') = 0 \text{ and } v_p(w'') = v}.
\end{multline*}
\end{lemma}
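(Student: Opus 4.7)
The plan is to reduce $v_p(h(w'))$ to a sum over the zeros of $h$ and then use the hypothesis on $w'$ to simplify each term.

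First, I would apply Weierstrass preparation to write $h(w) = u(w) \cdot P(w)$ where $u \in \Z_p[[w]]^\times$ is a unit and $P \in \Z_p[w]$ is a distinguished polynomial whose roots, listed with multiplicity, are exactly the zeros of $h$ in the open unit disc. By hypothesis these roots all lie in $p\Z_p$, and since $p\Z_p$ is compact (bounded away from $|w|=1$), there are only finitely many of them. Because $u(0) \in \Z_p^\times$ and the remaining coefficients of $u$ lie in $\Z_p$, the ultrametric inequality gives $v_p(u(w')) = v_p(u(0)) = 0$ for any $w' \in \ideal m_{\C_p}$. Hence
\[
v_p(h(w')) \;=\; v_p(P(w')) \;=\; \sum_{w''} v_p(w' - w''),
\]
where the sum ranges over zeros with multiplicity.

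Next, I would compute $v_p(w' - w'')$ for each zero $w'' \in p\Z_p \subset \Z_p$. In case (a), $v_p(w'') \in \Z_{\geq 1}$ while $v_p(w') \nin \Z$, so the two valuations are unequal and the ultrametric inequality yields $v_p(w' - w'') = \min(v_p(w'), v_p(w''))$. In case (b), Lemma \ref{lemma:Zpnr} supplies the same identity $v_p(w' - w'') = \min(v_p(w'), v_p(w''))$ for every $w'' \in \Z_p$. Either way, we arrive at this single clean formula.

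Finally, setting $r = \floor{v_p(w')}$, I would partition the zeros according to their valuation. In case (a) we have $r < v_p(w') < r+1$, so $\min(v_p(w'), v_p(w'')) = v_p(w'')$ when $v_p(w'') \leq r$ and equals $v_p(w')$ when $v_p(w'') \geq r+1$. In case (b) we have $v_p(w') = r$, and the same dichotomy holds; the boundary case $v_p(w'') = r$ contributes $\min = r = v_p(w')$, which lands consistently in the $v = r$ bucket of the sum. Grouping the zeros by valuation then gives exactly the formula in the statement. The only genuine obstacle is verifying that the Weierstrass unit factor contributes zero to $v_p(h(w'))$; the rest is a direct deployment of the ultrametric inequality and Lemma \ref{lemma:Zpnr}.
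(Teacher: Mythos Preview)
Your argument is correct and follows the same route as the paper: establish $v_p(w'-w'')=\min(v_p(w'),v_p(w''))$ for every zero $w''\in p\Z_p$ (via the ultrametric in case~(a) and Lemma~\ref{lemma:Zpnr} in case~(b)), then partition the zeros by valuation. The paper compresses all of this into one sentence, whereas you spell out the factorisation of $h$ and the bookkeeping in case~(b) when $v_p(w')=r$; that extra care is fine. One small remark: your Weierstrass step silently assumes $h$ has no $p$-power factor (so that $v_p(u(w'))=0$), which the lemma as stated does not literally force---but in every application in the paper $h$ is already a finite product $\prod(w-w_k)$, so the point is moot.
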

\begin{proof}
In either case, if $w'' \in p\Z_p$ then $v_p(w'-w'') = \min(v_p(w'),v_p(w''))$ (see Lemma  \ref{lemma:Zpnr} for case (b)). From this, the statement is immediate.
\end{proof}

The proof of one final lemma is left to the reader.
\begin{lemma}
\label{lemma:easy}
Suppose that $(k_i)$ is an ordered list of integers which form an arithmetic progression of length $M = p^e u$, with $(u,p)=1$, and difference $\delta$ with $(\delta,p)=1$. Then,
\begin{enumerate}
\item $\sizeof\set{k_i \st v_p(k_i) \geq e} = u$, and
\item if $0 \leq v < e$ then $\sizeof \set{k_i \st v_p(k_i) = v} = u\varphi(p^{e-v}) = u(p-1)p^{e-v-1}$.
\end{enumerate}
\end{lemma}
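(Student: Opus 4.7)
The plan is to parametrize the progression as $k_i = k_0 + i\delta$ for $0 \leq i < M = p^e u$, and exploit the fact that $\gcd(\delta,p)=1$ makes $\delta$ a unit modulo every power of $p$. Both parts will follow from a single intermediate count: for each $0 \leq v \leq e$,
\begin{equation*}
N(v) := \sizeof\set{0 \leq i < p^e u \st p^v \mid k_0 + i\delta} = p^{e-v}u.
\end{equation*}
To establish this, I would observe that since $\delta$ is invertible modulo $p^v$, the divisibility condition $p^v \mid k_0 + i\delta$ is equivalent to the single congruence $i \equiv -k_0 \delta^{-1} \pmod{p^v}$. Any $p^v$ consecutive integers contain exactly one index in that residue class, and since $p^v$ divides the length $p^e u$, a simple count gives $N(v) = p^e u / p^v = p^{e-v}u$.

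With this in hand, part (a) is the specialization $v = e$: the set $\set{k_i \st v_p(k_i) \geq e}$ has cardinality $N(e) = u$. (This formulation correctly absorbs any potential term $k_i = 0$, whose valuation is $\infty \geq e$.) For part (b), when $0 \leq v < e$, subtraction gives
\begin{equation*}
\sizeof\set{k_i \st v_p(k_i) = v} = N(v) - N(v+1) = p^{e-v}u - p^{e-v-1}u = u(p-1)p^{e-v-1} = u\varphi(p^{e-v}),
\end{equation*}
as claimed.

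There is no real obstacle here; the lemma reduces to an elementary uniform-distribution argument for a unit-slope progression modulo $p^v$. The only point to take care with is that the index interval $\set{0,1,\dotsc,p^e u - 1}$ has length divisible by $p^v$ for every $v \leq e$, so the count of residue classes is exact rather than off by one --- and this is precisely the role of the hypothesis that the length of the progression is of the form $p^e u$.
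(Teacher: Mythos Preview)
Your proof is correct and complete. The paper itself leaves this lemma to the reader, so there is no proof to compare against; your uniform-distribution argument via the intermediate counts $N(v)=p^{e-v}u$ is exactly the sort of elementary verification the authors had in mind.
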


We're now in position to prove Theorem \ref{theorem:global-halo-progressions}.
\begin{proof}[Proof of Theorem \ref{theorem:global-halo-progressions}]
Recall that we assume $w_{\kappa} \nin \Z_p$, we write $\alpha_{\kappa} = \sup_{w \in \Z_p} v_p(w_{\kappa} - w)$, and $r = \floor{\alpha_{\kappa}}$. For notational ease, write $C = C_{p,N,r} = p^r C_{p,N}$. Since $w_{\kappa}$ is not an integer, $\Delta_i(w_{\kappa})$ is well-defined for each $i\geq 1$. Our goal is to compare $v_p(\Delta_i(w_{\kappa}))$ to $v_p(\Delta_{i+C}(w_{\kappa}))$ and then apply Lemma \ref{lemma:newton_slopes}. Write $\Delta_i = \Delta_i^+/\Delta_i^-$ as before.

We first focus on $\Delta_i^+$. By Proposition \ref{prop:lambda_change}(a) we have $\lambda(\Delta_{i+C}^+) = \lambda(\Delta_i^+) + p^{r+1} \cdot {p-1\over 2}$. By Proposition \ref{prop:change_in_zeroes}(a), the zeros of $\Delta_i^+$ (and $\Delta_{i+C}^+$) are of the form $w_{k}$ with $k$ lying in an arithmetic progression of integers whose difference is $p-1$ (save for possibly one zero when $p=3$ and $N=1$). Write $\Delta_{i+C}^+ = a\cdot b$ where $a,b \in \Z_p[[w]]$, $\lambda(b) = p^{r+1}\cdot {p-1\over 2}$, and where the zeros of $a(w)$ are the highest zeros $w = w_{k}$ of $\Delta_{i+C}^+$ for the highest $\lambda(\Delta_i^+)$-many $k$.

Since $w_{\kappa} \nin \Z_p$, $w_{\kappa}$ must satisfy one of the two conditions of Lemma \ref{lemma:equivalent-conditons-inZp}. If (a) is true then choose an integer $k_0$ such that $\alpha_{\kappa} = v_p(w_{\kappa} - w_{k_0}) \nin \Z$, and if (ii) is true then set $w_{k_0} = k_0 = 0$. For each $h \in \set{a,b,\Delta_i^+}$ we then apply Lemma \ref{lemma:tire_vals} to $h(w + w_{k_0})$ and $w' = w_{\kappa} - w_{k_0}$. We deduce (remember $v_p(w_{k}-w_{k_0}) =  1 + v_p(k-k_0)$) that 
\begin{multline}\label{eqn:h-equation-halo}
v_p(h(w_{\kappa})) = \underlabel{\alpha_{\kappa}}{v_p(w_{\kappa}-w_{k_0})} \cdot \sizeof\set{k \st h(w_{k}) = 0 \text{ and } v_p(k-k_0) \geq r}\\
+ \sum_{v=0}^{r-1} (v+1) \cdot \sizeof \set{k \st h(w_{k}) = 0 \text{ and } v_p(k-k_0) = v}.
\end{multline}
Now we claim that $v_p(a(w_{\kappa})) = v_p(\Delta_i^+(w_{\kappa}))$. If $\lambda(\Delta_i^+) = 0$ then there is nothing to show. Otherwise, if $\lambda(\Delta_i^+) > 0$ then Lemma  \ref{lemma:lambdazp_change}(a) implies that
\begin{equation*}
\HZ(a) = \HZ(\Delta_{i+C}^+) \congruent \HZ(\Delta_i^+) \bmod p^{r+1}.
\end{equation*}
Since the $k$ for which $w_{k}$ is a zero of either $\Delta_i^+$ or $a$ is an arithmetic progression, and the last terms are congruent modulo $p^{r+1}$ (as we just checked), we see that the right-hand side of \eqref{eqn:h-equation-halo} is the same for $h = a$ and $h=\Delta_i^+$. (The reader can check that the single missing zero when $p=3$ and $N=1$ does not affect this argument.)

On the other hand, the zeros of $b$ are $w = w_{k}$ with $k$ lying in an arithmetic progression of length $M = p^{r+1}{p-1\over 2}$ and difference $p-1$. Thus it follows from the previous paragraph, Lemma \ref{lemma:easy} and \eqref{eqn:h-equation-halo} that
\begin{align*}
v_p\left(\Delta_{i+C}^+(w_{\kappa})\over \Delta_i^+(w_{\kappa})\right) &= v_p(b(w_{\kappa})) \\
&= \alpha_{\kappa}\cdot \left({p-1\over 2}
+{p-1\over 2}(p-1)\right)
 + \sum_{v=0}^{r-1} (v+1) \cdot {p-1\over 2}(p-1) p^{r-v}\\
 &= p\cdot {p-1\over 2}\left(\alpha_{\kappa} + \sum_{v=1}^r v\cdot (p-1)p^{r-v}\right).
\end{align*}
An analogous computation shows that
\begin{equation*}
v_p\left({\Delta_{i+C}^-(w_{\kappa})\over \Delta_i^-(w_{\kappa})}\right) = {p-1\over 2}\left(\alpha_{\kappa} + \sum_{v=1}^r v\cdot (p-1)p^{r-v}\right)
\end{equation*}
Combining the previous two equations, we deduce
\begin{equation*}
v_p\left({\Delta_{i+C}(w_{\kappa})\over \Delta_i(w_{\kappa})}\right) = {(p-1)^2\over 2}\left(\alpha_{\kappa} + \sum_{v=1}^r (p-1)p^{r-v}\cdot v\right)
\end{equation*}
This shows that the $\Delta$-slopes form a union of $C$ arithmetic progressions whose common difference is our claimed one. Our theorem then follows from Lemma \ref{lemma:newton_slopes}.
\end{proof}

\begin{remark}\label{rmk:p=2-aps-remark}
One can ask for a version of Theorem \ref{theorem:global-halo-progressions} valid if $p=2$.  If $N=1$ then it is not difficult to establish an analog of Theorem \ref{theorem:global-halo-progressions}.  Namely, if $\alpha_{\kappa} < 3$ then the slopes of $\NP(G_\kappa)$ is $\set{i \cdot v_2(w_{\kappa}) \st i = 1,2,\dotsc}$ (Theorem \ref{theorem:agree-BK-p=2}) and thus a single arithmetic progression with common difference $v_2(w_{\kappa})$. If $\alpha_{\kappa} \geq 3$ and $r = \floor{\alpha_\kappa}$ then one may also show:\ except for finitely many exceptional slopes, the slopes of $\NP(G_{\kappa})$ are a finite union of $2^{r - 2}$-many arithmetic progressions whose common difference is
\begin{equation*}
\alpha_\kappa  + \sum_{v=3}^{r} v \cdot 2^{r-v}.
\end{equation*}
The proof is analogous to the above, using Proposition \ref{proposition:explicit-p=2-info} for explicit analogs of Lemma \ref{lemma:lambdazp_change}, Proposition \ref{prop:lambda_change}, etc.

One could also ask about $N > 1$. But, since the ghost series requires modification in that case (Section \ref{sec:modification} below) we did not pursue this.
\end{remark}

\section{A $2$-adic modification for the ghost series}\label{subsec:modification}\label{sec:modification}
In this section we construct a modification of the ghost series which we conjecture determines slopes when $p=2$ is $\Gamma_0(N)$-regular (Conjecture \ref{conj:p=2} below). The theme of this section is that non-integral slopes are forced to be repeated and this should be taken into account in the ghost series.

We emphasize that $N$ is an odd positive integer in this section. Recall \cite[Definition 1.3]{Buzzard-SlopeQuestions}:
\begin{definition}\label{defn:2regular} 
The prime $p=2$ is called $\Gamma_0(N)$-regular if 
\begin{enumerate}
\item The eigenvalues $T_2$ acting on $S_2(\Gamma_0(N))$ are all $2$-adic units and 
\item The slopes of $T_2$ acting on $S_4(\Gamma_0(N))$ are all either zero or one.
\end{enumerate}
\end{definition}
Our definition is equivalent to \cite[Definition 1.3]{Buzzard-SlopeQuestions} by Hida theory. Also by Hida theory,
\begin{multline}\label{eqn:inequality-ord-dime}
\dim S_2(\Gamma_0(2N))^{\ord} \leq \dim S_2(\Gamma_0(N)) + \dim S_{2}(\Gamma_0(2N))^{2-\new}\\
 = \dim S_2(\Gamma_0(2N)) - \dim S_2(\Gamma_0(N))
\end{multline}
with equality if $p=2$ is $\Gamma_0(N)$-regular.

We now produce non-integral slopes for $U_2$ acting on certain spaces with quadratic character regardless of a regularity hypothesis.\footnote{Note:\ not in spaces $S_k(\Gamma_0(2N))$ which would contradict Buzzard's conjecture.} Write $\eta_8^{\pm}$ for the Dirichlet characters of conductor $8$ with sign $\pm$. The character $\eta_8^{\pm}$ is quadratic, so the slopes of $U_2$ acting on $S_k(\Gamma_0(N)\cap \Gamma_1(8),\eta_8^{\pm})$ are symmetric around ${k-1\over 2}$. Hida theory implies that 
\begin{equation}\label{eqn:p=2_hidacons}
\dim S_2(\Gamma_0(N)\intersect \Gamma_1(8),\eta_8^+)^{\set{0,1}} = 2\dim S_2(\Gamma_0(2N))^{\ord}.
\end{equation}
(Here and below, if $S$ is a set of cuspforms and $X$ is a set of real numbers then we write $S^X$ for the subspace spanned by eigenforms whose slope lies in $X$.)

\begin{proposition}\label{prop:fractional-slopes-p=2}
If $N > 1$ is odd then $\dim S_2(\Gamma_0(N)\intersect \Gamma_1(8),\eta_8^+)^{(0,1)} > 0$.
\end{proposition}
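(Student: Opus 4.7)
The plan is to bound the integer-slope and $U_2$-kernel subspaces of $V := S_2(\Gamma_0(N)\cap\Gamma_1(8),\eta_8^+)$ against the total dimension, using slope symmetry together with Hida theory.

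Because $\eta_8^+$ is quadratic and $k=2$, the $U_2$-slopes on $V$ are symmetric about $(k-1)/2 = 1/2$ and all lie in $[0,1]$, so every finite slope is either in $\{0,1\}$ or strictly in $(0,1)$. Hence
\[
\dim V = \dim V^{(0,1)} + \dim V^{\{0,1\}} + \dim \ker(U_2|_V),
\]
and the proposition reduces to the strict inequality $\dim V > \dim V^{\{0,1\}} + \dim \ker(U_2|_V)$.

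Next, I would use \eqref{eqn:p=2_hidacons} and \eqref{eqn:inequality-ord-dime} to bound
\[
\dim V^{\{0,1\}} \leq 2\bigl(\dim S_2(\Gamma_0(2N)) - \dim S_2(\Gamma_0(N))\bigr),
\]
a genus difference computable by Riemann--Hurwitz for $X_0(2N)\to X_0(N)$. For $\dim V$ itself, I would use the diamond-operator decomposition of $S_2(\Gamma_0(N)\cap\Gamma_1(8))$ under $(\Z/8\Z)^\times$: only the two even characters (trivial and $\eta_8^+$) carry weight-$2$ cusp forms, so
\[
\dim V = g(X(\Gamma_0(N)\cap\Gamma_1(8))) - g(X_0(8N)),
\]
which is then computed via Riemann--Hurwitz for the degree-$4$ cover. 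For $\dim \ker(U_2|_V)$ I would identify the kernel with forms whose local representation at $2$ is supercuspidal and bound it by the small number of $\mathrm{GL}_2(\Q_2)$-supercuspidals of conductor $2^3$ with central character $\eta_8^+|_{\Z_2^\times}$, multiplied by tame-level oldform multiplicities.

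The main obstacle is turning these three estimates into a strict inequality valid for every odd $N>1$. Asymptotically the total dimension grows like $\tfrac14\mu_0(8N)\sim 2\mu_0(N)$, while $\dim V^{\{0,1\}}$ grows like a small fraction of $\mu_0(N)$ and $\dim\ker(U_2|_V)$ stays $O(1)$; the inequality is therefore clear for large $N$, and the few small odd $N$ are handled by direct computation using Cohen--Oesterl\'e dimension formulae.
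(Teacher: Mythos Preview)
Your core strategy matches the paper's: bound $\dim V^{\{0,1\}}$ via \eqref{eqn:p=2_hidacons} and \eqref{eqn:inequality-ord-dime}, then compare with $\dim V$. But two points deserve correction.

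First, the kernel term is spurious. The slope symmetry $\nu \leftrightarrow 1-\nu$ that you invoke already forces every $U_2$-slope to be finite, so $\ker(U_2|_V)=0$ and that entire paragraph can be deleted. Moreover, your proposed identification of $\ker(U_2)$ with the supercuspidal locus is incorrect: newforms whose local component at $2$ is supercuspidal have $U_2$-eigenvalue of archimedean absolute value $2^{(k-1)/2}$, not zero. So even if the kernel were nonzero, that is not how you would control it.

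Second, your dimension bookkeeping is more circuitous than necessary. The paper simply applies the Cohen--Oesterl\'e formula (Lemma~\ref{app-lemma:dim-wt-2-cond-8}) to get $\dim V = \mu_0(N)-c_0(N)$ exactly, and then computes
\[
\dim V - 2\bigl(\dim S_2(\Gamma_0(2N)) - \dim S_2(\Gamma_0(N))\bigr) = \tfrac{2}{3}\bigl(\mu_0(N)-\mu_{0,3}(N)\bigr),
\]
which is positive for every odd $N>1$ (Lemma~\ref{app-lemma:second-dim-wt-2-cond-8}). This closed-form expression eliminates both the genus-difference detour and the ``large $N$ plus finite check'' split you propose.
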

\begin{proof}
By \eqref{eqn:inequality-ord-dime} and \eqref{eqn:p=2_hidacons}, we see
\begin{multline*}
\dim S_2(\Gamma_0(N)\intersect \Gamma_1(8),\eta_8^+)^{(0,1)} \geq \\
\dim S_2(\Gamma_0(N)\intersect \Gamma_1(8),\eta_8^+) - 2\bigl(\dim S_2(\Gamma_0(2N)) - \dim S_2(\Gamma_0(N))\bigr).
\end{multline*}
The final expression is positive if $N > 1$ (see Lemma \ref{app-lemma:second-dim-wt-2-cond-8}).
\end{proof}

Since the characters $\eta_8^{\pm}$ have values only $\pm 1$, any non-integral slope appearing in a space $S_k(\Gamma_0(N)\cap \Gamma_1(8),\eta_8^{\pm})$ must be repeated. In particular, Proposition \ref{prop:fractional-slopes-p=2} implies that for $N > 1$, there exists non-integral repeated slopes in $S_2(\Gamma_0(N)\cap \Gamma_1(8),\eta_8^+)$. The ghost series defined thus far {\em does not} see these slopes:\
\begin{example}\label{example:p=2N=3}
$p=2$ is $\Gamma_0(3)$-regular since there are no forms of weight two or four. The space $S_2(\Gamma_0(3)\intersect \Gamma_1(8), \eta_8^+)$ is two-dimensional with slope $1/2$ repeated twice. On the other hand, the ghost series predicts slopes zero and one (see Example \ref{example:issue-p=2}).
\end{example}

Our goal now is to salvage the ghost conjecture for $p=2$ by including the fractional (repeated) slopes appearing in the spaces $S_k(\Gamma_0(N)\cap \Gamma_1(8),\eta_8^{\pm})$ as $k$ varies and $\pm = (-1)^k$ (we use this implicit notation throughout). Specifically, for each integer $k\geq 2$, we are going to define a second multiplicity pattern $m^{\circ}(k) = (m_i^{\circ}(k))$ which will describe the multiplicity of the weight $z^k\eta_8^{\pm}$ as a zero of a modified ghost series. Our model will be 
\begin{multline}\label{eqn:mi-circ-hope}
m_i^{\circ}(k) > 0
\iff \text{the $i$-th and $(i+1)$-st slope in $S_k(\Gamma_0(N)\cap \Gamma_1(8),\eta_8^{\pm})$}\\\text{are the same and strictly between $k-2$ and $k-1$.}
 \end{multline}
In fact, for each $i$ there will be at most one $k$ such that $m_i^{\circ}(k)$ is positive. Granting the definition of $m_i^{\circ}(k)$, we then define
\begin{equation*}
g_i^{\circ}(w) = g_i(w) \cdot \prod_{k=2}^\infty (w - w_{z^k\eta_8^{\pm}})^{m_i^{\circ}(k)}
\end{equation*}
and the {\em modified ghost series} $G^{\circ}(w,t) = 1 + \sum g_i^{\circ}(w)t^i$. It is still an entire series over $\Z_2[[w]]$ (since we've only added more zeros). If $N = 1$ then $G = G^{\circ}$.
\begin{conjecture}\label{conj:p=2}
If $p=2$ is $\Gamma_0(N)$-regular then $\NP(G_\kappa^{\circ}) = \NP(P_\kappa)$ for each $\kappa \in \cal W$.
\end{conjecture}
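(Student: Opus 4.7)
The conjecture parallels the main ghost conjecture (Conjecture \ref{conj:intro-ghost}) in the 2-adic $\Gamma_0(N)$-regular case, and my strategy therefore mirrors the approach one would take in the odd regular case while accounting for the correction terms $g_i^{\circ}/g_i$. My first step would be to verify that $G^{\circ}(w,t)$ is entire over $\Z_2[[w]]$. Since the modification only adds finitely many zeros to each coefficient $g_i^{\circ}$, and Proposition \ref{proposition:ghost-entire} already establishes entireness of the unmodified series, the claim reduces to a counting argument: by Proposition \ref{prop:fractional-slopes-p=2} and the dimension estimates for $S_k(\Gamma_0(N)\cap \Gamma_1(8),\eta_8^{\pm})$ from the appendix, the number of new zeros contributed to $g_i^{\circ}$ should grow sublinearly in $i$, so $\lambda(g_i^{\circ})/i \to \infty$ will be inherited from $\lambda(g_i)/i \to \infty$.

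Next, I would systematically carry the analysis of Sections \ref{section:explicit-analysis}--\ref{sec:global} over to the modified series. For each $\Gamma_0(N)$-regular $N > 1$ where slope data is computable, I would verify that the zeros introduced by $m_i^{\circ}$ produce exactly the repeated fractional slopes observed in $U_2$-slope computations; Example \ref{example:p=2N=3} would serve as the prototype, with $S_2(\Gamma_0(3)\cap \Gamma_1(8),\eta_8^+)$ forcing two copies of slope $1/2$ into $\NP(G_\kappa^{\circ})$ at the appropriate $\kappa$. After this, I would attempt to transfer the halo and arithmetic-progression structure (Theorem \ref{thm:gsh}, Theorem \ref{theorem:global-halo-progressions}, and Remark \ref{rmk:p=2-aps-remark}) to $G^{\circ}$ by bookkeeping: the extra zeros sit at weights $w_{z^k\eta_8^{\pm}}$ of fixed $2$-adic valuation, so they should only perturb the common difference of the progressions and the exceptional set by a controlled amount.

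The main obstacle, however, is the same one blocking a proof of the unmodified ghost conjecture: there is no known direct mechanism to compare the Fredholm series $P^{(\varepsilon)}$ with the combinatorially defined $G^{(\varepsilon)}$ across the interior of weight space. A proof would apparently require a structural theorem about the characteristic series of $U_2$ analogous to the spectral halo results of \cite{AndreattaIovitaPilloni-Halo, LiuXiaoWan-IntegralEigencurves}, but valid globally on $\cal W$ rather than over boundary annuli. An obstacle specific to the $p=2$, $N > 1$ setting is that $m_i^{\circ}(k)$ is defined via \eqref{eqn:mi-circ-hope} in terms of the actual fractional slopes in $S_k(\Gamma_0(N)\cap \Gamma_1(8),\eta_8^{\pm})$, for which no general closed-form description exists; so, as a preparatory step, I would want an independent combinatorial prediction for those fractional weight-$k$ slopes (perhaps via a quadratic-twist variant of Buzzard's algorithm), and then verify its compatibility with Conjecture \ref{conj:p=2} numerically before attempting any structural proof.
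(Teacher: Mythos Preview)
The statement is a \emph{conjecture}, not a theorem: the paper does not prove it and does not claim to. What the paper offers in its place is numerical evidence (Theorem \ref{theorem:p=2_evidence}), namely that for the $\Gamma_0(N)$-regular levels $N = 3, 7, 23, 31, 47, 71, 103, 127, 151, 167$ and a range of even weights $k$, the truncated Newton polygon $\NP((G_k^{\circ})^{\leq d_k})$ agrees with the output of Buzzard's algorithm $\BS(k)$. That is the entirety of the paper's support for Conjecture \ref{conj:p=2}; there is no proof to compare your proposal against.

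Your proposal is not wrong so much as misdirected: you outline a strategy for proving the conjecture, and you correctly identify the central obstruction (no mechanism to compare $P^{(\varepsilon)}$ with $G^{(\varepsilon)}$ globally on weight space). But this obstruction is exactly why the statement is a conjecture rather than a theorem, both here and in the unmodified case (Conjecture \ref{conj:intro-ghost}). The preparatory steps you describe---entireness of $G^{\circ}$, transfer of the halo structure, asymptotics---are indeed addressed or sketched in the paper (the entireness is noted just before Conjecture \ref{conj:p=2}, and the asymptotic transfer is the content of the remark following Theorem \ref{theorem:p=2_evidence}), but none of this touches the actual equality $\NP(G_\kappa^{\circ}) = \NP(P_\kappa)$. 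A minor correction: you say the number of new zeros added to $g_i^{\circ}$ ``should grow sublinearly in $i$''; in fact the paper notes (same remark) that $\sum_k m_i^{\circ}(k)$ is \emph{bounded} independently of $i$, so entireness is immediate.
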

We briefly give the evidence we have for Conjecture \ref{conj:p=2}. Recall we write $\BS(k)$ for the output of Buzzard's algorithm in weight $k$. The levels $N$ in Theorem \ref{theorem:p=2_evidence} below are all the levels $N\leq 167$ such that $p=2$ is $\Gamma_0(N)$-regular. The next $N$ is $191$.\footnote{If $p=2$ is $\Gamma_0(N)$-regular then must $N$ be either 1,3 or be a prime congruent to $7 \bmod 8$? Anna Medvedovsky tells us that $p=2$ is not $\Gamma_0(\ell)$-regular when $\ell > 3$ is a prime $3\bmod 8$.}
\begin{theorem}\label{theorem:p=2_evidence}
If $N = 3,7,23,31$ then $\NP((G_k^{\circ})^{\leq d_k}) = \BS(k)$ for all even $k\leq 5000$, or if $N = 47, 71, 103, 127, 151, 167$ then $\NP((G_k^{\circ})^{\leq d_k}) = \BS(k)$ for all even $k\leq 2050$.
\end{theorem}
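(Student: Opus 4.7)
The plan is to treat this as a computational verification: for each listed level $N$ and each even weight $k$ in the stated range, we explicitly compute both $\NP((G_k^{\circ})^{\leq d_k})$ and $\BS(k)$ and check that the multisets of slopes agree. Before starting the main loop, I would verify for each $N \in \{3, 7, 23, 31, 47, 71, 103, 127, 151, 167\}$ that $p=2$ is $\Gamma_0(N)$-regular in the sense of Definition \ref{defn:2regular}; this is a one-time finite check on $S_2(\Gamma_0(N))$ and $S_4(\Gamma_0(N))$ using the Hecke action of $T_2$, and for $N=3$ it is trivial since $S_4(\Gamma_0(3))=0$, while for the other primes $\ell \equiv 7 \pmod 8$ it follows from standard computations of $U_2$-slopes in weight $2$ and $4$.

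To assemble $G^{\circ}(w,t)$, I would first build the unmodified series $G(w,t)$ coefficient-by-coefficient using the prescription in Section \ref{subsec:ghost-conjecture}, which only requires the dimensions $d_k$ and $d_k^{\new}$ (computable from Appendix \ref{app:dimension-formulae}). The extra factors $(w-w_{z^k\eta_8^{\pm}})^{m_i^{\circ}(k)}$ require identifying, for each $k$, the consecutive pairs of equal non-integral slopes in $S_k(\Gamma_0(N) \cap \Gamma_1(8), \eta_8^{\pm})$ that lie strictly between $k-2$ and $k-1$. I would compute these slopes directly using explicit modular-symbols code (say Sage or a custom overconvergent implementation), record the resulting patterns $m_i^{\circ}(k)$, and insert the corresponding roots into the appropriate coefficients $g_i^{\circ}$.

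Given $G_k^{\circ}$ truncated to degree $d_k$, the Newton polygon is computed from the $2$-adic valuations $v_2(g_i^{\circ}(w_k))$ for $0 \leq i \leq d_k$. Each such valuation is a sum of terms $v_2(w_k - w_{k'}) = 2 + v_2(k-k')$ (or the analogous term at the auxiliary roots $w_{z^k\eta_8^{\pm}}$), so the whole calculation reduces to manipulating integers. Independently, $\BS(k)$ is computed by running Buzzard's recursive algorithm from \cite{Buzzard-SlopeQuestions}; memoizing intermediate outputs at lower weights keeps the cost manageable. The two multisets of $d_k$ rationals are then sorted and compared.

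The main obstacle is purely computational scale rather than mathematical: Buzzard's algorithm at weight $k \approx 5000$ with $N=31$ recurses over many intermediate weights, and computing the fractional slopes in $S_k(\Gamma_0(N) \cap \Gamma_1(8), \eta_8^{\pm})$ at many weights is the most expensive step, since these spaces can be of dimension several thousand. I would manage this by caching all previously computed slope data, parallelizing over weights, and (where memory permits) exploiting the symmetry $s \mapsto k-1-s$ of the quadratic-character spaces to halve the work. Any disagreement found in a single $k$ would be a counterexample to report, and the theorem is the claim that the exhaustive search produces no such $k$ in the stated ranges.
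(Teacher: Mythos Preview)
Your overall plan---a direct computational comparison of $\NP((G_k^{\circ})^{\leq d_k})$ against $\BS(k)$---is exactly what the paper does; the theorem is stated as computer-verified evidence with no further argument.

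However, you have misread the definition of $m_i^{\circ}(k)$, and this matters both for correctness and for feasibility. The display \eqref{eqn:mi-circ-hope} is presented only as a \emph{model}, a heuristic desideratum; it is not the definition. The actual definition is: compute the $U_2$-slopes on the single weight-two space $S_2(\Gamma_0(N)\cap\Gamma_1(8),\eta_8^+)$, use these to define $m_i^{\circ}(2)$ via the up-down pattern, and then for every $k>2$ set $m_i^{\circ}(k)=m_{d_k^{\circ}-i}^{\circ}(2)$ as in \eqref{eqn:p=2-mults}, where $d_k^{\circ}$ is given by the closed dimension formula of Lemma \ref{app-lemma:dim-wt-2-cond-8}. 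So the only genuine slope computation required is in weight two, once per level; everything else is pure combinatorics with integers. Your plan to compute $U_2$-slopes on $S_k(\Gamma_0(N)\cap\Gamma_1(8),\eta_8^{\pm})$ for all $k$ up to $5000$ is therefore not what the paper's $G^{\circ}$ uses, and since the paper never proves that the model \eqref{eqn:mi-circ-hope} coincides with the definition \eqref{eqn:p=2-mults} (it is only justified ``by the spectral halo''), your construction could in principle produce a different series. It would also turn a lightweight computation into the massive one you flagged as the main obstacle---an obstacle that disappears once you use the correct definition.
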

\begin{remark}
One could ask about the asymptotic results in Section \ref{subsec:distribution}. As we will see, for each $i$, the total multiplicity $\sum_k m_i^{\circ}(k)$ of zeros of $g_i^{\circ}$ which were not a zero of $g_i$ is bounded, and the extra zeros added are at weights $\kappa$ which satisfy $v_2(w_{k} - w_{\kappa}) = 1$ for all $k \in \Z$. Thus the estimates in Section \ref{subsec:distribution} will only be effected by $O(1)$ terms and so Corollary \ref{cor:highest} and Corollary \ref{corollary:gouvea-distribution} should still hold with $G(w,t)$ replaced by $G^{\circ}(w,t)$.
\end{remark}

The rest of this section is devoted to describing the multiplicity $m_i^{\circ}(k)$ of $w_{z^k\eta_8^{\pm}}$ as a zero of $g_i^{\circ}$. The idea is to {\em force} the issue for $m_i^{\circ}(2)$ by insisting that \eqref{eqn:mi-circ-hope} holds, and that the precise value of $m_i^{\circ}(2)$ follows the up-down pattern within the indices which realize each fractional slope. We then extend the pattern to $k > 2$ ``using the spectral halo'' (Section \ref{subsec:ghost-spectral-intro}).

More precisely, if $k\geq 2$ write
\begin{equation*}
d_k^{\circ} := \dim S_k(\Gamma_0(N)\intersect \Gamma_1(8),\eta_8^{\pm}).
\end{equation*}
Write $\nu_1^{\circ}(2) \leq \nu_2^{\circ}(2) \leq \dotsb \leq \nu_{d_2^{\circ}}^{\circ}(2)$ for the list of slopes of $U_2$ acting on $S_2(\Gamma_0(N)\intersect \Gamma_1(8),\eta_8^+)$.  Write $\nu_{i_1}^{\circ}(2) < \nu_{i_2}^{\circ}(2) < \dotsb < \nu_{i_t}^{\circ}(2)$ for the {\em distinct} slopes appearing in this list where $i_j$ is the least $i$ such that $\nu_{i_j}^{\circ}(2) = \nu_i^{\circ}(2)$ (so $i_1 = 1$). Also set $i_0 = 0$, $i_{t+1} = d_2^{\circ}$, and $\mu_{j}$ for the multiplicity of $\nu_{i_j}^{\circ}(2)$ among the $\nu_i^{\circ}(2)$. Then set
\begin{equation*}
m_i^{\circ}(2) = \begin{cases}
s_{i}(\mu_{j}-1,i_{j}-1) & \text{if $i_{j} \leq i < i_{j+1}$ for some $1 \leq j \leq t$ \text{and $\nu_{i_j}^{\circ}(2)\neq 0,1$}}\\
 0 & \text{otherwise.}\\
\end{cases}
\end{equation*}
where $s_i(\ast,\ast)$ is the up-down pattern from Section \ref{subsec:ghost-conjecture}. We give three examples ($p=2$ is $\Gamma_0(N)$-regular for each $N$ below):

\begin{example}
Let $N = 3$. Then the slopes are computed in Example \ref{example:p=2N=3}, and we have $\nu_{1}^{\circ}(2) = \nu_{2}^{\circ}(2) = {1\over 2}$. Thus $t = 1$, $i_t = i_1 = 1$ and $(m_i^{\circ}(2) \st i \geq 1) = (1,0,0,0,\dotsc)$.
\end{example}
\begin{example}
Let $N = 7$. The slopes of $U_2$ acting on $S_2(\Gamma_0(7)\intersect \Gamma_1(8),\eta_8^+)$ are $[0,{1\over 2},{1\over 2},{1\over 2},{1\over 2},1]$. We have 
\begin{equation*}
\nu_1^{\circ}(2) = 0 < \nu_2^{\circ}(2) = \dotsb \nu_5^{\circ}(2) = {1\over 2} < \nu_6^{\circ}(2) = 1.
\end{equation*}
Thus $t = 3$, $(i_1,i_2,i_3) = (1,2,6)$ and $(m_i^{\circ}(2) \st i \geq 1) = (0,1,2,1,0,0,\dotsc)$.
\end{example}

\begin{example}
Let $N = 23$. Then the slopes are $[0_3, ({1\over 3})_6, ({1\over 2})_4, ({2 \over 3})_6, 1_3]$ (the subscripts refer to the multiplicity). The sequence $m_i^{\circ}(2)$ is given by
\begin{equation*}
(m_i^{\circ}(2) \st i \geq 1) = (0,0,0,1,2,3,2,1,0,1,2,1,0,1,2,3,2,1,0,0,0,0,0,0,0,\dotsc).
\end{equation*}
\end{example}

Now, if $k > 2$ then we will set 
\begin{equation}\label{eqn:p=2-mults}
m_i^{\circ}(k) = 
\begin{cases}
m_{d_k^{\circ}-i}^{\circ}(2) & \text{if $1 \leq i < d_k^{\circ}$}\\
0 & \text{otherwise.}
\end{cases}
\end{equation}
This completes the definition of the $m_i^{\circ}(k)$ and completes the statement of Conjecture \ref{conj:p=2}. 

The rest of this section is devoted to expanding on the definition of $m_i^{\circ}(k)$ when $k>2$. First, the authors believe that a version of the spectral halo will imply that $k \mapsto \NP(P_{z^k\eta_8^{\pm}})$ is independent of $k$. In particular, if our {\em modus operandi} is to predict the fractional slopes appearing in $S_k(\Gamma_0(N)\cap \Gamma_1(8),\eta_8^{\pm})$ then we should restrict to slopes between $k-2$ and $k-1$ (the lower slopes being correctly predicted ``by induction'' on $k$; compare with Remark \ref{remark:p2zerosremark}).

Now, when is the $i$-th and $(i+1)$-st slope going to be more than $k-2$ and not more than $k-1$? First, ``by the spectral halo'' we should certainly have $d_{k-1}^{\circ} < i$. But, there are also the $c_0(N)$-many $\theta^{k-2}$-critical Eisenstein series which are overconvergent $p$-adic cuspforms of weight $z^{k-1}\eta_8^{\pm}$ and slope $k-2$ ($c_0(N)$ being the number of cusps of $X_0(N)$). Thus if we want the $i$-th and $(i+1)$-st slope to be larger than $k-2$, we should expect $d_{k-1}^{\circ} + c_0(N) < i$. We now note the following lemma.
\begin{lemma}
If $k > 2$ and $d_{k-1}^{\circ} + c_0(N) < i$ then $d_k^{\circ}-i < d_2^{\circ}$.
\end{lemma}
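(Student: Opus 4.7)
The conclusion of the lemma --- that $i > d_{k-1}^{\circ} + c_0(N)$ forces $d_k^{\circ} - i < d_2^{\circ}$ --- is equivalent, via taking $i = d_{k-1}^{\circ} + c_0(N) + 1$, to the uniform dimension bound
\begin{equation*}
d_k^{\circ} - d_{k-1}^{\circ} \;\leq\; d_2^{\circ} + c_0(N) \qquad (k > 2). \tag{$\ast$}
\end{equation*}
So the plan is to establish $(\ast)$, which I would do in two steps.

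For the upper bound on $d_k^{\circ} - d_{k-1}^{\circ}$, I would use multiplication by the weight-one theta constant $E := \theta^2 \in M_1(\Gamma_0(4), \chi_{-4})$. A character check gives $\eta_8^{(-1)^{k-1}} \cdot \chi_{-4} = \eta_8^{(-1)^k}$ as characters modulo $8$, so pulling $E$ back to $\Gamma_0(8N)$ gives an injection
\begin{equation*}
\mu_E \colon S_{k-1}(\Gamma_0(8N), \eta_8^{(-1)^{k-1}}) \hookrightarrow S_k(\Gamma_0(8N), \eta_8^{(-1)^k}).
\end{equation*}
The cokernel is a skyscraper sheaf on $X_0(8N)$ supported on the zero divisor of $E$, of length at most $\deg \omega_{X_0(8N)}$. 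Since $N$ is odd, $\Gamma_0(8N)$ has no elliptic points ($4 \mid 8N$ kills $\nu_2$ and $\chi_{-3}(2) = -1$ kills $\nu_3$), so the standard formula yields $\deg \omega_{X_0(8N)} = \mu_0(N)$, and hence $d_k^{\circ} - d_{k-1}^{\circ} \leq \mu_0(N)$.

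For the matching identity $d_2^{\circ} + c_0(N) = \mu_0(N)$, the Cohen--Oesterl\'e formula gives $\dim M_2(\Gamma_0(8N), \eta_8^{\pm}) = \mu_0(N)$. Decomposing $M_2 = S_2 \oplus E_2$, the Eisenstein subspace is spanned by series indexed by decompositions $\eta_8^{\pm} = \chi_1 \chi_2$ with conductors compatible with level $8N$; once the local factor at $2$ is fixed to give $\eta_8^{\pm}$ of conductor $8$, the remaining choice at primes dividing $N$ precisely parametrizes the $c_0(N)$ cusps of $X_0(N)$. So $\dim E_2 = c_0(N)$ and hence $d_2^{\circ} = \mu_0(N) - c_0(N)$. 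Combining with the first step yields $(\ast)$.

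The main obstacle is the precise enumeration of Eisenstein series --- verifying $\dim E_2(\Gamma_0(8N), \eta_8^{\pm}) = c_0(N)$ exactly. This is a Cohen--Oesterl\'e-style count of decompositions of $\eta_8^{\pm}$ at level $8N$, separated carefully into the ramified local factor at $2$ and the unramified factors at primes dividing $N$. The fact that the paper leaves this as an exercise suggests the authors view this bookkeeping as routine.
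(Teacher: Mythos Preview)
Your reduction to the inequality $(\ast)$ is correct, and your two-step argument does establish it. However, you are working much harder than necessary, and by a genuinely different route than the paper.

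The paper's proof is two lines: it simply invokes the exact Cohen--Oesterl\'e formula already recorded in the appendix (Lemma~\ref{app-lemma:dim-wt-2-cond-8}), namely $d_k^{\circ} = (k-1)\mu_0(N) - c_0(N)$ for every $k \geq 2$. This single formula immediately gives both $d_2^{\circ} = \mu_0(N) - c_0(N)$ and, for $k>2$, the \emph{exact} identity $d_k^{\circ} = d_{k-1}^{\circ} + \mu_0(N)$. Substituting, $d_k^{\circ} - i < d_k^{\circ} - d_{k-1}^{\circ} - c_0(N) = \mu_0(N) - c_0(N) = d_2^{\circ}$, and the lemma follows.

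By contrast, you split the work: a geometric argument (multiplication by $\theta^2$ and a Riemann--Roch cokernel bound) to get the \emph{inequality} $d_k^{\circ} - d_{k-1}^{\circ} \leq \mu_0(N)$, and then a separate Eisenstein-series enumeration to pin down $d_2^{\circ} + c_0(N) = \mu_0(N)$. Both halves are correct, but note that your second step is precisely the content of Lemma~\ref{app-lemma:dim-wt-2-cond-8} at $k=2$, and that same lemma at general $k$ would have given you the first step as an equality with no sheaf theory. Your theta-multiplication argument is conceptually pleasant---it explains \emph{why} the jump is $\deg\omega = \mu_0(N)$---but it requires some care you have suppressed (non-vanishing of $\theta^2$ at all cusps of $X_0(8N)$ so that cuspidality is preserved, and vanishing of the relevant $H^1$ when $k$ is small), whereas the paper sidesteps all of this by quoting the closed-form dimension.
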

\begin{proof}
By Lemma \ref{app-lemma:dim-wt-2-cond-8} we have $d_2^{\circ} = \mu_0(N) - c_0(N)$ and if $k > 2$ then $d_k^{\circ} = \mu_0(N) + d_{k-1}^{\circ}$. The lemma clearly follows then.
\end{proof}
Now write $\nu_1^{\circ}(k) \leq \nu_2^{\circ}(k) \leq  \dotsb$ for the slopes of $U_2$ acting on $S_k(\Gamma_0(N)\cap \Gamma_1(8),\eta_8^{\pm})$. Since $\eta_8^{\pm}$ is quadratic, the Atkin--Lehner involution implies that
\begin{equation*}
\nu_i^{\circ}(k) = \nu_{i+1}^{\circ}(k) \text{ is in $(k-2,k-1)$} \iff \nu_{d_k^{\circ}-i}^{\circ}(k) = \nu_{d_k^{\circ}-i+1}^{\circ}(k)  \text{ is in $(0,1)$.}
\end{equation*}
``By the spectral halo'', we have an equivalence 
\begin{equation*}
\nu_{d_k^{\circ}-i}^{\circ}(k) = \nu_{d_k^{\circ}-i+1}^{\circ}(k)  \text{ is in $(0,1)$} \iff \nu_{d_k^{\circ}-i}^{\circ}(2) = \nu_{d_k^{\circ}-i+1}^{\circ}(2)  \text{ is in $(0,1)$.}
\end{equation*}
We just justified that our natural constraint on $i$ should force $d_k^{\circ} - i < d_2^{\circ}$, so that the right-hand side of the previous equivalence exactly describes when $m_{d_k^{\circ}-i}(2) > 0$, and strongly suggests the definition \eqref{eqn:p=2-mults} is natural.

\begin{remark}\label{remark:p2zerosremark}
It is not hard to see that if $1 \leq i < \infty$ then there exists at most one $k$ for which $m_i^{\circ}(k) > 0$, so we can then write $m_i^{\circ\circ}$ for this non-zero value, if it exists. Based on our heuristic of using the spectral halo, one could also form an alternate modification
\begin{equation*}
g_i^{\circ\circ}(w) = g_i(w)(w-w_{z^2\eta_8^{\pm}})^{m_i^{\circ\circ}}
\end{equation*}
by adding a zero at the single weight $\kappa = z^2\eta_8^{\pm}$ infinitely often. Then one could form an alternate modified ghost series $G^{\circ\circ}(w,t) = 1 + \sum g_i^{\circ\circ}(w)t^i$. Numerical checks suggests that $\NP(G^{\circ}_\kappa) = \NP(G^{\circ\circ}_\kappa)$ for all $\kappa$, but we will not pursue proving that here. It is certainly true if $v_2(w_\kappa) > 1$.
\end{remark}

\begin{appendix}

\section{Dimension formulas}
\label{app:dimension-formulae}

The goal of this appendix is to gather together various estimates and formulas for the dimensions of spaces of cuspforms. The results for spaces with trivial character are deduced from the standard formulas in \cite[Section 6.1]{Stein-ModularForms}.\footnote{Freely available at 
\href{http://wstein.org/books/modform/modform/dimension_formulas.html}{{\tt http://wstein.org/books/modform/modform/dimension\textunderscore formulas.html}.}}
We use the notation(s):\ $\mu_0(N)$ for the index of $\Gamma_0(N)$ in $\SL_2(\Z)$, $c_0(N)$ for the number of cusps of $X_0(N)$, $\mu_{0,2}(N)$ for the number of elliptic points of order two on $X_0(N)$, $\mu_{0,3}(N)$ for the number of elliptic points of order three on $X_0(N)$ and $g_0(N)$ for the genus of $X_0(N)$. Many proofs are asymptotically clear and we often leave the details of explicit constants to the reader.

We fix $N$ and $p$ throughout the appendix and we will also assume that $p \ndvd N$ as a rule. As in the main text we write $d_k = \dim S_k(\Gamma_0(N))$,  $d_{k,p} = \dim S_k(\Gamma_0(Np))$ and $d_k^{\new} = \dim S_k(\Gamma_0(Np))^{p-\new}$. For example, if $k > 2$ is even then
\begin{equation}\label{eqn:dim-formula-old}
d_k = (k-1)(g_0(N) - 1) + \left({k\over 2} - 1\right)c_0(N) + \bfloor{k\over 4}\mu_{0,2}(N) + \bfloor{k\over 3}\mu_{0,3}(N)
\end{equation}
where $g_0(N)$ may be written as 
\begin{equation*}
d_2 = g_0(N) = 1 + {\mu_0(N)\over 12} - {\mu_{0,2}(N) \over 4} - {\mu_{0,3}(N) \over 3} - {c_0(N)\over 2}.
\end{equation*}
To compute $d_{k,p}$, one replaces $N$ by $Np$ everywhere in \eqref{eqn:dim-formula-old}. Then it is easy to check that for $p \ndvd N$ and $k>2$ then 
\begin{multline}\label{eqn:dim-formula-new}
d_k^{\new} = {(k-1)(p-1)\over 12}\mu_0(N) + \left(\bfloor{{k\over 4}} - {k-1\over 4}\right)\left(-1 + \left({-4 \over p}\right)\right)\mu_{0,2}(N)\\ + \left(\bfloor{{k\over 3}} - {k-1\over 3}\right)\left(-1 + \left({-3 \over p}\right)\right)\mu_{0,3}(N),
\end{multline}
where $\left({a \over b}\right)$ is the Kronecker symbol.

\begin{lemma}\label{mu:inequalities-need-for-dimensions}
If $N > 1$ then 
$\displaystyle 
{1\over 6}\mu_0(N) - {1\over 2}\mu_{0,2}(N) - {2\over 3}\mu_{0,3}(N) \geq 0.
$
\end{lemma}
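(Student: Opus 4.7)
The plan is to reduce the stated inequality to a statement about the genus and cusps of $X_0(N)$. Using the genus formula
\begin{equation*}
g_0(N) = 1 + \frac{\mu_0(N)}{12} - \frac{\mu_{0,2}(N)}{4} - \frac{\mu_{0,3}(N)}{3} - \frac{c_0(N)}{2}
\end{equation*}
recalled earlier in this appendix, I would multiply through by $2$ and rearrange to obtain the identity
\begin{equation*}
\frac{1}{6}\mu_0(N) - \frac{1}{2}\mu_{0,2}(N) - \frac{2}{3}\mu_{0,3}(N) = 2g_0(N) + c_0(N) - 2.
\end{equation*}
With this in hand, the lemma is equivalent to the inequality $2g_0(N) + c_0(N) \geq 2$.

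Since $g_0(N) \geq 0$ always, it suffices to check $c_0(N) \geq 2$ whenever $N > 1$. This is classical: the cusps $0$ and $\infty$ of $X_0(N)$ are distinct $\Gamma_0(N)$-orbits whenever $N > 1$. Alternatively, I would invoke the standard formula $c_0(N) = \sum_{d \mid N} \varphi(\gcd(d, N/d))$ and observe that the divisors $d = 1$ and $d = N$ contribute two distinct summands, each equal to $1$, whenever $N > 1$.

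Essentially there is no obstacle: the argument is a one-line rearrangement of the genus formula followed by the observation that $c_0(N) \geq 2$ for $N > 1$. It is worth noting in passing that the inequality is sharp and becomes an equality precisely when $g_0(N) = 0$ and $c_0(N) = 2$, e.g.\ for $N \in \{2, 3, 5, 7\}$, which explains why no uniform strict improvement is available.
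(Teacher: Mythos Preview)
Your proof is correct and takes a genuinely different route from the paper. The paper argues analytically: it bounds $\mu_{0,2}(N),\mu_{0,3}(N)\leq 2^{\omega(N)}$, shows $\mu_0(N)\geq N\geq 7\cdot 2^{\log_3 N}$ for $N\geq 200$, and then leaves $2\leq N\leq 200$ to a finite check. Your argument is cleaner and fully self-contained: rewriting the expression via the genus formula as $2g_0(N)+c_0(N)-2$ reduces everything to the classical fact $c_0(N)\geq 2$ for $N>1$, with no case analysis or computer verification needed. Your approach also makes the equality cases transparent (exactly $g_0(N)=0$ and $c_0(N)=2$, i.e.\ $N\in\{2,3,5,7,13\}$), whereas the paper's growth estimate gives no such information.
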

\begin{proof}
Let $\omega(N)$ denote the number of distinct prime divisors of $N$. Then $\omega(N) \leq \log_3(N)$ if $N\geq 6$ and $\mu_{0,i}(N) \leq 2^{\omega(N)}$ for $i=2,3$. Moreover, if $N \geq 200$ then $N \geq 7\cdot  2^{\log_3(N)}$. Thus for $N\geq 200$ we conclude
\begin{equation*}
\mu_0(N) \geq N \geq 7 \cdot 2^{\log_3(N)} \geq 7\cdot 2^{\omega(N)} \geq 6\cdot\left({1\over 2}\mu_{0,2}(N) + {2\over 3}\mu_{0,3}(N)\right).
\end{equation*}
We leave checking $2 \leq N \leq 200$ for the reader (or a computer).
\end{proof}

\begin{lemma}\label{lemma:dim-formulas-increasing}
\leavevmode
\begin{enumerate}
\item If $N > 1$ then $k \mapsto d_k$ is a weakly increasing function of even weights $k\geq 2$.
\item If $N = 1$ and $p>3$ then $n \mapsto d_{k+n(p-1)}(\SL_2\Z)$ is increasing.
\item If $N > 1$ or $p > 3$ then $d_k^{\new} \leq d_{k + \varphi(2p)}^{\new}$
\end{enumerate}
\end{lemma}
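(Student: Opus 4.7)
The plan is to treat parts (a) and (b) cleanly via multiplication by Eisenstein series, and to reduce part (c) to a direct manipulation of the dimension formula \eqref{eqn:dim-formula-new}.

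For (a), the hypothesis $N > 1$ gives $c_0(N) \geq 2$, so the Eisenstein quotient $M_2(\Gamma_0(N))/S_2(\Gamma_0(N))$ has positive dimension $c_0(N) - 1 \geq 1$; in particular there is a non-cuspidal $E \in M_2(\Gamma_0(N))$. The map $f \mapsto fE$ is an injection $S_k(\Gamma_0(N)) \hookrightarrow S_{k+2}(\Gamma_0(N))$: it is injective because the ring of modular forms is an integral domain, and its image is cuspidal because $f$ vanishes at every cusp while $E$ is holomorphic there. This yields $d_k \leq d_{k+2}$.

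For (b), the hypothesis $p > 3$ forces $p-1 \geq 4$ to be even, so the normalized Eisenstein series $E_{p-1} \in M_{p-1}(\SL_2(\Z))$ is non-zero. The same multiplication argument produces an injection $S_k(\SL_2(\Z)) \hookrightarrow S_{k+(p-1)}(\SL_2(\Z))$, and iterating in $n$ gives the claim.

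For (c), I would compute $d_{k+\varphi(2p)}^{\new} - d_k^{\new}$ term-by-term from \eqref{eqn:dim-formula-new}. The $k$-independent main contribution is $\tfrac{(p-1)\varphi(2p)}{12}\mu_0(N)$, while the floor-function contributions give an error bounded in absolute value by a small constant times $\mu_{0,2}(N) + \mu_{0,3}(N)$, with constants controlled by the Kronecker symbols $\left(\tfrac{-4}{p}\right)$ and $\left(\tfrac{-3}{p}\right)$. When $N > 1$, Lemma \ref{mu:inequalities-need-for-dimensions} bounds this error by a fraction of $\mu_0(N)$, and the main term always dominates. When $N = 1$ (so $p > 3$ by hypothesis) one observes that for $p \geq 11$ the bound $(p-1)^2/12$ already exceeds the worst-case error $\tfrac{3}{2} + \tfrac{4}{3}$, while for $p = 5$ and $p = 7$ exactly one of the two Kronecker symbols equals $-1$, suppressing one of the two error terms and allowing a direct verification by inspecting $k$ modulo $12$.

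The main subtlety lies in part (c) for $N = 1$ and the small primes $p \in \{5, 7\}$: here the leading term $(p-1)^2/12$ is comparable in size to the worst-case error, and the argument genuinely requires using the specific values of $\left(\tfrac{-4}{p}\right)$ and $\left(\tfrac{-3}{p}\right)$ rather than abstract bounds.
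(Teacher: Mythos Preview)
Your argument is correct. For parts (a) and (b) you take a genuinely different route from the paper: the paper argues directly from the dimension formula \eqref{eqn:dim-formula-old} (noting for (a) that either $g_0(N)\geq 1$ or $c_0(N)\geq 2$ when $N>1$, and for (b) that the only failure of monotonicity for $\SL_2(\Z)$ is $d_{12m}>d_{12m+2}$, which never arises when the shift is $p-1$ with $p>3$). Your multiplication-by-Eisenstein-series argument is more conceptual and avoids any casework; it also handles the base case $d_2\leq d_4$ uniformly, whereas the paper checks it separately. The only thing you might add for completeness is a one-line justification that $c_0(N)\geq 2$ for $N>1$ (e.g.\ the cusps $0$ and $\infty$ are inequivalent).

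For part (c) your approach coincides with the paper's: both compute $d_{k+\varphi(2p)}^{\new}-d_k^{\new}$ from \eqref{eqn:dim-formula-new}, use Lemma \ref{mu:inequalities-need-for-dimensions} to absorb the floor-error into the main term for $N>1$ and $p$ not too small, and then dispatch the residual small cases by hand. Two small points: first, \eqref{eqn:dim-formula-new} is only stated for $k>2$, so you should flag that $k=2$ requires a separate (easy) check; second, your sentence ``the main term always dominates'' for $N>1$ is slightly optimistic for $p\in\{2,3\}$, where the main term $\tfrac{\varphi(2p)(p-1)}{12}\mu_0(N)$ is not large enough to beat the crude error bound and one really must use the specific Kronecker-symbol values (the paper likewise leaves $p=2,3,5$ with $N>1$ to the reader). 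Neither of these is a genuine gap.
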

\begin{proof}
For (a) it is clear from \eqref{eqn:dim-formula-old} that if we restrict to $k\geq 4$ and either $g_0(N) \geq 1$ or $c_0(N) \geq 2$. That leaves $N=1$, which we've excluded, and checking $d_2 \leq d_4$ (which is easy).

Part (b) is also easy. If $N=1$, $j\geq 0$ and $d_k > d_{k+j}$ then $k \congruent 0 \bmod 12$ and $j = 2$. In particular, if $p$ is odd and $d_k > d_{k+(p-1)}$ then $p=3$.

Let's prove (c). First, using \eqref{eqn:dim-formula-new} to compute $d_{k+\varphi(2p)}^{\new} - d_k^{\new}$, one uniformly sees that if $k\geq 4$ then
\begin{equation}\label{eqn:difference-equation}
d_{k+\varphi(2p)}^{\new} - d_k^{\new} \geq {\varphi(2p)(p-1) \over 2}\mu_0(N) - \mu_{0,2}(N) - {4\over 3}\mu_{0,3}(N).
\end{equation}
If $N > 1$ then Lemma \ref{mu:inequalities-need-for-dimensions} implies the right-hand side is $\geq 0$ as long as $p \geq 7$. We leave the remaining cases of $p=2,3,5$ and $N>1$, $N = 1$ and $p>3$, and $k=2$ to the reader (one just needs to make the lower bound \eqref{eqn:difference-equation} more explicit.)
\end{proof}

\begin{lemma}\label{lemma:weight2-inequality}
If $p$ is odd and $n\geq 1$ then $d_{2 + n(p-1)} \geq d_2 + d_2^{\new}$ with equality if $n = 1$.
\end{lemma}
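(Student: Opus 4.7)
The plan is to reduce the claim to the single case $n=1$ by monotonicity, and then verify that case by a direct comparison of the dimension formulas in Appendix \ref{app:dimension-formulae}.

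First I will handle the inequality for $n \geq 2$. Since $p$ is odd, $p-1$ is even, so $2+n(p-1)$ is an even integer that weakly increases in $n$. If $N>1$, Lemma \ref{lemma:dim-formulas-increasing}(a) gives $d_{2+n(p-1)} \geq d_{2+(p-1)} = d_{p+1}$. If $N=1$ and $p>3$, the same inequality follows from Lemma \ref{lemma:dim-formulas-increasing}(b). The remaining case $N=1$, $p=3$ is trivial: $d_{p+1} = \dim S_4(\SL_2\Z) = 0$, so the inequality reads $d_{2+2n}\geq 0$.

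Next I will establish the equality $d_{p+1} = d_2 + d_2^{\new}$. Since $p\nmid N$, the two standard degeneracy maps $S_2(\Gamma_0(N)) \hookrightarrow S_2(\Gamma_0(Np))$ produce an injective map from two copies of $S_2(\Gamma_0(N))$ onto the $p$-old subspace, so $d_{2,p} = 2d_2 + d_2^{\new}$, i.e.\ $d_2 + d_2^{\new} = d_{2,p} - d_2 = g_0(Np) - g_0(N)$. Using the standard multiplicative relations $\mu_0(Np) = (p+1)\mu_0(N)$, $c_0(Np) = 2c_0(N)$, and $\mu_{0,i}(Np) = (1+\left(\frac{-a}{p}\right))\mu_{0,i}(N)$ (with $a=4,3$ respectively), a short calculation gives
\[
d_2 + d_2^{\new} = \frac{p\,\mu_0(N)}{12} - \frac{\left(\tfrac{-4}{p}\right)\mu_{0,2}(N)}{4} - \frac{\left(\tfrac{-3}{p}\right)\mu_{0,3}(N)}{3} - \frac{c_0(N)}{2}.
\]
On the other hand, since $p+1 > 2$, formula \eqref{eqn:dim-formula-old} applies and expresses $d_{p+1}$ in terms of $g_0(N)-1$, $c_0(N)$, and floor terms $\lfloor (p+1)/4 \rfloor \mu_{0,2}(N)$, $\lfloor (p+1)/3 \rfloor \mu_{0,3}(N)$. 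To match the two expressions, I will verify the elementary identities
\[
\Bigl\lfloor \tfrac{p+1}{4}\Bigr\rfloor - \tfrac{p}{4} = -\tfrac{1}{4}\left(\tfrac{-4}{p}\right), \qquad \Bigl\lfloor \tfrac{p+1}{3}\Bigr\rfloor - \tfrac{p}{3} = -\tfrac{1}{3}\left(\tfrac{-3}{p}\right),
\]
by case analysis on $p \bmod 4$ and $p \bmod 3$ (all four combinations, including $p=3$ where the Kronecker symbol vanishes). Substituting back into \eqref{eqn:dim-formula-old} and expanding $p(g_0(N)-1)$ produces exactly the displayed formula for $d_2 + d_2^{\new}$.

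The main step, and only mildly tricky part, is the bookkeeping in the second paragraph: one has to check that the correction coming from the floor functions at weight $p+1$ agrees precisely with the Kronecker-symbol corrections appearing in $g_0(Np)-g_0(N)$. This is routine and reduces to the four cases mentioned above.
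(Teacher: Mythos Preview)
Your proof is correct and follows essentially the same approach as the paper: reduce to $n=1$ via Lemma \ref{lemma:dim-formulas-increasing} (handling $N=1$, $p=3$ separately as trivial), then verify the $n=1$ equality by direct computation with the dimension formulas. The only cosmetic difference is that the paper packages the $n=1$ check as a single expression for $d_{p+1}-(d_2+d_2^{\new})$ and observes its $\mu_{0,2}$ and $\mu_{0,3}$ coefficients vanish, whereas you compute each side separately via $d_2+d_2^{\new}=g_0(Np)-g_0(N)$; the underlying identities $\lfloor(p+1)/4\rfloor - p/4 = -\tfrac14\left(\tfrac{-4}{p}\right)$ and $\lfloor(p+1)/3\rfloor - p/3 = -\tfrac13\left(\tfrac{-3}{p}\right)$ are the same.
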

\begin{proof}
Let's first show equality for $n=1$. One computes
\begin{multline}\label{eqn:difference-blah}
d_{2+(p-1)} - (d_2 + d_2^{\new})\\= \left(\bfloor{p+1\over 4} - {p \over 4} + {1\over 4}\left({-4 \over p}\right)\right)\mu_{0,2}(N) + \left(\bfloor{p+1\over 3} - {p \over 3} + {1\over 3}\left({-3 \over p}\right)\right)\mu_{0,3}(N).
\end{multline}
The right-hand side of \eqref{eqn:difference-blah} clearly vanishes for all odd $p$. Next, Lemma \ref{lemma:dim-formulas-increasing} allows us to finish except if $N=1$ and $p=3$, where the result is trivial anyways because $d_2 + d_2^{\new} = 0$.
\end{proof}

\begin{lemma}\label{lemma:appendix-2adic-wt-2}
Let $p=2$. If $n\geq 1$ then $d_{2(n+1)} \geq d_2 + d_2^{\new}$ with equality when $n=1$ only if $N=1,3,7$.
\end{lemma}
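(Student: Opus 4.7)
The plan is first to reduce to the single case $n = 1$. If $N = 1$ then $d_2 = d_2^{\new} = 0$, so the inequality is trivial and equality holds for every $n \geq 1$. For $N > 1$, Lemma \ref{lemma:dim-formulas-increasing}(a) gives $d_{2(n+1)} \geq d_4$ for all $n \geq 1$, so establishing the inequality and characterizing equality when $n = 1$ will finish the job.

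For $n = 1$ and $N > 1$ odd, I would avoid the formula \eqref{eqn:dim-formula-new} (which requires $k > 2$) and instead use the old/new decomposition $\dim S_2(\Gamma_0(2N)) = 2 d_2 + d_2^{\new}$ to rewrite
\begin{equation*}
d_4 - d_2 - d_2^{\new} \;=\; d_4 + d_2 - \dim S_2(\Gamma_0(2N)) \;=\; d_4 + g_0(N) - g_0(2N).
\end{equation*}
For $N$ odd, multiplicativity together with the values $\mu_0(2) = 3$, $c_0(2) = 2$, $\mu_{0,2}(2) = 1$, $\mu_{0,3}(2) = 0$ yields $\mu_0(2N) = 3 \mu_0(N)$, $c_0(2N) = 2 c_0(N)$, $\mu_{0,2}(2N) = \mu_{0,2}(N)$, and $\mu_{0,3}(2N) = 0$. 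Plugging these into the genus formula for $g_0(2N)$ and using \eqref{eqn:dim-formula-old} for $d_4$, the dependence on $\mu_0(N)$ and $\mu_{0,3}(N)$ cancels and one is left with
\begin{equation*}
d_4 - d_2 - d_2^{\new} \;=\; g_0(N) - 1 + \frac{c_0(N)}{2} + \frac{\mu_{0,2}(N)}{2}.
\end{equation*}

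With this clean expression in hand, the inequality follows since $c_0(N) \geq 2$ for $N > 1$, making the right side at least $g_0(N) + \mu_{0,2}(N)/2 \geq 0$. For equality one needs simultaneously $c_0(N) = 2$, $\mu_{0,2}(N) = 0$, and $g_0(N) = 0$. The first condition forces $N$ to be prime (via the standard divisor sum for $c_0$), the second then forces $N \equiv 3 \pmod 4$, and the third restricts such primes to $N \in \{3, 7\}$. Combined with the trivial case $N = 1$, this proves equality only when $N \in \{1, 3, 7\}$.

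The only real obstacle is verifying the multiplicative values at $2$ correctly (in particular the Kronecker symbol conventions behind $\mu_{0,2}(2N) = \mu_{0,2}(N)$ and $\mu_{0,3}(2N) = 0$, both of which can be checked directly on $X_0(2)$); after that, the cancellation producing the clean formula and the subsequent characterization of equality are immediate.
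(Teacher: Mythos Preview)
Your proof is correct, and your final expression
\[
d_4 - d_2 - d_2^{\new} \;=\; g_0(N) - 1 + \tfrac{c_0(N)}{2} + \tfrac{\mu_{0,2}(N)}{2}
\]
is equivalent to the paper's
\[
d_4 - (d_2 + d_2^{\new}) \;=\; \tfrac{1}{12}\mu_0(N) + \tfrac{1}{4}\mu_{0,2}(N) - \tfrac{1}{3}\mu_{0,3}(N),
\]
as one sees immediately by substituting the genus formula for $g_0(N)-1$. The reduction to $n=1$ via Lemma~\ref{lemma:dim-formulas-increasing}(a) and the treatment of $N=1$ are the same as in the paper.

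Where the two arguments diverge is in handling positivity and the equality case. The paper keeps the expression in terms of $\mu_0,\mu_{0,2},\mu_{0,3}$ and must offset the negative $-\tfrac{1}{3}\mu_{0,3}(N)$ term against $\tfrac{1}{12}\mu_0(N)$, appealing to an estimate in the style of Lemma~\ref{mu:inequalities-need-for-dimensions} (i.e.\ bounding $\mu_{0,3}$ by a multiple of $\mu_0$, with a finite check for small $N$). Your rewriting absorbs $\mu_0$ and $\mu_{0,3}$ into $g_0(N)$ and leaves only manifestly non-negative contributions once $c_0(N)\geq 2$ is invoked; the equality case then falls out cleanly from $c_0(N)=2$ (forcing $N$ prime), $\mu_{0,2}(N)=0$ (forcing $N\equiv 3\bmod 4$), and $g_0(N)=0$. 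This is a genuinely tidier route to the same conclusion. One small quibble: your phrase ``the dependence on $\mu_0(N)$ and $\mu_{0,3}(N)$ cancels'' is slightly misleading---those quantities do not cancel so much as get repackaged into $g_0(N)-1$---but the resulting formula is correct.
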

\begin{proof}
One checks explicitly that 
\begin{equation*}
d_4 - (d_2 + d_2^{\new}) = {1 \over 12}\mu_0(N) + {1\over 4}\mu_{0,2}(N) - {1\over 3}\mu_{0,3}(N),
\end{equation*}
and this is equal to zero if and only if $N = 1,3,7$ by an argument similar to Lemma \ref{mu:inequalities-need-for-dimensions}. If $N > 1$ then we are finished by Lemma \ref{lemma:dim-formulas-increasing}. If $N = 1$ then $d_2 + d_2^{\new} = 0$ so the result is trivial in that case.
\end{proof}
\begin{lemma}\label{app-lemma:shift-wt-p(p-1)}
If $p\geq 5$ is odd, $k\geq 4$ is even and $j \geq 0$ then
$$
\displaystyle d_{k+j(p-1),p} - d_{k,p}
 = {j(p-1)(p+1)\over 12}\mu_0(N).
$$ If $p=3$ then the same holds for $j \congruent 0 \bmod 3$.
\end{lemma}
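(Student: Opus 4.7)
The plan is to apply the dimension formula \eqref{eqn:dim-formula-old} (valid for even $k > 2$) directly at level $Np$ to both $d_{k+j(p-1),p}$ and $d_{k,p}$ and subtract. Writing $A := \lfloor (k+j(p-1))/4\rfloor - \lfloor k/4\rfloor$ and $B := \lfloor (k+j(p-1))/3\rfloor - \lfloor k/3\rfloor$, this immediately yields
\begin{equation*}
d_{k+j(p-1),p} - d_{k,p} = j(p-1)\bigl(g_0(Np)-1\bigr) + \tfrac{j(p-1)}{2}c_0(Np) + A\,\mu_{0,2}(Np) + B\,\mu_{0,3}(Np).
\end{equation*}

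Next I would substitute the genus identity $g_0(Np) - 1 = \mu_0(Np)/12 - \mu_{0,2}(Np)/4 - \mu_{0,3}(Np)/3 - c_0(Np)/2$ together with the multiplicative relation $\mu_0(Np) = (p+1)\mu_0(N)$ (valid since $p\nmid N$). The $c_0(Np)$ contributions cancel, the leading piece simplifies to the claimed $\frac{j(p-1)(p+1)\mu_0(N)}{12}$, and the task reduces to verifying
\begin{equation*}
\left(A - \tfrac{j(p-1)}{4}\right)\mu_{0,2}(Np) + \left(B - \tfrac{j(p-1)}{3}\right)\mu_{0,3}(Np) = 0.
\end{equation*}

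The main step, and where the hypothesis on $p$ comes in, is showing these correction terms vanish; this is the part that requires genuine input beyond formula manipulation. The key observation is that nontriviality of an elliptic-point count at level $Np$ constrains $p$ modulo $4$ or $3$. From the standard quadratic-symbol formulas for $\mu_{0,2}$ and $\mu_{0,3}$, since $p$ is odd and divides $Np$, if $\mu_{0,2}(Np)\neq 0$ then $\chi_{-4}(p)=1$, so $p \equiv 1 \pmod{4}$ and $4 \mid p-1$; this forces $A = j(p-1)/4$ exactly, independent of $k$. An entirely parallel argument shows that for $p \geq 5$, $\mu_{0,3}(Np)\neq 0$ requires $p \equiv 1 \pmod{3}$, so $3 \mid p-1$ and $B = j(p-1)/3$. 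Together these settle the main statement.

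For the $p=3$ addendum, $\mu_{0,2}(Np)$ is automatically zero because $3 \equiv 3 \pmod{4}$, so the first correction drops out with no hypothesis on $j$. However $\mu_{0,3}(Np)$ can be nonzero (e.g.\ for $N=1$), and $p-1=2$ is not divisible by $3$. The extra hypothesis $3 \mid j$ is exactly what forces $3 \mid j(p-1) = 2j$, giving $B = 2j/3$ and eliminating the remaining term. Packaging both cases together yields the lemma.
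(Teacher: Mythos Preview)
Your proof is correct and follows essentially the same approach as the paper's: both reduce the difference $d_{k+j(p-1),p}-d_{k,p}$ via the genus identity to the claimed main term plus elliptic-point corrections, and both eliminate those corrections by observing that $\mu_{0,2}(Np)\neq 0$ forces $p\equiv 1\pmod 4$ (so $4\mid j(p-1)$), that $\mu_{0,3}(Np)\neq 0$ with $p\geq 5$ forces $p\equiv 1\pmod 3$ (so $3\mid j(p-1)$), and that for $p=3$ the extra hypothesis $3\mid j$ picks up the $\mu_{0,3}$ term. The only cosmetic difference is ordering: the paper establishes $A\,\mu_{0,2}(Np)=\tfrac{j(p-1)}{4}\mu_{0,2}(Np)$ and the analogous $\mu_{0,3}$ identity first and then assembles the formula, whereas you assemble first and then kill the corrections.
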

\begin{proof}
Let $p\geq 3$. Then,
\begin{equation*}
\left(\bfloor{{k + j(p-1)\over 4}} - \bfloor{{k\over 4}}\right)\mu_{0,2}(Np) = {j(p-1)\over 4}\mu_{0,2}(Np).
\end{equation*}
If $p\congruent 1 \bmod 4$ this is clear, and if $p \congruent 3 \bmod 4$ then both sides vanish because $\mu_{0,2}(Np) = 0$. Similarly, if either $p \geq 5$ or if $j \congruent 0 \bmod 3$ then
\begin{equation*}
\left(\bfloor{{k + j(p-1)\over 3}} - \bfloor{{k\over 3}}\right)\mu_{0,3}(Np) = {j(p-1)\over 3}\mu_{0,3}(Np).
\end{equation*}
Thus,
\begin{multline*}
d_{k+j(p-1),p} - d_k\\
 = j(p-1)(g_0(Np)-1) + {{j(p-1)\over 2}c_0(Np)} + {j(p-1)\over 4}\mu_{0,2}(Np) + {j(p-1)\over 3}\mu_{0,3}(Np)\\
 = {j(p-1)\over 12}\underlabel{\mu_0(Np)}{\left(12(g_0(Np)-1) + 6c_0(Np) + 3\mu_{0,2}(Np) + 4\mu_{0,3}(Np)\right)}.
\end{multline*}
Since $\mu_0(Np) = (1+p)\mu_0(N)$ we are done.
\end{proof}
\begin{lemma}\label{lemma:robs-tedious-calculation}
If $p\geq 5$, $k\geq 4$ is even and $j\geq 0$ then
\begin{equation*}
d_{k+j(p-1)} - d_k + \bfloor{{d_{k+j(p-1)}^{\new}\over 2}} - \bfloor{{d_k^{\new}\over 2}} = {j(p-1)(p+1)\over 24}\mu_0(N).
\end{equation*}
If $p = 3$ then the same is true of $j \congruent 0 \bmod 3$.
\end{lemma}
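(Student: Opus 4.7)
My plan is to reduce the claim to Lemma~\ref{app-lemma:shift-wt-p(p-1)} by expressing the left-hand side in terms of the ``total'' dimension $d_{k,p} = \dim S_k(\Gamma_0(Np))$ instead of the two pieces $d_k$ and $d_k^{\new}$. The key is that because $p \nmid N$, the two degeneracy maps $S_k(\Gamma_0(N))^{\oplus 2} \hookrightarrow S_k(\Gamma_0(Np))$ realize the oldform decomposition, giving the identity
\begin{equation*}
d_{k,p} = 2 d_k + d_k^{\new}.
\end{equation*}
In particular $d_k^{\new} \equiv d_{k,p} \pmod 2$, from which one checks directly (splitting into the two parity cases) that
\begin{equation*}
d_k + \bfloor{d_k^{\new}/2} \;=\; \bfloor{d_{k,p}/2}.
\end{equation*}

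Applying this identity at both $k$ and $k + j(p-1)$, the left-hand side of the lemma becomes $\bfloor{d_{k+j(p-1),p}/2} - \bfloor{d_{k,p}/2}$. By Lemma~\ref{app-lemma:shift-wt-p(p-1)} (applicable under our hypotheses),
\begin{equation*}
d_{k+j(p-1),p} - d_{k,p} \;=\; \frac{j(p-1)(p+1)}{12}\mu_0(N).
\end{equation*}
If this difference is an even integer, say $2m$, then trivially $\bfloor{(d_{k,p}+2m)/2} - \bfloor{d_{k,p}/2} = m$, which is exactly $\frac{j(p-1)(p+1)}{24}\mu_0(N)$.

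So the remaining (and only nonroutine) step is to verify the evenness of the difference above, i.e.\ that $\tfrac{j(p-1)(p+1)}{24}\mu_0(N)$ is an integer. For $p \geq 5$ this is the classical divisibility $24 \mid p^2-1$, so the factor $(p-1)(p+1)/24$ is already an integer. For $p=3$, one has $(p-1)(p+1)=8$ and the expression becomes $\tfrac{j\mu_0(N)}{3}$, which is an integer precisely when $j \equiv 0 \pmod 3$, matching the hypothesis. This completes the plan; I do not anticipate a serious obstacle, as the entire argument is a bookkeeping reduction to the already-proven Lemma~\ref{app-lemma:shift-wt-p(p-1)} together with an elementary parity check.
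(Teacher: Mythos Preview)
Your argument is correct and follows essentially the same route as the paper: both proofs use the relation $d_{k,p} = 2d_k + d_k^{\new}$ to reduce the floor expression to $\tfrac12(d_{k+j(p-1),p}-d_{k,p})$ and then invoke Lemma~\ref{app-lemma:shift-wt-p(p-1)}, with the parity of the difference handled by the divisibility $24\mid (p-1)(p+1)$ for $p\geq 5$ (and by $3\mid j$ when $p=3$). Your packaging via the identity $d_k + \lfloor d_k^{\new}/2\rfloor = \lfloor d_{k,p}/2\rfloor$ is a minor cosmetic variant of the paper's step of first showing $d_{k+j(p-1)}^{\new}\equiv d_k^{\new}\pmod 2$ and then removing the floors, but the content is identical.
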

\begin{proof}
Since $d_k^{\new} \congruent d_{k,p} \bmod 2$,
Lemma \ref{app-lemma:shift-wt-p(p-1)} implies that $d_{k+j(p-1)}^{\new} \congruent d_k^{\new} \bmod 2$. Thus,
\begin{multline*}
d_{k+j(p-1)} - d_k + \bfloor{{d_{k+j(p-1)}^{\new}\over 2}} - \bfloor{{d_k^{\new}\over 2}} \\ = d_{k+j(p-1)} - d_k + {1\over 2}(d_{k+j(p-1)}^{\new} - d_k^{\new})
 = {1\over 2}(d_{k+j(p-1),p} - d_{k,p})
\end{multline*}
Thus Lemma \ref{app-lemma:shift-wt-p(p-1)} finishes the proof.
\end{proof}

We finish with formulas for spaces with character. For this we use Cohen--Oesterl\'e  \cite{CohenOesterle-Dimensions}.

\begin{lemma}\label{app-lemma:dim-wt-2-cond-8}
If $N \geq 1$ is odd, $k\geq 2$ is even and $\eta_8^{\pm}$ is the primitive character modulo $8$ such that $\eta_8^{\pm}(-1) = (-1)^k$ then
\begin{equation*}
\dim S_k(\Gamma_0(N)\intersect \Gamma_1(8),\varepsilon) = (k-1)\mu_0(N) - c_0(N).
\end{equation*}
\end{lemma}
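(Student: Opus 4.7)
The plan is to apply the Cohen--Oesterl\'e dimension formula of \cite{CohenOesterle-Dimensions} at level $M = 8N$ with character $\chi = \eta_8^+$ (the even character modulo $8$, since $k$ is even), pulled back to $(\Z/8N\Z)^\times$ via the projection $(\Z/8N\Z)^\times \twoheadrightarrow (\Z/8\Z)^\times$ (which makes sense because $\gcd(8,N) = 1$). Under this identification, $S_k(\Gamma_0(N) \cap \Gamma_1(8), \eta_8^+) = S_k(\Gamma_0(8N), \eta_8^+)$, so it suffices to compute the dimension of the latter.

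First I collect the geometric invariants at level $8N$. Multiplicativity of the index for coprime moduli gives $\mu_0(8N) = \mu_0(8)\mu_0(N) = 12\mu_0(N)$, and the standard elliptic-point criteria yield $\mu_{0,2}(8N) = 0$ (since $4 \mid 8N$) and $\mu_{0,3}(8N) = 0$ (since $2 \mid 8N$ with $2 \not\equiv 1 \pmod{3}$). Consequently the elliptic correction terms in Cohen--Oesterl\'e vanish, and the formula reduces to
\begin{equation*}
\dim S_k(\Gamma_0(8N), \eta_8^+) - \dim M_{2-k}(\Gamma_0(8N), \eta_8^+) = \frac{k-1}{12}\mu_0(8N) - \frac{1}{2}\prod_{p \mid 8N}\lambda(r_p, s_p, p),
\end{equation*}
where $r_p,s_p$ denote the $p$-adic valuations of the conductor of $\eta_8^+$ and of $8N$, respectively. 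The second term on the left-hand side vanishes uniformly: for $k > 2$ trivially, and for $k = 2$ because $\eta_8^+ \neq 1$ forces $M_0(\Gamma_0(8N), \eta_8^+) = 0$.

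Next I evaluate the $\lambda$-product. At $p=2$, $(r_2, s_2) = (3,3)$ and the $2r > s$ branch gives $\lambda(3,3,2) = 2 \cdot 2^{s_2 - r_2} = 2$. At each $p \mid N$, $r_p = 0$ and $s_p = v_p(N)$, so the $2r \leq s$ branch evaluates to $\lambda(0, s_p, p) = p^{\lfloor s_p/2 \rfloor} + p^{\lceil s_p/2 \rceil - 1}$. A direct case split on the parity of $s_p$, compared with the prime-power formula $c_0(p^s) = \sum_{a=0}^{s}\varphi(p^{\min(a, s-a)})$, identifies $\lambda(0, s_p, p) = c_0(p^{s_p})$; then multiplicativity of $c_0$ across coprime factors gives $\prod_{p \mid N}\lambda(0, s_p, p) = c_0(N)$. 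Assembling,
\begin{equation*}
\dim S_k(\Gamma_0(8N), \eta_8^+) = \frac{(k-1)\cdot 12\mu_0(N)}{12} - \frac{1}{2} \cdot 2\, c_0(N) = (k-1)\mu_0(N) - c_0(N),
\end{equation*}
as desired.

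The only step requiring genuine care is the identification $\lambda(0, s, p) = c_0(p^s)$ for prime powers; the remaining ingredients are routine bookkeeping about indices, cusps, and elliptic points, plus the minor but necessary observation that $M_0(\Gamma_0(8N), \eta_8^+)$ vanishes because the character is nontrivial.
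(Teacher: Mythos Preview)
Your proof is correct and follows the same approach as the paper, which simply cites Cohen--Oesterl\'e at level $8N$ with $\chi=\eta_8^{\pm}$ and leaves the computation implicit. You have supplied the details the paper omits: the vanishing of the elliptic terms, the case $k=2$ for the weight-$0$ contribution, and the identification $\prod_{p\mid N}\lambda(0,s_p,p)=c_0(N)$.
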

\begin{proof}
This is immediate from \cite[Th\'eor\`eme 1]{CohenOesterle-Dimensions}. One should take $8N$ for $N$ in the reference, $\chi=\eta_8^{\pm}$ and note remark $1^{\circ}$ in {\em loc. cit.}
\end{proof}

\begin{lemma}\label{app-lemma:second-dim-wt-2-cond-8}
If $N\geq 1$ is odd and $\eta_8^+$ is the even primitive character modulo $8$ then
\begin{equation*}
\dim S_2(\Gamma_0(N)\intersect \Gamma_1(8),\eta_8^+) - 2\bigl(\dim S_2(\Gamma_0(2N)) - \dim S_2(\Gamma_0(N))\bigr) = {2 \over 3}\left(\mu_0(N) - \mu_{0,3}(N)\right).
\end{equation*}
In particular, it is positive if and only if $N > 1$.
\end{lemma}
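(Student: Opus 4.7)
The plan is a direct dimension-formula computation. First I would apply Lemma \ref{app-lemma:dim-wt-2-cond-8} with $k = 2$ to rewrite
\begin{equation*}
\dim S_2(\Gamma_0(N)\cap \Gamma_1(8), \eta_8^+) = \mu_0(N) - c_0(N).
\end{equation*}
Then, writing $\dim S_2(\Gamma_0(M)) = g_0(M)$ via the standard formula used earlier in the appendix, the difference $\dim S_2(\Gamma_0(2N)) - \dim S_2(\Gamma_0(N)) = g_0(2N) - g_0(N)$ becomes
\begin{equation*}
\frac{\mu_0(2N) - \mu_0(N)}{12} - \frac{\mu_{0,2}(2N) - \mu_{0,2}(N)}{4} - \frac{\mu_{0,3}(2N) - \mu_{0,3}(N)}{3} - \frac{c_0(2N) - c_0(N)}{2}.
\end{equation*}

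The main step is then to record how each modular invariant changes under $N \mapsto 2N$ for odd $N$. Using multiplicativity of the invariants in the level together with the usual formulas at the prime $2$, I would verify the four identities
\begin{equation*}
\mu_0(2N) = 3\mu_0(N), \quad \mu_{0,2}(2N) = \mu_{0,2}(N), \quad \mu_{0,3}(2N) = 0, \quad c_0(2N) = 2c_0(N).
\end{equation*}
The first comes from $[\SL_2(\Z):\Gamma_0(2)] = 3$ and coprimality; the third from $\left(\frac{-3}{2}\right) = -1$ in the Kronecker-symbol formula, so the factor contributed by $2$ vanishes; the second from the corresponding factor $1 + \left(\frac{-1}{2}\right) = 1$; and the fourth from a direct sum over divisors of $2N$, splitting them as $d$ and $2d$ with $d \mid N$ and $\gcd(d, 2N/d) = \gcd(d, N/d)$ since $d$ is odd.

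Substituting these four relations gives
\begin{equation*}
2\bigl(\dim S_2(\Gamma_0(2N)) - \dim S_2(\Gamma_0(N))\bigr) = \frac{\mu_0(N)}{3} + \frac{2\mu_{0,3}(N)}{3} - c_0(N),
\end{equation*}
and subtracting this from $\mu_0(N) - c_0(N)$ yields the claimed value $\tfrac{2}{3}(\mu_0(N) - \mu_{0,3}(N))$. For the final assertion, the case $N = 1$ is $\mu_0(1) = \mu_{0,3}(1) = 1$. For odd $N > 1$, the bound $\mu_{0,3}(N) \leq 2^{\omega(N)}$ versus $\mu_0(N) = N\prod_{p \mid N}(1+1/p) \geq \prod_{p \mid N}(p+1) \geq 4^{\omega(N)}$ (using $p \geq 3$) gives strict positivity. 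The only subtlety is the careful bookkeeping of the Kronecker-symbol conventions at $p = 2$, but as in Lemma \ref{mu:inequalities-need-for-dimensions} this is routine.
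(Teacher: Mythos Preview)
Your proof is correct and follows the same route as the paper: apply Lemma \ref{app-lemma:dim-wt-2-cond-8} at $k=2$, compute $g_0(2N)-g_0(N)$ from the standard invariants, and subtract. The paper simply asserts the intermediate identity $\dim S_2(\Gamma_0(2N)) - \dim S_2(\Gamma_0(N)) = \tfrac{\mu_0(N)}{6} + \tfrac{\mu_{0,3}(N)}{3} - \tfrac{c_0(N)}{2}$ and leaves the positivity for $N>1$ to ``a finite computation''; your version spells out the four multiplicativity relations at the prime $2$ and gives the direct bound $\mu_0(N)\geq 4^{\omega(N)} > 2^{\omega(N)} \geq \mu_{0,3}(N)$ for odd $N>1$, which is a cleaner way to close the argument.
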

\begin{proof}
One computes explicitly that 
\begin{align*}
\dim S_2(\Gamma_0(2N)) - \dim S_2(\Gamma_0(N)) = {\mu_0(N)\over 6} + {\mu_{0,3}(N)\over 3} - {c_0(N)\over 2}.
\end{align*}
The equality then follows from Lemma \ref{app-lemma:dim-wt-2-cond-8} applied with $k=2$. Regarding the positivity, if $N = 1$ then $\mu_0(N) = \mu_{0,3}(N) = 1$. To check it is positive if $N > 1$, it reduces to a finite computation (which we leave to the reader).
\end{proof}

\section{The story of an explicit calculation when $p=2$ and $N = 1$}
\label{app:explicit-2adic}

The goal of this appendix is to describe the $2$-adic calculation we made which motivated the  multiplicity pattern and the use of ``ghost'' in ``the ghost conjecture''. We begin by quoting an unpublished note of Buzzard.\footnote{Page 2 of the note ``Explicit formulae..." at \href{http://wwwf.imperial.ac.uk/~buzzard/maths/research/notes/}{{\tt http://wwwf.imperial.ac.uk/\textasciitilde buzzard/maths/research/notes/}}} In it, he writes:
\begin{quote}
``...the trace [of $U_2$ acting on overconvergent $2$-adic cuspforms] vanishes at weight $w = 2^3+2^5+2^6+2^7+2^8+2^{13}+2^{16}+2^{18}+2^{19}+\dotsb$, and this corresponds to $k = 2+2^2 +2^3 +2^{11} +2^{15} +2^{16} +2^{18} + \dotsb$, which, unsurprisingly, is close to 14."
\end{quote}
Indeed, $S_{14}(\Gamma_0(2))$ has two distinct eigenforms, both new at 2 and whose $U_2$-eigenvalues are $6$ and $-6$. Thus $\tr(\restrict{U_2}{S_{14}(\Gamma_0(2))}) = 0$ and  $\tr(\restrict{U_2}{S_{14}^{\dagger}(\Gamma_0(2))}) \congruent 0 \bmod 2^{13}$. One can check  that the zero $w = 2^3 + 2^5 + \dotsb$ satisfies $v_2(w_{14}-w) = 13$. In this way, $w_{14}$ is a ``ghost zero'' of the trace:\ it is an integer weight and the true zero of the trace is only a slight $2$-adic deformation.

In order to investigate whether the above phenomenon generalizes, we implemented Koike's formula \cite{Koike-padicProperties} on a computer and computed the first twenty coefficients of $P_\kappa(t) = 1 + \sum a_i(w_\kappa)t^i$ (see  \cite{Robwebsite}). For each $i \leq 20$ we noticed that if $a_i(w_0) = 0$ then $v_2(w_0) \in \Z$ (see \cite[Appendix B]{BergdallPollack-FredholmSlopes}). Thus, it seems possible that the roots of the $a_i$ are relatively near actual integer weights $w_{k}$. And, we conjectured that for some meaning of ``relatively near'', the $w_{k}$ could be taken so that the $i$-th and $(i+1)$-st slope in weight $k$ is ${k-2\over 2}$, i.e.\  $k=6i+8,6i+10,\dotsc,12i-2,12i+2$ (Proposition \ref{proposition:explicit-p=2-info}).

Let's see how this works out. We just pointed out that the unique zero of $a_1$ lies on $v_2(w-w_{14}) = 13$. For $a_2$, the predicted ghost zeros are $w_{20}, w_{22}$ and $w_{26}$. In Table \ref{table:b2-zero-location} below we give the relative position of the zeros of $a_2$ to these three weights. We see what we want:\ the true zeros of $a_2$ are slight $2$-adic deformations of $w_{k}$ with $k=20,22,26$. Similarly, one can work out that the six weights $w_{26},w_{30},\dotsc,w_{34},w_{38}$ are ghost zeros for the third coefficient (which has six zeros).

 \begin{table}[htpp]
\caption{Relative position of zeros of $a_2(w)$ to the weights $w_{k}$ for $k=20,22,26$. (Bold indicates the witnesses to $k$ as a ``ghost zero''.)}
\begin{center}
\begin{tabular}{|c|c|c|c|}
\hline
$k$ & $20$ & $22$ & $26$\\
\hline
$v_2(w_0-w_{k}) \st a_2(w_0) = 0$ & $\mathbf{12}, 3,3$& $\mathbf{13}, 4, 3$ &  $\mathbf{9}, 4, 3$\\
\hline
\end{tabular}
\end{center}
\label{table:b2-zero-location}
\end{table}

A departure must occur for the fourth coefficient:\ $a_4$ has ten zeros and there are only nine predicted ghost zeros. The relative position of the ten zeros to the nine predictions are given in Table \ref{table:b4-zero-location}. What we see is that for each $k=32,34,\dotsc,46,50$ there is a small $2$-adic disc around $w_{k}$ containing at least one root of $a_4$, and that there are actually two roots in a small disc around $w_{38}$. In this sense, $38$ is a ghost zero for $a_4$ with multiplicity two and the rest of the $w_{k}$ have multiplicity one.

 \begin{table}[htpp]
\caption{Relative position of zeros of $a_4(w)$ to the weights $w_{k}$ for $k=32,34,\dotsc,46,50$. (Bold indicates the witnesses to $k$ as a ``ghost zero''.) }
\begin{center}
\begin{tabular}{|c|l|}
\hline
$k$ &  $v_2(w_0 - w_{k})$ where $a_4(w_0) = 0$\\
\hline
 $32 $&$\mathbf{9}, 5, 4, 4, 3,\dotsc$\\
  $34 $&$ \mathbf{9}, 6, 5, 4, 4,\dotsc$\\
  $36 $&$ \mathbf{15}, 5, 4, 4, 3,\dotsc$\\
  $38 $&$ \mathbf{{21 \over 2}, {21 \over 2}}, 5, 4, 4,\dotsc$\\
  $40 $&$ \mathbf{9}, 5, 4, 4, 3,\dotsc$\\
  $42 $&$ \mathbf{11}, 5, 5, 4, 4,\dotsc$\\
  $44 $&$ \mathbf{34}, 5, 4, 4, 3,\dotsc$\\
  $46 $&$ \mathbf{36}, 5, 5, 4, 4,\dotsc$\\
  $50 $&$ \mathbf{14}, 6, 5, 4, 4,\dotsc$\\
  \hline
\end{tabular}
\end{center}
\label{table:b4-zero-location}
\end{table}

Continuing then with the weight $k=38$, it was a ghost zero for $a_3$ with multiplicity one, multiplicity two for $a_4$ and one can check it should have multiplicity one for $a_5$ (see Table \ref{table:weight62-zeros})

With these computations in mind, we cataloged the relative location of the zeros of $a_5,a_6,\dotsc$ to the ghost zeros we were predicting. Seeing the data, and writing down the multiplicity $k$-by-$k$ we saw what became the multiplicity pattern:\ for each $k$, the first and last time appear of $k$ as a ghost zero it has multiplicity one, the second and second to last time it has multiplicity two, etc. To emphasize this, in Table \ref{table:weight62-zeros} below we give the relative positions of the zeros of each $a_i$ to the weights $w_{38}$ and $w_{62}$, with the multiplicity pattern emphasized through the use of bolding. 
\begin{table}[htpp]
\caption{Relative location of zeros of $a_1(w),\dotsc,a_{10}(w)$ for $w_{38}$ and $w_{62}$.  (Bold indicates the witnesses to $k$ as a ``ghost zero''.)}
\begin{center}
\begin{tabular}{|c|l|l|}
\hline
$i$ & $v_2(w_0-w_{38})$ where $a_i(w_0) = 0$ & $v_2(w_0-w_{62})$ where $a_i(w_0) = 0$ \\
\hline
 $1$ & $5$ &$ 6$\\
 $2$ & $6, 4, 3$ & $ 5, 4, 3$\\
 $3$ & $\mathbf{31}, 5, 4, 4, 3,\dotsc $ & $ 7, 5, 4, 4, 3,\dotsc$\\
 $4$ & $\mathbf{{21\over 2}, {21\over 2}}, 5, 4, 4,\dotsc$ & $ 6, 5, 5, 4, 4,\dotsc$\\
 $5$ & $\mathbf{22}, 6, 5, 5, 5,\dotsc$ & $ \mathbf{30}, 6, 6, 5, 5,\dotsc$\\
 $6$ & $7, 6, 6, 5, 5,\dotsc$ & $ \mathbf{14, 14}, 6, 5, 5,\dotsc$\\
 $7$ & $7, 7, 6, 6, 5,\dotsc$ & $\mathbf{29, {23\over 2}, {23\over 2}}, 6, 5,\dotsc$\\
 $8$ & $7, 7, 7, 6, 6,\dotsc$ & $ \mathbf{14, 14}, 7, 6, 6,\dotsc$\\
 $9$ & $8, 7, 7, 6, 6,\dotsc$ & $ \mathbf{30}, 7, 7, 6, 6,\dotsc$\\
 $10$ & $8, 8, 7, 6, 6,\dotsc$ & $ 7, 7, 7, 6, 6,\dotsc$\\
 \hline
\end{tabular}
\end{center}
\label{table:weight62-zeros}
\end{table}
\end{appendix}

\newpage

\bibliography{ghost_bib}
\bibliographystyle{abbrv}

\end{document}